\documentclass[a4paper,10.5pt]{article}
\usepackage{amsmath,amssymb,amsthm}
\usepackage{comment} 
\usepackage{color}
%
%\usepackage[color]{showkeys}
%\definecolor{refkey}{rgb}{0.9451,0.2706,0.4941}
%\definecolor{labelkey}{rgb}{0.9451,0.2706,0.4941}
\oddsidemargin=0pt
\evensidemargin=0pt
\topmargin=-7mm
\headsep=18pt
\textheight=230mm
\textwidth=159.3mm
\pagestyle{plain}
\theoremstyle{definition}
 \newtheorem{dfn}{Definition}[section]
 \newtheorem{remark}[dfn]{Remark}

\theoremstyle{plain}
 \newtheorem{thm}[dfn]{Theorem}
   
 \newtheorem{lem}[dfn]{Lemma}
 \newtheorem{cor}[dfn]{Corollary}

\numberwithin{equation}{section}

\newcommand{\bA}{{\mathbf A}}

\newcommand{\bD}{{\mathbf D}}

\newcommand{\bG}{{\mathbf G}}
\newcommand{\bH}{{\mathbf H}}

\newcommand{\bI}{{\mathbf I}}

\newcommand{\bU}{{\mathbf U}}
\newcommand{\BH}{{\mathbb H}}

\newcommand{\bW}{{\mathbf W}}

\newcommand{\DV}{{\rm Div}\,}
\newcommand{\dv}{\, {\rm div}\,}

\newcommand{\BR}{{\mathbb R}}

\newcommand{\BM}{{\mathbb M}}
\newcommand{\BN}{{\mathbb N}}

\newcommand{\CA}{{\mathcal A}}
\newcommand{\CB}{{\mathcal B}}
\newcommand{\CC}{{\mathcal C}}
\newcommand{\CD}{{\mathcal D}}
\newcommand{\CE}{{\mathcal E}}
\newcommand{\CF}{{\mathcal F}}
\newcommand{\CI}{{\mathcal I}}

\newcommand{\CL}{{\mathcal L}}
\newcommand{\CM}{{\mathcal M}}

\newcommand{\CR}{{\mathcal R}}
\newcommand{\CS}{{\mathcal S}}
\newcommand{\CT}{{\mathcal T}}
\newcommand{\CH}{{\mathcal H}}

\newcommand{\CU}{{\mathcal U}}

\newcommand{\bff}{{\mathbf f}}
\newcommand{\bv}{{\mathbf v}}
\newcommand{\bu}{{\mathbf u}}

\newcommand{\bg}{{\mathbf g}}
\newcommand{\bh}{{\mathbf h}}

\newcommand{\bQ}{{\mathbf Q}}
\newcommand{\bP}{{\mathbf P}}
\newcommand{\bS}{{\mathbb S}}
\newcommand{\bF}{{\mathbf F}}

\newcommand{\pd}{\partial}

\newcommand{\R}{\mathbb{R}}
\newcommand{\N}{\mathbb{N}}
\newcommand{\C}{\mathbb{C}}

\newcommand{\vp}{\varphi}

\newcommand{\fp}{{\mathfrak p}}
\newcommand{\tr}{\mathrm{tr}}
\renewcommand{\Re}{{\rm Re}~}

\title{The $\CR$-boundedness of solution operators for the Q-tensor model of nematic liquid crystals}
\author{
Daniele Barbera
\thanks{Department of Mathematical Sciences "Giuseppe Luigi Lagrange", Politecnico di Torino, 
\endgraf
Corso Duca degli Abruzzi 24, 10129 Torino, Italy
\endgraf 
e-mail address: daniele.barbera96@gmail.com
\endgraf
Partially supported by the project E53D23005450006 “Nonlinear dispersive equations in presence of singularities”- funded by European Union– Next Generation EU within the PRIN 2022 program (D.D.
 104- 02/02/2022 Ministero dell’Università e della Ricerca)
 and
 INDAM, GNAMPA group}
\enskip and \enskip
Miho Murata
\thanks{Department of Mathematical and System Engineering,
Faculty of Engineering,
Shizuoka University, 
\endgraf
3-5-1 Johoku, Chuo-ku, Hamamatsu-shi, Shizuoka,
432-8561, Japan.
\endgraf
e-mail address: murata.miho@shizuoka.ac.jp
\endgraf
Partially supported by JSPS Grant-in-Aid for Early-Career Scientists 21K13819 and Grant-in-Aid for Scientific Research (B) 22H01134
}
}
\date{}

\begin{document}
\maketitle

 \begin{abstract}
In this paper, we consider a resolvent problem 
arising from the Q-tensor model for liquid crystal flows in the half-space.
Our purpose is to show the $\CR$-boundedness for the solution operator families of the resolvent problem when the resolvent parameter lies near the origin.
The definition of the $\CR$-solvability implies the uniform boundedness of the operator
 and, consequently, the resolvent estimates for the linear system. 
\end{abstract}

\section{Introduction}
Let $\R^N_+$ and $\R^N_0$ be the upper half-space and  its boundary for $N \ge 2$, respectively, namely,
\begin{align*}
	\R^N_+&=\{x=(x', x_N) \mid x' \in \R^{N-1}, x_N>0\},\\
	\R^N_0&=\{x=(x', x_N) \mid x' \in \R^{N-1}, x_N=0\},
\end{align*}
where $x'=(x_1, \ldots, x_{N-1})$.
In this paper, we consider the following resolvent problem 
arising from the model for a liquid crystal flow proposed by Beris and Edwards \cite{BE}.
\begin{equation}\label{r0}
\left\{
\begin{aligned}
&\lambda\bu -\Delta \bu + \nabla \fp + \beta \DV (\Delta \bQ -a \bQ)=\bff,
\quad \dv \bu=0& \quad&\text{in $\R^N_+$},\\
&\lambda \bQ - \beta \bD(\bu) - \Delta \bQ + a \bQ =\bG& \quad&\text{in $\R^N_+$},\\
&\bu= \bh, \quad \pd_N \bQ=\bH& \quad&\text{on $\R^N_0$},
\end{aligned}
\right.
\end{equation}
where $\bu = \bu(x) = (u_1(x), \ldots, u_N(x))^\mathsf{T} \footnote{$\bA^\mathsf{T}$denotes the transpose of $\bA$.}$ is the fluid velocity,
$\bQ = \bQ(x)$ is a symmetric and traceless order parameter tensor describing the alignment behavior of molecule orientations,
and $\fp=\fp(x)$ is the pressure.
The right members $\bff=\bff(x)$ and $\bh = \bh(x) = (h_1(x), \dots, h_{N-1}(x), 0)^\mathsf{T}$ are given vector-valued functions,
$\bG=\bG(x)$ and $\bH=\bH(x)$ are given matrix-valued functions;
$a$ and $\beta$ are constants with $a > 0$ and $\beta \neq 0$.
For a vector-valued function $\bv$ and a $N\times N$ matrix-valued function $\bA$ with the $(j, k)$ components $A_{jk}$,
we set 
\[
	\dv \bv = \sum_{j=1}^N\pd_j v_j, \enskip \DV \bA = \left(\sum_{k=1}^N\pd_kA_{1k}, \sum_{k=1}^N\pd_kA_{2k}, \dots, \sum_{k=1}^N\pd_kA_{Nk}\right)^\mathsf{T},
\]
where $\pd_j=\pd/\pd x_j$.
In addition, $\bD(\bv)$ is the deformation rate tensor, namely, 
\[
	\bD(\bv) = \frac12(\nabla \bv +(\nabla \bv)^\mathsf{T}).
\]
The resolvent parameter $\lambda$ is supposed to be contained in a sector 
\[
\Sigma_{\epsilon} 
=\{\lambda \in \C\setminus \{0\} \mid |\arg \lambda| < \pi - \epsilon\}, 
\]
where $\epsilon \in (\epsilon_0, \pi/2)$ with $\tan \epsilon_0 \ge |\beta|/\sqrt 2$.

The resolvent problem \eqref{r0} is derived from a coupled system by the Navier-Stokes equations with the evolution equation of the order parameter tensor $\bQ$:
\begin{equation}\label{nonlinear}
\left\{
\begin{aligned}
&\pd_t \bu -\Delta \bu + \nabla \fp 
+\beta \DV (L\Delta \bQ - a\bQ) = \bff(\bu, \bQ), 
\enskip \dv \bu=0
& \quad&\text{in $\R^N_+$ for $t\in (0, T)$}, \\
&\pd_t \bQ - \beta \bD(\bu) - L\Delta \bQ + a\bQ
=\bG(\bu, \bQ) & \quad&\text{in $\R^N_+$ for $t\in (0, T)$}, \\
&\bu= \bh, \enskip \pd_N \bQ=\bH& \quad&\text{on $\R^N_0$ for $t\in (0, T)$},\\
&(\bu, \bQ)|_{t=0} = (\bu_0, \bQ_0)& \quad&\text{in $\R^N_+$}.
\end{aligned}\right.
\end{equation}
The nonlinear terms are 
\begin{align*}
\bff(\bu, \bQ)
&= -(\bu \cdot \nabla) \bu + 
\DV[2\xi \BH: \bQ(\bQ+\bI/N) - (\xi + 1) \BH\bQ +
 (1-\xi) \bQ\BH - L\nabla \bQ \odot \nabla \bQ]- \beta \DV F(\bQ),\\
\bG(\bu, \bQ)
&= -(\bu \cdot \nabla)\bQ + \xi(\bD(\bu) \bQ + \bQ \bD(\bu))
+\bW(\bu) \bQ - \bQ \bW(\bu) -2\xi (\bQ +\bI/N) \bQ : \nabla \bu + F(\bQ),
\end{align*}
where
$L>0$ is a material-dependent elastic constant, 
a scalar parameter $\xi \in \R$
denotes the ratio between the tumbling 
and the aligning effects that a shear flow would exert over the directors,
$\beta
= 2\xi/N$,
$\bI$ is the $N \times N$ identity matrix,
the $(j, k)$ component of $\nabla \bQ \odot \nabla \bQ$ and $\bW(\bu)$ denote
\[
	(\nabla \bQ \odot \nabla \bQ)_{j k} = \sum^N_{\ell, m = 1} \pd_j Q_{\ell m} \pd_k Q_{\ell m}, \enskip
	\bW(\bu) = \frac12(\nabla \bu -(\nabla \bu)^\mathsf{T}),
\]
respectively.
Furthermore, we set
\begin{align*}
\BH& = L\Delta \bQ - a\bQ + b(\bQ^2-(\tr \bQ^2)\bI/N) -c\tr(\bQ^2) \bQ
\end{align*}
with a material-dependent and temperature-dependent non-zero constant $a$ and material-dependent positive constants $b$ and $c$, 
where $\BH$ is derived from the variational derivative of the free energy functional. See, e.g., \ cite {HD, MZ} for details.
The nonlinear term of $\BH$ is defined as $F(\bQ)$.%, namely,
%$F(\bQ)= b (\bQ^2-(\tr \bQ^2)\bI/N) - c\tr(\bQ^2) \bQ$.
In our problem, we set $L=1$ for simplicity. In addition, we assume that $\xi \neq 0$ and $a>0$. The assumption $a>0$ is a mathematical point of view.
%If $a>0$, the real parts of the characteristic roots are negative in $\R^N$, see \eqref{p2 small} below. Moreover, the lower bound of the characteristic roots is obtained in $\R^N_+$ at least if $a>0$. For instance, we have $B_a^2=\lambda+a+|\xi'|^2 \in \Sigma_\epsilon$ for $\lambda \in \Sigma_\epsilon$ if $a>0$, then we have the estimate from below of $B_a$ by the same method as \cite[Lemma 4.4]{SS}, see Lemma \ref{m} \eqref{m-2} below.

The aim of the paper is to prove the resolvent estimate for $\lambda$ near $0$.
In the previous paper \cite{BM}, we considered the case $|\lambda| > r$ with $r>0$.
However, the case $\lambda$ near $0$ is essential in order to prove the global well-posedness for the nonlinear problem.
To understand this point, we consider the linearized system
\[
\left\{
\begin{aligned}
	&\pd_t \bU + \CA \bU=\bF& \quad&\text{in $\R^N_+$ for $t\in (0, T)$}, \\
	&\CB \bU = 0& \quad&\text{on $\R^N_0$ for $t\in (0, T)$},\\
	&\bU(0)=0 & \quad&\text{in $\R^N_+$},
\end{aligned}\right.
\]
where $\CA$ and $\CB$ are linear operators, $\bU=(\bu, \bQ)$, and $\bF=(\bff, \bG)$ is a given function.
As we proved in \cite[Theorem 1.2.2]{BM}, 
let $1< p, q< \infty$,
then there exists $\gamma_0>0$ such that for any $\gamma \ge \gamma_0$ the solution $\bU$ of the linearized problem satisfies
\begin{equation}\label{exp}
\begin{aligned}
	&\|\pd_t \bU\|_{L_p((0, T), L_q(\R^N_+) \times H^1_q(\R^N_+))} + \|\bU\|_{L_p((0, T), H^2_q(\R^N_+) \times H^3_q(\R^N_+))} \\
	&\enskip \le C e^{\gamma T} (\|\bff\|_{L_p((0, T), L_q(\R^N_+))} + \|\bG\|_{L_p((0, T), H^1_q(\R^N_+))})
\end{aligned}
\end{equation}
with some constant $C>0$, where $\gamma_0$ depends on $r$.
%The local well-posedness of \eqref{nonlinear} for small initial data was discussed in \cite{BM}.
%The key theorem is the maximal $L_p$-$L_q$ regularity for the linearized problem, which is proved by the $\CR$-boundedness operator families of \eqref{r0} for 
%$\lambda \in 
%\{\lambda \in \Sigma_\epsilon \mid |\lambda| > r\}$ for any $r>0$.
%On the other hand, to obtain the global well-posedness, the analysis of the resolvent problem 
% for $\lambda$ lying near the origin will be essential.
%In fact, let $T>0$. For simplicity, we only mention the linearized problem with homogeneous boundary conditions and zero initial conditions. 
%This linearized problem is written as $\pd_t \bU + \CA \bU=\bF$, $\CB \bU = 0$ for $t \in (0, T)$, $\bU|_{t=0}=0$ symbolically, where $\CA$ is a linear operator, $\CB \bU=0$ describes the homogeneous boundary condition, $\bU=(\bu, \bQ)$ is unknown function, and $\bF=(\bff, \bG)$ is a given function.
%As we proved in \cite[Theorem 1.2.2]{BM}, 
%let $1< p, q< \infty$,
%then there exists $\gamma_0>0$ such that for any $\gamma \ge \gamma_0$ the solution $\bu$ of the linearized problem satisfies
%\begin{equation}\label{exp}
%	\|\pd_t \bu\|_{L_p((0, T), L_q(\R^N_+))} + \|\bu\|_{L_p((0, T), H^2_q(\R^N_+))} \le C e^{\gamma T} (\|\bff\|_{L_p((0, T), L_q(\R^N_+))} + \|\bG\|_{L_p((0, T), H^1_q(\R^N_+))})
%\end{equation}
%with some constant $C>0$, where $\gamma_0$ depends on $r$.
The analysis of the resolvent problem for $\lambda$ lying near the origin corresponds to proving \eqref{exp} with $\gamma_0=0$.
{%From this observation, the purpose of this paper is to prove the $\CR$-boundedness for the solution operators of \eqref{r0} if 
Consequently, in the following, we consider $\lambda$ in a sector
\[
\Sigma_{\epsilon, c_0} 
=\{\lambda \in \Sigma_\epsilon \mid |\lambda| \le c_0\},
\]
where $c_0$ is some small constant depending on $\epsilon$, $\beta$, and $a$
chosen in Lemma \ref{lem:zl} below.

The resolvent problem for the Q-tensor model was investigated by Schonbek and Shibata \cite{SS1} in the whole space.
They proved the $\CR$-boundedness for the solution operators to the resolvent problem with a modified stress tensor, which implies that the first equation in \eqref{r0} is the Stokes resolvent problem.
Therefore, the $\CR$-boundedness for the Q-tensor model follows from that for the Stokes equations and the heat equation.
Since the $\CR$-boundedness furnishes the maximal $L_p$-$L_q$ regularity for $1< p, q<\infty$, by combining these estimates with the decay estimates of the Stokes and the heat semigroups, they obtained the global well-posedness for the modified system in the whole space. 
The second author and Shibata \cite{MS} proved $\CR$-boundedness for the solution operators for \eqref{r0} in the whole space, where $\lambda$ is far away from the origin. Then \cite{MS} proved the global well-posedness in the maximal $L_p$-$L_q$ regularity class for \eqref{nonlinear} in the whole space.
Xiao \cite{X} assumed $\xi=0$ in a bounded domain; therefore, 
he could refer to \cite{A, D, GS} concerning the maximal $L_p$-$L_q$ regularity for the Stokes and the parabolic operators. Then he proved the existence of global-in-time solutions for \eqref{nonlinear} with $\xi=0$ in the bounded domain.
Hieber, Hussein, and Wrona \cite{HHW} proved that the linear operator is $\CR$-sectorial by proving that the linear operator is invertible and its numerical range lies in a certain sector, which is based on a classical result for unbounded operators in Hilbert spaces (cf. Kato \cite{K}), which implies that the linear operator has the maximal $L_p$-$L_2$ regularity for $p>4/{4-N}$ with $N=2, 3$. Then they established the global well-posedness in a bounded domain for any $\xi$.

This paper is organized as follows: Section 2 states the existence of the $\CR$-bounded solution operator families and the resolvent estimates to the resolvent problem \eqref{r0}.
As a corollary, we state the resolvent estimates in the homogeneous Sobolev spaces for the resolvent problem with homogeneous boundary conditions,
which may be useful to prove the global well-posedness in the maximal $L_p$-$L_q$ regularity class. 
Section 3 improves the result proved by \cite{MS}, which is the $\CR$-boundedness for the solution operator for $\lambda \in \Sigma_\epsilon$ in the whole space.
As a preliminary for the proof of the main theorem, Section 4 develops the estimates of the terms appearing in the Lopatinski determinant in the half-space.
Section 5 proves the main theorem by estimating the symbols of the Fourier multiplier operators.
Finally, Section 6 states the $\CR$-solvability for the resolvent problem with homogeneous boundary conditions.

\section{Notation and Main Theorem}
First, we summarize several symbols and functional spaces used 
throughout the paper.
%%%%%%%%%%%%%%
$\N$, $\R$ and $\C$ denote the sets of 
all natural numbers, real numbers, and complex numbers, respectively. 
We set $\N_0=\N \cup \{0\}$ and $\R_+ = (0, \infty)$. 
Let $q'$ be the dual exponent of $q$
defined by $q' = q/(q-1)$
for $1 < q < \infty$. 
For any multi-index $\alpha = (\alpha_1, \ldots, \alpha_N) 
\in \N_0^N$, we write $|\alpha|=\alpha_1+\cdots+\alpha_N$ 
and $D_x^\alpha=\pd_1^{\alpha_1} \cdots \pd_N^{\alpha_N}$ 
with $x = (x_1, \ldots, x_N)$ and $\pd_j=\pd/\pd x_j$. 
For $k \in \N_0$, scalar function $f$, $N$ vector-valued function $\bg$, 
and $N \times N$ matrix-valued function $\bG$, we set
\begin{gather*}
\nabla^k f = (D_x^\alpha f \mid |\alpha|=k),
\enskip \nabla^k \bg = (D_x^\alpha g_j \mid |\alpha|=k, \enskip j = 1,\ldots, N),\\
\nabla^k \bG = (D_x^\alpha G_{i j} \mid |\alpha|=k, \enskip i, j = 1,\ldots, N).
\end{gather*} 
Hereafter, we use small boldface letters, e.g. $\bff$ to 
denote vector-valued functions and capital boldface letters, e.g. $\bG$
to denote matrix-valued functions, respectively. 
For $\xi'=(\xi_1, \dots, \xi_{N-1})$ and $\alpha=(\alpha_1, \ldots, \alpha_{N-1}) 
\in \N_0^{N-1}$, 
we set
\[
	D^\alpha_{\xi'}  = \frac{\pd^{|\alpha| }}{\pd \xi_1^{\alpha_1} \cdots \pd \xi_{N-1}^{\alpha_{N-1}}},\enskip |\alpha|=\alpha_1+\cdots+\alpha_{N-1}.
\]
For Banach spaces $X$ and $Y$, $\CL(X,Y)$ denotes the set of 
all bounded linear operators from $X$ into $Y$,
$\CL(X)$ is the abbreviation of $\CL(X, X)$, and 
$\rm{Hol}\,(U, \CL(X,Y))$ 
 the set of all $\CL(X,Y)$ valued holomorphic 
functions defined on a domain $U$ in $\mathbb C$. 
For $1 \le p \le \infty$ and $m \in \N$,
$L_p(\Omega)$ and $H_p^m(\Omega)$ 
denote the Lebesgue space and Sobolev space on a domain $\Omega \subset \R^N$;
while $\|\cdot\|_{L_q(\Omega)}$ and $\|\cdot\|_{H_q^m(\Omega)}$
denote their norms, respectively.$C^\infty(\Omega)$ denotes the set of all $C^\infty$ functions defined on $\Omega$. 
$L_p((a, b), X)$ and $H_p^m((a, b), X)$ 
denote the Lebesgue space and the Sobolev space of 
$X$-valued functions defined on an interval $(a,b)$, respectively.
The $d$-product space of $X$ is defined by $X^d$,
while its norm is denoted by 
$\|\cdot\|_X$ instead of $\|\cdot\|_{X^d}$ for the sake of 
simplicity. 
Let $\bS_0 \subset \R^{N^2}$ denotes the set of the order parameter, namely,
\[
 \bS_0=\{\bQ \in \R^{N^2} \mid \tr \bQ=0, \enskip \bQ=\bQ^\mathsf{T}\}.
\]
The norm of a matrix $\bA$ is given by $|\bA|^2 = \tr (A^\mathsf{T} A)$ and hence it follows for all $\bQ \in \bS_0$ that $|\bQ|^2=\tr (\bQ^2)$.
The space for the pressure term is defined as 
\begin{align*}
%&\widehat H^3_q(\Omega; \bS_0)=\{\bQ \in L_{q, \text{loc}}(\Omega; \bS_0) \mid \nabla \bQ \in H^2_q(\Omega; \R^{N^3})\},\\
\widehat H^1_q(\Omega)&=\{\vp \in L_{q, \text{loc}}(\Omega) \mid \nabla \vp \in L_q(\Omega)^N\}.
%\widehat H^1_{q, 0}(\R^N_+)&=\{\vp \in L_{q, \text{loc}}(\R^N_+) \mid \nabla \vp \in L_q(\R^N_+)^N, \enskip \vp|_{x_N=0}=0\}.
\end{align*}
%The space for a tensor is defined by
%\[
%X(\Omega; \R^{N^2}) = \left\{\bG : \Omega \to \R^{N^2} \mid \|\bG\|_X = \sum^N_{i, j=1} \|G_{ij}\|_X < \infty \right\}
%\]
%for the Banach space $X$.
%We set 
%\begin{gather*}
%W_q^{m,\ell}(\BR^N)=\{(\bff,\bg) \mid  \bff \in W_q^m(\BR^N)^N,
%\enskip \bg \in W_q^\ell(\R^N; \R^{N^2}) \}, \\ 
%\|(\bff, \bg)\|_{W^{m, \ell}_q(\BR^N)} = \|\bff\|_{W^m_q(\BR^N)}
%+ \|\bg\|_{W^\ell_q(\BR^N)}.
%\end{gather*}
%Furthermore, we set
%\begin{align*}
%L_{p, \gamma}(\BR_+, X) & = \{\bff (t) \in L_{p, {\rm loc}}
%(\BR_+, X) \mid e^{-\gamma t} \bff (t) \in L_p (\BR_+, X)\}, \\
%%L_{p, \delta, 0}(\BR, X) & = \{f (t) \in L_{p, \delta}(\BR, X) 
%%\mid
%%f (t) = 0 \enskip (t < 0)\}, \\
%W^1_{p, \gamma}(\BR_+, X) & = \{\bff(t) \in
% L_{p, \gamma}(\BR_+, X) \mid 
%e^{-\gamma t} \pd_t \bff(t) \in L_p(\BR_+, X)\}, \\
%L_{p, \gamma}(\BR_+, X(\R^N; \R^{N^2})) & = \{\bg (t) \in L_{p, {\rm loc}}
%(\BR_+, X(\R^N; \R^{N^2})) \mid e^{-\gamma t} \bg (t) \in L_p (\BR_+, X(\R^N; \R^{N^2}))\}, \\
%W^1_{p, \gamma}(\BR_+, X(\R^N; \R^{N^2})) & = \{\bg(t) \in
% L_{p, \gamma}(\BR_+, X(\R^N; \R^{N^2})) \mid 
%e^{-\gamma t} \pd_t \bg(t) \in L_p(\BR_+, X(\R^N; \R^{N^2}))\}
%%W^1_{p, \delta, 0}(\BR, X) & = W^1_{p, \delta}(\BR, X)
%%\cap L_{p, \delta, 0}(\BR, X)
%\end{align*}
%for $1 < p < \infty$ and $\gamma > 0$.
Let $\CF_{x'}= \CF$ and $\CF^{-1}_{\xi'} = \CF^{-1}$ 
denote the partial Fourier transform and 
its inverse transform, respectively, which are defined by 
 setting
\begin{align*}
&\hat f (\xi', x_N)
= \CF_{x'}[f](\xi', x_N) = \int_{\R^{N-1}}e^{-ix'\cdot\xi'}f(x', x_N)\,dx', \\
&\CF^{-1}_{\xi'}[g(\xi', x_N)](x') = \frac{1}{(2\pi)^{N-1}}\int_{\R^{N-1}}
e^{ix'\cdot\xi'}g(\xi', x_N)\,d\xi'. 
\end{align*}
The letter $C$ denotes generic constants and the constant 
$C_{a,b,\ldots}$ depends on $a,b,\ldots$. 
The values of constants $C$ and $C_{a,b,\ldots}$ 
may change from line to line. 
%\begin{align*}
%X^\rho_{p, q, t}
%&= L_p((0, t), W^3_q(\BR^N)) \cap W^1_p((0, t), W^1_q(\BR^N)), \\
%X^\bu_{p, q, t}
%&= L_p((0, t), W^2_q(\BR^N)^N) \cap W^1_p((0, t), L_q(\BR^N)^N), \\
%J_q(\R^N)&=
%\{\bu \in L_q(\R^N)^N \mid \dv \bu = 0 \text{ in $\R^N$}\}, \nonumber \\
%\end{align*}

Second, we introduce the definition of the $\CR$-boundedness.
\begin{dfn}\label{dfn2}
A family of operators $\CT \subset \CL(X,Y)$ is called $\CR$-bounded 
on $\CL(X,Y)$, if there exist constants $C > 0$ and $p \in [1,\infty)$ 
such that for any $n \in \BN$, $\{T_{j}\}_{j=1}^{n} \subset \CT$,
$\{f_{j}\}_{j=1}^{n} \subset X$ and sequences $\{r_{j}\}_{j=1}^{n}$
 of independent, symmetric, $\{-1,1\}$-valued random variables on $[0,1]$, 
we have  the inequality:
$$
\bigg \{ \int_{0}^{1} \|\sum_{j=1}^{n} r_{j}(u)T_{j}f_{j}\|_{Y}^{p}\,du
 \bigg \}^{1/p} \leq C\bigg\{\int^1_0
\|\sum_{j=1}^n r_j(u)f_j\|_X^p\,du\biggr\}^{1/p}.
$$ 
The smallest such $C$ is called $\CR$-bound of $\CT$, 
which is denoted by $\CR_{\CL(X,Y)}(\CT)$.
\end{dfn}
\begin{remark}\label{rem:def of rbdd}
The $\CR$-boundedness implies that the uniform boundedness of the operator family $\CT$.
In fact, choosing $m=1$ in Definition \ref{dfn2}, we observed that there exists a constant $C$ such that $\|T f\|_Y \le C \|f\|_X$ holds for any $T \in \CT$ and $f \in X$.
\end{remark}

Finally, let us state our main theorem.

\begin{thm}\label{thm:Rbdd H}
Let $N \ge 2$, $a>0$, and $1 < q < \infty$. Let $\epsilon \in (\epsilon_0, \pi/2)$ with $\tan \epsilon_0 \ge |\beta|/\sqrt 2$, and let $c_0$ is a small constant depending on $\epsilon$, $\beta$, and $a$
chosen in Lemma \ref{lem:zl} below. 
Let 
\begin{align*}
X_q(\R^N_+)&=L_q(\R^N_+)^N \times L_q(\R^N_+; \R^{N^3}) \times L_q(\R^N_+)^{N^3+N^2+N} \\
&\quad \times L_q(\R^N_+; \R^{N^4}) \times L_q(\R^N_+; \R^{N^3}) \times L_q(\R^N_+; \bS_0) \times L_q(\R^N_+; \bS_0),\\
Y_q(\R^N_+)&=L_q(\R^N_+)^N \times L_q(\R^N_+; \R^{N^3}) \times L_q(\R^N_+; \bS_0) \times L_q(\R^N_+)^{N^3+N^2+N} \\
&\quad \times L_q(\R^N_+; \R^{N^4}) \times L_q(\R^N_+; \R^{N^3}) \times L_q(\R^N_+; \bS_0) \times L_q(\R^N_+; \R^{N^3}) \times L_q(\R^N_+; \bS_0) \times L_q(\R^N_+; \bS_0), 
\end{align*}
and let 
\begin{align*}
\bF_X&=(\bff, \nabla \bG, \CS_\lambda \bh, \CS_\lambda \bH, \lambda^{1/2}\bH),\\
\bF_Y&=(\bff, \nabla \bG, \bG, \CS_\lambda \bh, \CT_\lambda \bH, \bH)
\end{align*}
with $h_N=0$, where
$\CS_\lambda = (\nabla^2, \lambda^{1/2}\nabla, \lambda)$,
$\CT_\lambda = (\nabla^2, \lambda^{1/2}\nabla, \lambda, \lambda^{1/2}, \nabla)$.
There exist %a positive constant $\lambda_0=\lambda_0(\sigma) \geq 1$
%and 
operator families 
\begin{align*}
&\CA (\lambda) \in 
{\rm Hol} (\Sigma_{\epsilon, c_0}, 
\CL(X_q(\R^N_+), H^2_q(\R^N_+)^N))\\
&\CB (\lambda) \in 
{\rm Hol} (\Sigma_{\epsilon, c_0}, 
\CL(Y_q(\R^N_+), H^3_q(\R^N_+; \bS_0)))
\end{align*}
such that 
the following assertions hold.
\begin{enumerate}
\item
For any $\lambda = \gamma + i\tau \in \Sigma_{\epsilon, c_0}$,
$\bF_X \in X_q(\R^N_+)$, and $\bF_Y \in Y_q(\R^N_+)$,
\[
\bu = \CA (\lambda) \bF_X,\quad
\bQ = \CB (\lambda) \bF_Y
\]
are unique solutions of \eqref{r0}.
\item
\begin{align*}
&\CR_{\CL(X_q(\R^N_+), A_q(\R^N_+))}
(\{(\tau \pd_\tau)^\ell \CS_\lambda \CA (\lambda) \mid 
\lambda \in \Sigma_{\epsilon, c_0}\}) 
\leq r,\\
&\CR_{\CL(Y_q(\R^N_+), B_q(\R^N_+))}
(\{(\tau \pd_\tau)^\ell \CT_\lambda \CB (\lambda) \mid 
\lambda \in \Sigma_{\epsilon, c_0}\}) 
\leq r
%&\CR_{\CL(X_q(\R^N_+), L_q(\R^N_+)^N)}
%(\{(\tau \pd_\tau)^\ell \nabla \CC (\lambda) \mid 
%\lambda \in \Sigma_{\epsilon, c_0}\}) 
%\leq r 
\end{align*}
for $\ell = 0, 1,$
where 
$A_q(\R^N_+) = L_q(\R^N_+)^{N^3 + N^2+N}$,
$B_q(\R^N_+) = H^1_q(\R^N_+; \R^{N^4}) \times H^1_q(\R^N_+; \R^{N^3}) \times H^1_q(\R^N_+; \bS_0) \times L_q(\R^N_+; \R^{N^3}) \times L_q(\R^N_+; \bS_0) $,
and $r$ is a constant independent of $\lambda$.
\end{enumerate}
%$\CT_\lambda \bQ = (\nabla^3 \bQ, \lambda^{1/2}\nabla^2 \bQ, \lambda \bQ)$,
%$A_q(\R^N_+) = L_q(\R^N_+)^{N^3 + N^2+N}$,
%$B_q(\R^N_+) = H^1_q(\R^N_+; \R^{N^4}) \times H^1_q(\R^N_+; \R^{N^3}) \times \dot H^1_q(\R^N_+; \R^{N^2})$,
\end{thm}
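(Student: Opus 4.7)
The plan is to follow the Shibata-program strategy for resolvent problems in a half-space: reduce \eqref{r0} to a boundary problem with vanishing interior data, apply the partial Fourier transform $\CF_{x'}$ to obtain a coupled linear ODE system in $x_N$ with parameters $(\xi',\lambda)$, write the bounded solution as an inverse Fourier transform of explicit rational-exponential symbols, and finally invoke an $\CR$-bounded Fourier multiplier theorem on $L_q(\R^N_+)$. The first reduction relies on the whole-space result established in Section 3: after extending $\bff$ and $\bG$ suitably, the whole-space $\CR$-bounded solution operators yield a pair $(\bu_0,\bQ_0)$, and one writes $(\bu,\bQ)=(\bu_0,\bQ_0)|_{\R^N_+}+(\bu_1,\bQ_1)$, where $(\bu_1,\bQ_1)$ solves the interior-homogeneous version of \eqref{r0} with modified boundary data whose $\CS_\lambda$- and $\CT_\lambda$-norms are controlled by $\bF_X$ and $\bF_Y$.

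For the correction problem, the partial Fourier transform in $x'$ turns the PDE into an ODE system in $x_N$ whose characteristic roots are $A=|\xi'|$ (from the pressure and the divergence constraint) together with $(|\xi'|^2+\lambda)^{1/2}$ and $B=(|\xi'|^2+\lambda+a)^{1/2}$ (from the Stokes block and the parabolic $\bQ$ block); the decaying fundamental solutions are linear combinations of the corresponding exponentials with matrix-valued coefficients. Imposing the Dirichlet condition on $\bu_1$ and the Neumann condition on $\bQ_1$ at $x_N=0$ and solving the resulting algebraic system produces a symbol matrix whose denominator is the Lopatinski determinant $L(\xi',\lambda)$ and whose numerators are polynomials in $A$, $B$, $\lambda^{1/2}$, $\lambda$, and $\xi'$. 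The coupling constant $\beta$ appears in both the denominator and the numerator and is the reason a direct Stokes/heat decoupling is unavailable; it is also what forces the angle restriction $\tan\epsilon_0\ge|\beta|/\sqrt 2$, keeping the relevant quadratic forms inside a fixed sector.

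The principal difficulty, and what the theorem really adds over \cite{BM,MS} in the regime $|\lambda|\le c_0$, is the Lopatinski analysis. I would need to show that $L(\xi',\lambda)$ stays bounded away from zero on $(\R^{N-1}\setminus\{0\})\times\Sigma_{\epsilon,c_0}$ and that the full symbols $m(\xi',\lambda)$ built from it satisfy Mikhlin-type estimates $|D_{\xi'}^\alpha m(\xi',\lambda)|\le C_\alpha |\xi'|^{-|\alpha|}$ uniformly in $\lambda$, together with the same bounds after applying $(\tau\pd_\tau)^\ell$ for $\ell=0,1$. This is exactly the role of Lemma \ref{lem:zl} and the refined symbol estimates developed in Section 4, the smallness of $c_0$ being used to guarantee that the leading-order behavior of $L$ near $\lambda=0$ is not perturbed away by the lower-order $\lambda$-dependent terms coming from the coupling. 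Once these symbol estimates are in hand, the Volevich trick together with the standard $\CR$-bounded Fourier multiplier theorem delivers the $\CR$-bounds on $\CS_\lambda\CA(\lambda)$ and $\CT_\lambda\CB(\lambda)$; uniqueness then follows from a duality argument applied to the adjoint problem, using that by Remark \ref{rem:def of rbdd} the $\CR$-bounds imply the corresponding resolvent estimates.
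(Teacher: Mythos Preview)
Your overall architecture matches the paper exactly: reduce via the whole-space solver of Section~3 to an interior-homogeneous boundary problem (this is subsection~5.4), take the partial Fourier transform and solve the resulting ODE in $x_N$, push the boundary data through the Volevich trick, verify Mikhlin-type symbol bounds that feed into the $\CR$-bounded multiplier lemma (Lemma~5.11/5.12), and close uniqueness by duality. So the skeleton is correct.

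There is, however, a genuine gap in your Fourier-side description that would make the computation fail if carried out as written. You list the characteristic roots as $A$, $(A^2+\lambda)^{1/2}$, and $B_a=(A^2+\lambda+a)^{1/2}$, i.e.\ the uncoupled Stokes and heat roots. Because $\beta\neq 0$ the velocity and tensor blocks do \emph{not} decouple at the symbol level: the relevant decaying exponentials are $e^{-Ax_N}$, $e^{-B_a x_N}$, and $e^{-L_j x_N}$ with $L_j=\sqrt{z_j(\lambda)+A^2}$, where $z_1,z_2$ are the $\beta$-dependent quantities \eqref{root:2}. For small $\lambda$ one has $z_1(\lambda)=\lambda/(1+\beta^2/2)+O(\lambda^2)$ and $z_2(\lambda)=a+\lambda/(1+\beta^2/2)+O(\lambda^2)$, so $L_1$ is not the Stokes root $\sqrt{\lambda+A^2}$. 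This matters because the entire Section~4 program (the expansions \eqref{b}--\eqref{l}, the behavior of $\CD$, $\CC_a$, $\CA_a$) rests on the precise form of $z_1,z_2$; with the wrong roots you would not even write down the correct Lopatinski quantities.

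Relatedly, there is not a single Lopatinski determinant but two separate symbol denominators, $\CC_a(\lambda,\xi')$ and $\CA_a(\lambda,\xi')$, each requiring its own lower bound (Lemma~4.4 and Section~4.4). The heart of the small-$\lambda$ argument, and what distinguishes this paper from \cite{BM}, is the estimate $|\lambda\,\CC_a(\lambda,\xi')|\ge C$ uniformly on $\Sigma_{\epsilon,c_0}\times(\R^{N-1}\setminus\{0\})$, together with the sharper bound $|\CC_a|\ge C|\lambda|^{-3/2}$ on the low-frequency cone $A^2\le|\lambda|/R$. These are obtained by a three-zone analysis in $A$ (high, middle, low), and the middle zone is handled not by expansion but by the explicit limit of Lemma~4.3. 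Your sketch (``$L$ stays bounded away from zero'') misses that $\CC_a$ is in fact singular as $\lambda\to 0$ and that the solution formulas compensate by carrying explicit factors of $\lambda^{-1}\CC_a^{-1}$, which is what Corollary~5.8 packages.
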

%By Theorem \ref{thm:Rbdd H}, Remark \ref{rem:Rbdd H}, we have the following corollary.
%Since the second equation of \eqref{r0} gives us the estimate of $\nabla \bQ$, then the following corollary holds.

\begin{remark}
The condition $h_N=0$ provides the variational equation for $\fp$, which implies the solution formula of the pressure satisfying \eqref{r0} with $(\bff, \bG)=(0, 0)$ (cf. \cite[subsection 2.1]{BM}).
\end{remark}

Remark \ref{rem:def of rbdd} implies the resolvent estimates for \eqref{r0}.
\begin{cor}\label{cor:resolvent}
Let $N \ge 2$, $a>0$, and $1 < q < \infty$.
Let $\epsilon \in (\epsilon_0, \pi/2)$ with $\tan \epsilon_0 \ge |\beta|/\sqrt 2$, and let $c_0$ is a small constant depending on $\epsilon$, $\beta$, and $a$
chosen in Lemma \ref{lem:zl} below.   
Then for any $\lambda \in \Sigma_{\epsilon, c_0}$, $\bff \in L_q(\R^N_+)^N$, $\bG \in H^1_q(\R^N_+)$, 
$\bh \in H^2_q(\R^N_+)^N$ with $h_N=0$, and $\bH \in H^2_q(\R^N_+; \bS_0)$,
there is a unique solution $(\bu, \bQ, \fp)$ for \eqref{r0}, unique up to additive constant on $\fp$, 
with $\bu \in H^2_q(\R^N_+)^N$, $\bQ \in H^3_q(\R^N_+; \bS_0)$, 
$\fp \in \widehat H^1_{q}(\R^N_+)$,
\begin{equation}\label{est:uq} 
\begin{aligned}
&\|(|\lambda|, |\lambda|^{1/2} \nabla, \nabla^2)(\bu, \bQ)\|_{L_q(\R^N_+) \times H^1_q(\R^N_+)}
\\
&\le C (\|\bff\|_{L_q(\R^N_+)}+\|\bG\|_{H^1_q(\R^N_+)}+\|(|\lambda|, |\lambda|^{1/2} \nabla, \nabla^2) \bh\|_{L_q(\R^N_+)}\\
&\quad
+\|\bH\|_{H^2_q(\R^N_+)} + |\lambda|^{1/2}\|\bH\|_{H^1_q(\R^N_+)}+|\lambda|\|\bH\|_{L_q(\R^N_+)}),
\end{aligned}
\end{equation}
and 
\begin{equation}\label{est:p}
\begin{aligned}
\|\nabla \fp\|_{L_q(\R^N_+)} &\le C(\|(\bff, \nabla \bG)\|_{L_q(\R^N_+)}+\|(|\lambda|, |\lambda|^{1/2} \nabla, \nabla^2) \bh\|_{L_q(\R^N_+)}\\
&\quad
+\|\bH\|_{H^2_q(\R^N_+)} + |\lambda|^{1/2}\|\bH\|_{H^1_q(\R^N_+)}+|\lambda|\|\bH\|_{L_q(\R^N_+)}),
\end{aligned}
\end{equation}
where $C$ is a constant depending on $a$, $\beta$, $N$, $\epsilon$, and $q$.
\end{cor}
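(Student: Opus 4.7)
The plan is to deduce this corollary as a direct consequence of Theorem \ref{thm:Rbdd H} via Remark \ref{rem:def of rbdd}, which converts the $\CR$-bound into a uniform operator bound. The tuples $\bF_X$ and $\bF_Y$ have been designed precisely so that their norms, evaluated on the data of the corollary, reproduce the right-hand side of \eqref{est:uq}. From the given data $(\bff,\bG,\bh,\bH)$ I would first form
\[
\bF_X = (\bff,\,\nabla\bG,\,\CS_\lambda\bh,\,\CS_\lambda\bH,\,\lambda^{1/2}\bH), \qquad
\bF_Y = (\bff,\,\nabla\bG,\,\bG,\,\CS_\lambda\bh,\,\CT_\lambda\bH,\,\bH),
\]
and set $\bu = \CA(\lambda)\bF_X$ and $\bQ = \CB(\lambda)\bF_Y$; these lie in $H^2_q(\R^N_+)^N$ and $H^3_q(\R^N_+;\bS_0)$ respectively and, by Theorem \ref{thm:Rbdd H}(1), together with some $\fp$ they solve \eqref{r0}.

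Applying Remark \ref{rem:def of rbdd} to the estimate of Theorem \ref{thm:Rbdd H}(2) with $\ell = 0$ gives that $\CS_\lambda\CA(\lambda)$ and $\CT_\lambda\CB(\lambda)$ are uniformly bounded on $\Sigma_{\epsilon,c_0}$, so
\[
\|\CS_\lambda \bu\|_{L_q(\R^N_+)} + \|\CT_\lambda \bQ\|_{B_q(\R^N_+)} \le C\bigl(\|\bF_X\|_{X_q(\R^N_+)} + \|\bF_Y\|_{Y_q(\R^N_+)}\bigr).
\]
Unfolding both sides via the definitions of $\CS_\lambda$, $\CT_\lambda$, $X_q$, and $Y_q$, the left-hand side dominates the left-hand side of \eqref{est:uq}, and the right-hand side is controlled by the right-hand side of \eqref{est:uq}, yielding the desired estimate.

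For the pressure bound \eqref{est:p}, I would recover $\nabla\fp$ from the first equation of \eqref{r0},
\[
\nabla\fp = \bff + \Delta\bu - \lambda\bu - \beta\DV(\Delta\bQ - a\bQ),
\]
and eliminate the dangerous third-order term $\DV\Delta\bQ$ by substituting $\Delta\bQ = (\lambda+a)\bQ - \beta\bD(\bu) - \bG$ from the second equation, obtaining
\[
\nabla\fp = \bff + \Delta\bu + \beta^2 \DV\bD(\bu) + \beta\DV\bG - \lambda\bu - \beta\lambda\DV\bQ.
\]
Each term is controlled uniformly in $\lambda\in\Sigma_{\epsilon,c_0}$: $\|\Delta\bu\|_{L_q}$, $\|\nabla^2\bu\|_{L_q}$, and $|\lambda|\|\bu\|_{L_q}$ come from $\CS_\lambda\bu$, while $|\lambda|\|\nabla\bQ\|_{L_q}$ is contained in $\||\lambda|\bQ\|_{H^1_q}$; the inputs $\|\bff\|_{L_q}$ and $\|\nabla\bG\|_{L_q}$ are part of the data. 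This gives \eqref{est:p}.

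Uniqueness follows by applying \eqref{est:uq} to the difference of two solutions: it forces $\bu_1=\bu_2$ and $\bQ_1=\bQ_2$, and then the momentum equation gives $\nabla(\fp_1-\fp_2)=0$. The only substantive point requiring care is the pressure step: a naive bound on $\DV(\Delta\bQ - a\bQ)$ would produce $\|\nabla^3\bQ\|_{L_q}$ and $\|\nabla\bQ\|_{L_q}$, and the latter is not directly contained in the left-hand side of \eqref{est:uq} when $|\lambda|$ is small; the substitution trick above trades these for quantities that carry uniform-in-$\lambda$ control through the $\CR$-boundedness.
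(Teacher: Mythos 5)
Your derivation of \eqref{est:uq} from Theorem \ref{thm:Rbdd H} and Remark \ref{rem:def of rbdd} is exactly the paper's argument, and your concern about controlling $\|\nabla\bQ\|_{L_q}$ uniformly for small $|\lambda|$ is legitimate — that norm is indeed not dominated by the left side of \eqref{est:uq}. However, your treatment of the pressure estimate \eqref{est:p} has a genuine gap, and it is a different issue from the one you flagged.

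The substitution $\Delta\bQ - a\bQ = \lambda\bQ - \beta\bD(\bu) - \bG$ replaces $\|\nabla\bQ\|_{L_q}$ by $|\lambda|\|\nabla\bQ\|_{L_q}$, which you then dominate by $\||\lambda|\bQ\|_{H^1_q}$, i.e.\ by the third component of $\CT_\lambda\CB(\lambda)\bF_Y$. But the $\CR$-bound in Theorem \ref{thm:Rbdd H}(2) controls this quantity only in terms of $\|\bF_Y\|_{Y_q(\R^N_+)}$, and $\bF_Y$ contains $\bG$ itself (not merely $\nabla\bG$) as its third component. Consequently your argument produces the right-hand side of \eqref{est:uq}, which carries $\|\bG\|_{H^1_q}$, rather than the strictly weaker right-hand side of \eqref{est:p}, which carries only $\|(\bff,\nabla\bG)\|_{L_q}$. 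The paper avoids this by decomposing $(\bu,\bQ,\fp)=(\bu_1,\bQ_1,\fp_1)+(\bu_2,\bQ_2,\fp_2)$ into a whole-space part and a boundary-correction part (subsection \ref{subsec:proof of main}), and then, in the pressure proof of subsection 5.4, invoking the refined whole-space operator $\CB_1'$ of Remark \ref{rem:Rbdd RN} — the $\CR$-bound \eqref{rem:rbdd} controls $\nabla\bQ_1$, $\nabla^3\bQ_1$, and hence the traces feeding the boundary correction, by $\|(\bff,\nabla\bG)\|_{L_q}$ alone (the factor involving $\bG$ in $L_q$ is rewritten as $\CF^{-1}[i\xi_\ell(\lambda+|\xi|^2+a)^{-1}|\xi|^{-2}\widehat{\pd_\ell G}_{jk}]$). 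Without this decomposition and refinement, the $\|\bG\|_{L_q}$ contribution cannot be eliminated, so your proof as written does not establish \eqref{est:p}.

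Two smaller remarks. First, the paper constructs $\fp$ by solving the weak Dirichlet--Neumann problem \eqref{DN}, using $\dv\bu=0$ and $u_N|_{x_N=0}=0$ (from $h_N=0$) to drop $\lambda\bu$ from the pressure data — you read off $\nabla\fp$ from the momentum equation instead, which determines $\nabla\fp$ but presupposes that $\fp\in\widehat H^1_q(\R^N_+)$ exists; the variational route provides this. Second, the substitution trick you propose is, in fact, unnecessary in the paper's framework: $\CT_\lambda$ already includes a plain $\nabla$ component, so the $\CR$-bound of Theorem \ref{thm:Rbdd H} controls $\|\nabla\bQ\|_{L_q}$ directly and uniformly in $\lambda$ — the difficulty was never $\nabla\bQ$ but the $\bG$ dependence.
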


In particular, the following resolvent estimate holds in the homogeneous Sobolev spaces for \eqref{r0} with $(\bh, \bH) = (0, 0)$. 
\begin{cor}\label{cor:homo}
Let $1 < q < \infty$ and
$\epsilon \in (\epsilon_0, \pi/2)$ with $\tan \epsilon_0 \ge |\beta|/\sqrt 2$, and let $c_0$ is a small constant depending on $\epsilon$, $\beta$, and $a$
chosen in Lemma \ref{lem:zl} below. 
Then for any $\lambda \in \Sigma_{\epsilon, c_0}$, $\bff \in L_q(\R^N_+)^N$, and $\bG \in \dot H^1_q(\R^N_+; \bS_0)$,
there is a unique solution $(\bu, \bQ, \fp)$ for \eqref{r5}, unique up to additive constant on $\fp$, 
with $\bu \in H^2_q(\R^N_+)^N$, $\bQ \in H^3_q(\R^N_+; \bS_0)$, 
$\fp \in \widehat H^1_{q}(\R^N_+)$, and
\begin{align*}
&\|(|\lambda|, |\lambda|^{1/2} \nabla, \nabla^2)(\bu, \bQ)\|_{L_q(\R^N_+) \times \dot H^1_q(\R^N_+)}
+\|\nabla \fp\|_{L_q(\R^N_+)}
\le C \|(\bff, \nabla \bG)\|_{L_q(\R^N_+)},
\end{align*}
where $C$ is a constant depending on $a$, $\beta$, $N$, $\epsilon$, and $q$.
\end{cor}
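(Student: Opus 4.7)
The plan is to derive Corollary \ref{cor:homo} from Corollary \ref{cor:resolvent} through a sharpened \emph{a priori} estimate—one that removes the $\|\bG\|_{L_q}$ term from the right-hand side of \eqref{est:uq}--\eqref{est:p}—combined with a density argument.

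To obtain the sharpened estimate, I would split $\bQ = \bQ_1 + \bQ_2$, where $\bQ_1$ absorbs the $\bG$-data through the pure resolvent problem
\[
(\lambda + a - \Delta)\bQ_1 = \bG \quad \text{in } \R^N_+, \qquad \pd_N \bQ_1 = 0 \quad \text{on } \R^N_0.
\]
Since $a > 0$ and $\lambda \in \Sigma_{\epsilon, c_0}$, this Neumann problem is uniquely solvable for $\bG \in H^1_q$, with the standard resolvent bound $\|(|\lambda|, |\lambda|^{1/2}\nabla, \nabla^2)\bQ_1\|_{L_q} \le C\|\bG\|_{L_q}$. Differentiating componentwise, each $\pd_j \bQ_1$ with $j<N$ again satisfies a Neumann problem while $\pd_N \bQ_1$ satisfies a Dirichlet problem, both with data $\pd_j \bG$; the same resolvent theory therefore yields
\[
\|(|\lambda|, |\lambda|^{1/2}\nabla, \nabla^2)\nabla \bQ_1\|_{L_q(\R^N_+)} \le C\|\nabla \bG\|_{L_q(\R^N_+)},
\]
which in particular controls $|\lambda|\,\|\nabla \bQ_1\|_{L_q}$ purely by $\|\nabla \bG\|_{L_q}$.

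Using the identity $\Delta \bQ_1 - a \bQ_1 = \lambda \bQ_1 - \bG$, the first equation of \eqref{r0} rewrites, for $\bQ_2 := \bQ - \bQ_1$, as a system of the original form with modified forcing $\tilde\bff := \bff + \beta \DV \bG - \beta \lambda \DV \bQ_1$, with $\bG$-data in the second equation replaced by $0$, and with $(\bh, \bH) = (0,0)$. The previous step gives $\|\tilde\bff\|_{L_q} \le C(\|\bff\|_{L_q} + \|\nabla \bG\|_{L_q})$. Corollary \ref{cor:resolvent} applied to $(\bu, \bQ_2, \fp)$ then yields
\[
\|(|\lambda|, |\lambda|^{1/2}\nabla, \nabla^2)(\bu, \bQ_2)\|_{L_q(\R^N_+) \times H^1_q(\R^N_+)} + \|\nabla \fp\|_{L_q(\R^N_+)} \le C\|\tilde\bff\|_{L_q(\R^N_+)},
\]
and recombining with the $\bQ_1$-estimate—noting that only $\nabla \bQ_2$-type controls are required on the left, which are majorized by the $H^1_q$-norm—produces the refined homogeneous estimate.

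With the refined estimate established for $\bG \in H^1_q$, I would approximate $\bG \in \dot H^1_q(\R^N_+; \bS_0)$ by $\bG_n \in H^1_q$ with $\nabla \bG_n \to \nabla \bG$ in $L_q$ (via cutoff and mollification). Applying the refined estimate to the differences shows that the corresponding solutions $(\bu_n, \bQ_n, \fp_n)$ from Corollary \ref{cor:resolvent} form Cauchy sequences in the homogeneous norm, and the limit yields a solution of \eqref{r0} with $(\bh, \bH) = (0,0)$ satisfying the claimed bound; uniqueness (with $\fp$ determined up to a constant) follows by applying the same estimate to the difference of two candidate solutions. The main obstacle I anticipate is the careful justification of the differentiated boundary conditions for $\pd_j \bQ_1$ and the clean transfer of the scalar resolvent estimate to the gradient level, together with checking that $\tilde\bff$ belongs to $L_q(\R^N_+)^N$ with the quantitative bound stated rather than merely as a distribution.
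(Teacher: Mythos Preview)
Your approach is correct but takes a genuinely different route from the paper. The paper does not split $\bQ = \bQ_1 + \bQ_2$ or run a density argument; instead it returns to the explicit solution formulas and rewrites the whole-space solution operator for $\bQ$ so that it depends only on $\nabla\bG$ rather than on $\bG$ itself (Remark~\ref{rem:Rbdd RN}, via the Fourier identity $\widehat G = -\sum_{\ell} i\xi_\ell\,|\xi|^{-2}\,\widehat{\pd_\ell G}$). This modified operator $\CB_1'(\lambda)$ is then carried through the half-space construction of Section~5, producing $\CR$-bounded solution operators directly in the homogeneous setting (Theorem~\ref{thm:homo}); Corollary~\ref{cor:homo} then follows from Remark~\ref{rem:def of rbdd}. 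Your route is more modular---it treats Corollary~\ref{cor:resolvent} and the scalar Neumann/Dirichlet heat-resolvent estimates as black boxes and avoids reopening the Fourier-multiplier analysis---at the price of a density step and the need to check the differentiated boundary conditions for $\bQ_1$. The paper's route, by contrast, yields the stronger $\CR$-boundedness conclusion (not just the resolvent estimate) and requires no approximation. Both arguments lead to the same a~priori bound; your observation that $\Delta\bQ_1 - a\bQ_1 = \lambda\bQ_1 - \bG$ converts the coupling term into $\beta\,\DV\bG - \beta\lambda\,\DV\bQ_1$ (with $|\lambda|\,\|\nabla\bQ_1\|_{L_q}\le C\|\nabla\bG\|_{L_q}$) is the key reduction, and it is sound.
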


\section{The whole-space case}
Let $\epsilon \in (\epsilon_0, \pi/2)$ with $\tan \epsilon_0 \ge |\beta|/\sqrt 2$.
As stated by \cite[Theorem 2.3]{MS}, there exist $\CR$-bounded solution operator families for the following problem:
\begin{equation}\label{r RN}
\left\{
\begin{aligned}
&\lambda \bu -\Delta \bu + \nabla \fp +\beta \DV \left(\Delta \bQ - a \bQ \right) = \bff,
\enskip \dv \bu = 0
& \quad&\text{in $\R^N$}, \\
&\lambda \bQ - \beta \bD(\bu) -\Delta \bQ + a\bQ
=\bG & \quad&\text{in $\R^N$}
\end{aligned}
\right.
\end{equation}
for $\lambda \in \Sigma_\epsilon$ with $|\lambda| \ge 1$.
Now we extend the result in the case that $\lambda \in \Sigma_{\epsilon, 1}$.
According to \cite{MS}, the characteristic equation for the velocity $P_2(\xi, \lambda)$ is written by
\begin{align*}
P_2(\xi, \lambda) &=(\lambda+|\xi|^2)(\lambda+|\xi|^2+a) + \frac{\beta^2}{2}(|\xi|^4+a|\xi|^2)\\
&=\lambda^2+2\left(|\xi|^2+\frac{a}{2}\right)\lambda+\left(1+\frac{\beta^2}{2}\right)(|\xi|^4+a|\xi|^2)\\
&= (\lambda - \lambda_+(|\xi|))(\lambda - \lambda_-(|\xi|)),
\end{align*}
where
\[
\lambda_\pm(|\xi|)=-\left(|\xi|^2+\frac{a}{2}\right) \pm \sqrt{\frac{a^2}{4} -a\frac{\beta^2}{2} |\xi|^2 -\frac{\beta^2}{2}|\xi|^4}
\]
which has the following expansions :
\begin{equation}\label{p2 small}
\left\{
\begin{aligned}
&\lambda_+(|\xi|) = -\left(1+\frac{\beta^2}{2}\right)|\xi|^2 + O(|\xi|^4), \\
&\lambda_-(|\xi|) = -\left(1-\frac{\beta^2}{2}\right)|\xi|^2 - a + O(|\xi|^4)  \enskip \text{as } |\xi| \to 0.
\end{aligned}
\right.
\end{equation}
\begin{equation}\label{p2 high}
\lambda_\pm(|\xi|)
= \left(- 1 \pm i\frac{|\beta|}{\sqrt 2}\right)|\xi|^2 +O(1) \enskip \text{as } |\xi| \to \infty.
\end{equation}
The lower bound of $P_2(\xi, \lambda)$ is obtained by the following lemma.
\begin{lem}[e.g. \cite{S} Lemma 3.5.2]\label{spectrum}
Let $0 < \epsilon < \pi/2$.
Then for any $\lambda \in \Sigma_{\epsilon}$
and
$\alpha \geq 0$, the following estimate holds.
\[
|\lambda + \alpha| \geq \left(\sin \frac{\epsilon}{2}\right) (|\lambda| + \alpha).
\]
\end{lem}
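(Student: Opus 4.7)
The plan is to square both sides and parametrise $\lambda \in \Sigma_\epsilon$ in polar form $\lambda = r e^{i\theta}$ with $r = |\lambda| > 0$ and $|\theta| \le \pi - \epsilon$. This reduces the claim to a purely algebraic inequality in the nonnegative scalars $r, \alpha$ and the angular parameter $\theta$. Using $|\lambda + \alpha|^2 = r^2 + 2r\alpha\cos\theta + \alpha^2$ and splitting $1 = \sin^2(\epsilon/2) + \cos^2(\epsilon/2)$, I expect the target estimate to be equivalent to
\[
\cos^2(\epsilon/2)\,(r^2 + \alpha^2) + 2 r\alpha\bigl(\cos\theta - \sin^2(\epsilon/2)\bigr) \ge 0.
\]

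Next I would replace $\cos\theta$ by its minimum on the admissible range. Since $\cos$ is decreasing on $[0, \pi]$ and $|\theta| \le \pi - \epsilon$, one has $\cos\theta \ge -\cos\epsilon$. The half-angle identity $-\cos\epsilon = 2\sin^2(\epsilon/2) - 1$ then yields
\[
\cos\theta - \sin^2(\epsilon/2) \ge \sin^2(\epsilon/2) - 1 = -\cos^2(\epsilon/2),
\]
so the left-hand side of the previous display is bounded below by $\cos^2(\epsilon/2)(r^2 + \alpha^2) - 2r\alpha\cos^2(\epsilon/2) = \cos^2(\epsilon/2)(r-\alpha)^2 \ge 0$, which is precisely the desired bound.

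There is essentially no technical obstacle; the only insight is that the half-angle identity $1 + \cos\epsilon = 2\cos^2(\epsilon/2)$ is exactly what converts the worst-case angular lower bound into a perfect square. The same computation also confirms that the constant $\sin(\epsilon/2)$ is sharp, with equality attained in the limit $\theta \to \pm(\pi - \epsilon)$ and $r = \alpha$.
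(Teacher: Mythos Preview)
Your argument is correct. The polar parametrisation, the reduction to the quadratic inequality, the use of $\cos\theta \ge -\cos\epsilon$ on the admissible angular range, and the half-angle identity $-\cos\epsilon - \sin^2(\epsilon/2) = -\cos^2(\epsilon/2)$ all check out, yielding the nonnegative perfect square $\cos^2(\epsilon/2)(r-\alpha)^2$. One cosmetic point: the sector $\Sigma_\epsilon$ is defined with strict inequality $|\arg\lambda| < \pi-\epsilon$, so your ``$|\theta|\le \pi-\epsilon$'' should be strict, but this does not affect the argument since you only use the resulting bound $\cos\theta \ge -\cos\epsilon$, which holds a fortiori.

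As for comparison with the paper: the paper does not actually supply a proof of this lemma. It is stated with a citation to \cite{S}, Lemma~3.5.2, and used as a black box throughout. Your self-contained proof is therefore a welcome addition rather than a duplication, and the standard argument in the cited reference proceeds along essentially the same lines as yours.
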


\begin{lem}\label{lem:spectrum2}
Let $a>0$. Let $\epsilon \in (\epsilon_0, \pi/2)$ with $\tan \epsilon_0 \ge |\beta|/\sqrt 2$.
Then for any $(\xi, \lambda) \in \R^N \setminus\{0\} \times \Sigma_\epsilon$,
we have     
\begin{equation}\label{spectrum2}
|P_2(\xi, \lambda)| \geq C_{\epsilon, \beta, a} (|\lambda|^{1/2}+ |\xi|)^2 (|\lambda|^{1/2}+1+ |\xi|)^2
\end{equation}
with some constant $C_{\epsilon, \beta, a}$ independent of $\xi$ and $\lambda$.
\end{lem}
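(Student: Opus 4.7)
The plan is to exploit the factorization $P_2(\xi,\lambda)=(\lambda-\lambda_+(|\xi|))(\lambda-\lambda_-(|\xi|))$ and to lower-bound each factor separately. Vieta's formulas read
\[
\lambda_++\lambda_-=-(2|\xi|^2+a),\qquad \lambda_+\lambda_-=\Big(1+\tfrac{\beta^2}{2}\Big)|\xi|^2(|\xi|^2+a),
\]
and, writing $s=|\xi|^2$, the discriminant $\Delta(s)=\tfrac{a^2}{4}-\tfrac{a\beta^2}{2}s-\tfrac{\beta^2}{2}s^2$ is strictly decreasing in $s\ge 0$ from $a^2/4$, so it has a unique positive root $s_0$. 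I split into the two regions $s\le s_0$ and $s>s_0$.

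For $s\le s_0$, both roots $\lambda_\pm$ are negative real and $|\xi|\le\sqrt{s_0}$ is bounded. Lemma \ref{spectrum} directly yields $|\lambda-\lambda_\pm|\ge \sin(\epsilon/2)(|\lambda|+|\lambda_\pm|)$, while Vieta gives
\[
|\lambda_+|\ge \frac{(1+\beta^2/2)|\xi|^2(|\xi|^2+a)}{2|\xi|^2+a}\ge c_{a,\beta,s_0}|\xi|^2,\qquad |\lambda_-|\ge |\xi|^2+\tfrac{a}{2}\ge \tfrac{a}{2}.
\]
Multiplying the two bounds yields $|P_2|\ge c(|\lambda|+|\xi|^2)(|\lambda|+1)$, which on this bounded range of $|\xi|$ is comparable to the right-hand side of \eqref{spectrum2}.

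For $s>s_0$, the roots form a complex-conjugate pair $\lambda_\pm=-(s+a/2)\pm i\sqrt{|\Delta(s)|}$ with negative real part. Using the identity $s(s+a)=(s+a/2)^2-a^2/4$ one computes
\[
\frac{|\Delta(s)|}{(s+a/2)^2}=\frac{\beta^2}{2}-\frac{a^2(\beta^2+2)}{8(s+a/2)^2}<\frac{\beta^2}{2}\le \tan^2\epsilon_0,
\]
so $\tan(\pi-|\arg\lambda_\pm|)<\tan\epsilon_0$ uniformly in $s$; combined with $|\arg\lambda|<\pi-\epsilon$ this produces an angular gap of at least $\epsilon-\epsilon_0>0$ between $\lambda$ and each $\lambda_\pm$. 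A law-of-cosines argument then delivers $|\lambda-\lambda_\pm|\ge c_{\epsilon,\epsilon_0}(|\lambda|+|\lambda_\pm|)$, and since $|\lambda_\pm|^2=(1+\beta^2/2)|\xi|^2(|\xi|^2+a)\ge(1+\beta^2/2)|\xi|^4$ one has $|\lambda_\pm|\ge c|\xi|^2$, hence $|P_2|\ge c''(|\lambda|+|\xi|^2)^2$. Because $|\xi|\ge\sqrt{s_0}$ forces $1+|\xi|\le C_{s_0}|\xi|$, this expression dominates $(|\lambda|^{1/2}+|\xi|)^2(|\lambda|^{1/2}+1+|\xi|)^2$ and the bound follows.

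The main obstacle is the uniform angular estimate in the second region: one must ensure that $\lambda_\pm(|\xi|)$ remains inside the closed sector $\{\mu\in\C\setminus\{0\}:|\arg\mu|\ge\pi-\epsilon_0\}$ for every $s>s_0$, and it is precisely here that the hypothesis $\tan\epsilon_0\ge|\beta|/\sqrt{2}$ enters. Everything else is routine polynomial manipulation combined with two appeals to the sector-type estimate of Lemma \ref{spectrum}.
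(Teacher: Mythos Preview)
Your proof is correct and follows a genuinely different route from the paper's. The paper splits into three ranges of $|\xi|$: for $|\xi|^2\le r$ small it uses the Taylor expansions \eqref{p2 small} together with Lemma~\ref{spectrum}; for $|\xi|^2\ge R$ large it invokes \cite[Lemma~2.8]{MS}; and on the compact shell $r\le|\xi|^2\le R$ it splits once more on $|\lambda|$, using an algebraic estimate for large $|\lambda|$ and a bare nonvanishing/compactness argument for bounded $|\lambda|$. By contrast, you split according to the sign of the discriminant $\Delta(s)$ and bound each linear factor $\lambda-\lambda_\pm$ directly: Lemma~\ref{spectrum} handles the real-root region, and in the complex-root region you locate $\lambda_\pm$ in the closed sector $\{|\arg\mu|\ge\pi-\epsilon_0\}$ (this is precisely where $\tan\epsilon_0\ge|\beta|/\sqrt2$ enters) and exploit the angular gap $\epsilon-\epsilon_0>0$ via a law-of-cosines bound. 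Your argument is more self-contained (no appeal to \cite{MS}, no compactness step) and makes the role of the hypothesis on $\epsilon_0$ transparent, whereas the paper's decomposition matches more closely the frequency splittings used elsewhere in the article. Both approaches ultimately yield the same quadratic lower bound $c(|\lambda|+|\xi|^2)(|\lambda|+1+|\xi|^2)$, which is equivalent to \eqref{spectrum2}.
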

\begin{proof}
First, we consider the case that $|\xi|^2 \le r$ with small $r$.
Choosing $r$ so small that $(1+\beta^2/2)|\xi|^2-|O(|\xi|^4)| \ge C_1|\xi|^2$ and $a-\beta^2/2|\xi|^2-|O(|\xi|^4)| \ge C_2$ with some positive constants $C_1$ and $C_2$,
\eqref{p2 small} and Lemma \ref{spectrum} imply that
\begin{equation}\label{p2}
|P_2(\xi, \lambda)| \ge C_{\epsilon, \beta}(|\lambda|^{1/2}+|\xi|)^2 (|\lambda|^{1/2}+1+|\xi|)^2. 
\end{equation}
Second, the case that $|\xi|^2 \ge R$ with large $R$ depending on $\epsilon$ and $\beta$ can be treated by the same proof of \cite[Lemma 2.8]{MS}, then we have
\[
|P_2(\xi, \lambda)| \ge C_{\epsilon, \beta}(|\lambda|^{1/2}+|\xi|)^4
\ge C_{\epsilon, \beta}(|\lambda|^{1/2}+|\xi|)^2 (|\lambda|^{1/2}+1+|\xi|)^2. 
\]
Finally, we consider the case that $r\le |\xi|^2 \le R$.
If $|\lambda| \ge \lambda_0$ with large $\lambda_0$ determined below,
Lemma \ref{spectrum} implies that
\[
|P_2(\xi, \lambda)| \ge (\sin \epsilon/2)^2(|\lambda|+|\xi|^2)(|\lambda|+|\xi|^2+a)-\frac{\beta^2}{2}(|\xi|^4+a|\xi|^2).
\]
Note that 
\[
(|\xi|^4+a|\xi|^2) \le (R+a)|\xi|^2 \le \frac{R+a}{\lambda_0}|\lambda||\xi|^2\le \frac{R+a}{\lambda_0}(|\lambda|+|\xi|^2)(|\lambda|+|\xi|^2+a)
\]
for $|\lambda| \ge \lambda_0$.
Choosing $\lambda_0$ such that $(\sin \epsilon/2)^2 \ge \beta^2(R+a)/2\lambda_0$, we have
\[
|P_2(\xi, \lambda)| \ge C_{\epsilon, \beta, a}(|\lambda|+|\xi|^2) (|\lambda|+1+|\xi|^2),
\]
which implies \eqref{spectrum2}.
For $\lambda \in \overline{\Sigma}_\epsilon$ with $|\lambda| \le \lambda_0$ and $\xi \in \R^N \setminus\{0\}$,
$\lambda-\lambda_\pm(|\xi|) \neq 0$, namely,
$P_2(\xi, \lambda) \neq 0$.
Thus, there exists a constant $c>0$ such that 
\[
|P_2(\xi, \lambda)| \ge c \ge C_{\epsilon, \beta, a}(|\lambda|+|\xi|^2) (|\lambda|+1+|\xi|^2),
\]
which completes the proof of Lemma \ref{lem:spectrum2}.
\end{proof}

Repeating the proof of \cite[Theorem 2.3]{MS} with \eqref{spectrum2}, we have the existence of $\CR$-bounded solution operator families for $\lambda \in \Sigma_\epsilon$.
\begin{thm}\label{thm:Rbdd RN}
Let $N \ge 2$, $a>0$, and $1 < q < \infty$.
Then
for any $\epsilon \in (\epsilon_0, \pi/2)$ with $\tan \epsilon_0 \ge |\beta|/\sqrt 2$,
there exist %a positive constant $\lambda_0=\lambda_0(\sigma) \geq 1$
%and 
operator families 
\begin{align*}
&\CA_1 (\lambda) \in 
{\rm Hol} (\Sigma_{\epsilon}, 
\CL(L_q(\R^N)^N \times L_q(\R^N; \R^{N^3}), H^2_q(\R^N)^N))\\
&\CB_1 (\lambda) \in 
{\rm Hol} (\Sigma_{\epsilon}, 
\CL(L_q(\R^N)^N \times L_q(\R^N; \R^{N^3})\times L_q(\R^N; \bS_0), H^3_q(\R^N; \bS_0)))\\
&\CC_1 (\lambda) \in 
{\rm Hol} (\Sigma_{\epsilon}, 
\CL(L_q(\R^N)^N \times L_q(\R^N; \R^{N^3}), \widehat H^1_q(\R^N)))
\end{align*}
such that 
for any $\lambda = \gamma + i\tau \in \Sigma_{\epsilon}$
, $\bff \in L_q(\R^N)^N$, and $\bG \in H^1_q(\R^N; \bS_0)$, 
\begin{equation*}
\bu = \CA_1 (\lambda) (\bff, \nabla \bG), \quad
\bQ = \CB_1 (\lambda) (\bff, \nabla \bG, \bG), \quad
\fp = \CC_1(\lambda) (\bff, \nabla \bG)
\end{equation*}
are unique solutions of problem \eqref{r RN}
and 
\begin{align*}
&\CR_{\CL(L_q(\R^N) \times L_q(\R^N; \R^{N^3}), A_q(\R^N))}
(\{(\tau \pd_\tau)^n \CS_\lambda \CA_1 (\lambda) \mid 
\lambda \in \Sigma_{\epsilon}\}) 
\leq r, \\
&\CR_{\CL(L_q(\R^N) \times L_q(\R^N; \R^{N^3}) \times L_q(\R^N; \bS_0), B_q(\R^N))}
(\{(\tau \pd_\tau)^n \CT_\lambda \CB_1 (\lambda) \mid 
\lambda \in \Sigma_{\epsilon}\}) 
\leq r,\\
&\CR_{\CL(L_q(\R^N) \times L_q(\R^N; \R^{N^3}), L_q(\R^N)^N)}
(\{(\tau \pd_\tau)^n \nabla \CC_1 (\lambda) \mid 
\lambda \in \Sigma_{\epsilon}\}) 
\leq r
\end{align*}
for $n = 0, 1$,
where 
$\CS_\lambda$ and $\CT_\lambda$ are defined in Theorem \ref{thm:Rbdd H},
%$\CT_\lambda = (\nabla^2, \lambda^{1/2}\nabla, \lambda, \nabla)$
%$\CS_\lambda \bQ = (\nabla^3 \bQ, \lambda^{1/2}\nabla^2 \bQ, \lambda \nabla \bQ)$,
$A_q(\BR^N) = L_q(\BR^N)^{N^3 + N^2+N}$,
$B_q(\BR^N) = H^1_q(\BR^N; \BR^{N^4}) \times H^1_q(\BR^N; \BR^{N^3}) \times  H^1_q(\R^N; \bS_0) \times L_q(\R^N; \R^{N^3}) \times L_q(\R^N; \bS_0)$,
and $r$ is a constant independent of $\lambda$.
\end{thm}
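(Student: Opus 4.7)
The strategy is to repeat, essentially verbatim at the structural level, the argument of \cite[Theorem 2.3]{MS}, but to replace every step where $|\lambda|\ge 1$ was used to bound the denominator $P_2(\xi,\lambda)$ by a bound based on the improved estimate \eqref{spectrum2} of Lemma \ref{lem:spectrum2}. First, I would apply the Fourier transform to \eqref{r RN}. Taking the divergence of the momentum equation together with $\dv\bu=0$ eliminates $\hat\fp$ and yields a formula $\hat\fp = |\xi|^{-2} M_\fp(\xi,\lambda)(\hat\bff,\nabla\hat\bG)$. The second equation is algebraic in Fourier space and gives
\[
\hat\bQ = (\lambda+|\xi|^2+a)^{-1}\bigl(\beta\,\widehat{\bD(\bu)} + \hat\bG\bigr).
\]
Substituting back into the momentum equation eliminates $\hat\bQ$, and elementary manipulations produce solution formulas of the form
\[
\hat u_j = \frac{M_j(\xi,\lambda)}{P_2(\xi,\lambda)}(\hat\bff,\nabla\hat\bG),
\qquad
\hat Q_{jk} = \frac{N_{jk}(\xi,\lambda)}{(\lambda+|\xi|^2+a)\,P_2(\xi,\lambda)}\,(\hat\bff,\nabla\hat\bG,\hat\bG),
\]
with $M_j$, $N_{jk}$ polynomials in $\xi$ and $\lambda$ of degree bounded independently of $\lambda$. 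These formulas define the candidate operator families $\CA_1(\lambda)$, $\CB_1(\lambda)$, $\CC_1(\lambda)$.

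Next, I would verify the Mikhlin-type symbol estimates. Writing $m(\xi,\lambda)$ for any of the components of the symbol of $\CS_\lambda\CA_1(\lambda)$, $\CT_\lambda\CB_1(\lambda)$, or $\nabla\CC_1(\lambda)$, the task reduces to checking
\[
\bigl|(\tau\pd_\tau)^n D_\xi^\alpha\, m(\xi,\lambda)\bigr| \le C_{n,\alpha}\,|\xi|^{-|\alpha|}
\]
uniformly for $\lambda=\gamma+i\tau\in\Sigma_\epsilon$, $\xi\in\R^N\setminus\{0\}$, $n=0,1$, $\alpha\in\N_0^N$. For every factor of $\nabla^2$, $\lambda^{1/2}\nabla$, or $\lambda$ appearing in $\CS_\lambda$ or $\CT_\lambda$, the worst-growth numerator in the symbol is proportional either to $|\xi|^2$, $|\lambda|^{1/2}|\xi|$, or $|\lambda|$, each of which is controlled by the two-scale lower bound
\[
|P_2(\xi,\lambda)|\gtrsim(|\lambda|^{1/2}+|\xi|)^2(|\lambda|^{1/2}+1+|\xi|)^2.
\]
The first factor in \eqref{spectrum2} absorbs the homogeneous weights $|\xi|$, $|\lambda|^{1/2}$ while the second factor absorbs the high-order weights $|\xi|^2$, $|\lambda|$; consequently each relevant quotient is of order zero jointly in $\xi$ and $\lambda$. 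Derivatives in $\xi$ and the multiplier $\tau\pd_\tau$ applied to such a quotient yield, via Leibniz and chain rules, polynomially many terms of the same homogeneous structure and are handled identically. This is precisely the step where \cite{MS} used the high-frequency bound of \cite[Lemma 2.8]{MS}; replacing it with the sharper lower bound of Lemma \ref{lem:spectrum2} extends all symbol estimates to the full sector $\Sigma_\epsilon$.

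With the pointwise Mikhlin estimates in hand, $\CR$-boundedness follows from the operator-valued Fourier multiplier theorem (Weis' theorem in its $\CR$-bounded Mikhlin variant), exactly as in the Shibata $\CR$-calculus framework used in \cite{MS}. Uniqueness is proved by applying the Fourier transform to a solution with vanishing data: the algebraic reduction forces $P_2(\xi,\lambda)\hat u_j(\xi)=0$, and since $P_2(\xi,\lambda)\ne 0$ for all $(\xi,\lambda)\in(\R^N\setminus\{0\})\times\Sigma_\epsilon$ by \eqref{spectrum2}, one obtains $\hat\bu=0$, hence $\hat\bQ=0$ and, up to a constant, $\hat\fp=0$. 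The main obstacle in carrying out this plan is not the multiplier machinery itself, but the careful bookkeeping of the $\lambda$-weights that appear in the target spaces $A_q(\R^N)$ and $B_q(\R^N)$: one must track each weight through the algebraic solution formula and verify that the bound \eqref{spectrum2} is strong enough — and in particular that its behavior as $\lambda\to 0$ matches the power of $|\xi|$ in the corresponding numerator — so that no spurious singularity at $\lambda=0$ appears in any of the multipliers $(\tau\pd_\tau)^n\CS_\lambda\CA_1(\lambda)$, $(\tau\pd_\tau)^n\CT_\lambda\CB_1(\lambda)$, $(\tau\pd_\tau)^n\nabla\CC_1(\lambda)$.
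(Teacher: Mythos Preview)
Your proposal is correct and matches the paper's approach exactly: the paper's proof of this theorem is a single sentence stating that one repeats the proof of \cite[Theorem~2.3]{MS} using the improved lower bound \eqref{spectrum2} in place of the $|\lambda|\ge 1$ estimate. Your elaboration of the Fourier-multiplier structure and the symbol bookkeeping is a faithful unpacking of what that repetition entails.
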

\begin{remark}\label{rem:Rbdd RN}
According to \cite[proof of Theorem 2.3]{MS}, the solution formula for the $(j,k)$ component of the order parameter has the terms:
\begin{equation}\label{form RN}
\begin{aligned}
	&a\CF^{-1} \left[ \frac{\beta^2}{(\lambda+|\xi|^2+a)P_2(\xi, \lambda)}
	\left(i\xi_j\sum^N_{\ell, m=1} i\xi_\ell \widehat{G}_{k \ell}+i\xi_k\sum^N_{\ell, m=1} i\xi_\ell \widehat{G}_{j \ell}\right)\right]\\
	&+ \CF^{-1} \left[\frac{1}{\lambda + |\xi|^2 + a} \widehat{G}_{jk} \right].
\end{aligned}
\end{equation}
In view of the estimates for $(\lambda + |\xi|^2 +a)$ and $P_2(\xi, \lambda)$, $\lambda \CB_1(\lambda)$ and $\lambda^{1/2}  \CB_1(\lambda)$ are defined for $\bG \in H^1_q(\R^N; \bS_0)$, then the $\CR$-boundedness for these operators can be proved in $L_q(\R^N; \bS_0)$.

On the other hand, since \eqref{form RN} can be written as 
\begin{align*}
	&a\CF^{-1} \left[ \frac{\beta^2}{(\lambda+|\xi|^2+a)P_2(\xi, \lambda)}
	\left(i\xi_j\sum^N_{\ell, m=1} \widehat{\pd_\ell G}_{k \ell}+i\xi_k\sum^N_{\ell, m=1} \widehat{\pd_\ell G}_{j \ell}\right)\right]\\
	&-\sum^N_{\ell=1}\CF^{-1} \left[\frac{i\xi_\ell}{(\lambda + |\xi|^2 + a)|\xi|^2} \widehat{\pd_\ell G}_{jk} \right],
\end{align*}
the solution operator of the order parameter may be defined for $\bG \in \dot H^1_q(\R^N; \bS_0)$,
the $\CR$-boundedness is proved in $\dot H^1_q(\R^N; \bS_0)$.
In more detail, the following result holds: 
there exists %a positive constant $\lambda_0=\lambda_0(\sigma) \geq 1$
%and 
an operator family
\[
\CB_1' (\lambda) \in 
{\rm Hol} (\Sigma_{\epsilon}, 
\CL(L_q(\R^N)^N \times L_q(\R^N; \R^{N^3}), H^3_q(\R^N; \bS_0)))
\]
such that 
for any $\lambda = \gamma + i\tau \in \Sigma_{\epsilon}$
, $\bff \in L_q(\R^N)^N$, and $\bG \in \dot H^1_q(\R^N; \bS_0)$, 
$\bQ = \CB_1' (\lambda) (\bff, \nabla \bG)$
satisfies the problem \eqref{r RN}
and 
\begin{equation}\label{rem:rbdd}
\CR_{\CL(L_q(\R^N) \times L_q(\R^N; \R^{N^3}), B_q'(\R^N))}
(\{(\tau \pd_\tau)^n (\CT_\lambda, 1) \CB_1' (\lambda) \mid 
\lambda \in \Sigma_{\epsilon}\}) 
\leq r
\end{equation}
for $n = 0, 1$,
where $B_q'(\BR^N) = \dot H^1_q(\BR^N; \R^{N^4}) \times \dot H^1_q(\R^N; \R^{N^3}) \times \dot H^1_q(\R^N; \bS_0) \times \dot H^1_q(\R^N; \bS_0) \times \dot H^1_q(\BR^N; \BR^{N^3}) \times \dot H^1_q(\R^N; \bS_0)$.
In particular, it holds that
\[
	\CR_{\CL(L_q(\R^N_+) \times L_q(\R^N_+; \R^{N^3}), \dot H^1_q(\BR^N; \R^{N^4}) \times \dot H^1_q(\R^N; \R^{N^3}) \times \dot H^1_q(\R^N; \bS_0))}
(\{(\tau \pd_\tau)^n \CS_\lambda \CB'_1 (\lambda) \mid 
\lambda \in \Sigma_{\epsilon, c_0}\})
\leq r.
\]
\end{remark}
\section{Lower bound of the Lopatinski determinant in the half space}

Set $A=|\xi'|$, $B_a=\sqrt{\lambda+a+A^2}$, 
and
\begin{equation}\label{root:1}
L_1=L_1(\lambda, \xi')=\sqrt{z_1(\lambda) + A^2}, \enskip L_2=L_2(\lambda, \xi')=\sqrt{z_2(\lambda) + A^2},
\end{equation}
where we have set
\begin{equation}\label{root:2}
\begin{aligned}
&z_1(\lambda)=\frac{\lambda}{1 + \beta^2/2} + \frac a2 +
i\frac{|\beta|/\sqrt 2}{1 + \beta^2/2}\sqrt{\lambda^2 - \frac{a^2(1+\beta^2/2)^2}{2\beta^2}},\\
&z_2(\lambda)=\frac{\lambda}{1 + \beta^2/2} + \frac a2 -
i\frac{|\beta|/\sqrt 2}{1 + \beta^2/2}\sqrt{\lambda^2 - \frac{a^2(1+\beta^2/2)^2}{2\beta^2}}.
\end{aligned}
\end{equation}
As we see in the the proof of Lemma \ref{lem:sol form} below, $-A$, $-B_a$, and $-L_j$ $(j=1, 2)$ are the characteristic roots for the velocity and the order parameter.
Furthermore, the solution formula of the resolvent problem \eqref{r0} with $(\bff, \bG)=(0, 0)$ includes the following terms:
\begin{align*}
\CC_a(\lambda, \xi') &= \frac{1}{\lambda} \left(\CI_1+\frac{\CI_2}{L_2-L_1}\right),\\
\CA_a(\lambda, \xi') &= B_a^3(L_1 + L_2) -A^2 B_a^2 - A^2 L_1 L_2\\
&=B_a(B_a^2-A^2)(L_1+L_2)-A^2(B_a-L_1)(B_a-L_2)
\end{align*}
with
\begin{align*}
\CI_1 & =\beta \frac{A^2}{B_a^2-A^2}\left\{2A^2- \frac{A(B_a^2+A^2)}{B_a}\right\},\\
\CI_2 & =-\beta \frac{L_1(L_2-A)}{B_a^2-L_1^2}\left\{2A^2L_1- \frac{(B_a^2+A^2)(L_1^2+A^2)}{2B_a}\right\}
+\beta \frac{L_2(L_1-A)}{B_a^2-L_2^2}\left\{2A^2L_2- \frac{(B_a^2+A^2)(L_2^2+A^2)}{2B_a}\right\}.
\end{align*}
In this section, we consider the lower bound for $\CC_a$ and $\CA_a$ for $\lambda \in \Sigma_{\epsilon, c_0}$, 
where $c_0$ is a sufficiently small constant determined in Lemma \ref{lem:zl} below.
In particular, we prove 
$|\lambda \CC_a(\lambda, \xi')| \ge C$
with some positive constant $C$, which is the essential estimate to prove the $\CR$-solvability in the case $\lambda$ near $0$.

\subsection{Estimates of $z_j(\lambda)$ and $L_j(\lambda, \xi')$ for $j=1, 2$}\label{zl}
In this subsection, we prove the following lemmas.
\begin{lem}\label{lem:l}
Let $a>0$.
Let $\epsilon \in (\epsilon_0, \pi/2)$ with $\tan \epsilon_0 \ge |\beta|/\sqrt 2$.
Then 
\begin{equation}\label{l neq 0}
L_j (\lambda, \xi')^2 \neq 0
\end{equation}
holds for any $\xi' \in \R^{N-1}$ and $\lambda\in \Sigma_{\epsilon}$. 
\end{lem}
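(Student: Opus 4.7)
The plan is to reduce the non-vanishing of $L_j(\lambda,\xi')^2 = z_j(\lambda) + |\xi'|^2$ to the non-vanishing of the whole-space symbol $P_2$ along the slice $\xi_N=0$, after which Lemma \ref{lem:spectrum2} finishes the argument.

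First I would observe that, by Vieta's formulas applied to \eqref{root:2}, one has
\[
z_1(\lambda) + z_2(\lambda) = \frac{2\lambda}{1 + \beta^2/2} + a, \qquad z_1(\lambda)\,z_2(\lambda) = \frac{\lambda(\lambda + a)}{1 + \beta^2/2},
\]
so $z_1, z_2$ are the two roots (in $z$) of
\[
z^2 - \left(\frac{2\lambda}{1+\beta^2/2}+a\right)z + \frac{\lambda(\lambda+a)}{1+\beta^2/2} = 0.
\]
Rewriting $P_2(\xi,\lambda)$ as a polynomial in $t = |\xi|^2$ yields
\[
P_2(\xi, \lambda) = (1 + \beta^2/2)\,t^2 + \bigl(2\lambda + a(1+\beta^2/2)\bigr)\,t + \lambda(\lambda + a),
\]
whose roots in $t$ are precisely $-z_1$ and $-z_2$. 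Hence the factorisation
\[
P_2(\xi, \lambda) = (1 + \beta^2/2)\bigl(|\xi|^2 + z_1(\lambda)\bigr)\bigl(|\xi|^2 + z_2(\lambda)\bigr)
\]
holds, and evaluating at $\xi = (\xi', 0)$ gives the key equivalence
\[
L_j(\lambda, \xi')^2 = 0 \quad \Longleftrightarrow \quad P_2\bigl((\xi', 0), \lambda\bigr) = 0.
\]

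The proof would then conclude via a short case split. If $\xi' \neq 0$, then $(\xi', 0) \in \R^N \setminus \{0\}$ and Lemma \ref{lem:spectrum2} supplies a positive lower bound on $|P_2((\xi',0),\lambda)|$, ruling out $L_j^2 = 0$. If $\xi' = 0$, the identity reduces to $L_j(\lambda, 0)^2 = z_j(\lambda)$, and it suffices to check $z_1(\lambda)\,z_2(\lambda) \neq 0$; by the Vieta product this amounts to $\lambda(\lambda + a) \neq 0$, which is clear because $\lambda \in \Sigma_\epsilon$ excludes $\lambda = 0$ by definition and excludes $\lambda = -a$ since $-a$ has argument $\pi$, outside the sector $|\arg \lambda| < \pi - \epsilon$. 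I do not anticipate a genuine obstacle here: once the factorisation of $P_2$ in terms of $z_1, z_2$ is recognised, the lemma is essentially a corollary of Lemma \ref{lem:spectrum2}.
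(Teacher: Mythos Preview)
Your proof is correct and takes a genuinely different route from the paper. The paper argues directly: it assumes $z_j(\eta)+A^2=0$, solves for $\eta$ to obtain $\eta = -(A^2+a/2)\pm\sqrt{a^2/4-(\beta^2/2)(A^4+aA^2)}$, and then checks by an explicit angle computation (bounding $\tan\tau$ by $|\beta|/\sqrt{2}$) that such $\eta$ never lies in $\Sigma_\epsilon$. You instead observe the factorisation $P_2((\xi',0),\lambda)=(1+\beta^2/2)\,L_1^2\,L_2^2$ and invoke the lower bound of Lemma~\ref{lem:spectrum2}, handling $\xi'=0$ separately via the Vieta product $z_1 z_2=\lambda(\lambda+a)/(1+\beta^2/2)$.

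Your approach is more economical and makes the link to the whole-space symbol explicit. One caveat worth noting: in the proof of Lemma~\ref{lem:spectrum2}, the compact-parameter case ($r\le|\xi|^2\le R$, $|\lambda|\le\lambda_0$) is disposed of by the bare assertion ``$\lambda-\lambda_\pm(|\xi|)\neq 0$'', which is precisely the spectral fact $\lambda_\pm\notin\Sigma_\epsilon$ that the paper's direct proof of Lemma~\ref{lem:l} re-derives. So your reduction does not create a logical circularity in the paper's order (Lemma~\ref{lem:spectrum2} comes first and that step is standard, cf.\ \cite{MS}), but it does mean the angle computation is not actually bypassed---it is merely relocated. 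The paper's self-contained argument has the virtue of making that computation visible where it is used.
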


\begin{proof}
Assume that $L_j(\eta, \xi')^2 = z_j(\eta) + A^2 = 0$, which implies that 
\[
\left(\frac{\eta}{1+\beta^2/2} + \frac{a}{2} + A^2 \right)^2 
= \left(\pm i\frac{|\beta|/\sqrt{2}}
{1+\beta^2/2}\sqrt{\eta^2-\frac{a^2(1+\beta^2/2)^2}{2\beta^2}}
\right)^2, 
\]
then
\[
\frac{1}{1+\beta^2/2}\left\{\eta^2 + 2 \left(\frac{a}{2}+A^2\right)\eta
+ (1+\beta^2/2)(A^4+aA^2)\right\}=0. 
\]
Thus, we have
\begin{align*}
\eta = -\left(A^2 + \frac{a}{2}\right) \pm \sqrt{a^2/4- (\beta^2/2)(A^4+aA^2)}
\end{align*}
Assume that $a^2/4- (\beta^2/2)(A^4+aA^2) \geq 0$.
Since $-(A^2 + a/2) \pm \sqrt{a^2/4- (\beta^2/2)(A^4+aA^2)}$
is non-positive real numbers for any $A \geq 0$, we have $\lambda \neq \eta$ for any $\lambda
\in \Sigma_\epsilon$, and so \eqref{l neq 0} holds.

On the other hand, assume that $a^2/4- (\beta^2/2)(A^4+aA^2) < 0$, 
which is $(\beta^2/2)(A^4+aA^2) - a^2/4>0$.
Set 
\[
-\left(A^2 + \frac{a}{2} \right) \pm i\sqrt{\frac{\beta^2}{2}(A^4+aA^2) - \frac{a^2}{4}} = \ell e^{i(\pi-\tau)}
\]
and then
\begin{align*}
\ell^2 &= \left(A^2+ \frac{a}{2}\right)^2 + \frac{\beta^2}{2}(A^4+aA^2)-\frac{a^2}{4}
= \left(1+\frac{\beta^2}{2}\right)(A^4+aA^2),  \\
\tan \tau & = \frac{\sqrt{(\beta^2/2)(A^4+aA^2) - a^2/4}}{A^2+a/2}.
\end{align*}
Note that 
\[
\frac{\beta^2}{2}(A^4+aA^2) -\frac{a^2}{4} 
= \frac{\beta^2}{2}\left(A^2+ \frac{a}{2}\right)^2 - \left(\frac{\beta^2}{2}+1\right)\frac{a^2}{4}
= \frac{\beta^2}{2}\left\{\left(A^2+\frac{a}{2}\right)^2 - \left(1+\frac{2}{\beta^2}\right)\frac{a^2}{4}\right\}.
\]
Set $m = A^2+a/2$ and $n = (1+2/\beta^2)(a^2/4)$, we have
\[
\tan \tau = \frac{|\beta|}{\sqrt{2}}\frac{\sqrt{m^2-n}}{m}
\quad (m \geq a/2).
\] 
Since 
\[
\frac{d}{dm} \frac{\sqrt{m^2-n}}{m}
=\frac{n}{m^2\sqrt{m^2-n}} > 0,
\]
$\tan \tau$ is monotonically increasing as $A \to \infty$, and so
\[
0 \leq \tan \tau \leq \lim_{m\to\infty} \frac{|\beta|}{\sqrt{2}}
\frac{\sqrt{m^2-n}}{m} = \frac{|\beta|}{\sqrt{2}}.
\]
Therefore, we obtain $\lambda \neq \ell e^{i(\pi-\tau)}$ for $\lambda \in \Sigma_\epsilon$, which completes the proof of Lemma \ref{lem:l}.

\end{proof}

\begin{lem}\label{lem:zl}
Let $a>0$.
Let $\epsilon \in (\epsilon_0, \pi/2)$ with $\tan \epsilon_0 \ge |\beta|/\sqrt 2$.
Then there exists a small constant $c_0$ depending on $\epsilon$, $\beta$, and $a$ such that 
\begin{enumerate}
\item\label{z}
\[
\begin{aligned}
c_\beta |\lambda| \le &|z_1(\lambda)| \le C_{\beta, a} |\lambda|\\
c_{\beta, \epsilon, a} (|\lambda|+1) \le &|z_2(\lambda)| \le C_{\beta, a} (|\lambda|+1)
\end{aligned}
\]
\item\label{est:l}
\[
\begin{aligned}
c_{\beta, \epsilon} (|\lambda|+A^2)^{1/2} \le \Re L_1 (\lambda, \xi') \le & |L_1 (\lambda, \xi')| \le C_{\beta, \epsilon}(|\lambda|+A^2)^{1/2}\\
c_{\beta, \epsilon, a} (|\lambda|+1+A^2)^{1/2} \le \Re L_2 (\lambda, \xi') \le & |L_2 (\lambda, \xi')| \le C_{\beta, \epsilon, a}(|\lambda|+1+A^2)^{1/2}
\end{aligned}
\]
\end{enumerate}
for any $\xi' \in \R^{N-1} \setminus \{0\}$ and $\lambda \in \Sigma_{\epsilon, c_0}$,
where $C_{\kappa}$ and $c_{\kappa}$ are positive constants depending on $\kappa$.
\end{lem}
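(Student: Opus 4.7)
The plan is to reduce everything to the elementary symmetric functions of $z_1$ and $z_2$. A direct computation from \eqref{root:2} gives
\[
z_1(\lambda)+z_2(\lambda) = \frac{2\lambda}{1+\beta^2/2}+a, \qquad z_1(\lambda)\, z_2(\lambda) = \frac{\lambda(\lambda+a)}{1+\beta^2/2},
\]
so in particular $z_1(0) = 0$, $z_2(0) = a$ (with the labels fixed by the signs in \eqref{root:2}), and
\[
z_1(\lambda) = \frac{\lambda(\lambda+a)}{(1+\beta^2/2)\, z_2(\lambda)}.
\]
I would then choose $c_0 \in (0, \min(1, a/2)]$, depending on $\beta$, $a$, and $\epsilon$, small enough that two continuity statements hold uniformly for $\lambda \in \Sigma_{\epsilon, c_0}$. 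First, $|z_2(\lambda) - a| \le a/2$, which yields $\Re z_2 \ge a/2$, $|\Im z_2| \le a/2$, and $a/2 \le |z_2| \le 3a/2$. Second, $|z_1(\lambda)/\lambda - (1+\beta^2/2)^{-1}| \le \eta$ for a small $\eta = \eta(\beta,\epsilon)>0$; this follows from the displayed identity together with the first statement, since the right-hand side divided by $\lambda$ tends to $1/(1+\beta^2/2)$ as $\lambda \to 0$.

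Part (1) is then immediate. The bounds for $z_2$ use only $a/2 \le |z_2| \le 3a/2$ and $|\lambda|+1 \in [1,2]$. The bounds for $z_1$ come from
\[
|z_1(\lambda)| = \frac{|\lambda|\,|\lambda+a|}{(1+\beta^2/2)\,|z_2(\lambda)|},
\]
combined with $a/2 \le |\lambda+a| \le 3a/2$ (valid since $c_0 \le a/2$) and the bounds on $|z_2|$.

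For part (2), the guiding principle is that $A^2 \ge 0$ moves $\arg(z_j + A^2)$ toward $0$ relative to $\arg z_j$. The second continuity property places $z_1$ in a sector $\Sigma_{\epsilon'}$ with $\epsilon' = \epsilon - \eta$ (for $\eta$ chosen small enough), because $\arg(z_1/\lambda)$ is small and $\arg\lambda \in (-(\pi-\epsilon),\pi-\epsilon)$. The first property places $z_2$ in $\Sigma_{3\pi/4}$, because $\Re z_2 \ge a/2 \ge |\Im z_2|$. Applying Lemma \ref{spectrum} with $z_j \in \Sigma_{\epsilon_j}$ and $A^2 \ge 0$ gives
\[
|z_j + A^2| \ge \sin(\epsilon_j/2)\bigl(|z_j|+A^2\bigr),
\]
where $\epsilon_1 = \epsilon'$ and $\epsilon_2 = 3\pi/4$. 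Combined with part (1), this yields $|L_j|^2 = |z_j+A^2| \ge c(|\lambda|+\sigma_j+A^2)$ with $\sigma_1 = 0$ and $\sigma_2 = 1$, while the upper bounds $|L_j|^2 \le |z_j|+A^2 \le C(|\lambda|+\sigma_j+A^2)$ are trivial. Finally, taking $L_j$ as the principal square root and using $|\arg(z_j+A^2)| \le \pi-\epsilon_j$, we get $|\arg L_j| \le \pi/2 - \epsilon_j/2$, hence $\Re L_j \ge |L_j|\sin(\epsilon_j/2)$, which completes part (2).

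The main obstacle is making the second continuity statement quantitative and uniform in $\arg\lambda$: one must choose $c_0$ so that $\arg z_1$ stays close to $\arg\lambda$ for every $\lambda$ in the truncated sector $\Sigma_{\epsilon, c_0}$, not merely along a fixed ray. Once that is secured, everything else reduces to the elementary identity for $z_1 z_2$ and Lemma \ref{spectrum}.
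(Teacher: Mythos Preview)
Your argument is correct and follows the same overall arc as the paper: expand $z_j$ near $\lambda=0$, place each $z_j$ in a fixed sector $\Sigma_{\epsilon_j}$, then invoke Lemma~\ref{spectrum} for $|z_j+A^2|$ and the principal-square-root bound $\Re L_j\ge|L_j|\sin(\epsilon_j/2)$. Two differences are worth noting. For part~(1) the paper expands the square root in \eqref{root:2} directly to obtain $z_1=\lambda/(1+\beta^2/2)+O(\lambda^2)$ and $z_2=a+\lambda/(1+\beta^2/2)+O(\lambda^2)$; your identity $z_1z_2=\lambda(\lambda+a)/(1+\beta^2/2)$ is a cleaner way to transfer the $|z_2|$-bounds to $|z_1|$ without touching the square root. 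For part~(2) the paper obtains the sector property $z_j\in\Sigma_\delta$ by invoking Lemma~\ref{lem:l} (since $z_j(\lambda)\neq -A^2$ for all $A\ge 0$, the value $z_j(\lambda)$ avoids $(-\infty,0]$), whereas you read it directly off the expansions already established in part~(1). Your route is more self-contained and makes the dependence of the sector angle on $\epsilon,\beta,a$ explicit, which is precisely the uniformity issue you flag as the main obstacle; the paper's route has the minor advantage that Lemma~\ref{lem:l} is valid on all of $\Sigma_\epsilon$, though that extra range is not used in this lemma.
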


\begin{proof}
First, we prove the first assertion.
Assume that $\lambda \in \Sigma_\epsilon$ satisfies $|\lambda|\le c_1a(1+\beta^2/2)/(\sqrt{2}|\beta|)$, where $c_1$ is a sufficiently small constant.
This assumption implies that $\Bigl|\lambda^2/\dfrac{a^2(1+\beta^2/2)^2}{2\beta^2}\Bigr|\le c_1^2$.
In this case,
\begin{equation}\label{ez} 
\begin{aligned}
z_1(\lambda) & = \frac{\lambda}{1+\beta^2/2} + \frac{a}{2}
-\frac{|\beta|/\sqrt{2}}{1+\beta^2/2}\sqrt{\frac{a^2(1+\beta^2/2)^2}{2\beta^2}}
\sqrt{1-\lambda^2/\frac{a^2(1+\beta^2/2)^2}{2\beta^2}}\\
& = \frac{\lambda}{1+\beta^2/2} + O(\lambda^2), \\
z_2(\lambda) & = \frac{\lambda}{1+\beta^2/2} + \frac{a}{2}
+\frac{|\beta|/\sqrt{2}}{1+\beta^2/2}\sqrt{\frac{a^2(1+\beta^2/2)^2}{2\beta^2}}
\sqrt{1-\lambda^2/\frac{a^2(1+\beta^2/2)^2}{2\beta^2}}\\
& = \frac{\lambda}{1+\beta^2/2} + a + O(\lambda^2).
\end{aligned}
\end{equation}
\begin{comment}
Thanks to the above expansion, we can obtain the estimates of $B_a^2-L_1^2 = \lambda+a-z_1(\lambda)$ and 
$B_a^2-L_2^2 = \lambda-z_2(\lambda)$. In fact,
there exists a constant $D$ such that
\begin{align*}
|B_a^2 - L_1^2| \ge c_\epsilon \left(\frac{\beta^2/2}{1+\beta^2/2} |\lambda| + a\right) - D|\lambda|^2,\quad
|B_a^2 - L_2^2| \ge \frac{\beta^2/2}{1+\beta^2/2} |\lambda| - D|\lambda|^2,
\end{align*}
where $c_\epsilon=\sin (\epsilon/2)$, then
if $|\lambda|$ is small such that $|\lambda| \le \dfrac{1}{2D} \dfrac{c_\epsilon \beta^2/2}{1+\beta^2/2}$, 
we have
\[
|B_a^2 - L_1^2| \ge \dfrac{c_\epsilon \beta^2/2}{2(1+\beta^2/2)} |\lambda| + c_\epsilon a, \quad
|B_a^2 - L_2^2| \ge \dfrac{\beta^2/2}{2(1+\beta^2/2)} |\lambda|.
\]
Therefore, there exists constant $c_{\beta, \epsilon, a}$, $c_\beta$, $C_{\beta, a}$, and $C_\beta$ such that 
\begin{equation}\label{ba-l}
\begin{aligned}
c_{\beta, \epsilon, a} (|\lambda| + 1) &\le |B_a^2 - L_1^2| \le C_{\beta, a}(|\lambda| + 1), \\
c_\beta |\lambda| &\le |B_a^2 - L_2^2| \le C_{\beta} |\lambda|.
\end{aligned}
\end{equation}
\end{comment}
The above expansion and Lemma \ref{spectrum} furnishes that
there exists a constant $D$ such that
\begin{align*}
|z_1(\lambda)| \ge \frac{|\lambda|}{1+\beta^2/2} - D|\lambda|^2,
\quad
|z_2(\lambda)| \ge c_\epsilon \left(\frac{|\lambda|}{1+\beta^2/2} + a\right) - D|\lambda|^2, 
\end{align*}
where $c_\epsilon=\sin (\epsilon/2)$, then
if $|\lambda|$ is small such that $|\lambda| \leq c_0$ with $c_0=\min\{c_1a(1+\beta^2/2)/(\sqrt{2}|\beta|), c_2\}$, 
$c_2=\dfrac{1}{2D}\dfrac{c_\epsilon}{1+\beta^2/2}$, 
we have \eqref{z}.

Next, we prove the second assertion.
Thanks to \eqref{z}, it is sufficient to consider the lower bound of $\Re L_j(\lambda, \xi')$ with $j=1, 2$.
%Assume that 
%\begin{equation}\label{contradiction}
%\inf_{\xi' \in \R^{N-1}, \lambda \in \Sigma_{\epsilon, c_0}}\dfrac{\Re L_1(\lambda, \xi')}{(|\lambda|+A^2)^{1/2}} = 0,
%\quad
%\inf_{\xi' \in \R^{N-1}, \lambda \in \Sigma_{\epsilon, c_0}}\dfrac{\Re L_2(\lambda, \xi')}{(|\lambda|+1+A^2)^{1/2}} = 0.
%\end{equation}
Set $L_j(\lambda, \xi')=|L_j(\lambda, \xi')|e^{i\theta}$, then $\Re L_j(\lambda, \xi')=|L_j(\lambda, \xi')|\cos \theta$.
Lemma \ref{lem:l} implies that $z_j(\lambda) \neq -A^2$ for any $\xi' \in \R^{N-1}$ and $\lambda \in \Sigma_\epsilon$,
thus we observe that $z_j(\lambda) \neq 0$ and $z_j(\lambda)$ does not exist on the negative real axis, namely,
there exists $\delta \in (0, \pi/2)$ such that $z_j(\lambda) \in \Sigma_\delta$ for any $\lambda \in \Sigma_\epsilon$.
Therefore, we have $L_j (\lambda, \xi')^2 \in \Sigma_\delta$, which implies that
$|\arg L_j (\lambda, \xi')| \le \pi/2-\delta/2$, then we observe that
$\cos \theta \ge \cos (\pi/2-\delta/2) = \sin \delta/2>0$.
Furthermore, in the same manner as \eqref{z},
we have
\begin{align*}
|L_1(\lambda, \xi')|^2& = |L_1(\lambda, \xi')^2| = |z_1(\lambda) + A^2| \geq c_\epsilon \Bigl(\frac{|\lambda|}{1+\beta^2/2}+A^2\Bigr)
-D|\lambda|^2 \nonumber \\
&\ge \Bigl(\frac{c_\epsilon}{1+\beta^2/2}-D|\lambda|\Bigr)|\lambda| + c_\epsilon A^2 \nonumber \\
&\ge \frac{c_\epsilon}{2(1+\beta^2/2)}(|\lambda| + A^2),
\\
|L_2(\lambda, \xi')|^2&=|z_2(\lambda) + A^2| \geq \frac{c_{\epsilon, a}}{2(1+\beta^2/2)}(|\lambda| + 1 + A^2)
\end{align*}
provided that $|\lambda| \leq c_0$.
Therefore, there exist positive constants $c_{\beta, \epsilon}$ and $c_{\beta, \epsilon, a}$ such that 
\[
\frac{\Re L_1 (\lambda, \xi')}{(|\lambda|+A^2)^{1/2}} \ge (\sin \delta/2) c_{\beta, \epsilon}, \quad
\frac{\Re L_2 (\lambda, \xi')}{(|\lambda|+1+A^2)^{1/2}} \ge (\sin \delta/2) c_{\beta, \epsilon, a} 
\]
for $\lambda \in \Sigma_{\epsilon, c_0}$, which completes the proof of \eqref{est:l}.
\end{proof}

\subsection{Lower bound of $\CD(\lambda, \xi')$}
In this subsection, we consider the lower bound of $\CD(\lambda, \xi')$ to obtain the estimates of $\CC_a(\lambda, \xi')$ in Lemma \ref{lem:bound ca}. 
Since the bound of $\CC_a(\lambda, \xi')$ for the middle frequency of $|\xi'|$ can be proved by  Lemma \ref{lem:behav ca}, we only consider the estimates of $\CD(\lambda, \xi')$ for the low and high frequencies of $|\xi'|$.

%In this section, we prove that there exists a constant $C_{\beta, \epsilon}$ depending on $\beta$ and $\theta$ such that 
%\[
%|\CC_a(\lambda, \xi')| \ge C_{\beta, \epsilon}
%\]
%for $\lambda \in \Sigma_\epsilon$ with $|\lambda| \le c_0$.
Set
\begin{align*}
\CD(\lambda, \xi')&=2B_a(B_a^2-L_1^2)(B_a^2-L_2^2)\left(\CI_1+\frac{\CI_2}{L_2-L_1}\right)\\
&=\CD_1(\lambda, \xi')+\CD_2(\lambda, \xi')+\CD_3(\lambda, \xi')
\end{align*}
with
\begin{align*}
\CD_1(\lambda, \xi')&=\frac{2\beta(B_a^2-L_1^2)(B_a^2-L_2^2)A^3\{2AB_a - (B_a^2+A^2)\}}{B_a^2-A^2},\\
\CD_2(\lambda, \xi')&=4\beta A^2 B_a \{L_1L_2(B_a^2+L_1L_2) - AB_a^2(L_1+L_2)\},\\
\CD_3(\lambda, \xi')&=-\beta(B_a^2+A^2)\{(B_a^2+A^2)L_1L_2(L_1+L_2)- AB_a^2 (L_1^2+L_1L_2+L_2^2+A^2) + AL_1L_2(L_1L_2-A^2)\}.
\end{align*}
 Let us consider the lower bound of $\CD(\lambda, \xi')$ for $\lambda \in \Sigma_{\epsilon, c_0}$.
\subsubsection{Case : $\dfrac{c_0+a}{r} \le A^2$.}\label{large}
We firstly consider the case: $(c_0+a)/r \le A^2$
with small $r$, which implies that 
$A^2 \ge (|\lambda| +a)/r \ge \max\{|\lambda +a|/r, |\lambda|/r\}$. 
In this case, by Lemma \ref{lem:zl} \eqref{z} 
\begin{equation}\label{prepare}
\frac{|z_j(\lambda)|}{A^2} 
\le C_\beta \frac{|\lambda|+a}{A^2} 
\le C_\beta r
\end{equation}
with $j=1, 2$.
Note that
\begin{equation}\label{e}
\begin{aligned}
&\sqrt{t^2+1}=1+\frac{t^2}{2}-\frac{1}{8}t^4+\frac{1}{16}t^6+t^6O(|t|^2)
\end{aligned}
\end{equation}
as $t \to 0$.
By \eqref{e} and \eqref{prepare}, 
\begin{align}
B_a&=A\sqrt{\frac{\lambda+a}{A^2}+1} \nonumber\\
&=A\left(1+\frac{1}{2}\frac{\lambda+a}{A^2}-\frac{1}{8}\left(\frac{\lambda+a}{A^2}\right)^2
+\frac{1}{16}\left(\frac{\lambda+a}{A^2}\right)^3+\left(\frac{\lambda+a}{A^2}\right)^3O(|t|^2)\right)\nonumber\\
&=A+\frac{\lambda+a}{2A}-\frac{1}{8}\frac{(\lambda+a)^2}{A^3}
+\frac{1}{16}\frac{(\lambda+a)^3}{A^5}+\frac{(\lambda+a)^3}{A^5}O(|t|^2), \label{b}\\
L_j(\lambda, \xi')&=A\sqrt{\frac{z_j(\lambda)}{A^2}+1}\nonumber\\
&=A\left(1+\frac{1}{2}\frac{z_j(\lambda)}{A^2}
-\frac{1}{8}\left(\frac{z_j(\lambda)}{A^2}\right)^2
+\frac{1}{16}\left(\frac{z_j(\lambda)}{A^2}\right)^3
+\left(\frac{z_j(\lambda)}{A^2}\right)^3O(|t|^2)\right)\nonumber\\
&=A+\frac{z_j(\lambda)}{2A}-\frac{1}{8}\frac{z_j(\lambda)^2}{A^3}
+\frac{1}{16}\frac{z_j(\lambda)^3}{A^5}+\frac{z_j(\lambda)^3}{A^5}O(|t|^2), \label{l}
\end{align}
where $t^2=(\lambda+a)/A^2$.

\noindent
\underline{Behavior of $\CD_1(\lambda, \xi')$}
By \eqref{b},
\begin{align*}
2AB_a-(B_a^2+A^2)
&=2A\left(A+\frac{\lambda+a}{2A}-\frac{1}{8}\frac{(\lambda+a)^2}{A^3}+\frac{(\lambda+a)^2}{A^3}O(|t|^2)\right)-(\lambda+a+2A^2)\\
&=-\frac 14 \frac{(\lambda+a)^2}{A^2}+\frac{(\lambda+a)^2}{A^2}O(|t|^2).
\end{align*}
Thus,
\begin{equation}\label{d1}
\begin{aligned}
\CD_1(\lambda, \xi')&=\frac{2\beta(\lambda+a-z_1(\lambda))(\lambda+a-z_2(\lambda))A^3}{\lambda+a}
\left\{-\frac 14 \frac{(\lambda+a)^2}{A^2}+\frac{(\lambda+a)^2}{A^2}O(|t|^2)\right\}\\
&=-\frac{\beta}{2}(\lambda+a-z_1(\lambda))(\lambda+a-z_2(\lambda))(\lambda+a)(A+A O(|t|^2)).
\end{aligned}
\end{equation}

\noindent
\underline{Behavior of $\CD_2(\lambda, \xi')$}
By \eqref{l},
\begin{equation}\label{l1l2}
\begin{aligned}
L_1L_2&=\left(A+\frac{z_1(\lambda)}{2A}-\frac{1}{8}\frac{z_1(\lambda)^2}{A^3}
+\frac{1}{16}\frac{z_1(\lambda)^3}{A^5}
+\frac{z_1(\lambda)^3}{A^5}O(|t|^2)\right)\\
& \enskip \times 
\left(A+\frac{z_2(\lambda)}{2A}-\frac{1}{8}\frac{z_2(\lambda)^2}{A^3}
+\frac{1}{16}\frac{z_2(\lambda)^3}{A^5}+\frac{z_2(\lambda)^3}{A^5}O(|t|^2)\right)\\
&=A^2+\frac 12 (z_1(\lambda)+z_2(\lambda))-\frac{1}{8 A^2}(z_1(\lambda)-z_2(\lambda))^2\\
&\enskip +\frac{1}{16}(z_1(\lambda)+z_2(\lambda))(z_1(\lambda)-z_2(\lambda))^2\left(\frac{1}{A^4}+\frac{1}{A^4}O(|t|^2)\right).
\end{aligned}
\end{equation}
By \eqref{l1l2},
\begin{equation}\label{d2-1}
\begin{aligned}
&L_1L_2(B_a^2+L_1 L_2)\\
&=\Bigl\{A^2+\frac 12 (z_1(\lambda)+z_2(\lambda))-\frac{1}{8 A^2}(z_1(\lambda)-z_2(\lambda))^2\\
&\quad +\frac{1}{16}(z_1(\lambda)+z_2(\lambda))(z_1(\lambda)-z_2(\lambda))^2
\left(\frac{1}{A^4}+\frac{1}{A^4}O(|t|^2)\right)\Bigr\}\\
&\enskip \times \Bigl\{2A^2+\lambda+a+\frac 12 (z_1(\lambda)+z_2(\lambda))-\frac{1}{8 A^2}(z_1(\lambda)-z_2(\lambda))^2\\
&\quad +\frac{1}{16}(z_1(\lambda)+z_2(\lambda))(z_1(\lambda)-z_2(\lambda))^2
\left(\frac{1}{A^4}+\frac{1}{A^4}O(|t|^2)\right)\Bigr\}\\
&=2A^4+\left\{\lambda+a+\frac{3}{2}(z_1(\lambda)+z_2(\lambda))\right\}A^2\\
&\enskip -\frac 38 (z_1(\lambda)-z_2(\lambda))^2+\frac 12(z_1(\lambda)+z_2(\lambda))(\lambda+a) +\frac 14(z_1(\lambda)+z_2(\lambda))^2\\
&\enskip +\Bigl\{\frac{1}{16}(z_1(\lambda)+z_2(\lambda))(z_1(\lambda)-z_2(\lambda))^2\\
&\quad -\frac{1}{8}(z_1(\lambda)-z_2(\lambda))^2(\lambda+a)\Bigr\}
\left(\frac{1}{A^2}+\frac{1}{A^2}O(|t|^2)\right)
\end{aligned}
\end{equation}
By \eqref{l},
\begin{equation}\label{d2-2}
\begin{aligned}
&AB_a^2(L_1+L_2)\\
&=A(\lambda +a+A^2)\Bigl\{2A+\frac{1}{2A} (z_1(\lambda)+z_2(\lambda))
-\frac{1}{8A^3}(z_1(\lambda)^2+z_2(\lambda)^2)\\
&\enskip +\frac{1}{16}(z_1(\lambda)^3+z_2(\lambda)^3)\left(\frac{1}{A^5}+\frac{1}{A^5}O(|t|^2)\right)\Bigr\}\\
&=2A^4+\left\{\frac 12 (z_1(\lambda)+z_2(\lambda))+2(\lambda+a)\right\}A^2\\
&\enskip +\frac 12(z_1(\lambda)+z_2(\lambda))(\lambda+a)-\frac{1}{8}(z_1(\lambda)^2+z_2(\lambda)^2)\\
&\enskip +\left\{\frac{1}{16}(z_1(\lambda)^3+z_2(\lambda)^3)-\frac{1}{8}(\lambda+a)(z_1(\lambda)^2+z_2(\lambda)^2)\right\}
\left(\frac{1}{A^2}+\frac{1}{A^2}O(|t|^2)\right).
\end{aligned}
\end{equation}
By \eqref{d2-1} and \eqref{d2-2},
\[
\begin{aligned}
&L_1 L_2 (B_a^2+L_1 L_2)-AB_a^2(L_1+L_2)\\
&=\{z_1(\lambda)+z_2(\lambda)-(\lambda+a)\}A^2+\frac 54 z_1(\lambda)z_2(\lambda)\\
&\enskip +\frac{1}{16}z_1(\lambda)z_2(\lambda)
\left\{4(\lambda+a)-(z_1(\lambda)+z_2(\lambda))\right\}
\left(\frac{1}{A^2}+\frac{1}{A^2}O(|t|^2)\right).
\end{aligned}
\]
Thus,
\begin{equation}\label{d2-3}
\begin{aligned}
\CD_2(\lambda, \xi')&=4\beta A^2
\left\{A+\frac{\lambda+a}{2A}-\frac{1}{8}\frac{(\lambda+a)^2}{A^3}
+\frac{1}{16}\frac{(\lambda+a)^3}{A^5}+\frac{(\lambda+a)^3}{A^5}O(|t|^2)\right\}\\
&\enskip \times \Bigl[\{z_1(\lambda)+z_2(\lambda)-(\lambda+a)\}A^2+\frac 54 z_1(\lambda)z_2(\lambda)\\
&\enskip +\frac{1}{16}z_1(\lambda)z_2(\lambda)
\left\{4(\lambda+a)-(z_1(\lambda)+z_2(\lambda))\Bigr\}
\left(\frac{1}{A^2}+\frac{1}{A^2}O(|t|^2)\right)\right]\\
&=4\beta \{z_1(\lambda)+z_2(\lambda)-(\lambda+a)\}A^5\\
&\enskip +4\beta \left\{\frac 54 z_1(\lambda)z_2(\lambda) + \frac 12 (z_1(\lambda)+z_2(\lambda) )(\lambda + a)- \frac 12 (\lambda+a)^2\right\}A^3\\
&\enskip +4\beta\Bigl[\frac{1}{16}z_1(\lambda)z_2(\lambda)
\left\{14(\lambda+a)-(z_1(\lambda)+z_2(\lambda))\right\}\\
&\enskip -\frac 18\{z_1(\lambda)+z_2(\lambda)-(\lambda+a)\}(\lambda+a)^2\Bigr]\left(A+AO(|t|^2)\right).
\end{aligned}
\end{equation}

\noindent
\underline{Behavior of $\CD_3(\lambda, \xi')$}
By \eqref{l} and \eqref{l1l2},
\begin{align}
&(B_a^2+A^2)L_1L_2(L_1+L_2) \nonumber\\
&=(2A^2+\lambda+a)
\Bigl\{A^2+\frac 12 (z_1(\lambda)+z_2(\lambda))-\frac{1}{8 A^2}(z_1(\lambda)-z_2(\lambda))^2 \nonumber\\
&\enskip +\frac{1}{16}(z_1(\lambda)+z_2(\lambda))
(z_1(\lambda)-z_2(\lambda))^2\left(\frac{1}{A^4}+\frac{1}{A^4}O(|t|^2)\right)\Bigr\} \nonumber\\
&\enskip \times
\left\{2A+\frac{1}{2A} (z_1(\lambda)+z_2(\lambda))
-\frac{1}{8A^3}(z_1(\lambda)^2+z_2(\lambda)^2)
+\frac{1}{16}(z_1(\lambda)^3+z_2(\lambda)^3)\left(\frac{1}{A^5}+\frac{1}{A^5}O(|t|^2)\right)\right\} \nonumber\\
&=(2A^2+\lambda+a) \nonumber\\
&\enskip \times
\Bigl[2A^3+\frac 32 (z_1(\lambda)+z_2(\lambda))A
-\left\{\frac{1}{8}(z_1(\lambda)^2+z_2(\lambda)^2)-\frac{1}{4}(z_1(\lambda)+z_2(\lambda))^2
+\frac{1}{4}(z_1(\lambda)-z_2(\lambda))^2\right\}\frac{1}{A} \nonumber\\
&\enskip +\Bigl\{\frac{1}{16}(z_1(\lambda)^3+z_2(\lambda)^3)
-\frac{1}{16}(z_1(\lambda)+z_2(\lambda))(z_1(\lambda)^2+z_2(\lambda)^2)
+\frac{1}{16}(z_1(\lambda)+z_2(\lambda))(z_1(\lambda)-z_2(\lambda))^2\Bigr\} \nonumber\\
&\quad \times \left(\frac{1}{A^3}+\frac{1}{A^3}O(|t|^2)\right)\Bigr] \nonumber\\
&=4A^5+\left\{3(z_1(\lambda)+z_2(\lambda))+2(\lambda+a)\right\}A^3 \nonumber\\
&\enskip+\left\{2z_1(\lambda)z_2(\lambda)-\frac 14 (z_1(\lambda)^2+z_2(\lambda)^2)+\frac 32 (\lambda+a)(z_1(\lambda)+z_2(\lambda))\right\}A \nonumber\\
&\enskip +\Bigl[\frac{1}{8}(z_1(\lambda)+z_2(\lambda))\{(z_1(\lambda)-z_2(\lambda))^2-z_1(\lambda)z_2(\lambda)\} \nonumber\\
&\enskip-\frac 18(z_1(\lambda)^2+z_2(\lambda)^2)(\lambda+a)
+z_1(\lambda)z_2(\lambda)(\lambda+a)\Bigr] \nonumber\\
&\quad \times \left(\frac{1}{A}+\frac{1}{A}O(|t|^2)\right).\label{d3-1}
\end{align}
By \eqref{l1l2},
\begin{equation}\label{d3-2}
\begin{aligned}
&AB_a^2 (L_1^2+L_1L_2+L_2^2+A^2)\\
&=A(\lambda+a+A^2)\Bigl\{4A^2+\frac 32 (z_1(\lambda)+z_2(\lambda))-\frac{1}{8A^2} (z_1(\lambda)-z_2(\lambda))^2\\
&\enskip +\frac{1}{16}(z_1(\lambda)+z_2(\lambda)) (z_1(\lambda)-z_2(\lambda))^2\left(\frac{1}{A^4}+\frac{1}{A^4}O(|t|^2)\right)\Bigr\}\\
&=4A^5+\left\{\frac 32 (z_1(\lambda)+z_2(\lambda))+4(\lambda+a)\right\}A^3\\
&\enskip +\left\{\frac 32(z_1(\lambda)+z_2(\lambda))(\lambda+a)
-\frac 18(z_1(\lambda)-z_2(\lambda))^2\right\}A\\
&\enskip +\frac{1}{16}(z_1(\lambda)-z_2(\lambda))^2\{z_1(\lambda)+z_2(\lambda)-2(\lambda+a)\}\left(\frac{1}{A}+\frac{1}{A}O(|t|^2)\right).
\end{aligned}
\end{equation}
By \eqref{l1l2},
\begin{align}
&AL_1L_2(L_1L_2-A^2) \nonumber\\
&=\Bigl\{A^3+\frac{A}{2} (z_1(\lambda)+z_2(\lambda))-\frac{1}{8 A}(z_1(\lambda)-z_2(\lambda))^2
 \nonumber\\
&\enskip +\frac{1}{16}(z_1(\lambda)+z_2(\lambda)) (z_1(\lambda)-z_2(\lambda))^2
\left(\frac{1}{A^3}+\frac{1}{A^3}O(|t|^2)\right)\Bigr\} \nonumber\\
&\enskip \times \left\{\frac 12 (z_1(\lambda)+z_2(\lambda))-\frac{1}{8 A^2}(z_1(\lambda)-z_2(\lambda))^2
+\frac{1}{16}(z_1(\lambda)+z_2(\lambda))(z_1(\lambda)-z_2(\lambda))^2\left(\frac{1}{A^4}+\frac{1}{A^4}O(|t|^2)\right)\right\} \nonumber\\
&=\frac 12 (z_1(\lambda)+z_2(\lambda))A^3-\frac 18(z_1(\lambda)-z_2(\lambda))^2A
+\frac 14(z_1(\lambda)+z_2(\lambda))^2A \nonumber\\
&\enskip -\frac{1}{16}(z_1(\lambda)+z_2(\lambda))(z_1(\lambda)-z_2(\lambda))^2\left(\frac{1}{A}+\frac{1}{A}O(|t|^2)\right).\label{d3-3}
\end{align}
Thus, by \eqref{d3-1}, \eqref{d3-2}, and \eqref{d3-3},
\begin{equation}\label{d3-4}
\begin{aligned}
\CD_3(\lambda, \xi')&=-\beta(\lambda+a+2A^2)
\Bigl[2\{(z_1(\lambda)+z_2(\lambda))-(\lambda+a)\}A^3
+\frac 52 z_1(\lambda)z_2(\lambda)A\\
&\enskip +\left\{\frac{5}{4}z_1(\lambda)z_2(\lambda) (\lambda +a)-\frac 18 (z_1(\lambda)+z_2(\lambda))z_1(\lambda)z_2(\lambda)\right\}
\left(\frac{1}{A}+\frac{1}{A} O(|t|^2)\right)\Bigr]\\
&=-4\beta\{(z_1(\lambda)+z_2(\lambda))-(\lambda+a)\}A^5\\
&\enskip-\beta\left[2\{(z_1(\lambda)+z_2(\lambda))-(\lambda+a)\}(\lambda+a)+5z_1(\lambda)z_2(\lambda)\right]A^3\\
&\enskip -\beta \left\{5z_1(\lambda)z_2(\lambda) (\lambda +a)-\frac 14 (z_1(\lambda)+z_2(\lambda))z_1(\lambda)z_2(\lambda)\right\}(A+AO(|t|^2).
\end{aligned}
\end{equation}
%Therefore,
%by \eqref{d1}, \eqref{d2-3}, and \eqref{d3-4},
%\begin{equation}\label{1-d}
%\CD=\beta z_1(\lambda)z_2(\lambda)A^3+A^3P(\lambda)O(|t|^2).
%\end{equation}
Therefore,
by \eqref{d1}, \eqref{d2-3}, and \eqref{d3-4},
\begin{equation}\label{d}
\begin{aligned}
\CD(\lambda, \xi')
&=\Bigl[-\frac{\beta}{2}(\lambda+a-z_1(\lambda))(\lambda+a-z_2(\lambda))(\lambda+a)\\
&\enskip +\frac{\beta}{4}z_1(\lambda)z_2(\lambda)
\left\{14(\lambda+a)-(z_1(\lambda)+z_2(\lambda))\right\}
-\frac{\beta}{2} \{z_1(\lambda)+z_2(\lambda)-(\lambda+a)\}(\lambda+a)^2\\
&\enskip -\beta \left\{5z_1(\lambda)z_2(\lambda) (\lambda +a)-\frac 14 (z_1(\lambda)+z_2(\lambda))z_1(\lambda)z_2(\lambda)\right\}\Bigr]
(A+AO(|t|^2)\\
&=-2 \beta z_1(\lambda)z_2(\lambda)(\lambda+a)(A+AO(|t|^2)),
\end{aligned}
\end{equation}
which combined with Lemma \ref{lem:zl} \eqref{z}, we have
\begin{equation}\label{ld}
|\CD(\lambda, \xi')| \ge C_{\beta, \epsilon, a}|\lambda|(|\lambda|+1)^2A.
\end{equation}

\subsubsection{Case : $A^2/|\lambda| \le \dfrac{1}{R}$}
Next, we consider the behavior of $\CD(\lambda, \xi')$ in the case that $A^2/|\lambda| \le 1/R$ with large $R$.
By the expansion:
\begin{equation*}\label{e'}
\begin{aligned}
&\sqrt{t^2+1}=1+O(|t|^2)
\end{aligned}
\end{equation*}
as $t \to 0$ and Lemma \ref{lem:zl} \eqref{z}, we have
\begin{align}
	B_a&=(\lambda+a)^{1/2}\sqrt{1+\frac{A^2}{\lambda+a}}=(\lambda+a)^{1/2}(1+O(|t|^2)),\label{b'}\\
	L_j(\lambda, \xi')&=z_j(\lambda)^{1/2}\sqrt{1+\frac{A^2}{z_j(\lambda)}}=z_j(\lambda)^{1/2}(1+O(|t|^2)),\label{l'}
\end{align}
where $t^2=A^2/\lambda$.
Let us consider the behavior of $\CD_1(\lambda, \xi')$, $\CD_2(\lambda, \xi')$, and $\CD_3(\lambda, \xi')$ by \eqref{b'} and \eqref{l'}.

\noindent
\underline{Behavior of $\CD_1(\lambda, \xi')$}
Since
\[
	2AB_a-(B_a^2+A^2)
	=2A(\lambda+a)^{1/2}(1+O(|t|^2))-(\lambda+a+2A^2),
\]
we have
\begin{equation}\label{d1'}
\begin{aligned}
	&\CD_1(\lambda, \xi')\\
	&=2\beta (\lambda+a-z_1(\lambda))(\lambda+a-z_2(\lambda))A^3
	\left\{-1+\frac{2A}{(\lambda+a)^{1/2}}-\frac{2A^2}{\lambda+a}+\frac{2A}{(\lambda+a)^{1/2}}O(|t|^2)\right\}\\
	&=2\beta (\lambda+a-z_1(\lambda))(\lambda+a-z_2(\lambda))A^3
	(-1 + O(|t|)).
\end{aligned}
\end{equation}

\noindent
\underline{Behavior of $\CD_2(\lambda, \xi')$}
Note that 
\begin{equation}\label{l1l2'}
	L_1 L_2=z_1(\lambda)^{1/2} z_2(\lambda)^{1/2} (1+O(|t|^2)).
\end{equation}
Then it holds that
\[
\begin{aligned}
	&L_1 L_2(B_a^2+L_1 L_2)-AB_a^2(L_1 + L_2)\\
	&= \{z_1(\lambda)^{1/2} z_2(\lambda)^{1/2}(\lambda+A^2+a+z_1(\lambda)^{1/2} z_2(\lambda)^{1/2})
	-A(\lambda+A^2+a)(z_1(\lambda)^{1/2} + z_2(\lambda)^{1/2})\}
	(1+O(|t|^2)).
\end{aligned}
\]
Thus,
\begin{align}
	\CD_2(\lambda, \xi')&=4\beta A^2(\lambda+a)^{1/2} \nonumber\\
	&\enskip \times \{z_1(\lambda)^{1/2} z_2(\lambda)^{1/2}(\lambda+A^2+a+z_1(\lambda)^{1/2} z_2(\lambda)^{1/2})
	-A(\lambda+A^2+a)(z_1(\lambda)^{1/2} + z_2(\lambda)^{1/2})\} \nonumber\\
	&\enskip \times (1+O(|t|^2)) \nonumber\\
	&=4\beta A^2(\lambda+a)^{1/2}z_1(\lambda)^{1/2} z_2(\lambda)^{1/2}(\lambda+a+z_1(\lambda)^{1/2} z_2(\lambda)^{1/2})
	(1+O(|t|)).\label{d2'}
\end{align}

\noindent
\underline{Behavior of $\CD_3(\lambda, \xi')$}
By \eqref{l'} and \eqref{l1l2'}, we have
\begin{equation*}\label{d3-1'}
\begin{aligned}
	&(B_a^2+A^2)L_1 L_2 (L_1 + L_2)
	=(\lambda+a+2A^2)z_1(\lambda)^{1/2} z_2(\lambda)^{1/2}(z_1(\lambda)^{1/2} + z_2(\lambda)^{1/2})
	(1+O(|t|^2)),\\
%By \eqref{l1l2'},
%\begin{equation}\label{d3-2'}
	&AB_a^2 (L_1^2+L_1 L_2 + L_2^2+A^2)\\
	&=A(\lambda+A^2+a)(z_1(\lambda) + z_1(\lambda)^{1/2}z_2(\lambda)^{1/2} + z_2(\lambda))(1+O(|t|^2))+A^3(\lambda + A^2+a),\\
	&AL_1(\lambda, \xi')L_2(\lambda, \xi')(L_1(\lambda, \xi')L_2(\lambda, \xi')-A^2)\\
	&=A z_1(\lambda) z_2(\lambda)(1+O(|t|^2))
	-A^3z_1(\lambda)^{1/2} z_2(\lambda)^{1/2}
	(1+O(|t|^2)).
\end{aligned}
\end{equation*}
These expansion furnish that
\begin{equation}\label{d3-4'}
\CD_3(\lambda, \xi')=-\beta(\lambda+a)^2
z_1(\lambda)^{1/2} z_2(\lambda)^{1/2}
(z_1(\lambda)^{1/2} + z_2(\lambda)^{1/2})
(1+O(|t|)).
\end{equation}
Summing up
\eqref{d1'}, \eqref{d2'}, and \eqref{d3-4'}, we observe that
\begin{equation*}\label{d'}
	\CD(\lambda, \xi')=-\beta(\lambda+a)^2
	z_1(\lambda)^{1/2} z_2(\lambda)^{1/2}
	(z_1(\lambda)^{1/2} + z_2(\lambda)^{1/2})
	(1+O(|t|)).
\end{equation*}

Now we consider the lower bound of $\CD$.
Lemma \ref{spectrum} and Lemma \ref{lem:zl} \eqref{z} furnish that
\[
	|\CD(\lambda, \xi')| \ge C_{\beta, \epsilon, a} (|\lambda|+1)^2|\lambda|^{1/2}(|\lambda|+1)^{1/2}|z_1(\lambda)^{1/2} + z_2(\lambda)^{1/2}|.
\]
Here, by \eqref{root:2} it holds that
\begin{align*}
	z_1(\lambda)^{1/2} + z_2(\lambda)^{1/2} &= \frac{z_1(\lambda) - z_2(\lambda)}{z_1(\lambda)^{1/2} - z_2(\lambda)^{1/2}} 
	= \frac{\sqrt 2i |\beta|}{1+\beta^2/2} \sqrt{\lambda^2 - \frac{a^2(1+\beta^2/2)^2}{2\beta^2}}\frac{1}{z_1(\lambda)^{1/2} - z_2(\lambda)^{1/2}}\\
	&= -a\sqrt{1-\frac{2\beta^2}{a^2(1+\beta^2/2)}\lambda^2}\frac{1}{z_1(\lambda)^{1/2} - z_2(\lambda)^{1/2}}\\
	&= \frac{-a}{z_1(\lambda)^{1/2} - z_2(\lambda)^{1/2}}(1+O(|\lambda|^2)),
\end{align*}
together with Lemma \ref{lem:zl} \eqref{z}, we have
\[
	|z_1(\lambda)^{1/2} + z_2(\lambda)^{1/2}| \ge C_{\beta, \epsilon, a}
\]
for $|\lambda| \le c_0$.
%Lemma \ref{spectrum} and Lemma \ref{lem:zl} \eqref{z} furnish that
Therefore, we obtain 
\begin{equation}\label{sd}
\begin{aligned}
	|\CD(\lambda, \xi')| & \ge C_{\beta, \epsilon, a} (|\lambda|+1)^2|\lambda|^{1/2}(|\lambda|+1)^{1/2} \ge C_{\beta, \epsilon, a}|\lambda|^{1/2}.
\end{aligned}
\end{equation}

%\subsubsection{Case : $\dfrac{1}{R} \le A^2 \le \dfrac{c_0+a}{r}$.}
%Set
%\[
%U=\{(\lambda, \xi') \in \overline{\Sigma_\epsilon} \times \R^{N-1}\mid |\lambda| \le c_0, 
%R^{-1/2}\le |\xi'| \le r^{-1/2}(c_0+a)^{1/2}\}.
%\]
%According to the \cite{BM}, $\CC_a(\lambda, \xi') \neq 0$ for $(\lambda ,\xi') \in \overline{\Sigma_\epsilon} \times \R^{N-1}\setminus \{0\}$,
%which implies that $\CC_a(\lambda, \xi') \neq 0$ for $(\lambda, \xi') \in U$.
%If $\lambda \neq 0$, $\CD=2\lambda B_a(B_a^2-L_1^2)(B_a^2-L_2^2) \CC_a$ implies that $\CD \neq 0$.
%If $\lambda=0$ and $\xi' \neq 0$, $\CD_2, \CD_3 \neq 0$.
%Thus $\CD \neq 0$ for $(\lambda, \xi') \in U$, which furnishes that
%there exists a positive constant $C$ such that
%\begin{equation}\label{md}
%|\CD| \ge C
%\end{equation}
%for $(\lambda, \xi') \in U$, which implies that \eqref{md} holds for  
%$(\lambda, \xi') \in \Sigma_\epsilon \times \R^{N-1}$ such that $1/R \le A^2 \le (c_0+a)/r$ and $|\lambda| \le c_0$.

%which furnishes that that
%\begin{align*}
%\CC_a(\lambda, \xi')&=\frac{\CD}{2\lambda B_a (B_a^2-L_1^2)(B_a^2-L_2^2)}\\
%&=-\beta\frac{(\lambda+a)^2
%z_1(\lambda)^{1/2} z_2(\lambda)^{1/2}
%(z_1(\lambda)^{1/2} + z_2(\lambda)^{1/2})}{\lambda(\lambda + a)^{1/2}(\lambda+a-z_1(\lambda))(\lambda+a-z_1(\lambda))}
%(1+O(|t|)),
%\end{align*}
%where we have used $B_a^{-1}=(\lambda+a)^{-1/2}(1+O(|t|^2))$ with $t^2=A^2/(\lambda+a)$.
%By Lemma \ref{lem:zl} \eqref{z}, we have
%\[
%|\CC_a(\lambda, \xi')| \ge C_{\beta, \epsilon}.
%\]
%with some constant $C_{\beta, \epsilon}$.

\subsection{Lower bound of $\CC_a(\lambda, \xi')$}\label{c a}
Recall that
\[
	\lambda \CC_a(\lambda, \xi') = \CI_1+\frac{\CI_2}{L_2-L_1}
\]
with
\begin{align*}
\CI_1 & =\beta \frac{A^2}{B_a^2-A^2}\left\{2A^2- \frac{A(B_a^2+A^2)}{B_a}\right\},\\
\CI_2 & =-\beta \frac{L_1(L_2-A)}{B_a^2-L_1^2}\left\{2A^2L_1- \frac{(B_a^2+A^2)(L_1^2+A^2)}{2B_a}\right\}
+\beta \frac{L_2(L_1-A)}{B_a^2-L_2^2}\left\{2A^2L_2- \frac{(B_a^2+A^2)(L_2^2+A^2)}{2B_a}\right\}.
\end{align*}
To consider the the lower bound of $\CC_a(\lambda, \xi')$ for the middle frequency of $|\xi'|$, we note the behavior of $\lambda C_a(\lambda, \xi')$ as $|\lambda| \to 0$.
\begin{lem}\label{lem:behav ca}
Let $a>0$ and $\xi' \in \R^{N-1} \setminus \{0\}$.
It holds that
\[
	\lambda \CC_a(\lambda, \xi') = -\frac{a^2}{2\beta A(\sqrt{a+A^2}-A)} + o(1)
\]
as $|\lambda| \to 0$.
\end{lem}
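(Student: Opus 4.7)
The plan is to obtain the limit by substituting the Taylor expansions \eqref{ez} of $z_1$ and $z_2$ near $\lambda = 0$ into the definitions of $\CI_1$ and $\CI_2$, and tracking the leading‐order behaviour of each summand of $\CI_1 + \CI_2/(L_2 - L_1)$. Writing $B_0 := \sqrt{a+A^2} = B_a|_{\lambda=0}$, the limiting values are $L_1 \to A$, $L_2 \to B_0$, $B_a \to B_0$, and $B_a^2 - L_1^2 = \lambda + a - z_1(\lambda) \to a$. However, two quantities degenerate linearly in $\lambda$: namely $L_1 - A$ (since $z_1(0)=0$) and $B_a^2 - L_2^2 = \lambda + a - z_2(\lambda)$ (since $z_2(0)=a$). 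This is what makes the computation delicate.

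First I would evaluate $\CI_1$ and the first summand of $\CI_2$ at $\lambda = 0$ by direct substitution. The algebraic identities $2AB_0 - (B_0^2+A^2) = -(B_0-A)^2$ and $4A^2B_0^2 - (B_0^2+A^2)^2 = -(B_0^2-A^2)^2 = -a^2$ are the key simplifications. They give
\[
\CI_1\big|_{\lambda=0} = -\frac{\beta A^3(B_0-A)^2}{a B_0},\qquad
\frac{\CI_{2,1}}{L_2-L_1}\bigg|_{\lambda=0} = +\frac{\beta A^3(B_0-A)^2}{a B_0},
\]
where $\CI_{2,1}$ denotes the first summand of $\CI_2$; so these contributions cancel exactly, leaving only the second summand of $\CI_2$.

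The main obstacle is precisely this second summand $\CI_{2,2}$, which is an indeterminate $0/0$ expression at $\lambda=0$. To resolve it, I would use \eqref{ez} together with the identity $L_1 - A = z_1(\lambda)/(L_1+A)$ to obtain
\[
L_1 - A = \frac{\lambda}{2A(1+\beta^2/2)} + O(|\lambda|^2), \qquad
B_a^2 - L_2^2 = \frac{\beta^2/2}{1+\beta^2/2}\,\lambda + O(|\lambda|^2),
\]
so that the ratio tends to the finite limit $1/(A\beta^2)$. Combined with $L_2 \to B_0$ and the limit $2A^2 L_2 - (B_a^2+A^2)(L_2^2+A^2)/(2B_a) \to -a^2/(2B_0)$ (computed via the same identity as above), the second summand yields $\CI_{2,2} \to -a^2/(2A\beta)$.

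Finally, dividing by $L_2 - L_1 \to B_0 - A$ and adding back the (already cancelled) contributions from $\CI_1$ and $\CI_{2,1}/(L_2-L_1)$, one obtains
\[
\lim_{|\lambda|\to 0} \lambda\CC_a(\lambda,\xi') = -\frac{a^2}{2\beta A(B_0-A)} = -\frac{a^2}{2\beta A(\sqrt{a+A^2}-A)},
\]
which is the desired asymptotic. The error terms are $O(|\lambda|)$ coming from the higher order parts of the expansions \eqref{ez} and from $B_a = B_0 + O(|\lambda|)$, hence $o(1)$ as $|\lambda|\to 0$ for each fixed $\xi'\in\R^{N-1}\setminus\{0\}$.
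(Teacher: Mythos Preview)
Your proof is correct and follows essentially the same approach as the paper: both expand $z_1,z_2,L_1,L_2,B_a$ at $\lambda=0$ via \eqref{ez}, compute $\CI_1$ and the first summand of $\CI_2$ by direct substitution, and resolve the $0/0$ in the second summand of $\CI_2$ using the same leading-order expansions of $L_1-A$ and $B_a^2-L_2^2$. Your write-up is in fact slightly more transparent than the paper's, since you explicitly record the identities $2AB_0-(B_0^2+A^2)=-(B_0-A)^2$ and $4A^2B_0^2-(B_0^2+A^2)^2=-a^2$ and point out the exact cancellation between $\CI_1$ and $\CI_{2,1}/(L_2-L_1)$, which the paper leaves implicit.
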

\begin{proof}
Set $B_a^0 = \sqrt{a+A^2}$.
By \eqref{ez}, it holds that
\begin{equation}\label{behavior z}
	z_1(\lambda) = \frac{\lambda}{1+\beta^2/2} + o(\lambda), \quad 
	z_2(\lambda) = \frac{\lambda}{1+\beta^2/2} + a + o(\lambda),
\end{equation}
then we know that
\[
	L_1 = A + o(1), \quad L_2 = B_a = B_a^0 + o(1).
\]
Let us consider the behavior of $\CI_1$ and $\CI_2$ as $|\lambda| \to 0$:
\begin{equation}\label{behavior I1}
	\CI_1 = \frac{\beta A^2}{a}\left\{ 2A^2 - \frac{A(a+2A^2)}{B_a^0} \right\} + o(1)
	= \frac{\beta A^3}{a B_a^0}(2A B_a^0 - a - 2A^2) + o(1).
\end{equation}
The first term of $\CI_2$ satisfies
\begin{equation}\label{behavior I2}
\begin{aligned}
	&-\beta \frac{L_1(L_2-A)}{B_a^2-L_1^2}\left\{2A^2L_1- \frac{(B_a^2+A^2)(L_1^2+A^2)}{2B_a}\right\}\\
	&= -\beta \frac{A(B_a^0-A)}{a}\left\{2A^3- \frac{2(a+2A^2)A^2}{2B_a^0}\right\} + o(1)\\
	&= -\beta \frac{A^3(B_a^0-A)}{a B_a^0}(2AB_a^0- a-2A^2) + o(1).
\end{aligned}
\end{equation}
Now we consider the second term of $\CI_2$.
Thanks to \eqref{behavior z}, we have
\[
	L_1-A = \frac{\lambda}{2A(1+\beta^2/2)} + o(\lambda),\quad
	B_a^2-L_2^2 = \frac{\beta^2 \lambda}{2(1+\beta^2/2)} + o(\lambda).
\]
Therefore,
\[
\begin{aligned}
	&\beta \frac{L_2(L_1-A)}{B_a^2-L_2^2}\left\{2A^2L_2- \frac{(B_a^2+A^2)(L_2^2+A^2)}{2B_a}\right\}\\
	&=\frac{B_a^0}{\beta A}\left\{2A^2B_a^0- \frac{(a+2A^2)^2}{2B_a^0}\right\} + o(1)
	=-\frac{a^2}{2\beta A} + o(1),
\end{aligned}
\]
together with \eqref{behavior I1} and \eqref{behavior I2}, we have
\[
	\lambda C_a(\lambda, \xi') = -\frac{a^2}{2\beta A(B_a^0-A)} + o(1),
\] 
which completes the proof of Lemma \ref{lem:behav ca}.
\end{proof}

Let us consider the lower bound of $\CC_a(\lambda, \xi')$.
\begin{lem}\label{lem:bound ca}
Let $a>0$.
\begin{enumerate}
\item\label{bound ca 1}
There exists a constant $C_{\beta, \epsilon, a}$ such that
\begin{equation}\label{est:ca}
|\CC_a(\lambda, \xi')| \ge C_{\beta, \epsilon, a} \frac{1}{|\lambda|},
%\quad \text{if} \enskip \frac{c_0+a}{r} \le A^2,
\end{equation}
for any $\lambda \in \Sigma_{\epsilon, c_0}$ and $\xi' \in \R^{N-1}\setminus \{0\}$.
\item\label{bound ca 2}
There exist a large number $R$ and a constant $C_{\beta, \epsilon, a}$ such that
\begin{equation}\label{est:ca l}
|\CC_a(\lambda, \xi')| \ge C_{\beta, \epsilon, a} \frac{1}{|\lambda|^{3/2}}
\quad \text{if} \enskip 0 < A^2 \le \frac{|\lambda|}{R}
\end{equation}
for any $\lambda \in \Sigma_{\epsilon, c_0}$.

%\item
%\begin{equation}\label{est:ca m}
%|\CC_a(\lambda, \xi')| \ge C_{\beta, \epsilon, a} \frac{1}{|\lambda|}
%\quad \text{if}  \enskip A^2 \le \frac{c_0+a}{r}
%\end{equation}

\end{enumerate}
%\begin{equation}\label{est:ca}
%|\CC_a(\lambda, \xi')| \ge C_{\beta, \epsilon, a} \frac{1}{|\lambda|}
%\end{equation}
%for any $\xi' \in \R^{N-1} \setminus \{0\}$ and $\lambda \in \Sigma_\epsilon$ with $|\lambda| \le c_0$.
\end{lem}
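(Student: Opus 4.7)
The plan is to write
$$\lambda \CC_a(\lambda, \xi') \;=\; \frac{\CD(\lambda, \xi')}{2\, B_a (B_a^2 - L_1^2)(B_a^2 - L_2^2)},$$
control the denominator uniformly from above, and combine this with the two lower bounds on $\CD$ already established in the preceding subsection, together with the asymptotic formula of Lemma \ref{lem:behav ca} to handle the remaining middle range of $A$.

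First I would estimate the three factors in the denominator for $\lambda \in \Sigma_{\epsilon, c_0}$. By Lemma \ref{lem:zl}, $|B_a| \le C(|\lambda|+a+A^2)^{1/2}$. Using the expansion \eqref{ez}, valid throughout $\Sigma_{\epsilon, c_0}$, one checks that $z_1(\lambda)=\lambda/(1+\beta^2/2)+O(\lambda^2)$, so $|B_a^2 - L_1^2| = |\lambda+a-z_1(\lambda)|$ is comparable to a constant (dominated by the $a$-term), while crucially $|B_a^2 - L_2^2| = |\lambda - z_2(\lambda)| \le C|\lambda|$ because the constant $a$ in $z_2 = \lambda/(1+\beta^2/2)+a+O(\lambda^2)$ cancels the $+a$ in $B_a^2 = \lambda+a+A^2$. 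Multiplying, the denominator is bounded above by $C\,|\lambda|\,(|\lambda|+a+A^2)^{1/2}$.

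Next I would split into three $A$-regimes. \textbf{(a)} If $A^2 \le |\lambda|/R$, then $(|\lambda|+a+A^2)^{1/2}$ is bounded and \eqref{sd} yields $|\CD| \ge C|\lambda|^{1/2}$; dividing gives $|\CC_a| \ge C/|\lambda|^{3/2}$, which proves \eqref{est:ca l}, and since $|\lambda| \le c_0 \le 1$ we also obtain \eqref{est:ca} in this range. \textbf{(b)} If $A^2 \ge (c_0+a)/r$, then $(|\lambda|+a+A^2)^{1/2}$ is comparable to $A$ and \eqref{ld} gives $|\CD| \ge C|\lambda|A$, so $|\CC_a| \ge C/|\lambda|$. \textbf{(c)} In the remaining middle range $|\lambda|/R < A^2 < (c_0+a)/r$, $A$ is bounded above, the denominator is $O(|\lambda|)$, and it suffices to establish $|\lambda \CC_a| \ge c > 0$ uniformly.

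For case \textbf{(c)} I would invoke Lemma \ref{lem:behav ca}: the limit $-a^2/(2\beta A(B_a^0-A)) = -a(B_a^0+A)/(2\beta)$ (after using $(B_a^0-A)(B_a^0+A)=a$) has modulus at least $a^{3/2}/(2|\beta|)$, since $B_a^0+A \ge \sqrt{a}$. The main obstacle is that the middle range admits $A = \sqrt{|\lambda|/R}$ shrinking with $|\lambda|$, so the fixed-$A$ $o(1)$-remainder in Lemma \ref{lem:behav ca} is potentially nonuniform. Inspecting the proof of Lemma \ref{lem:behav ca}, the corrections arise from terms of type $L_1 - A \sim \lambda/(2A(1+\beta^2/2))$ together with $O(\lambda^2)$ pieces; under $A^2 > |\lambda|/R$ one has $|\lambda|/A \le \sqrt{R|\lambda|}$, so these corrections are $O(|\lambda|^{1/2})$ uniformly in $A$. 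Shrinking $c_0$ if necessary makes this remainder fall below $a^{3/2}/(4|\beta|)$, giving $|\lambda \CC_a| \ge a^{3/2}/(4|\beta|)$ throughout case \textbf{(c)} and completing \eqref{est:ca}.
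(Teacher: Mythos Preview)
Your approach is essentially the paper's: both express $\lambda\CC_a = \CD/\bigl(2B_a(B_a^2-L_1^2)(B_a^2-L_2^2)\bigr)$, bound the denominator from above using $|B_a^2-L_2^2|\le C|\lambda|$, and then feed in \eqref{ld}, \eqref{sd}, and Lemma~\ref{lem:behav ca} on the relevant $A$-ranges (the paper uses only two ranges for part~\eqref{bound ca 1} rather than your three, simply asserting an $O(|\lambda|)$ remainder in Lemma~\ref{lem:behav ca} without separating off the low-$A$ regime you take care to discuss). One slip to fix: $-a^2/\bigl(2\beta A(B_a^0-A)\bigr) = -a(B_a^0+A)/(2\beta A)$, not $-a(B_a^0+A)/(2\beta)$; the missing factor $1/A$ only strengthens the lower bound in your case~(c), so the conclusion survives.
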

%Since we know that \eqref{est:ca} for $\lambda \in \Sigma_{\epsilon, r}$ with $r>0$, we assume $|\lambda| \le c_0$ with a small constant $c_0$.
\begin{proof}
Note that 
\[
	\CC_a(\lambda, \xi')
	= \frac{\CD(\lambda, \xi')}{2\lambda B_a (B_a^2-L_1^2)(B_a^2-L_2^2)}
	=\frac{\CD(\lambda, \xi')}{2\lambda B_a (\lambda+a-z_1(\lambda))(\lambda+a-z_2(\lambda))}.
\]
It holds that
\begin{equation}\label{Ba}
|B_a|\le C(|\lambda|^{1/2}+1+A)
\end{equation}
for any $(\lambda, \xi') \in \Sigma_\epsilon \times \R^{N-1} \setminus \{0\}$. 
Moreover, \eqref{ez} implies that
\[
	|\lambda+a-z_1(\lambda)| \le C_{\beta, a}(|\lambda|+1),\quad
	|\lambda+a-z_2(\lambda)| \le C_{\beta}|\lambda|
\]
for any $(\lambda, \xi') \in \Sigma_{\epsilon, c_0} \times \R^{N-1} \setminus \{0\}$.
Therefore, we have
\begin{equation}\label{est:ca'}
	|\CC_a(\lambda, \xi')|
	\ge C_{\beta, a}\frac{|\CD(\lambda, \xi')|}{|\lambda|^2 (|\lambda|^{1/2}+1+A)(|\lambda|+1)}.
\end{equation}

In order to obtain the first assertion, we divide the proof into two cases: $(c_0+a)/r \le A^2$ and $(c_0+a)/r \ge A^2$, where $r$ is the same small constant in subsection \ref{large}.
In the case $(c_0+a)/r \le A^2$, we note that $A \ge |\lambda|^{1/2}/r^{1/2}$.
By \eqref{est:ca'} and \eqref{ld}, 
we have
\begin{align*}
|\CC_a(\lambda, \xi')|%=\left|\frac{\CD}{2\lambda B_a (B_a^2-L_1^2)(B_a^2-L_2^2)}\right|=\left|\frac{\CD}{2\lambda B_a (\lambda+a-z_1(\lambda))(\lambda+a-z_2(\lambda))}\right|\\
&\ge C_{\beta, \epsilon, a} \frac{|\lambda|(|\lambda|+1)^2 A}{|\lambda|^2(|\lambda|^{1/2}+1+A)(|\lambda|+1)}
= C_{\beta, \epsilon, a} \frac{(|\lambda|+1) A}{|\lambda|(|\lambda|^{1/2}+1+A)},
\end{align*}
together with $|\lambda|^{1/2} \le A$ and $1 =a^{-1/2} a^{1/2} \le a^{-1/2} \{(c_0+a)/r\}^{1/2} \le a^{-1/2}  A$, then
we observe that 
\[
|\CC_a(\lambda, \xi')|
\ge C\frac{(|\lambda|+1)A}{|\lambda|A}
\ge C\frac{1}{|\lambda|}.
\]
On the other hand, in the case that $A^2 \le (c_0+a)/r$,
Lemma \ref{lem:behav ca} implies that there exists a constant $C>0$ such that
\[
	|\lambda||\CC_a(\lambda, \xi')| = \left|\frac{a^2}{2 \beta A} \frac{1}{\sqrt{a+A^2}-A}  + O(|\lambda|)\right|
	\ge \frac{a^2}{2 \beta A} \frac{1}{\sqrt{a+A^2}-A} - C c_0.
\]
Note that
\[
	\frac{1}{\sqrt{a+A^2}-A} \ge \frac{1}{\sqrt{a+A^2}} \ge C_{\beta, \epsilon, a}
\]
for $A^2 \le (c_0+a)/r$. 
Choosing $c_0$ small enough if necessary, we have
\[
	|\lambda||\CC_a(\lambda, \xi')| 
	\ge C_{\beta, \epsilon, a}
\]
for $A^2 \le (c_0+a)/r$. 
This completes the proof of Lemma \ref{lem:bound ca} \eqref{bound ca 1}. 

Let us prove the second assertion. In the case that $A^2 \le |\lambda|/R$, it holds that
\[
	(|\lambda|^{1/2}+1+A)(|\lambda|+1) \le C(|\lambda|^{1/2}+1)(|\lambda|+1) \le C
\]
for $|\lambda| \le c_0$.
Then \eqref{est:ca'} and \eqref{sd} imply that
\[
|\CC_a(\lambda, \xi')| \ge C_{\beta, \epsilon, a}\frac{|\lambda|^{1/2}}{|\lambda|^2} = C_{\beta, \epsilon, a}\frac{1}{|\lambda|^{3/2}},
\]
which gives Lemma \ref{lem:bound ca} \eqref{bound ca 2}.

%Set
%\[
%U=\{(\lambda, \xi') \in \overline{\Sigma_\epsilon} \times \R^{N-1}\mid |\lambda| \le c_0, 
%R^{-1/2}\le |\xi'| \le r^{-1/2}(c_0+a)^{1/2}\}.
%\]
%According to the proof of \cite[Propositions 2.2.2 and 2.3.1]{BM}, it holds that $\lambda \CC_a(\lambda, \xi') \neq 0$ for $(\lambda ,\xi') \in \overline{\Sigma_\epsilon} \times \R^{N-1}\setminus \{0\}$,
%which implies that $\lambda \CC_a(\lambda, \xi') \neq 0$ for $(\lambda, \xi') \in U$.
%Then we have \eqref{est:ca m}.
%\[
%|\CC_a(\lambda, \xi')| \ge C\frac{1}{|\lambda|}% \ge C\frac{1}{|\lambda|}
%\]
%provided that $|\lambda|\le c_0$.
\end{proof}
%which furnishes that that
%\begin{align*}
%\CC_a(\lambda, \xi')&=\frac{\CD}{2\lambda B_a (B_a^2-L_1^2)(B_a^2-L_2^2)}\\
%&=\frac 32 \beta \frac{z_1(\lambda)z_2(\lambda)(\lambda+a)}{\lambda(\lambda+a-z_1(\lambda))(\lambda+a-z_2(\lambda))}(1+O(|t|^2)),
%\end{align*}
%where we have used $B_a^{-1}=A^{-1}(1+O(|t|^2))$ with $t^2=(\lambda+a)/A^2$.
%By Lemma \ref{lem:zl} \eqref{z}, we have
%\[
%|\CC_a(\lambda, \xi')| \ge C_{\beta, \epsilon}.
%\]
%with some constant $C_{\beta, \epsilon}$.

\subsection{Lower bound of $\CA_a(\lambda, \xi')$}
Recall that
\[
	\CA_a(\lambda, \xi') =B_a(B_a^2-A^2)(L_1+L_2)-A^2(B_a-L_1)(B_a-L_2).
\]
In this subsection, we prove the following lemma.
\begin{lem}
Let $a>0$.
There exists a constant $C_{\beta, \epsilon, a}$ such that 
\begin{equation}\label{est:aa}
|\CA_a(\lambda, \xi')| \ge C_{\beta, \epsilon, a}(|\lambda|+1)^2
\end{equation}
for any $\xi' \in \R^{N-1} \setminus \{0\}$ and $\lambda \in \Sigma_{\epsilon, c_0}$.
\end{lem}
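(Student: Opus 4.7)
The plan is to expose an algebraic cancellation in $\CA_a$ that isolates a small factor of order $\lambda$ in the subtracted term, and then to bound the dominant term from below uniformly in $\xi'$. Using the identities $B_a^2-A^2 = \lambda+a$ and $B_a^2-L_j^2 = \lambda+a-z_j(\lambda)$, I would first rewrite
\[
\CA_a(\lambda,\xi') = B_a(\lambda+a)(L_1+L_2) - \frac{A^2\,(\lambda+a-z_1(\lambda))(\lambda+a-z_2(\lambda))}{(B_a+L_1)(B_a+L_2)}.
\]
The strategy is to prove that the first term has modulus at least of order $(|\lambda|+1+A^2)$ while the second term is $O(|\lambda|)$ uniformly in $A$, so that the first dominates once $c_0$ is small enough. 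This gives $|\CA_a| \gtrsim 1 \gtrsim (|\lambda|+1)^2$ since $(|\lambda|+1)^2$ is bounded on $\Sigma_{\epsilon,c_0}$.

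For the first term, note that for $|\lambda|\le c_0 < a/2$ one has $|\lambda+a|\ge a/2$, and $\lambda+a+A^2$ lies in a fixed sector strictly inside the right half-plane, so $|B_a| \ge c(|\lambda|+1+A^2)^{1/2}$ and $\Re B_a \ge c|B_a|$. Combined with Lemma \ref{lem:zl}\eqref{est:l}, which gives $\Re L_j > 0$ and $\Re L_2 \ge c_{\beta,\epsilon,a}(|\lambda|+1+A^2)^{1/2}$, this yields
\[
|B_a(\lambda+a)(L_1+L_2)| \ge c_{\beta,\epsilon,a}\,(|\lambda|+1+A^2).
\]
For the second term, the same positivity of real parts gives $|B_a+L_j| \ge c(|\lambda|+1+A^2)^{1/2}$. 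From \eqref{ez} one reads off $\lambda+a-z_1(\lambda) = a + O(\lambda)$ and $\lambda+a-z_2(\lambda) = \lambda\beta^2/(2+\beta^2) + O(\lambda^2)$, so $|\lambda+a-z_1|\le C_{\beta,a}$ and $|\lambda+a-z_2|\le C_\beta|\lambda|$. Therefore
\[
\left|\frac{A^2(\lambda+a-z_1)(\lambda+a-z_2)}{(B_a+L_1)(B_a+L_2)}\right| \le C_{\beta,\epsilon,a}\,|\lambda|\,\frac{A^2}{|\lambda|+1+A^2} \le C_{\beta,\epsilon,a}|\lambda|.
\]

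Combining the two bounds and shrinking $c_0$ further (depending on $\beta,\epsilon,a$) so that $C_{\beta,\epsilon,a}c_0 \le c_{\beta,\epsilon,a}/2$, I would conclude
\[
|\CA_a(\lambda,\xi')| \ge \tfrac12 c_{\beta,\epsilon,a}\,(|\lambda|+1+A^2) \ge \tfrac12 c_{\beta,\epsilon,a} \ge c'_{\beta,\epsilon,a}(|\lambda|+1)^2,
\]
where the last step uses $(|\lambda|+1)^2 \le (1+c_0)^2$. The only genuinely delicate point is identifying the cancellation $B_a-L_j = (\lambda+a-z_j)/(B_a+L_j)$, which extracts the small factor $\lambda+a-z_2 = O(\lambda)$; without this identity the subtracted term would only be bounded by a quantity of order $1$, which could not be absorbed by the dominant term for moderate $A$, and the argument would have to be split into low/middle/high frequency cases as in the analysis of $\CC_a$.
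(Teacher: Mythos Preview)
Your argument is correct and in fact cleaner than the paper's. The paper splits into three frequency regimes: for $A^2 \ge (c_0+a)/r$ it uses the Taylor expansions \eqref{b}, \eqref{l} to show $\CA_a = 2(\lambda+a)A^2(1+O(t^2))$; for $A^2 \le 1/R$ it writes $\CA_a = (\lambda+a)B_a(L_1+L_2)(1+O(A))$ and invokes the lower bound \eqref{l1 l2} on $|L_1+L_2|$; and for the remaining compact range of $A$ it appeals to the non-vanishing of $\CA_a$ established in \cite{BM}. Your approach bypasses all three cases at once: the identity $(B_a-L_1)(B_a-L_2) = (\lambda+a-z_1)(\lambda+a-z_2)/\bigl((B_a+L_1)(B_a+L_2)\bigr)$ together with $\lambda+a-z_2 = O(\lambda)$ makes the subtracted term uniformly $O(|\lambda|)$, while $\Re L_2 \ge c(|\lambda|+1+A^2)^{1/2}$ from Lemma~\ref{lem:zl}\eqref{est:l} gives the uniform lower bound on the dominant term. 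What you gain is a single short argument with no asymptotic expansions and no compactness step; what the paper's route buys is that it does not require any further shrinking of $c_0$ beyond what Lemma~\ref{lem:zl} already imposes. Since the paper itself shrinks $c_0$ elsewhere (in the proof of Lemma~\ref{lem:bound ca}\eqref{bound ca 1}), your additional restriction on $c_0$ is harmless in context.
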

The following proof is the same way as the proof in subsection \ref{c a}.

\subsubsection{Case : $\dfrac{c_0+a}{r} \le A^2$.}
We consider the case: $(c_0+a)/r \le A^2$
with small $r$.
Note that $B_a^2-A^2=\lambda+a$.
Furthermore, \eqref{b} and \eqref{l} imply that
\begin{align*}
B_a(L_1+L_2)&=2A^2(1+O(|t|^2)),\\
A^2(B_a-L_1)(B_a-L_1)&=\frac14 (\lambda+a-z_1(\lambda))(\lambda+a-z_2(\lambda))(1+O(|t|^2)),
\end{align*}
where $t^2=(\lambda+a)/A^2$, then by Lemma \ref{lem:zl} \eqref{z} we have
\begin{align*}
	\CA_a(\lambda, \xi')
	&=2(\lambda+a)A^2(1+O(|t|^2)) - \frac14 (\lambda+a-z_1(\lambda))(\lambda+a-z_2(\lambda))(1+O(|t|^2))\\
	&=2(\lambda+a)A^2(1+O(|t|^2)).
\end{align*}
Choosing $r$ small enough if necessary, we have
\[
|\CA_a(\lambda, \xi')| \ge C_\epsilon (|\lambda|+a)A^2,
\]
together with $(|\lambda|+a)/r \le A^2$ provided by
$(c_0+a)/r \le A^2$ and $|\lambda| \le c_0$, it holds that
\[
|\CA_a(\lambda, \xi')| \ge \frac{C_\epsilon}{r} (|\lambda|+a)^2
\ge C_{\beta, \epsilon, a}(|\lambda|+1)^2
\]
with small $r$.

\subsubsection{Case : $A^2 \le \dfrac{1}{R}$.}
We consider the case: $A^2 \le 1/R$ with large $R$.
Note that
\begin{equation}\label{lower b}
|B_a|\ge C(|\lambda|^{1/2}+1+A)
\end{equation}
holds for any $(\lambda, \xi') \in \Sigma_\epsilon \times \R^{N-1} \setminus \{0\}$.
Furthermore, using the identity:
\[
L_1+L_2=\frac{L_1^2-L_2^2}{L_1-L_2}=\frac{z_1(\lambda)-z_2(\lambda)}{L_1-L_2},
\]
\eqref{ez}, and Lemma \ref{lem:zl} \eqref{est:l}, we have 
\begin{equation}\label{l1 l2}
|L_1+L_2| \ge C_{\beta, \epsilon, a}
\end{equation}
for any $(\lambda, \xi') \in \Sigma_\epsilon \times \R^{N-1} \setminus \{0\}$.
From the definition of $\CA_a$, together with \eqref{lower b} and \eqref{l1 l2}, we have
\[
\CA_a(\lambda, \xi') = (\lambda+1) B_a (L_1+L_2) (1+O(A)).
\]
Then it holds that
\begin{equation}\label{s aa}
\begin{aligned}
	|\CA_a(\lambda, \xi')| &\ge C_{\beta, \epsilon, a}(|\lambda|+1)(|\lambda|^{1/2}+1+A) \ge C_{\beta, \epsilon, a}(|\lambda|+1)^{3/2}\\
	& \ge C_{\beta, \epsilon, a} \frac{1}{(c_0+1)^{1/2}}(|\lambda|+1)^2.
\end{aligned}
\end{equation}

\subsubsection{Case : $\dfrac{1}{R} \le A^2 \le \dfrac{c_0+a}{r}$.}
Set
\[
U=\{(\lambda, \xi') \in \overline{\Sigma_\epsilon} \times \R^{N-1}\mid |\lambda| \le c_0, 1/R^{1/2} \le |\xi'| \le r^{-1/2}(c_0+a)^{1/2}\}.
\]
Thanks to \cite{BM}, $\CA_a(\lambda, \xi') \neq 0$ for $(\lambda ,\xi') \in U$, which implies that
there exists a positive constant $C_{\beta, \epsilon}$ such that
$|\CA_a(\lambda, \xi')| \ge C_{\beta, \epsilon}$
for $(\lambda, \xi') \in U$.
Thus, we have 
\[
|\CA_a(\lambda, \xi')|\ge C_{\beta, \epsilon} \ge C_{\beta, \epsilon}\frac{1}{(c_0+1)^2}(|\lambda|+1)^2
\]
for $(\lambda, \xi') \in \Sigma_\epsilon \times \R^{N-1}$ such that $1/R \le A^2 \le (c_0+a)/r$ and $|\lambda| \le c_0$.

\section{$\CR$-boundedness}

In this section, we prove the main theorem.
By a suitable extension of $\bff$ and $\bG$, the resolvent problem \eqref{r0} with $(\bh, \bH)=(0, 0)$ reduces to the whole-space problem.
We discuss the details in subsection \ref{subsec:proof of main}. 
Therefore, our main task is to prove $\CR$-boundedness for the solution operators of the \eqref{r0} with $\bff=0$, $\bG=0$, namely,
\begin{equation}\label{r}
\left\{
\begin{aligned}
&\lambda\bu -\Delta \bu + \nabla \fp + \beta \DV (\Delta \bQ -a \bQ)=0,
\enskip \dv \bu=0& \quad&\text{in $\R^N_+$},\\
&\lambda \bQ - \beta \bD(\bu) - \Delta \bQ + a \bQ =0& \quad&\text{in $\R^N_+$},\\
&\bu= \bh, \enskip \pd_N \bQ=\bH& \quad&\text{on $\R^N_0$}.
\end{aligned}
\right.
\end{equation}
Let 
\begin{align*}
\widetilde X_q(\R^N_+) &= L_q(\R^N_+)^{N^3+N^2+N} \times L_q(\R^N_+; \R^{N^4}) \times L_q(\R^N_+; \R^{N^3}) \times L_q(\R^N_+; \bS_0) \times L_q(\R^N_+; \bS_0),\\
\widetilde Y_q(\R^N_+) &= \widetilde X_q(\R^N_+) \times L_q(\R^N_+;\R^{N^3})\times L_q(\R^N_+; \bS_0).
%\widehat H^1_{q, 0}(\R^N_+) &= \{\varphi \in L_{q, \text{loc}}(\R^N_+) \mid \nabla \varphi \in L_q(\R^N_+)^N, \enskip \varphi|_{x_N=0} = 0\}.
\end{align*}
\begin{thm}\label{thm:Rbdd}
Let $N \ge 2$ and $a>0$.
Let $1 < q < \infty$. Let $\epsilon \in (\epsilon_0, \pi/2)$ with $\tan \epsilon_0 \ge |\beta|/\sqrt 2$, and let $c_0$ be a small positive constant determined in Lemma \ref{lem:zl}.
Then
there exist %a positive constant $\lambda_0=\lambda_0(\sigma) \geq 1$
%and 
an operator families 
\begin{align*}
&\CA_2 (\lambda) \in 
{\rm Hol} (\Sigma_{\epsilon, c_0}, 
\CL(\widetilde X_q(\R^N_+), H^2_q(\R^N_+)^N))\\
&\CB_2 (\lambda) \in 
{\rm Hol} (\Sigma_{\epsilon, c_0}, 
\CL(\widetilde Y_q(\R^N_+), H^3_q(\R^N_+; \bS_0)))
%&\CC_2 (\lambda) \in 
%{\rm Hol} (\Sigma_{\epsilon, c_0},\CL(\widetilde X_q(\R^N_+), \widehat H^1_{q, 0}(\R^N_+)))
\end{align*}
such that 
for any $\lambda = \gamma + i\tau \in \Sigma_{\epsilon, c_0}$ with $|\lambda| \le c_0$
, $\bh \in H^2_q(\R^N_+)^N$ with $h_N=0$, and $\bH \in H^2_q(\R^N_+; \bS_0)$, 
\begin{align*}
\bu &= \CA_2 (\lambda) (\CS_\lambda\bh, \CS_\lambda\bH, \lambda^{1/2}\bH), \\
\bQ &= \CB_2 (\lambda) (\CS_\lambda\bh, \CT_\lambda\bH, \bH)
\end{align*}
are solutions of problem \eqref{r}
and 
\begin{align*}
&\CR_{\CL(\widetilde X_q(\R^N_+), A_q(\R^N_+))}
(\{(\tau \pd_\tau)^\ell \CS_\lambda \CA_2 (\lambda) \mid 
\lambda \in \Sigma_{\epsilon, c_0}\}) 
\leq r, \\
&\CR_{\CL(\widetilde Y_q(\R^N_+), B_q(\R^N_+))}
(\{(\tau \pd_\tau)^\ell \CT_\lambda \CB_2 (\lambda) \mid 
\lambda \in \Sigma_{\epsilon, c_0}\}) 
\leq r 
\end{align*}
for $\ell = 0, 1$,
where 
$\CS_\lambda$, $\CT_\lambda$, $A_q(\R^N_+)$, and $B_q(\R^N_+)$ are defined in Theorem \ref{thm:Rbdd H},
$r$ is a constant independent of $\lambda$.
\end{thm}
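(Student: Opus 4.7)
The plan is to follow the three-step procedure standard for half-space resolvent problems: derive an explicit solution formula via partial Fourier transform in $x'$; recast it via the Volevich trick as a family of Fourier multipliers acting on the compensated boundary data $(\CS_\lambda\bh,\CS_\lambda\bH,\lambda^{1/2}\bH)$ and $(\CS_\lambda\bh,\CT_\lambda\bH,\bH)$; and apply an Enomoto--Shibata type $\CR$-bounded Fourier multiplier theorem on $L_q(\R^N_+)$. The lower bounds for $\CC_a$ and $\CA_a$ established in Section~4 are exactly what is needed to control the denominators of the resulting symbols uniformly in $\lambda \in \Sigma_{\epsilon,c_0}$.

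The first step reduces \eqref{r} to an ODE system in $x_N > 0$ after applying $\CF_{x'}$. Eliminating $\hat\fp$ by taking $\dv$ of the first equation (together with the constraint $h_N=0$, which provides the variational equation for the pressure as noted in \cite[\S 2.1]{BM}) yields an ODE whose characteristic roots with negative real part are $-A$, $-B_a$, $-L_1(\lambda,\xi')$, $-L_2(\lambda,\xi')$; Lemma~\ref{lem:l} ensures these are distinct. Writing $(\hat\bu,\hat\bQ)$ as a linear combination of the corresponding exponentials and imposing the boundary conditions $\bu|_{x_N=0}=\bh$, $\pd_N\bQ|_{x_N=0}=\bH$ together with the divergence constraint leads to a linear system whose determinants are precisely $\CC_a(\lambda,\xi')$ for the velocity/pressure block and $\CA_a(\lambda,\xi')$ for the $\bQ$-block. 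Cramer's rule then produces solution formulae whose scalar constituents take the form
\[
\CF^{-1}_{\xi'}\!\left[\frac{P(A,B_a,L_1,L_2,\lambda)\,e^{-\gamma(\lambda,\xi')x_N}}{\CC_a(\lambda,\xi')}\,\hat g(\xi')\right](x'),
\]
and analogously with $\CA_a$ in the denominator for the $\bQ$ parts, where $P$ is polynomial in its arguments, $\gamma \in\{A,B_a,L_1,L_2\}$, and $g$ is a component of $\bh$ or $\bH$.

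In the second step we apply the Volevich trick: writing $\hat g(\xi') = -\int_0^\infty \pd_{y_N}\widehat{E[g]}(\xi',y_N)\,dy_N$ for a suitable extension $E[g]$ to $\R^N_+$ and distributing the roots via $\pd_{y_N}e^{-\gamma(x_N+y_N)} = -\gamma\,e^{-\gamma(x_N+y_N)}$, we trade each $x_N$-derivative or $\lambda$-power landing on the unknown for a matching weight belonging to $\CS_\lambda$ or $\CT_\lambda$ applied to $g$. The third step is to verify that the resulting symbols $m(\lambda,\xi')$ satisfy the Mikhlin-type estimates
\[
\bigl|(\tau\pd_\tau)^\ell D^\alpha_{\xi'} m(\lambda,\xi')\bigr| \le C\,|\xi'|^{-|\alpha|}, \qquad \ell\in\{0,1\},\ |\alpha|\le N,
\]
uniformly in $\lambda \in \Sigma_{\epsilon,c_0}$, which via the standard $\CR$-bounded Fourier multiplier theorem on $L_q(\R^N_+)$ produces the operator families $\CA_2(\lambda)$, $\CB_2(\lambda)$ with the claimed $\CR$-bounds. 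The derivative bounds on $B_a$, $L_1$, $L_2$ follow by the chain rule from Lemma~\ref{lem:zl} \eqref{est:l} (giving $|D^\alpha_{\xi'}L_j|\le C|\xi'|^{1-|\alpha|}$ thanks to $\Re L_j \gtrsim |L_j|$), and $(\tau\pd_\tau)^\ell$ derivatives reduce to differentiating $z_j(\lambda)$, controlled by Lemma~\ref{lem:zl} \eqref{z}.

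The main obstacle is the behavior as $|\lambda|\to 0$. The bound \eqref{est:ca} gives only $|1/\CC_a|\le C|\lambda|$, and the improved \eqref{est:ca l} gives $|1/\CC_a|\le C|\lambda|^{3/2}$ in the degenerate low-frequency regime $A^2 \le |\lambda|/R$; these singular $\lambda$-factors must be absorbed by the $\lambda$-weights packaged into $\CS_\lambda\bh$ and $\CT_\lambda\bH$. The delicate point is that in each branch of Cramer's rule the numerator $P$ produced from the boundary traces carries exactly one extra power of $A$, $B_a$, or $\lambda^{1/2}$ paired with a $\lambda$ from $\CC_a^{-1}$, and in the regime $A^2 \le |\lambda|/R$ the refined bound \eqref{est:ca l} is needed to absorb the $|\lambda|^{1/2}$ factor coming from $B_a$ or $\sqrt{\lambda+a-z_1}$ in the numerator. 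The $\CA_a$ branch is easier since \eqref{est:aa} gives $|1/\CA_a|\le C$ uniformly. Once this matching is carried out in each case -- a bookkeeping argument over finitely many symbol types -- every symbol in play is smooth in $\xi' \neq 0$ and satisfies the Mikhlin estimates, concluding the proof by assembling the $\CR$-bounds of the individual Fourier multiplier pieces.
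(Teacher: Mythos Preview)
Your outline matches the paper's strategy: explicit solution formula via partial Fourier transform, Volevich trick, then $\CR$-bounded multiplier theorems of Shibata--Shimizu/Enomoto--Shibata type. You also correctly identify the central difficulty (absorbing the $|\lambda|$-singularity from $\CC_a^{-1}$ into the compensated data, with the refined bound \eqref{est:ca l} needed in the regime $A^2 \le |\lambda|/R$).

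Two points of divergence from the paper are worth noting. First, your attribution ``$\CC_a$ for the velocity/pressure block and $\CA_a$ for the $\bQ$-block'' does not match the actual structure: in the paper's solution formula for $u_k$ (imported from \cite{BM} and displayed in Lemma~\ref{lem:sol form}), \emph{both} $\CC_a$ and $\CA_a$ appear in the velocity terms---$\CC_a$ in the coefficients of $\CM(L_1,A,\cdot)$ and both in the coefficients of $\CM(L_2,L_1,\cdot)$. Second, for $\bQ$ the paper does \emph{not} read off a formula from the coupled boundary system; instead it decouples: once $\bu$ is constructed, $\bQ$ is obtained from $(\lambda-\Delta+a)\bQ=\beta\bD(\bu)$ with Neumann data $\bH$, split as a whole-space piece $\bQ_1$ plus a boundary correction $\bQ_2$ solving a pure heat-type problem (Section~\ref{Rbdd Q}). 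This buys a much simpler symbol analysis for $\bQ$ (only $B_a$ appears, no $\CC_a$ or $\CA_a$), at the cost of composing with $\CS'_\lambda\CA_2(\lambda)$.

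Finally, what you call ``a bookkeeping argument over finitely many symbol types'' is in fact the technical heart of the paper (the long proof of Lemma~\ref{lem:sol form}). It is not just counting powers: one repeatedly inserts the identities $1=(z_j(\lambda)+A^2)/L_j^2$ and $L_1-A=z_1(\lambda)/(L_1+A)$, invokes the structural decompositions of $\CE$ and $E_k^h$ (Lemmas~\ref{lem:e}, \ref{lem:eh}), and---crucially---splits the integral according to whether $A^2\ge \omega_0$ or $A^2\le |\lambda|/R$ so that in the latter case \eqref{lem:ca l} can be invoked. Your sketch is correct in spirit, but a reader could not reconstruct the argument from it without these identities and case splits made explicit.
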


\subsection{Preliminary}
First, we define the classes of multipliers.
\begin{dfn}\label{def:m}
Let $\Sigma$ be a domain in $\C$ and let $m(\xi', \lambda): \R^{N-1} \setminus\{0\} \times \Sigma \to \C$ be a $C^\infty$ function with respect to $\xi' \in \R^{N-1} \setminus\{0\}$ and a $C^1$ function with respect to $\tau \in \R \setminus\{0\}$, where $\lambda=\gamma+i\tau$. 
Let $\ell=0, 1$.
\begin{enumerate}
\item
If there is $s \in \R$ such that for any $\xi' \in \R^{N-1}\setminus\{0\}$ and for any $\lambda\in \Sigma$, it holds
\[
    \left|(\tau\partial_\tau)^\ell D^\alpha_{\xi'} m(\xi',\lambda) \right| \le C(|\lambda|^\frac{1}{2}+|\xi'|)^{s-|\alpha|} \quad \forall \alpha \in \N_0^{N-1}
\]
with some constant $C$,
then we write $m(\xi',\lambda) \in \BM_{s, 1}$.
If for any $\xi' \in \R^{N-1}\setminus\{0\}$ and for any $\lambda \in \Sigma$, 
it holds
\[
\left|(\tau\partial_\tau)^\ell D^\alpha_{\xi'}m(\xi',\lambda)\right| \le C (|\lambda|^\frac{1}{2}+1+|\xi'|)^s(|\lambda|^\frac{1}{2}+|\xi'|)^{-|\alpha|} \quad \forall \alpha \in \N_0^{N-1}
\]
with some constant $C$,
then we write $m(\xi',\lambda) \in \widetilde\BM_{s, 1}$.

\item
If there is $s \in \R$ such that for any $\xi' \in \R^{N-1}\setminus\{0\}$ and for any $\lambda \in \Sigma$, 
it holds
\[
\left|(\tau\partial_\tau)^\ell D^\alpha_{\xi'}m(\xi',\lambda)\right| \le C (|\lambda|^\frac{1}{2}+|\xi'|)^{s}|\xi'|^{-|\alpha|} \quad \forall \alpha \in \N_0^{N-1}
\]
with some constant $C$,
then we write $m(\xi',\lambda) \in \BM_{s, 2}$.
If for any $\xi' \in \R^{N-1}\setminus\{0\}$ and for any $\lambda \in \Sigma$, 
it holds
\[
\left|(\tau\partial_\tau)^\ell D^\alpha_{\xi'}m(\xi',\lambda)\right| \le C (|\lambda|^\frac{1}{2}+1+|\xi'|)^{s}|\xi'|^{-|\alpha|} \quad \forall \alpha \in \N_0^{N-1}
\]
with some constant $C$,
then we write $m(\xi',\lambda) \in \widetilde\BM_{s, 2}$.

\item
If there is $s \in \R$ such that for any $\xi' \in \R^{N-1}\setminus\{0\}$ and for any $\lambda\in \Sigma$, it holds
\[
    \left|(\tau\partial_\tau)^\ell D^\alpha_{\xi'} m(\xi',\lambda) \right| \le C(|\lambda|^\frac{1}{2}+1+|\xi'|)^{s-|\alpha|} \quad \forall \alpha \in \N_0^{N-1}
\]
with some constant $C$,
then we write $m(\xi',\lambda) \in \BM_{s, 1}'$.
\end{enumerate}
\end{dfn}

\begin{remark}
By Definition \ref{def:m}, $\BM_{s, 1}' \subset \widetilde \BM_{s, 1}$ holds.
\end{remark}

The following lemmas will be used to prove the $\CR$-boundedness for the solution operators of the velocity and the order parameter.

\begin{lem}\label{m}
Let $a>0$, $s \in \R$.
\begin{enumerate}
\item\label{m-1}
Let $\epsilon \in (0, \pi/2)$.
Then
$A^s \in \BM_{s, 2}$ for $s \ge 0$.
In particular, $|D^\alpha_{\xi'} A^2| \le 2A^{2-|\alpha|}~(|\alpha| \le 2)$,
$|D^\alpha_{\xi'} A^2| = 0~(|\alpha| \ge 3)$.
\item\label{m-2}
Let $\epsilon \in (0, \pi/2)$ and let $\lambda \in \Sigma_\epsilon$.
Then
$B_a^s \in \widetilde\BM_{s, 1}$.
\item
Let $\epsilon \in (\theta_0, \pi/2)$ with $\tan \epsilon_0 \ge |\beta|/\sqrt 2$, and let $\lambda \in \Sigma_{\epsilon, c_0}$, where $c_0$ is a small constant determined in Lemma \ref{lem:zl}.
Then
\begin{enumerate}
\item\label{m3-a}
$L_1^s \in \BM_{s, 1}$. In particular, $L_1^s \in \widetilde \BM_{s, 1}$ for $s \ge 0$.
\item\label{m3-b}
$L_2^s \in \widetilde\BM_{s, 1}$.
\end{enumerate}
\end{enumerate}

\end{lem}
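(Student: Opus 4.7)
The plan is to verify each symbol class assertion by combining a pointwise two-sided estimate for the base quantity ($A$, $B_a$, $L_1$, $L_2$) with an inductive computation of its $\xi'$-derivatives and the single $\tau\partial_\tau$ derivative. All the analytical content is already available: Lemma \ref{spectrum} yields $|\lambda+a+A^2|\gtrsim |\lambda|+a+A^2$ for $\lambda\in\Sigma_\epsilon$, while Lemma \ref{lem:zl} gives the two-sided bounds $|z_1(\lambda)|\sim|\lambda|$, $|z_2(\lambda)|\sim|\lambda|+1$ together with $|L_j|\sim \Re L_j$ (and hence the corresponding two-sided bounds on $|L_j|$) for $\lambda\in\Sigma_{\epsilon,c_0}$.

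For (1), since $A=|\xi'|$ is independent of $\lambda$, the estimate reduces to the textbook fact $|D^\alpha_{\xi'} A^s|\le C_{s,\alpha} A^{s-|\alpha|}$ for $s\ge 0$, proved by induction starting from $\partial_j A=\xi_j/A$. The special case $A^2$ is handled by direct differentiation: $\partial_j A^2=2\xi_j$, $\partial_i\partial_j A^2=2\delta_{ij}$, and derivatives of order $\ge 3$ vanish.

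For (2) and (3), I would write $B_a^s=(\lambda+a+A^2)^{s/2}$ and $L_j^s=(z_j(\lambda)+A^2)^{s/2}$ and argue by induction on $|\alpha|$ using
\[
\partial_{\xi_k} B_a^s = s\xi_k B_a^{s-2}, \qquad \partial_{\xi_k} L_j^s = s\xi_k L_j^{s-2},
\]
so that each $\xi'$-derivative costs one factor of $\xi_k$ and drops the exponent by $2$. Combined with $|\xi_k|\le A\le |\lambda|^{1/2}+A$ and the pointwise bounds $|B_a|\sim |\lambda|^{1/2}+1+A$, $|L_1|\sim |\lambda|^{1/2}+A$, $|L_2|\sim |\lambda|^{1/2}+1+A$, one obtains exactly the decay claimed in each of the classes $\widetilde\BM_{s,1}$, $\BM_{s,1}$, $\widetilde\BM_{s,1}$. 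The fact that $\BM_{s,1}\subset\widetilde\BM_{s,1}$ for $s\ge 0$ is immediate from $|\lambda|^{1/2}+A\le|\lambda|^{1/2}+1+A$.

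The $\tau\partial_\tau$ estimates follow from a chain rule calculation: $\tau\partial_\tau B_a^s = (is/2)\tau B_a^{s-2}$ and $\tau\partial_\tau L_j^s=(is/2)\tau z_j'(\lambda)L_j^{s-2}$. For $B_a$ one uses $|\tau|\le|\lambda|\le|B_a|^2$; for $L_j$ the expansion \eqref{ez} in the proof of Lemma \ref{lem:zl} gives $z_1'(\lambda)=1/(1+\beta^2/2)+O(\lambda)$ and similarly for $z_2'$, hence $|\tau z_j'(\lambda)|\le C|\lambda|\lesssim |z_j(\lambda)|\lesssim |L_j|^2$ on $\Sigma_{\epsilon,c_0}$. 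Mixed $(\tau\partial_\tau)D^\alpha_{\xi'}$ derivatives are handled by the same two ingredients applied in any order, since every derivative produces either an extra $\xi_k$ factor or an extra $\tau z_j'(\lambda)$ factor while lowering the exponent of the base quantity by $2$, and the book-keeping is accounted for by the stated symbol classes. The only point requiring care is the distinction between the scale $(|\lambda|^{1/2}+|\xi'|)$ that governs $\xi'$-decay and the scale $(|\lambda|^{1/2}+1+|\xi'|)$ that governs the size of $B_a$ and $L_2$; I would insert the elementary inequality $(|\lambda|^{1/2}+|\xi'|)(|\lambda|^{1/2}+1+|\xi'|)^{-1}\le 1$ wherever needed to absorb the one-unit mismatch. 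No step seems to be a genuine obstacle; the delicate material is already in Lemma \ref{lem:zl}.
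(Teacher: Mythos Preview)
Your proposal is correct and follows essentially the same route as the paper. The paper's proof is extremely terse: it cites \cite{ShiS} and \cite{SS} for parts (1)--(2) and invokes Bell's (Fa\`a di Bruno) formula together with Lemma~\ref{lem:zl}\eqref{est:l} for part (3); your direct induction based on $\partial_{\xi_k}L_j^s=s\xi_k L_j^{s-2}$ and $\tau\partial_\tau L_j^s=(is/2)\tau z_j'(\lambda)L_j^{s-2}$ is exactly the unwinding of that argument, and your use of $(|\lambda|^{1/2}+|\xi'|)\le(|\lambda|^{1/2}+1+|\xi'|)$ to reconcile the two scales is the right bookkeeping step.
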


\begin{proof}
The first assertion was proved by \cite[Lemma 5.2]{ShiS}. As we mentioned in \cite[Lemma 3.2.4]{BM}, the second assertion follows from the same method as \cite[Lemma 5.2]{ShiS} and \cite[Lemma 4.4]{SS}.
Furthermore, the first assertion provide that $A^2 \in \BM_{2, 1}$ and $A^2 \in \widetilde \BM_{2, 1}$, together with Bell's formula and Lemma \ref{lem:zl} \eqref{est:l}, give us the third assertion.  
\end{proof}
\begin{lem}\label{m2}
Let $a>0$ and let $\lambda \in \Sigma_{\epsilon, c_0}$, where $c_0$ is a small constant determined in Lemma \ref{lem:zl}.
Let $s \in \R$.
Then the following assertions hold.
\begin{enumerate}
\item\label{m2-1}
$(A+B_a)^s \in \widetilde \BM_{s, 2}$.
\item\label{m2-2}
$(B_a+L_j)^s \in \widetilde \BM_{s, 1}$ for $j=1, 2$.
\item\label{m2-3}
$(A+L_1)^s \in \BM_{s, 2}$, \enskip $(A+L_2)^s \in \widetilde\BM_{s, 2}$.
\item\label{m2-4}
$B_a-A \in \widetilde \BM_{-1, 2}$, \enskip 
$L_2-A \in \widetilde \BM_{-1, 2}$, \enskip 
$L_j-B_a \in \widetilde \BM_{-1, 1}$.
In particular, $L_2-A \in \BM_{0, 2}$, \enskip 
$L_j-B_a \in \BM_{0, 1}$.
\item\label{m2-5}
$\dfrac{L_1-A}{B_a^2-L_j^2} \in \BM_{-1, 2}$ for $j=1, 2$, \enskip 
$\dfrac{L_2-A}{B_a^2-L_1^2} \in \widetilde\BM_{-1, 2}$, \enskip 
$\dfrac{B_a-A}{B_a^2-L_1^2} \in \widetilde\BM_{-1, 2}$, \enskip 
$\dfrac{B_a-L_2}{B_a^2-L_1^2} \in \widetilde\BM_{-1, 1}$.
\item\label{m2-6}
$\dfrac{\lambda}{\lambda+1}\dfrac{L_2-A}{B_a^2-L_2^2} \in \widetilde\BM_{-1, 2}$,
\enskip
$\dfrac{\lambda}{\lambda+1}\dfrac{B_a-A}{B_a^2-L_2^2} \in \widetilde\BM_{-1, 2}$.
%\item
%$\dfrac{L_2-A}{B_a^2-L_2^2}, \dfrac{L_1-B_a}{B_a^2-L_2^2}, \dfrac{B_a-A}{B_a^2-L_2^2}
%=\dfrac{1}{\lambda} m$, where $m \in \widetilde\BM_{-1, 1}$.
%\item
%$\dfrac{S_1-S_2}{\sqrt{\lambda+a}} \in \widetilde\BM_{0, 1}$ for $S_1, S_2=B_a, L_1, L_2$.
%\item
%$\left(\dfrac{S_1-S_2}{\sqrt{\lambda+a}}\right)^{-1} \in \widetilde\BM_{1, 1}$  for $S_1, S_2=B_a, L_1, L_2$ with $(S_1, S_2)\neq (B_a, L_2)$.
%\item
%$\left(\dfrac{B_a-L_2}{\sqrt{\lambda+a}}\right)^{-1}=\dfrac{1}{\lambda} m$,
%where $m \in \widetilde\BM_{-1, 1}$.
%\item
%$\dfrac{S-A}{\sqrt{\lambda+a}} \in \widetilde\BM_{0, 2}$ for $S=B_a, L_1, L_2$.

\end{enumerate}

\end{lem}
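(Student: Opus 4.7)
The plan is to derive these multiplier-class memberships by combining three ingredients: (i) the base results of Lemma \ref{m} for $A^s$, $B_a^s$, $L_1^s$, $L_2^s$; (ii) lower bounds of sums using the fact that $A\ge 0$ and $\Re B_a$, $\Re L_j$ are strictly positive (with size $(|\lambda|+1+A^2)^{1/2}$ for $B_a$, $L_2$ and $(|\lambda|+A^2)^{1/2}$ for $L_1$ by Lemma \ref{lem:zl}\eqref{est:l}); and (iii) the rationalization identity $X-Y=(X^2-Y^2)/(X+Y)$, which converts the delicate differences into quotients whose numerators depend only on $\lambda$.

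For assertions \eqref{m2-1}--\eqref{m2-3}, note that whenever $X,Y$ have nonnegative real part we have $|X+Y|\ge \max(|\Re X|,|\Re Y|)$, so from Lemma \ref{lem:zl}\eqref{est:l} it follows that
\[
|A+B_a|,\ |B_a+L_j|,\ |A+L_2|\ge c(|\lambda|^{1/2}+1+A), \qquad |A+L_1|\ge c(|\lambda|^{1/2}+A).
\]
Applying Faà di Bruno's formula to $(X+Y)^s$ and using the base classes $A\in\BM_{1,2}$, $B_a\in\widetilde\BM_{1,1}$, $L_1\in\BM_{1,1}$, $L_2\in\widetilde\BM_{1,1}$, each $\xi'$-derivative costs either a factor $|\xi'|^{-1}$ (if $A$ contributes, giving a subscript-$2$ class) or $(|\lambda|^{1/2}+|\xi'|)^{-1}$ (if only $B_a$, $L_j$ contribute, giving a subscript-$1$ class). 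This fixes all classes in \eqref{m2-1}--\eqref{m2-3}; in particular $(A+L_1)^s\in\BM_{s,2}$ because $A$ absorbs the $|\xi'|$ behavior and $L_1$ carries no $+1$ in its bound.

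For assertion \eqref{m2-4} I rationalize
\[
B_a-A=\frac{\lambda+a}{B_a+A},\qquad L_2-A=\frac{z_2(\lambda)}{L_2+A},\qquad L_j-B_a=\frac{z_j(\lambda)-(\lambda+a)}{L_j+B_a}.
\]
Each numerator is $\xi'$-independent, so Leibniz's rule only acts on the denominators, whose classes are known from \eqref{m2-1}--\eqref{m2-3}. Since $|\lambda+a|$, $|z_j(\lambda)-(\lambda+a)|$ are uniformly bounded by Lemma \ref{lem:zl}\eqref{z} (and the expansion \eqref{ez}), and $|z_2(\lambda)|\le C(|\lambda|+1)$, the homogeneity index shifts down by one to yield the stated $\widetilde\BM_{-1,\cdot}$ memberships. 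The sharper statements $L_2-A\in\BM_{0,2}$ and $L_j-B_a\in\BM_{0,1}$ follow by absorbing the $|\lambda|+1$ from the numerator against one power of the denominator's lower bound. For \eqref{m2-5}--\eqref{m2-6}, I divide the results of \eqref{m2-4} by the $\xi'$-independent factor $B_a^2-L_j^2=(\lambda+a)-z_j(\lambda)$, using the lower bounds $|(\lambda+a)-z_1(\lambda)|\ge c(|\lambda|+1)$ and $|(\lambda+a)-z_2(\lambda)|\ge c|\lambda|$ obtained from \eqref{ez} and Lemma \ref{spectrum}. In \eqref{m2-6}, the prefactor $\lambda/(\lambda+1)$ precisely compensates for the weaker lower bound on $(\lambda+a)-z_2(\lambda)$, restoring the required $\widetilde\BM_{-1,2}$ class.

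The main obstacle will be the bookkeeping of which sums and quotients sit in the tilde-class (the $+1$ is needed, corresponding to quantities whose real part is bounded below by an $a$-dependent constant even as $\lambda,\xi'\to 0$) versus the non-tilde class, and whether each derivative loses $|\xi'|^{-1}$ (subscript $2$) or $(|\lambda|^{1/2}+|\xi'|)^{-1}$ (subscript $1$). The distinction is exactly the one between $L_1$ (which vanishes with $\lambda^{1/2}$ as $\xi'\to 0$) and $L_2$ (which remains of order $1$ there), and propagates through every identity; the careful tracking of this dichotomy via Lemma \ref{lem:zl} is the technical heart of the proof.
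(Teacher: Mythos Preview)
Your proposal is correct and follows essentially the same route as the paper: lower bounds on the sums $A+B_a$, $B_a+L_j$, $A+L_j$ via positivity of real parts (Lemma \ref{lem:zl}\eqref{est:l}), Bell/Fa\`a di Bruno for the powers, and the rationalization $X-Y=(X^2-Y^2)/(X+Y)$ for the differences, with the $\xi'$-independent factors $z_j(\lambda)$, $\lambda+a$, $\lambda+a-z_j(\lambda)$ controlled by \eqref{ez} and Lemma \ref{lem:zl}\eqref{z}. One small remark: in \eqref{m2-5} the quotient $\dfrac{L_1-A}{B_a^2-L_j^2}$ involves $L_1-A$, which you did not list in \eqref{m2-4}; of course the same rationalization $L_1-A=z_1(\lambda)/(L_1+A)$ together with $|z_1(\lambda)|\le C|\lambda|$ and $(A+L_1)^{-1}\in\BM_{-1,2}$ handles it, and the paper writes this out explicitly as $\dfrac{z_1(\lambda)}{\lambda+a-z_j(\lambda)}\dfrac{1}{L_1+A}$.
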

\begin{proof}
Since the proof is essentially the same as Lemma 3.2.10 of \cite{BM}, we write the part of the proof.
\begin{enumerate}
\item
This was proved by \cite[Lemma 5.2]{ShiS}.
\item
The second assertion will be obtained by Bell's formula and the following estimate 
\begin{equation}\label{bl}
c(|\lambda|^{1/2}+1+A) \le |B_a + L_j| \le C(|\lambda|^{1/2}+1+A)
\end{equation}
for any $(\xi', \lambda) \in \R^{N-1}\setminus\{0\} \times \Sigma_{\epsilon, c_0}$
with some constant $c$ and $C$ depending on $\beta$, $\epsilon$, and $a$.
Let us prove \eqref{bl}.
Since $\Re L_j >0$,
\[
|B_a+L_j| \ge \Re (B_a+L_j) \ge \Re B_a.
\] 
If $\lambda \in \Sigma_\epsilon$, we have $\lambda+a+A^2 \in \Sigma_\epsilon$, then $|\arg B_a| \le \pi/2 - \epsilon/2$.
Thus,
\[
\Re B_a \ge |\lambda+a+A^2|^{1/2} \cos(\pi/2 - \epsilon/2) = |\lambda+a+A^2|^{1/2}\sin (\epsilon/2) \ge C_{\epsilon, a}(|\lambda|+1+A^2)^{1/2}
\]
for any $(\xi', \lambda) \in \R^{N-1}\setminus\{0\} \times \Sigma_\epsilon$.
On the other hand, Lemma \ref{m} implies that $|B_a + L_j| \le C(|\lambda|^{1/2}+1+A)$ for any $(\xi', \lambda) \in \R^{N-1}\setminus\{0\} \times \Sigma_{\epsilon, c_0}$
, then we have \eqref{bl}.
\item
Since $|A+L_1| \ge \Re(A+L_1) \ge \Re L_1$
and $|A+L_2| \ge \Re(A+L_2) \ge \Re L_2$, together with Lemma \ref{lem:zl} \eqref{est:l}, we also have the third assertion.

\item
The first, second, and third assertions, together with
\begin{align*}
B_a - A &= \frac{B_a^2-A^2}{B_a+A}= \frac{\lambda+a}{B_a+A}, \enskip
L_2 - A = \frac{L_2^2-A^2}{L_2+A} = \frac{z_2(\lambda)}{L_2+A}, \\
L_j - B_a &=\frac{L_j^2 - B_a^2}{L_j +B_a} = \frac{z_j(\lambda)-(\lambda+a)}{L_j+B_a},
\end{align*}
furnish that $B_a-A \in \widetilde \BM_{-1, 2}$, $L_2-A \in \widetilde \BM_{-1, 2}$, and $L_j - B_a \in \widetilde \BM_{-1, 1}$.
Since $\BM_{-1, j} \subset \BM_{0, j}$ for $j=1, 2$, the fourth assertions hold for $\lambda \in \Sigma_{\epsilon, c_0}$.

\item
Note that
\[
B_a-A=\frac{B_a^2-A^2}{B_a+A}=\frac{\lambda+a}{B_a+A},
\quad
L_j-A=\frac{L_j^2-A^2}{L_j+A}=\frac{z_j(\lambda)}{L_j+A}~(j=1, 2).
\]
By the above assertions \eqref{m2-1} and \eqref{m2-3}, we have
\begin{align*}
\frac{L_1-A}{B_a^2-L_j^2}&=\frac{z_1(\lambda)}{\lambda+a-z_j(\lambda)} \frac{1}{L_1+A} \in \BM_{-1, 2},\\
\frac{L_2-A}{B_a^2-L_1^2}&=\frac{z_2(\lambda)}{\lambda+a-z_1(\lambda)} \frac{1}{L_2+A} \in \widetilde\BM_{-1, 2},\\
\frac{B_a-A}{B_a^2-L_1^2}&=\frac{\lambda+a}{\lambda+a-z_1(\lambda)} \frac{1}{B_a+A} \in \widetilde\BM_{-1, 2},\\
\frac{B_a-L_2}{B_a^2-L_1^2}&=\frac{\lambda+a-z_2(\lambda)}{\lambda+a-z_1(\lambda)} \frac{1}{B_a+L_2} \in \widetilde\BM_{-1, 1}.
\end{align*}
\item
This assertion holds by the same way as \eqref{m2-5}.
\end{enumerate}
\end{proof}

\begin{lem}\label{lem:ca}
Let $a>0$. Let $\xi' \in \R^{N-1} \setminus \{0\}$ and $\lambda \in \Sigma_{\epsilon, c_0}$, where $c_0$ is a small constant determined in Lemma \ref{lem:zl}.
Then %$\lambda \CC_a = A m_0 + m_1$ with $m_0 \in \BM_{-1, 2}$, $m_1 \in \BM_{0, 2}$. Furthermore, 
$\lambda \CC_a \in \BM_{0, 2}$.
In addition, let $\ell=0, 1$. Then there exists a large number $R$ such that 

\begin{enumerate}
\item
\begin{equation}\label{lem:ca h}
|\lambda|^{-1} |(\tau \pd_{\tau})^\ell D^\alpha_{\xi'} \CC_a(\lambda, \xi')^{-1}| \le C_\alpha |\xi'|^{-|\alpha'|}.
%\quad \text{if} \enskip \frac{1}{R} \le A^2.
\end{equation}
\item
\begin{equation}\label{lem:ca l}
|\lambda|^{-3/2} |(\tau \pd_{\tau})^\ell D^\alpha_{\xi'} \CC_a(\lambda, \xi')^{-1}| \le C_\alpha |\xi'|^{-|\alpha'|}
\quad \text{if} \enskip A^2 \le \frac{|\lambda|}{R}.
\end{equation}

%\item
%\begin{equation}\label{lem:ca m}
%|\lambda|^{-2} |(\tau \pd_{\tau})^\ell D^\alpha_{\xi'} \CC_a^{-1}| \le C_\alpha |\xi'|^{-|\alpha'|}
%\quad \text{if}  \enskip \frac{1}{R} \le A^2 \le \frac{c_0+a}{r}.
%\end{equation}

\end{enumerate}
 
\end{lem}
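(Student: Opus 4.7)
The strategy is to first establish $\lambda\CC_a \in \BM_{0,2}$ by rewriting $\lambda\CC_a = \CI_1 + \CI_2/(L_2-L_1)$ in a form to which the multiplier calculus of Lemmas \ref{m} and \ref{m2} can be applied term by term. Two identities make this effective: the algebraic identity
\[
2AB_a - (B_a^2+A^2) = -(B_a-A)^2,
\]
which recasts $\CI_1$ as $-\beta A^3(B_a-A)^2/[B_a(B_a^2-A^2)]$, and
\[
L_2 - L_1 = \frac{L_2^2 - L_1^2}{L_1+L_2} = \frac{z_2(\lambda)-z_1(\lambda)}{L_1+L_2},
\]
whose numerator is bounded below by a positive constant on $\Sigma_{\epsilon,c_0}$ thanks to the expansions \eqref{ez}. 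Combined with the class information for $B_a$, $L_j$, their sums $A+L_j$, $B_a+L_j$, and the differences $B_a-A$, $L_j-A$, $B_a-L_j$ collected in Lemma \ref{m2}, a Leibniz-rule expansion shows that each of $\CI_1$ and $\CI_2/(L_2-L_1)$, hence their sum, satisfies the $\BM_{0,2}$ estimates for $\ell=0,1$.

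Next, I would write
\[
\CC_a(\lambda,\xi')^{-1} = \lambda\cdot\bigl(\lambda\CC_a(\lambda,\xi')\bigr)^{-1}
\]
and differentiate by Fa\`a di Bruno's (Bell's) formula: the $D^\alpha_{\xi'}$-derivative of $1/(\lambda\CC_a)$ is a finite linear combination of terms of the shape
\[
\frac{\prod_{j} D^{\alpha_j}_{\xi'}(\lambda\CC_a)}{(\lambda\CC_a)^{k+1}}, \qquad \sum_j \alpha_j = \alpha,
\]
and each numerator factor is controlled via the $\BM_{0,2}$ membership just established, while the denominator is bounded below, uniformly in $(\lambda,\xi')\in\Sigma_{\epsilon,c_0}\times(\R^{N-1}\setminus\{0\})$, by Lemma \ref{lem:bound ca}\eqref{bound ca 1} in the form $|\lambda\CC_a|\ge C_{\beta,\epsilon,a}$. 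The $\tau\pd_\tau$ derivative is absorbed into the same scheme by noting that $(\tau\pd_\tau)^\ell\lambda = O(|\lambda|)$ for $\ell\ge 0$ and that $\tau\pd_\tau(\lambda\CC_a)\in\BM_{0,2}$ by construction. Multiplying by the outer factor $\lambda$ then yields \eqref{lem:ca h}.

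For the low-frequency regime $A^2\le |\lambda|/R$, exactly the same Fa\`a di Bruno scheme applies, but with the stronger pointwise lower bound $|\lambda\CC_a|\ge C_{\beta,\epsilon,a}|\lambda|^{-1/2}$ from Lemma \ref{lem:bound ca}\eqref{bound ca 2} in place of the previous one. This replaces the factor $|\lambda|$ coming from the outer $\lambda$ by $|\lambda|^{3/2}$ after taking reciprocals, producing \eqref{lem:ca l}.

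I expect the main obstacle to be the bookkeeping in proving $\lambda\CC_a\in\BM_{0,2}$: since $\CI_1$ and $\CI_2$ are rational expressions in $A,B_a,L_1,L_2$ whose factors belong to different classes ($\BM_{s,j}$ versus the larger $\widetilde\BM_{s,j}$), one must track which factors may incur the extra ``$+1$'' growth and verify that the overall product still lies in the stricter class $\BM_{0,2}$ rather than only in $\widetilde\BM_{0,2}$. The two identities above, together with the entries of Lemma \ref{m2} on $(B_a-A)$, $(L_j-A)$, $(B_a-L_j)$ and on the denominator quotients $(L_1-A)/(B_a^2-L_j^2)$ and friends, are tailored to absorb every potentially problematic factor and deliver the desired class.
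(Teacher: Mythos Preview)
Your global strategy matches the paper exactly: first prove $\lambda\CC_a\in\BM_{0,2}$, then combine Bell's formula with the lower bounds of Lemma~\ref{lem:bound ca} (parts \eqref{bound ca 1} and \eqref{bound ca 2}) to obtain \eqref{lem:ca h} and \eqref{lem:ca l}. That part is fine and essentially identical to the paper's argument.

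The gap is in your plan for proving $\lambda\CC_a\in\BM_{0,2}$. The two identities you list --- $2AB_a-(B_a^2+A^2)=-(B_a-A)^2$ for $\CI_1$, and $L_2-L_1=(z_2-z_1)/(L_1+L_2)$ for the denominator --- are not enough. The obstruction sits inside the curly brackets of $\CI_2$: each factor
\[
2A^2L_j-\frac{(B_a^2+A^2)(L_j^2+A^2)}{2B_a}
\]
is, term by term, of size $\sim A^3$ for large $A$, so a naive Leibniz bookkeeping with Lemmas~\ref{m} and~\ref{m2} places $\CI_2/(L_2-L_1)$ only in $\widetilde\BM_{4,\,\cdot}$, not $\BM_{0,2}$. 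One needs a further algebraic identity exhibiting the cancellation, for instance
\[
4A^2L_jB_a-(B_a^2+A^2)(L_j^2+A^2)=-A^2(B_a-L_j)^2-(B_aL_j-A^2)^2,
\]
after which Lemma~\ref{m2}\eqref{m2-4} and Lemma~\ref{lem:bl} bring each bracket down to $\widetilde\BM_{-1,\,\cdot}$ and the product closes in $\BM_{0,2}$. The paper circumvents this by quoting a fully simplified closed form for $\CI_1+\CI_2/(L_2-L_1)$ (the display \eqref{f.alt.C-it}) in which all such cancellations have already been carried out, and then reads off $\BM_{0,2}$ from Lemma~\ref{m2}\eqref{m2-4},\eqref{m2-5}. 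Your ``main obstacle'' paragraph correctly anticipates that the bookkeeping is the hard part, but misidentifies the difficulty as a $\BM$-versus-$\widetilde\BM$ issue; the real issue is that the raw brackets in $\CI_2$ are three orders too large until you exhibit the cancellation algebraically.
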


\begin{proof}
We rewrite $\CC_a=\CC_a(\lambda, \xi')$ as follows:
\[
\mathcal{C}_a=\frac{1}{\lambda}\left(\mathcal{I}_1+\frac{\mathcal{I}_2}{L_2-L_1}\right)
\]
with 
\begin{equation}\label{f.alt.C-it}
    \begin{aligned}
        & \mathcal{I}_1+\frac{\mathcal{I}_2}{L_2-L_1}= -\frac{\beta A^3(B_a^2-A^2)}{B_a(B_a+A)^2}\\
        & + \frac{\beta A(L_1-A)(L_2-A)[(L_2A-B_a^2)(L_1-A)+L_2(A^2-B_a^2)]}{(B_a^2-L_1^2)(B_a^2-L_2^2)} \\
        & + \frac{\beta (B_a-A)^2[(A^2B_a^2+A^2L_1L_2+B_a^2L_1L_2+L_2^2B_a^2)(A-L_1)+(B_a^2-L_2^2)AL_1(L_1+A)]}{2B_a(B_a^2-L_1^2)(B_a^2-L_2^2)}.
    \end{aligned}
\end{equation}
Note that 
\[
(L_2 A-B_a^2)(L_1-A) = (L_2-B_a)(A+B_a) + (A-L_2)B_a,
\]
which appears on the second line of \eqref{f.alt.C-it}.
Then Lemma \ref{m} and Lemma \ref{m2} \eqref{m2-4}, \eqref{m2-5}
imply that
$\lambda \CC_a \in \BM_{0, 2}$.
Let $s \in \N$. 
%In the case that $1/R \le A^2$,
Bell's formula and $\lambda \CC_a \in \BM_{0, 2}$, together with \eqref{est:ca}, 
furnish that
\begin{equation}\label{ca-1}
\begin{aligned}
|D^{\alpha}_{\xi'} \CC_a^{-s}| &\le C_\alpha \sum^{|\alpha|}_{\ell=1} |\CC_a|^{-s-\ell} \sum_{\substack{\alpha_1+\cdots \alpha_\ell=\alpha, \\ |\alpha_k|\ge 1}}
|D^{\alpha_1}_{\xi'}C_a| \cdots |D^{\alpha_\ell}_{\xi'}C_a|\\
&\enskip \le C_{\alpha} \sum^{|\alpha|}_{\ell=1} |\lambda|^{s+\ell} |\lambda|^{-\ell} |\xi'|^{-|\alpha|}
\enskip \le C_{\alpha} |\lambda|^s |\xi'|^{-|\alpha|}.
\end{aligned}
\end{equation}
Now we consider the estimate of $\tau \pd_{\tau} \CC_a^{-1}=-(\tau \pd_{\tau} \CC_a) \CC_a^{-2}$.
By Leibniz rule, $\lambda \CC_a \in \BM_{0, 2}$, and \eqref{ca-1} with $s=2$,
\[
|D^{\alpha}_{\xi'} \tau \pd_{\tau} \CC_a^{-1}| \le C \sum_{\beta+\gamma = \alpha}|D^{\beta}_{\xi'} \tau \pd_{\tau} \CC_a| 
|D^{\gamma}_{\xi'} \CC_a^{-2}| \le C_\alpha |\lambda| |\xi'|^{-|\alpha|},
\]
which furnishes that \eqref{lem:ca h}.

In the case that $A^2 \le |\lambda|/R$,
\eqref{est:ca l} furnishes that
\begin{equation}\label{ca-2}
\begin{aligned}
|D^{\alpha}_{\xi'} \CC_a^{-s}| &\le C_\alpha \sum^{|\alpha|}_{\ell=1} |\CC_a|^{-s-\ell} \sum_{\substack{\alpha_1+\cdots \alpha_\ell=\alpha, \\ |\alpha_k|\ge 1}}
|D^{\alpha_1}_{\xi'}C_a| \cdots |D^{\alpha_\ell}_{\xi'}C_a|\\
& \le C_{\alpha} \sum^{|\alpha|}_{\ell=1} |\lambda|^{3(s+\ell)/2} |\lambda|^{-\ell} |\xi'|^{-|\alpha'|}
\le C_{\alpha} |\lambda|^{3s/2} |\xi'|^{-|\alpha'|}
%\le C_\alpha |\lambda|^s |\xi'|^{-|\alpha|}
\end{aligned}
\end{equation}
for $|\lambda| \le c_0$. Similarly, \eqref{ca-2} with $s=2$ implies that
\[
|D^{\alpha}_{\xi'} \tau \pd_{\tau} \CC_a^{-1}| \le C \sum_{\beta+\gamma = \alpha}|D^{\beta}_{\xi'} \tau \pd_{\tau} \CC_a| 
|D^{\gamma}_{\xi'} \CC_a^{-2}| \le C_\alpha |\lambda|^2 |\xi'|^{-|\alpha|} \le C_\alpha |\lambda|^{3/2} |\xi'|^{-|\alpha|},
\]
then we have \eqref{lem:ca l}, which completes the proof of Lemma \ref{lem:ca}.
\end{proof}

Lemma \ref{lem:ca} implies that the following corollary.
\begin{cor}\label{cor:ca}
Let $a>0$ and $\lambda \in \Sigma_{\epsilon, c_0}$, where $c_0$ is a small constant determined in Lemma \ref{lem:zl}.
Then %$\lambda \CC_a = A m_0 + m_1$ with $m_0 \in \BM_{-1, 2}$, $m_1 \in \BM_{0, 2}$. Furthermore, 
$\lambda^{-1}\CC_a^{-1} \in \BM_{0, 2}$.
\end{cor}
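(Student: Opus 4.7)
The plan is to verify directly the defining estimate of the class $\BM_{0,2}$ by invoking Lemma \ref{lem:ca} as a black box; no new analytic input is required. Recall that $\lambda^{-1}\CC_a^{-1} \in \BM_{0,2}$ means
\[
|(\tau\pd_\tau)^\ell D^\alpha_{\xi'}(\lambda^{-1}\CC_a^{-1})| \le C_\alpha\, (|\lambda|^{1/2}+|\xi'|)^0 |\xi'|^{-|\alpha|} = C_\alpha |\xi'|^{-|\alpha|}
\]
for every $\alpha \in \N_0^{N-1}$, $\ell = 0,1$, $\xi' \ne 0$, and $\lambda \in \Sigma_{\epsilon,c_0}$.

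For the case $\ell = 0$, the factor $\lambda^{-1}$ is independent of $\xi'$, so $D^\alpha_{\xi'}(\lambda^{-1}\CC_a^{-1}) = \lambda^{-1} D^\alpha_{\xi'}\CC_a^{-1}$. Taking absolute values, the required estimate reduces to
\[
|\lambda|^{-1} |D^\alpha_{\xi'}\CC_a^{-1}| \le C_\alpha |\xi'|^{-|\alpha|},
\]
which is exactly \eqref{lem:ca h} with $\ell = 0$.

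For the case $\ell = 1$, I would first commute $\tau\pd_\tau$ with $D^\alpha_{\xi'}$ and apply the Leibniz rule to the product $\lambda^{-1}\cdot \CC_a^{-1}$:
\[
(\tau\pd_\tau) D^\alpha_{\xi'}(\lambda^{-1}\CC_a^{-1}) = (\tau\pd_\tau \lambda^{-1})\, D^\alpha_{\xi'}\CC_a^{-1} + \lambda^{-1}\, (\tau\pd_\tau) D^\alpha_{\xi'}\CC_a^{-1}.
\]
Writing $\lambda = \gamma + i\tau$ gives $\tau\pd_\tau\lambda^{-1} = -i\tau\lambda^{-2}$, hence $|\tau\pd_\tau\lambda^{-1}| \le |\lambda|^{-1}$. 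Therefore
\[
|(\tau\pd_\tau) D^\alpha_{\xi'}(\lambda^{-1}\CC_a^{-1})| \le |\lambda|^{-1}|D^\alpha_{\xi'}\CC_a^{-1}| + |\lambda|^{-1}|(\tau\pd_\tau)D^\alpha_{\xi'}\CC_a^{-1}|,
\]
and both terms are controlled by $C_\alpha |\xi'|^{-|\alpha|}$ via \eqref{lem:ca h} with $\ell = 0$ and $\ell = 1$ respectively.

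There is essentially no obstacle; the corollary is a routine packaging of Lemma \ref{lem:ca}. The only subtle point worth flagging is that one must use the $\ell=0$ estimate from Lemma \ref{lem:ca} even when verifying the $\ell=1$ bound for $\lambda^{-1}\CC_a^{-1}$, because the differentiation of $\lambda^{-1}$ produces a term without the inner $\tau\pd_\tau$ on $\CC_a^{-1}$.
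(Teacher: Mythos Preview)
Your proof is correct and follows exactly the route the paper intends: the paper simply states that Corollary~\ref{cor:ca} is an immediate consequence of Lemma~\ref{lem:ca}, and you have supplied the straightforward details (Leibniz rule in $\tau$ plus $|\tau\pd_\tau\lambda^{-1}|\le|\lambda|^{-1}$) that make \eqref{lem:ca h} yield the $\BM_{0,2}$ bound.
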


\begin{lem}\label{lem:aa}
Let $a>0$ and $\lambda \in \Sigma_{\epsilon, c_0}$, where $c_0$ is a small constant determined in Lemma \ref{lem:zl}.
Then
$(\lambda+a)^{-1}\CA_a \in \widetilde \BM_{2, 1}$ and $(\lambda+a)\CA_a^{-1} \in \widetilde \BM_{-2, 1}$.
\end{lem}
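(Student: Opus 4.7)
The plan is to first establish $(\lambda+a)^{-1}\CA_a \in \widetilde\BM_{2,1}$ by inspecting each factor, and then to deduce $(\lambda+a)\CA_a^{-1} \in \widetilde\BM_{-2,1}$ by combining this multiplier estimate with a matching pointwise lower bound via a Bell's formula computation, in complete analogy with the treatment of $\CC_a^{-1}$ in Lemma \ref{lem:ca}.

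For the first claim I would exploit the identity $B_a^2 - A^2 = \lambda + a$ to rewrite
\[
\frac{\CA_a(\lambda,\xi')}{\lambda+a} = B_a(L_1+L_2) - \frac{A^2(B_a-L_1)(B_a-L_2)}{\lambda+a}.
\]
The first term lies in $\widetilde\BM_{2,1}$ directly from Lemma \ref{m}: $B_a\in\widetilde\BM_{1,1}$ by \eqref{m-2} and $L_j\in\widetilde\BM_{1,1}$ by \eqref{m3-a}--\eqref{m3-b}, so $B_a(L_1+L_2)\in\widetilde\BM_{2,1}$. For the second term, Lemma \ref{m} \eqref{m-1} gives $A^2\in\BM_{2,1}\subset\widetilde\BM_{2,1}$, and Lemma \ref{m2} \eqref{m2-4} gives $B_a-L_j\in\widetilde\BM_{-1,1}$; consequently $A^2(B_a-L_1)(B_a-L_2)\in\widetilde\BM_{0,1}$. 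After possibly shrinking $c_0$ so that $c_0\le a/2$, the scalar $(\lambda+a)^{-1}$ is smooth and bounded together with its $\tau\partial_\tau$-derivative on $\Sigma_{\epsilon,c_0}$, hence $(\lambda+a)^{-1}\in\widetilde\BM_{0,1}$. Multiplication and the inclusion $\widetilde\BM_{0,1}\subset\widetilde\BM_{2,1}$ complete this step.

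For the reciprocal, I would first upgrade \eqref{est:aa} to the weighted lower bound
\[
|(\lambda+a)^{-1}\CA_a(\lambda,\xi')| \ge C_{\beta,\epsilon,a}(|\lambda|^{1/2}+1+|\xi'|)^2
\]
by revisiting the three frequency regimes of the preceding subsection. In the regime $(c_0+a)/r \le A^2$ the proof of \eqref{est:aa} actually produces $|\CA_a|\ge C_\epsilon(|\lambda|+a)A^2$; since $|\lambda|^{1/2}+1\le C$ on $\Sigma_{\epsilon,c_0}$ and $A$ is bounded below by a positive constant, $(|\lambda|^{1/2}+1+A)^2 \le CA^2$ and the claim follows after dividing by $|\lambda+a|$. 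In the two remaining regimes $A^2 \le 1/R$ and $1/R \le A^2 \le (c_0+a)/r$ the weight $(|\lambda|^{1/2}+1+|\xi'|)^2$ stays uniformly bounded, so the positive lower bounds on $|\CA_a|$ already proved suffice. With this weighted lower bound in hand, Bell's formula applied to the reciprocal of $(\lambda+a)^{-1}\CA_a$ together with the multiplier estimate from the first step yields
\[
\bigl|(\tau\partial_\tau)^\ell D^\alpha_{\xi'}\bigl((\lambda+a)\CA_a^{-1}\bigr)\bigr| \le C_\alpha(|\lambda|^{1/2}+1+|\xi'|)^{-2}(|\lambda|^{1/2}+|\xi'|)^{-|\alpha|}
\]
for $\ell=0,1$, exactly as in the estimates \eqref{ca-1}--\eqref{ca-2} of Lemma \ref{lem:ca}, with the $\tau\partial_\tau$ derivative handled by the quotient rule.

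The main obstacle is the first regime of the lower bound: \eqref{est:aa} as stated carries only the $\xi'$-independent weight $(|\lambda|+1)^2$, so one must reopen the case analysis for large $A$ and use the hidden factor $A^2$ to match the weight $(|\lambda|^{1/2}+1+|\xi'|)^2$ required by membership in $\widetilde\BM_{-2,1}$; once this is observed the remaining bookkeeping is routine.
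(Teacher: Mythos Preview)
Your proposal is correct and follows essentially the same route as the paper's proof: both parts rely on the same multiplier facts from Lemmas \ref{m} and \ref{m2}, and the reciprocal is obtained by a Bell's formula computation after revisiting the three frequency regimes to secure a sharper lower bound than \eqref{est:aa}. The only cosmetic difference is that the paper normalizes by $B_a^2$, establishing $|(\lambda+a)^{-1}B_a^{-2}\CA_a|\ge C$ and $(\lambda+a)^{-1}B_a^{-2}\CA_a\in\BM_{0,1}$ so that the inversion is carried out entirely in $\BM_{0,1}$, whereas you keep the weight $(|\lambda|^{1/2}+1+|\xi'|)^2$ explicit throughout; the bookkeeping is identical.
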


\begin{proof}
First, we prove $(\lambda+a)^{-1}\CA_a \in \widetilde \BM_{2, 1}$.
Recall that 
\begin{align*}
\CA_a=\CA_a(\lambda, \xi')
&=B_a(B_a^2-A^2)(L_1+L_2)-A^2(B_a-L_1)(B_a-L_2)\\
&=(\lambda+a)B_a(L_1+L_2)-(\lambda+a-z_1(\lambda))\frac{A^2(B_a-L_2)}{B_a+L_1}.
\end{align*}
Lemma \ref{m} implies that
$B_a, L_1+L_2 \in \widetilde \BM_{1, 1}$, 
$A^2 \in \widetilde \BM_{2, 1}$.
Furthermore, it holds
\[
\frac{B_a-L_2}{B_a+L_1}=\frac{B_a^2-L_2^2}{(B_a+L_1)(B_a+L_2)}=\frac{(\lambda+a-z_2(\lambda))^{1/2}}{B_a+L_1}\frac{(\lambda+a-z_2(\lambda))^{1/2}}{B_a+L_2} \in \BM_{0, 1}
\]
by Lemma \ref{m2} \eqref{m2-2} and \eqref{ez}.
Then
we have $(\lambda+a)^{-1}\CA_a \in \widetilde \BM_{2, 1}$.

Next, we prove $(\lambda+a)\CA_a^{-1} \in \widetilde \BM_{-2, 1}$.
By \eqref{est:aa}, there exists a positive constant $C$ such that
\begin{equation}\label{bound aa}
\left|\frac{\CA_a(\lambda, \xi')}{(\lambda+a) B_a^2}\right| \ge C.
\end{equation}
In fact, in the case that $(c_0+a)/r \le A^2$, by the same manner as the estimates of $\CD(\lambda, \xi')$,
\begin{align*}
&(\lambda+a)^{-1}\CA_a(\lambda, \xi')\\
&\enskip =B_a(L_1+L_2)-\frac{1}{\lambda+a}A^2(B_a-L_1)(B_a-L_2)\\
&\enskip=A^2-\frac12 \{(\lambda+a)-3(z_1(\lambda)+z_2(\lambda))+z_1(\lambda)z_2(\lambda)(\lambda+a)^{-1}\} (1+O(|t|^2))\\
&\enskip =A^2(1+O(|t|^2)),
\end{align*}
where $t^2=(\lambda+a)/A^2$.
Thus, we have $|(\lambda+a)^{-1} B_a^{-2}\CA_a(\lambda, \xi')| \ge C$.
In the case that $A^2 \le 1/R$, by \eqref{s aa} and $|B_a^2| \le |\lambda|+a + 1/R \le C(|\lambda|+a)$,
we have
\[
\left|\frac{\CA_a(\lambda, \xi')}{(\lambda+a) B_a^2}\right| \ge C\frac{(|\lambda|+1)^2}{(|\lambda|+a)^2} \ge C.
\]
In the case that $1/R \le A^2 \le (c_0+a)/r$, $\CA_a, B_a \neq 0$ implies that \eqref{bound aa}.
Let $s\in \N$.
Bell's formula provides that
\begin{equation}\label{aa-1}
\begin{aligned}
&|D^{\alpha}_{\xi'} ((\lambda+a)B_a^2 \CA_a^{-1})^s|=|D^{\alpha}_{\xi'} ((\lambda+a)^{-1}B_a^{-2} \CA_a)^{-s}| \\
&\enskip \le C_\alpha \sum^{|\alpha|}_{\ell=1} |(\lambda+a)^{-1}B_a^{-2} \CA_a|^{-s-\ell} \sum_{\substack{\alpha_1+\cdots \alpha_\ell=\alpha, \\ |\alpha_k|\ge 1}}
|D^{\alpha_1}_{\xi'}(\lambda+a)^{-1}B_a^{-2} \CA_a| \cdots |D^{\alpha_\ell}_{\xi'}(\lambda+a)^{-1}B_a^{-2} \CA_a|\\
&\enskip \le C_{\alpha} (|\lambda|^{1/2}+|\xi'|)^{-|\alpha'|},
\end{aligned}
\end{equation}
where we have used $(\lambda+a)^{-1}B_a^{-2} \CA_a \in \BM_{0, 1}$ provided by $(\lambda+a)^{-1}\CA_a \in \widetilde \BM_{2, 1}$.
Note that
\[
\tau \pd_{\tau}\left(\frac{(\lambda+a)B_a^2 }{\CA_a}\right)
=-\left(\tau \pd_{\tau} \frac{\CA_a}{(\lambda+a) B_a^2}\right) \left(\frac{(\lambda+a)B_a^2}{\CA_a}\right)^2,
\]
then \eqref{aa-1} with $s=2$ and $(\lambda+a)^{-1}B_a^{-2} \CA_a \in \BM_{0, 1}$ furnish that
$(\lambda+a)B_a^2 \CA_a^{-1} \in \BM_{0, 1}$, which implies that $(\lambda+a)\CA_a^{-1} \in \widetilde \BM_{-2, 1}$.
\end{proof}

Next, we consider the estimate for $\CE$:
\begin{align*}
\CE&=\frac{\beta}{L_2-L_1}\left\{\frac{L_1}{B_a^2-L_1^2}\left(2A^2L_1-\frac{(B_a^2+A^2)(L_1^2+A^2)}{2B_a}\right)-\frac{L_2}{B_a^2-L_2^2}\left(2A^2L_2-\frac{(B_a^2+A^2)(L_2^2+A^2)}{2B_a}\right)\right\}\\
&=\frac{\beta A[(B_a-L_2)\{2B_aL_1(A-L_1)+L_1^2(B_a-L_2)\}-L_1L_2(B_a-A)^2-B_a^2(L_2-A)^2]}{(B_a^2-L_1^2)(B_a^2-L_2^2)} \\
&\enskip +\frac{\beta(B_a-A)^2[-A^2B_a^2-L_1L_2A^2-B_a^2(L_1^2+L_1L_2+L_2^2)+L_1^2L_2^2]}{2B_a(B_a^2-L_1^2)(B_a^2-L_2^2)}. 
\end{align*}

\begin{lem}\label{lem:e}
Let $a>0$ and $\lambda \in \Sigma_{\epsilon, c_0}$, where $c_0$ is a small constant determined in Lemma \ref{lem:zl}.
Then
$\CE=(1+1/\lambda)(A m_0+m_1)$, where $m_0 \in \BM_{0, 2}$ and $m_1 \in \widetilde \BM_{1, 2}$.
\end{lem}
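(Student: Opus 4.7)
The plan is to decompose $\CE$ into a sum of terms and isolate the source of the $1/\lambda$ singularity, which comes from $(B_a^2-L_2^2)^{-1}$: by the expansion \eqref{ez}, $B_a^2-L_2^2=\lambda+a-z_2(\lambda)=\frac{\beta^2 \lambda}{2(1+\beta^2/2)}+O(\lambda^2)$, so $(B_a^2-L_2^2)^{-1}$ scales like $1/\lambda$ for $|\lambda|\le c_0$ small, while $(B_a^2-L_1^2)^{-1}=(\lambda+a-z_1(\lambda))^{-1}$ remains a bounded scalar function of $\lambda$ alone.

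First I would write $\CE=T_1+T_2$ corresponding to the two displayed fractions, then split the numerator of $T_1$ into three summands: $T_{1a}=\beta A(B_a-L_2)\{2B_aL_1(A-L_1)+L_1^2(B_a-L_2)\}/((B_a^2-L_1^2)(B_a^2-L_2^2))$, $T_{1b}=-\beta AL_1L_2(B_a-A)^2/((B_a^2-L_1^2)(B_a^2-L_2^2))$, and $T_{1c}=-\beta AB_a^2(L_2-A)^2/((B_a^2-L_1^2)(B_a^2-L_2^2))$. Only $T_{1a}$ contains a $(B_a-L_2)$ factor: simplifying $(B_a-L_2)/(B_a^2-L_2^2)=1/(B_a+L_2)$ eliminates the singularity, and the resulting expression can be classified by Lemmas \ref{m} and \ref{m2} (in particular \eqref{m2-2}, \eqref{m2-3}, \eqref{m2-4}); the leading $A$ is pulled out to place the result in $A\BM_{0,2}+\widetilde{\BM}_{1,2}$. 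Multiplication of $T_{1a}$ by the bounded scalar $\lambda/(\lambda+1)$ preserves this class.

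For $T_{1b}$, $T_{1c}$, and $T_2$ (which genuinely carry the $1/\lambda$ singularity), I would multiply by $\lambda/(\lambda+1)$ and group so that the key combinations $\frac{\lambda(B_a-A)}{(\lambda+1)(B_a^2-L_2^2)}$ and $\frac{\lambda(L_2-A)}{(\lambda+1)(B_a^2-L_2^2)}$ appear; both lie in $\widetilde{\BM}_{-1,2}$ by Lemma \ref{m2} \eqref{m2-6}. The remaining factors ($A$, $L_1$, $L_2$, $B_a$, $(B_a-A)$, $1/(B_a^2-L_1^2)$) are controlled by Lemmas \ref{m} and \ref{m2}, and putting them together (using $\widetilde{\BM}_{0,2}=\BM_{0,2}$ to convert bounded factors) each of $\lambda T_{1b}/(\lambda+1)$, $\lambda T_{1c}/(\lambda+1)$, $\lambda T_2/(\lambda+1)$ is shown to lie in $A\BM_{0,2}+\widetilde{\BM}_{1,2}$.

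Summing the four contributions yields $\lambda\CE/(\lambda+1)\in A\BM_{0,2}+\widetilde{\BM}_{1,2}$, which is equivalent to the desired statement $\CE=(1+1/\lambda)(Am_0+m_1)$ with $m_0\in\BM_{0,2}$ and $m_1\in\widetilde{\BM}_{1,2}$. The main obstacle will be the bookkeeping for each subterm: systematically tracking whether each factor lives in $\BM$ or $\widetilde{\BM}$ and in subscript $1$ or $2$, and in particular deciding for each contribution whether to absorb it into the $A\BM_{0,2}$ slot (extracting an explicit $A$) or into the $\widetilde{\BM}_{1,2}$ slot (where a $B_a$-type factor prevents removal of the $1$).
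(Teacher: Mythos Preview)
Your proposal is correct and follows essentially the same route as the paper: the paper also splits $\CE$ into the non-singular piece $T_{1a}$ (placed in $Am_0$ after cancelling $(B_a-L_2)/(B_a^2-L_2^2)=1/(B_a+L_2)$), the pieces $T_{1b}+T_{1c}$ together with the $A^2$-part of $T_2$ (placed in $\frac{\lambda+1}{\lambda}Am_0$), and the remaining part of $T_2$ (placed in $\frac{\lambda+1}{\lambda}m_1$), invoking Lemmas \ref{m} and \ref{m2} throughout. Your explicit identification of Lemma \ref{m2} \eqref{m2-6} as the mechanism controlling the $1/\lambda$ singularity, and the observation $\widetilde\BM_{0,2}=\BM_{0,2}$, are exactly the ingredients the paper uses implicitly.
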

\begin{proof}
Lemma \ref{m} and Lemma \ref{m2} imply that
\begin{align*}
&\frac{A(B_a-L_2)\{2B_aL_1(A-L_1)+L_1^2(B_a-L_2)\}}{(B_a^2-L_1^2)(B_a^2-L_2^2)} =Am_0,\\
&\frac{-A\{L_1L_2(B_a-A)^2-B_a^2(L_2-A)^2\}}{(B_a^2-L_1^2)(B_a^2-L_2^2)}
+\frac{(B_a-A)^2(-A^2B_a^2-L_1L_2A^2)}{2B_a(B_a^2-L_1^2)(B_a^2-L_2^2)}=\frac{\lambda+1}{\lambda} A m_0\\
&\frac{(B_a-A)^2\{-B_a^2(L_1^2+L_1L_2+L_2^2)+L_1^2L_2^2\}}{2B_a(B_a^2-L_1^2)(B_a^2-L_2^2)}
=\frac{\lambda+1}{\lambda}m_1,
\end{align*}
where $m_0 \in \BM_{0, 2}$ and $m_1 \in \widetilde \BM_{1, 2}$, which completes the proof of Lemma \ref{lem:e}.
\end{proof}

We also consider the estimates for $E^h_k$, $E_k^{H_{NN}}$, and $E^{H_{jl}}_k$ defined by
\begin{align*}\label{f.alt.E}
E_k^h&= i\xi_k\left\{\frac{\mathcal{E}}{\lambda\mathcal{C}_a}\left[\frac{2AB_a}{(B_a+A)^2} -\frac{2[(L_2B_a^2-A^2B_a)(L_1-A)+(AL_1L_2+AB_aL_1)(A-B_a)]}{(B_a^2-A^2)(B_a+L_1)(B_a+L_2)}\right.\right. \\
&\left.\left.-\frac{AB_a+L_1L_2}{(B_a+L_1)(B_a+L_2)}\right] -\frac{2(B_a^2(L_1+L_2)-A^2B_a+L_1L_2B_a)}{(B_a^2-A^2)(B_a+L_1)(B_a+L_2)}+\frac{B_a}{(B_a+L_1)(B_a+L_2)}\right\}, \\
E_k^{H_{NN}}&= \frac{ i\xi_k\mathcal{B}}{\lambda\mathcal{C}_a}A^2+\frac{2i\xi_kB_a^2}{\beta(B_a^2-A^2)},\\
E^{H_{jN}}_k&= -\frac{(B_a^2+A^2)i\xi_ki\xi_j\mathcal{B}}{B_a\lambda\mathcal{C}_a}-\frac{4i\xi_ji\xi_k B_a}{\beta(B_a^2-A^2)}+\frac{2B_a}{\beta}\delta_{jk},\\
E^{H_{j\ell}}_k&=\frac{i\xi_ji\xi_ki\xi_{\ell}\mathcal{B}}{\lambda\mathcal{C}_a}+\frac{2i\xi_ji\xi_ki\xi_{\ell}}{\beta(B_a^2-A^2)}-\frac{2}{\beta}i\xi_j\delta_{k \ell}-\frac{2}{\beta}i\xi_{\ell}\delta_{jk}
\end{align*}
for $j, k, \ell =1, \dots N-1$ with
\[
\mathcal{B}= \frac{2A^2}{(B_a+A)^2}+\frac{2[(L_2B_a^2-A^2B_a)(L_1-A)+(AL_1L_2+AB_aL_1)(A-B_a)]}{(B_a+L_1)(B_a+L_2)(B_a^2-A^2)}
 +\frac{AB_a+L_1L_2}{(B_a+L_1)(B_a+L_2)}.
\]
Note that the second and third terms of $\CB$ is the same as the second and third terms of $E^h_k$.
The following lemma follows from Lemmas \ref{m}, \ref{m2}, and \ref{lem:e}.
\begin{lem}\label{lem:eh}
Let $a>0$ and $\lambda \in \Sigma_{\epsilon, c_0}$, where $c_0$ is a small constant determined in Lemma \ref{lem:zl}.
Then
\begin{align*}
&E^h_k = i\xi_k \left(\frac{\CE}{\lambda \CC_a} m_0 + \frac{m_1}{\lambda+a}\right), ~
E^{H_{NN}}_k  = i\xi_k \left(\frac{m_2}{\lambda \CC_a} + \frac{m_3}{\lambda+a}\right), \\
&E^{H_{jN}}_k = i\xi_j i\xi_k \left(\frac{m_4}{\lambda \CC_a} + \frac{m_5}{\lambda+a}\right) + m_6,\\
&E^{H_{j\ell}}_k= i\xi_j i\xi_k i\xi_\ell \left(\frac{m_7}{\lambda \CC_a} + \frac{m_8}{\lambda+a}\right) + i\xi_j m_9 + i\xi_\ell m_{10},
\end{align*}
 where $m_0, m_7, m_8, m_9, m_{10} \in \BM_{0, 2}$, $m_2, m_6 \in \BM_{2, 2}$, $m_1, m_5, m_6 \in \widetilde\BM_{1, 1}$,
$m_3 \in \widetilde\BM_{2, 1}$, $m_4 \in \widetilde\BM_{1, 2}$.
\end{lem}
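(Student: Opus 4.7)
The plan is to verify each of the four identities by algebraic rearrangement term-by-term and then to read off the multiplier class of every factor from Lemmas \ref{m}, \ref{m2}, and \ref{lem:e}. The decomposition is driven by the single identity $B_a^2 - A^2 = \lambda + a$, which converts each appearance of $(B_a^2 - A^2)^{-1}$ into the explicit factor $(\lambda+a)^{-1}$ present on every right-hand side; this is what allows every ``non $\CE/(\lambda \CC_a)$'' piece to be packaged into a single summand of the form $m_j/(\lambda+a)$.

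For $E^h_k$, the bracket multiplying $\CE/(\lambda\CC_a)$ consists of three rational expressions. The first, $\frac{2AB_a}{(B_a+A)^2}$, is in $\BM_{0,2}$ by Lemma \ref{m} and Lemma \ref{m2} (\ref{m2-1}); the third, $\frac{AB_a+L_1L_2}{(B_a+L_1)(B_a+L_2)}$, is in $\BM_{0,2}$ by Lemma \ref{m2} (\ref{m2-2}); the middle one is handled by first using $(B_a^2-A^2)^{-1}=(\lambda+a)^{-1}$ to extract an $O(1)$ constant factor and then applying Lemma \ref{m2} (\ref{m2-4})--(\ref{m2-5}) to the differences $L_1-A$, $A-B_a$. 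The sum furnishes $m_0$. The remaining two terms outside the $\CE/(\lambda\CC_a)$ factor are combined over the common denominator $(\lambda+a)(B_a+L_1)(B_a+L_2)$ via the same identity, producing $m_1/(\lambda+a)$ with $m_1\in\widetilde\BM_{1,1}$. For $E^{H_{NN}}_k$, $E^{H_{jN}}_k$, $E^{H_{j\ell}}_k$, the leading term carries a factor $\mathcal B$; an essentially identical computation (its ratios again belong to $\BM_{0,2}$ after reducing $B_a^2-A^2$) shows $\mathcal B\in\BM_{0,2}$. Multiplication by $A^2$, by $(B_a^2+A^2)/B_a$ (using Lemma \ref{m} (\ref{m-2}) with negative exponent), or by $1$ and the external factors $i\xi_k i\xi_j$ or $i\xi_j i\xi_k i\xi_\ell$ yields $m_2\in\BM_{2,2}$, $m_4\in\widetilde\BM_{1,2}$ and $m_7\in\BM_{0,2}$ respectively. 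The explicit $(B_a^2-A^2)^{-1}$ tail terms $\frac{2B_a^2}{\beta(B_a^2-A^2)}$, $\frac{4B_a}{\beta(B_a^2-A^2)}$, $\frac{2}{\beta(B_a^2-A^2)}$ become $m_3/(\lambda+a)$, $m_5/(\lambda+a)$, $m_8/(\lambda+a)$ with the claimed classes directly from Lemma \ref{m}; and the bare Kronecker pieces $\tfrac{2B_a}{\beta}\delta_{jk}$, $-\tfrac{2}{\beta}i\xi_j\delta_{k\ell}$, $-\tfrac{2}{\beta}i\xi_\ell\delta_{jk}$ supply $m_6$, $m_9$, $m_{10}$ directly.

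The main obstacle is purely bookkeeping: every compound factor has to be tracked through products and quotients to decide whether it lies in $\BM_{s,j}$ or in the weaker $\widetilde\BM_{s,j}$ and with what index $s$. The delicate step is that quantities like $B_a-L_j$ and $L_j-A$ land in their sharp classes only after the algebraic simplifications $B_a^2-L_j^2=\lambda+a-z_j(\lambda)$ and $L_j^2-A^2=z_j(\lambda)$ that underlie Lemma \ref{m2} (\ref{m2-4})--(\ref{m2-5}); so each subexpression has first to be reduced to a canonical product of the standard factors $A^p$, $B_a^q$, $L_j^r$, $(B_a+A)^\alpha$, $(B_a+L_j)^\beta$, $(L_2-L_1)^\gamma$ before the multiplier calculus can be invoked. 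Once this normalization is carried out, the class of each $m_k$ is forced and the identities of the lemma follow.
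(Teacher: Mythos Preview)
Your proposal is correct and follows essentially the same approach as the paper: both use $B_a^2-A^2=\lambda+a$ to isolate the $(\lambda+a)^{-1}$ factors, establish $\CB\in\BM_{0,2}$ by the same term-by-term analysis used for the bracket multiplying $\CE/(\lambda\CC_a)$ in $E^h_k$, and read off the classes of the $m_j$ from Lemmas \ref{m}, \ref{m2}, \ref{lem:e}. The paper is only slightly more explicit in recording the identity $L_2B_a-A^2=(L_2-A)(B_a+A)+A(B_a-L_2)$ needed to reduce the middle term of the bracket to standard differences, which your closing remark about canonical reduction already anticipates.
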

\begin{proof}
Let us consider $E^h_k$.
The first and third terms of $E^h_k$:
\[
	\frac{2AB_a}{(B_a+A)^2} -\frac{AB_a+L_1L_2}{(B_a+L_1)(B_a+L_2)}
\]
belong to $\BM_{0, 2}$ by Lemma \ref{m} and Lemma \ref{m2} \eqref{m2-1} \eqref{m2-2}.
For the second term of $E^h_k$,
note that $(L_2 B_a-A^2)=(L_2-A)(B_a+A) + A(B_a-L_2)$, then
Lemma \ref{m2}, \eqref{ez}, and Lemma \ref{lem:zl} \eqref{z} imply that
\begin{align*}
&\frac{(L_2B_a^2-A^2B_a)(L_1-A)}{(B_a^2-A^2)(B_a+L_1)(B_a+L_2)}\\
&\enskip =\frac{z_1(\lambda) z_2(\lambda) B_a (B_a+A)}{(\lambda+a)(B_a+L_1)(B_a+L_2)(L_1+A)(L_2+A)}
+\frac{z_1(\lambda) (\lambda+a-z_2(\lambda)) B_a A}{(\lambda+a)(B_a+L_1)(B_a+L_2)^2(L_1+A)},\\
&\frac{(AL_1L_2+AB_aL_1)(A-B_a)}{(B_a^2-A^2)(B_a+L_1)(B_a+L_2)}
=-\frac{AL_1L_2+AB_aL_1}{(B_a+L_1)(B_a+L_2)(B_a+A)}
\end{align*}
belong to $\BM_{0, 2}$.
For the last two terms, Lemma \ref{m} \eqref{m-1} implies that 
$A^2 \in \widetilde \BM_{2, 1}$, thus
\[
-\frac{1}{B_a^2-A^2}\left\{\frac{2(B_a^2(L_1+L_2)-A^2B_a+L_1L_2B_a)}{(B_a+L_1)(B_a+L_2)}-\frac{(B_a^2-A^2)B_a}{(B_a+L_1)(B_a+L_2)}\right\}=\frac{m_1}{\lambda+a}
\]
with $m_1 \in \widetilde\BM_{1, 1}$. Therefore, we have
\[
E^h_k = i\xi_k \left(\frac{\CE}{\lambda \CC_a} m_0 + \frac{m_1}{\lambda+a}\right)
\]
with $m_0 \in \BM_{0, 2}$ and $m_1 \in \widetilde\BM_{1, 1}$.

The assertions for the other terms are obtained by Lemma \ref{m} and the fact that 
$\CB \in \BM_{0, 2}$, which completes the proof of Lemma \ref{lem:eh}.
\end{proof}

Finally, we prove the estimate of $B_a L_1-A^2$.

\begin{lem}\label{lem:bl}
Let $a>0$ and $\lambda \in \Sigma_{\epsilon, c_0}$, where $c_0$ is a small constant determined in Lemma \ref{lem:zl}.
Then
$B_a L_1-A^2=(\lambda+1)m$, where $m \in \BM_{0, 2}$.

\end{lem}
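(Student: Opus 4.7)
My plan is to decompose $B_a L_1 - A^2$ algebraically and show that the quotient by $\lambda+1$ lies in $\BM_{0,2}$. I would start from the elementary identity
\[
B_a L_1 - A^2 = (B_a - A)(L_1 - A) + A(B_a - A) + A(L_1 - A),
\]
and then use the relations $B_a - A = (\lambda+a)/(B_a+A)$ and $L_1 - A = z_1(\lambda)/(L_1+A)$ established in Lemma \ref{m2}\eqref{m2-4} to rewrite this as
\[
B_a L_1 - A^2 = \frac{(\lambda+a)\,z_1(\lambda)}{(B_a+A)(L_1+A)} + \frac{A(\lambda+a)}{B_a+A} + \frac{A\,z_1(\lambda)}{L_1+A}.
\]

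Next I would extract the scalar factor $\lambda+1$. By Lemma \ref{spectrum} applied with $\alpha = 1$, $|\lambda+1| \ge (\sin \epsilon/2)(|\lambda|+1)$ on $\Sigma_\epsilon$, and by Lemma \ref{lem:zl}\eqref{z} we have $|z_1(\lambda)| \le C_{\beta,a}|\lambda|$ on $\Sigma_{\epsilon, c_0}$. Together these show that the $\xi'$-independent ratios $(\lambda+a)/(\lambda+1)$ and $z_1(\lambda)/(\lambda+1)$ are uniformly bounded on $\Sigma_{\epsilon, c_0}$, with the latter in fact of size $O(|\lambda|)$. I would then combine Lemma \ref{m} (giving $A \in \BM_{1,2}$) with Lemma \ref{m2}\eqref{m2-1}, \eqref{m2-3} (giving $(B_a+A)^{-1} \in \widetilde\BM_{-1,2}$ and $(L_1+A)^{-1} \in \BM_{-1,2}$) via Leibniz's rule. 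The middle and last summands in the decomposition, divided by $\lambda+1$, then belong to $\BM_{0,2}$ directly, while the first summand lies in the smaller class $\widetilde\BM_{-2,2} \subset \BM_{0,2}$ after absorbing the $|\lambda|$ bound on $z_1(\lambda)/(\lambda+1)$. Because every cited lemma is stated uniformly for $\ell = 0, 1$, the $\tau\pd_\tau$-derivative requirement in Definition \ref{def:m} is handled by the same calculation.

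The main technical point, modest though it is, is choosing the algebraic decomposition so that the factor $\lambda+1$ is visible and so that each resulting $\xi'$-dependent piece falls into the stricter class $\BM_{0,2}$ (with weight $|\xi'|^{-|\alpha|}$) rather than into a merely $\widetilde\BM$-type class; the inclusion $(|\lambda|^{1/2}+1+|\xi'|)^{-k} \le 1$ for $k \ge 0$ is what bridges the gap for the first summand. Once Lemma \ref{m2}\eqref{m2-4} has been invoked, the remaining work is routine multiplier-class bookkeeping, with no genuine obstacles anticipated.
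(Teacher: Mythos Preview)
Your proof is correct and proceeds by essentially the same strategy as the paper: an elementary algebraic decomposition of $B_aL_1-A^2$, followed by the identities $B_a-A=(\lambda+a)/(B_a+A)$ and its analogues from Lemma~\ref{m2}, and then routine multiplier-class bookkeeping. The only difference is the choice of decomposition: the paper uses the two-term splitting
\[
B_aL_1-A^2=(B_a-A)(L_1+A)+(L_1-B_a)A,
\]
invoking Lemma~\ref{m2}\eqref{m2-1},\,\eqref{m2-2} (i.e.\ $(B_a+A)^{-1}$ and $(B_a+L_1)^{-1}$), whereas you use the three-term splitting $(B_a-A)(L_1-A)+A(B_a-A)+A(L_1-A)$ and instead call on Lemma~\ref{m2}\eqref{m2-1},\,\eqref{m2-3} (i.e.\ $(B_a+A)^{-1}$ and $(L_1+A)^{-1}$). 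Both routes are short and equivalent in difficulty; the paper's version is marginally more economical, but yours has the minor advantage that the $(\lambda+1)$ factor is isolated term-by-term from scalar ratios $(\lambda+a)/(\lambda+1)$ and $z_1(\lambda)/(\lambda+1)$ that are visibly bounded on $\Sigma_{\epsilon,c_0}$.
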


\begin{proof}
Note that
\[
B_aL_1-A^2=(B_a-A)(L_1+A)+(L_1-B_a)A.
\]
The identities
\begin{align*}
B_a-A=\frac{\lambda+a}{B_a+A}, \enskip L_1-B_a=\frac{z_1(\lambda)-(\lambda+a)}{L_1+B_a},
\end{align*}
together with Lemma \ref{m2} \eqref{m2-1}, \eqref{m2-2}, and \eqref{ez},
 we have Lemma \ref{lem:bl}.
\end{proof}

\subsection{$\CR$-boundedness for the solution operator of the velocity}\label{subsection:v}
Set
\[
\CM(\gamma_1, \gamma_2, x_N)=\frac{e^{-\gamma_1 x_N}-e^{-\gamma_2 x_N}}{\gamma_1-\gamma_2}.
\]
The following lemmas are proved by the same way as \cite[Lemma 5.3 and Lemma 5.4]{ShiS}.
\begin{lem}
Let $\ell = 0, 1$, let $\beta \in \mathbb{R}\setminus\{0\}$, $\epsilon \in (\epsilon_0, \pi/2)$ with $\tan \epsilon_0\ge |\beta|/\sqrt{2}$.
Then for any $\alpha \in \N_0^{N-1}$, $\lambda \in \Sigma_\epsilon$, 
$\xi \in \R^{N-1}\setminus \{0\}$, and $x_N \in (0, \infty)$, the following estimates hold.
\begin{align*}
|D^\alpha_{\xi'} \{ (\tau \pd_\tau)^\ell e^{-Ax_N}| &\le C e^{-(1/2) Ax_N}A^{-|\alpha|} ,\\
|D^\alpha_{\xi'} \{ (\tau \pd_\tau)^\ell e^{-L_1 x_N}| &\le C e^{-d (|\lambda|^{1/2}+A)x_N} (|\lambda|^{1/2}+A)^{-|\alpha|},\\
|D^\alpha_{\xi'} \{ (\tau \pd_\tau)^\ell e^{-L_2 x_N}| &\le C e^{-d (|\lambda|^{1/2}+1+A)x_N} (|\lambda|^{1/2}+1+A)^{-|\alpha|}\\
&\le C e^{-d (|\lambda|^{1/2}+A)x_N} (|\lambda|^{1/2}+A)^{-|\alpha|},\\
|D^\alpha_{\xi'} \{ (\tau \pd_\tau)^\ell \CM(L_1, A, x_N)| &\le C e^{-d Ax_N}
(|\lambda|^{-1/2} \text{~or~} x_N) A^{-|\alpha|},\\
|D^\alpha_{\xi'} \{ (\tau \pd_\tau)^\ell \CM(L_2, L_1, x_N)| &\le C e^{-d (|\lambda|^{1/2}+A)x_N}
(1 \text{~or~} x_N) (|\lambda|^{1/2}+A)^{-|\alpha|},
\end{align*}
where $d$ is a some positive constant independent of $\alpha$.
\end{lem}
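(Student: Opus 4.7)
\medskip

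\noindent The overall strategy is Fa\`{a} di Bruno / Leibniz combined with the symbol class estimates for $A$, $L_1$, $L_2$ already established, plus absorption of polynomial factors in $x_N$ into half of the exponential via the elementary inequality
\[
x_N^k e^{-\delta x_N/2} \le C_k \delta^{-k}
\qquad (\delta > 0,\ k \in \N_0).
\]
For the pure exponential $e^{-A x_N}$, differentiation in $\xi'$ gives
$\pd_{\xi_j} e^{-Ax_N} = -(\pd_{\xi_j}A) x_N e^{-Ax_N}$, and iterating produces, via the Fa\`{a} di Bruno partition expansion, a finite sum of terms of the form $x_N^k (D^{\alpha_1} A)\cdots (D^{\alpha_k}A) e^{-Ax_N}$ with $\alpha_1+\cdots+\alpha_k = \alpha$. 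Since $A \in \BM_{1,2}$ by Lemma \ref{m} \eqref{m-1}, each factor $D^{\alpha_j}A$ is bounded by $C A^{1-|\alpha_j|}$; splitting $e^{-Ax_N}=e^{-Ax_N/2}\cdot e^{-Ax_N/2}$ and using the absorption inequality with $\delta = A$ transfers the $x_N^k$ factor into $A^{-k}$, giving the claimed bound. Since $A$ is $\tau$-independent, $(\tau\pd_\tau)^\ell$ contributes nothing new.

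For $e^{-L_1 x_N}$ and $e^{-L_2 x_N}$ the argument is identical, now using $L_1 \in \BM_{1,1}$ and $L_2 \in \widetilde\BM_{1,1}$ (Lemma \ref{m}), together with the crucial real-part lower bounds $\Re L_1 \ge c_{\beta,\epsilon}(|\lambda|+A^2)^{1/2}$ and $\Re L_2 \ge c_{\beta,\epsilon,a}(|\lambda|+1+A^2)^{1/2}$ from Lemma \ref{lem:zl} \eqref{est:l}, which supply the exponential decay rate and the correct $\delta$ for the absorption. The operator $(\tau\pd_\tau)^\ell$ brought inside the Fa\`{a} di Bruno expansion falls on one of the $L_j$-factors via the chain rule; since $(\tau\pd_\tau) L_j$ and $(\tau\pd_\tau) D^\alpha_{\xi'} L_j$ obey the same symbol estimates as $L_j$ and $D^\alpha_{\xi'} L_j$ themselves (this is the very definition of the classes $\BM_{s,1}$ and $\widetilde\BM_{s,1}$), the final bound is unchanged up to constants.

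For the two $\CM$-kernels, we use the integral representation
\[
\CM(\gamma_1,\gamma_2,x_N) = -x_N\int_0^1 e^{-(t\gamma_1+(1-t)\gamma_2)x_N}\,dt,
\]
which removes the apparent singularity at $\gamma_1=\gamma_2$ and shows $\CM$ is an average of exponentials of the convex combination $t\gamma_1+(1-t)\gamma_2$. This convex combination lies in the same symbol class and obeys the same real-part lower bound as $\gamma_1,\gamma_2$ uniformly in $t\in[0,1]$, so differentiating under the integral sign and applying the previous paragraph yields a bound with an extra factor of $x_N$ coming from the prefactor; this is the ``$x_N$'' branch of each dichotomy. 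The alternative branch is obtained by absorbing this extra $x_N$ into exponential decay. For $\CM(L_1,A,x_N)$, we split off part of the decay as $e^{-tc|\lambda|^{1/2}x_N}$ (which is valid since $\Re L_1 - A \gtrsim |\lambda|^{1/2}$, following from $L_1-A = z_1(\lambda)/(L_1+A)$ together with Lemma \ref{lem:zl} \eqref{z}) and use $x_N \int_0^1 e^{-tc|\lambda|^{1/2}x_N}\,dt \lesssim |\lambda|^{-1/2}$, yielding the $|\lambda|^{-1/2}$ factor. For $\CM(L_2,L_1,x_N)$, the same procedure with the decay rate $|\lambda|^{1/2}+A$ gives the ``$1$'' factor, after noting that $|\lambda|$ is bounded by $c_0$ in $\Sigma_{\epsilon,c_0}$.

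The main obstacle is purely combinatorial bookkeeping: when both $(\tau\pd_\tau)^\ell$ and $D^\alpha_{\xi'}$ act simultaneously on a deep composition, one must confirm that the cumulative symbol degree drops by exactly $|\alpha|$, rather than being degraded. This is taken care of automatically by the definition of the classes $\BM_{s,j}$ and $\widetilde\BM_{s,j}$, which were designed so that each $\xi'$-derivative contributes $(|\lambda|^{1/2}+A)^{-1}$ (or $|\xi'|^{-1}$) while $(\tau\pd_\tau)$ is neutral. With this in hand the proof is essentially the template of \cite[Lemma 5.3]{ShiS}.
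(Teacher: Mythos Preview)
Your approach is correct and is precisely the template of \cite[Lemma~5.3]{ShiS} that the paper cites in lieu of a self-contained proof. Two of your side justifications are loose, however, and should be tightened.

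First, the claim ``$\Re L_1 - A \gtrsim |\lambda|^{1/2}$'' is not true in general: Lemma~\ref{lem:zl}\,\eqref{est:l} only gives $\Re L_1 \ge c_{\beta,\epsilon}(|\lambda|^{1/2}+A)$ with a constant that is typically below $1$, so $\Re L_1 - A$ can be negative when $A$ is large. The splitting you want is nevertheless valid, because directly
\[
\Re\bigl(tL_1+(1-t)A\bigr)\;\ge\; t c_{\beta,\epsilon}(|\lambda|^{1/2}+A)+(1-t)A \;=\; t c_{\beta,\epsilon}|\lambda|^{1/2} + \bigl(tc_{\beta,\epsilon}+1-t\bigr)A \;\ge\; t c_{\beta,\epsilon}|\lambda|^{1/2}+c_{\beta,\epsilon}A,
\]
and then $x_N\int_0^1 e^{-tc_{\beta,\epsilon}|\lambda|^{1/2}x_N}\,dt \le (c_{\beta,\epsilon}|\lambda|^{1/2})^{-1}$ produces the $|\lambda|^{-1/2}$ factor. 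Second, for the ``$1$'' branch of $\CM(L_2,L_1,x_N)$ the restriction $|\lambda|\le c_0$ plays no role; what matters is the extra ``$+1$'' in $\Re L_2 \ge c(|\lambda|^{1/2}+1+A)$, which by the same manipulation gives $\Re(tL_2+(1-t)L_1)\ge c(|\lambda|^{1/2}+A)+tc$ and hence $x_N\int_0^1 e^{-tcx_N}\,dt\le c^{-1}$.
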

\begin{lem}\label{l.R-bound.int.1d}
Let $a>0$, $\beta \in \mathbb{R}\setminus\{0\}$, $\epsilon \in (\epsilon_0, \pi/2)$ with $\tan \epsilon_0\ge |\beta|/\sqrt{2}$, let $q\in(1,\infty)$, let $k_1 \in \BM_{0, 1}$ and $k_2 \in \BM_{0, 2}$,%, and
%$k_3\in \BM_{-1,2}$,
let us define the following operators in $\CL(L_q(\mathbb{R}^N_+))$:
\[
 \begin{array}{l}
%    (K_1(\lambda)g)(x)= \int_0^\infty \mathcal{F}^{-1}_{\xi^\prime}[k_1(\lambda,\xi^\prime)\lambda^{1/2}e^{-L_1(x_N+y_N)}\widehat{g}(\xi^\prime,y_N)](x^\prime)dy_N \\
    (K_1(\lambda)g)(x)= \int_0^\infty \mathcal{F}^{-1}_{\xi^\prime}[k_2(\lambda,\xi^\prime)Ae^{-A(x_N+y_N)}\widehat{g}(\xi^\prime,y_N)](x^\prime)dy_N, \\
    (K_2(\lambda)g)(x)= \int_0^\infty \mathcal{F}^{-1}_{\xi^\prime}[k_2(\lambda,\xi^\prime)Ae^{-L_i (x_N+y_N)}\widehat{g}(\xi^\prime,y_N)](x^\prime)dy_N~(i=1, 2), \\
    (K_3(\lambda)g)(x)= \int_0^\infty \mathcal{F}^{-1}_{\xi^\prime}[k_1(\lambda,\xi^\prime)|\lambda|^{1/2}e^{-L_i(x_N+y_N)}\widehat{g}(\xi^\prime,y_N)](x^\prime)dy_N~(i=1, 2),\\
%    (K_1(\lambda)g)(x)= \int_0^\infty \mathcal{F}^{-1}_{\xi^\prime}[k_3(\lambda,\xi^\prime)(|\lambda|^{1/2}+1)e^{-L_2(x_N+y_N)}\widehat{g}(\xi^\prime,y_N)](x^\prime)dy_N,\\    
	(K_4(\lambda)g)(x)= \int_0^\infty \mathcal{F}^{-1}_{\xi^\prime}[k_2(\lambda,\xi^\prime)A^2\mathcal{M}(L_1,A,x_N+y_N)\widehat{g}(\xi^\prime,y_N)](x^\prime)dy_N, \\
    (K_5(\lambda)g)(x)= \int_0^\infty \mathcal{F}^{-1}_{\xi^\prime}[k_2(\lambda,\xi^\prime)A|\lambda|^\frac{1}{2}\mathcal{M}(L_1,A,x_N+y_N)\widehat{g}(\xi^\prime,y_N)](x^\prime)dy_N, \\   
%    (L_5(\lambda)g)(x)= \int_0^\infty \mathcal{F}^{-1}_{\xi^\prime}[k_4(\lambda,\xi^\prime)\mathcal{M}(L_2,L_1,x_N+y_N)\widehat{g}(\xi^\prime,y_N)](x^\prime)dy_N \\
     (K_6(\lambda)g)(x)= \int_0^\infty \mathcal{F}^{-1}_{\xi^\prime}[k_2(\lambda,\xi^\prime)A\mathcal{M}(L_2,L_1,x_N+y_N)\widehat{g}(\xi^\prime,y_N)](x^\prime)dy_N,\\
    (K_7(\lambda)g)(x)= \int_0^\infty \mathcal{F}^{-1}_{\xi^\prime}[k_1(\lambda,\xi^\prime)|\lambda|^{1/2}\mathcal{M}(L_2,L_1,x_N+y_N)\widehat{g}(\xi^\prime,y_N)](x^\prime)dy_N,\\
   (K_8(\lambda)g)(x)= \int_0^\infty \mathcal{F}^{-1}_{\xi^\prime}[k_2(\lambda,\xi^\prime)A^2\mathcal{M}(L_2,L_1,x_N+y_N)\widehat{g}(\xi^\prime,y_N)](x^\prime)dy_N
\end{array} 
\]
for $\lambda \in \Sigma_\epsilon$.
Then for $\ell=0,1$ and for $j=1,\ldots, 8$ the set
$\left\{(\tau\partial_\tau)^\ell(K_j(\lambda))\mid \lambda\in\Sigma_{\epsilon}\right\}$ 
is $\mathcal{R}$-bounded on $\CL(L_q(\R^N_+))$.
\end{lem}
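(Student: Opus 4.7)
The argument follows the template of Shibata--Shimizu \cite[Lemmas 5.3--5.4]{ShiS}. The central tool is a Mikhlin-type $\CR$-boundedness theorem for partial Fourier multiplier operators of the form $g \mapsto \int_0^\infty \CF^{-1}_{\xi'}[m(\lambda,\xi',x_N,y_N)\hat g(\xi',y_N)]\,dy_N$: if the symbol $m$ satisfies
\[
|\xi'|^{|\alpha|}\,|(\tau\partial_\tau)^\ell D^\alpha_{\xi'} m(\lambda,\xi',x_N,y_N)|\le \Phi(\lambda,\xi',x_N,y_N),\qquad |\alpha|\le N,\ \ell=0,1,
\]
together with $\sup_{x_N>0}\int_0^\infty \Phi(\lambda,\xi',x_N,y_N)\,dy_N\le C$ uniformly in $(\lambda,\xi')$, then the family of associated integral operators parametrised by $\lambda\in\Sigma_\epsilon$ is $\CR$-bounded on $\CL(L_q(\R^N_+))$ with a bound independent of $\lambda$.

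The first step of the plan is to isolate the symbol of each $K_j$, apply Bell's formula in $\xi'$, and combine the pointwise estimates for $D^\alpha_{\xi'}(\tau\partial_\tau)^\ell$ of $e^{-Ax_N}$, $e^{-L_i x_N}$, $\CM(L_1,A,x_N)$, $\CM(L_2,L_1,x_N)$ from the preceding lemma with the symbol-class memberships $k_1\in\BM_{0,1}$ and $k_2\in\BM_{0,2}$. This reduces the problem to producing a dominating function $\Phi_j$ whose $dy_N$-integral is uniformly bounded. For $K_1,K_2,K_3$ this step is immediate: the exponential factor $e^{-c\gamma(x_N+y_N)}$ with $\gamma\in\{A,\Re L_1,\Re L_2\}$ integrates to $1/(c\gamma)$, which absorbs the prefactor $A$ or $|\lambda|^{1/2}$ in view of the bounds $A,|\lambda|^{1/2}\le C\,\Re L_i$ furnished by Lemma \ref{lem:zl}; the derivative loss $A^{-|\alpha|}$ is absorbed into $|\xi'|^{-|\alpha|}$ using $k_j\in\BM_{0,*}$.

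The main obstacle lies in $K_4$--$K_8$, whose kernels involve $\CM(L_1,A,\cdot)$ or $\CM(L_2,L_1,\cdot)$ and may degenerate as $|\lambda|\to 0$ (indeed $L_1-A\to 0$ in that limit). Here I would use the alternative bound $|\CM(L_1,A,x_N)|\le Ce^{-dAx_N}(|\lambda|^{-1/2}\text{ or }x_N)$ (and the analogue for $\CM(L_2,L_1,\cdot)$) supplied by the previous lemma, and in each case select the option matched to the prefactor so that the $y_N$-integral remains uniformly bounded. For $K_4$ with prefactor $A^2$ I choose the $x_N$ option, since $A^2\int_0^\infty (x_N+y_N) e^{-dA(x_N+y_N)}\,dy_N\le C$; for $K_5$ with prefactor $A|\lambda|^{1/2}$ I choose $|\lambda|^{-1/2}$, giving $A|\lambda|^{1/2}\cdot|\lambda|^{-1/2}\int_0^\infty e^{-dA(x_N+y_N)}\,dy_N\le C$; for $K_6$--$K_8$ the stronger lower bound $\Re L_2\ge c(|\lambda|^{1/2}+1+A)$ from Lemma \ref{lem:zl} supplies the extra factor needed to dominate $|\lambda|^{-1/2}$ when combined with the corresponding prefactor. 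Once $\Phi_j$ is constructed, verifying $\sup_{x_N>0}\int_0^\infty\Phi_j\,dy_N<\infty$ uniformly in $\lambda$ and invoking the Mikhlin-type $\CR$-boundedness theorem completes the proof; the bound for $\ell=1$ proceeds identically since $(\tau\partial_\tau)$ preserves all the relevant symbol classes by Lemma \ref{m}.
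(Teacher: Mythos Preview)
Your proposal is correct and follows exactly the approach the paper indicates: the paper itself does not give a proof but simply states that the lemma is ``proved by the same way as \cite[Lemma~5.3 and Lemma~5.4]{ShiS}'', and your outline is precisely that template. One minor slip: for $K_6$--$K_8$ the bound on $\CM(L_2,L_1,\cdot)$ from the preceding lemma involves the factor $(1\text{ or }x_N)$ rather than $(|\lambda|^{-1/2}\text{ or }x_N)$, so there is no $|\lambda|^{-1/2}$ to absorb and the ``$+1$'' in $\Re L_2$ plays no role---the prefactors $A$, $|\lambda|^{1/2}$, $A^2$ are dominated directly by $(|\lambda|^{1/2}+A)$, $(|\lambda|^{1/2}+A)$, $(|\lambda|^{1/2}+A)^2$ respectively, exactly as you handled $K_4$ and $K_5$.
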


Lemma \ref{l.R-bound.int.1d} implies the following lemma.

\begin{lem}\label{l.R-bound.int.2d}
Let $a>0$, $\beta \in \mathbb{R}\setminus\{0\}$, $\epsilon \in (\epsilon_0, \pi/2)$ with $\tan \epsilon_0\ge |\beta|/\sqrt{2}$, let $q\in(1,\infty)$, let $k_1 \in  \BM_{-1, 1}$, $k_2 \in  \BM_{-2, 2}$, $k_3 \in  \widetilde\BM_{-2, 2}$, $k_4 \in \widetilde\BM_{-2, 1}$, $k_5 \in \widetilde \BM_{-1, 2}$, and $k_6 \in \widetilde \BM_{-1, 1}$,%, and
%$k_3\in \BM_{-1,2}$,
let us define the following operators in $\CL(L_q(\mathbb{R}^N_+))$:
\[
 \begin{array}{l}
%    (K_1(\lambda)g)(x)= \int_0^\infty \mathcal{F}^{-1}_{\xi^\prime}[k_1(\lambda,\xi^\prime)\lambda^{1/2}e^{-L_1(x_N+y_N)}\widehat{g}(\xi^\prime,y_N)](x^\prime)dy_N \\
     (M_1(\lambda)g)(x)= \int_0^\infty \mathcal{F}^{-1}_{\xi^\prime}[k_1(\lambda,\xi^\prime)e^{-L_1(x_N+y_N)}\widehat{g}(\xi^\prime,y_N)](x^\prime)dy_N,\\
    (M_2(\lambda)g)(x)= \int_0^\infty \mathcal{F}^{-1}_{\xi^\prime}[k_2(\lambda,\xi^\prime)Ae^{-L_1(x_N+y_N)}\widehat{g}(\xi^\prime,y_N)](x^\prime)dy_N,\\
    (M_3(\lambda)g)(x)= \int_0^\infty \mathcal{F}^{-1}_{\xi^\prime}[k_3(\lambda,\xi^\prime)Ae^{-L_2(x_N+y_N)}\widehat{g}(\xi^\prime,y_N)](x^\prime)dy_N,\\
    (M_4(\lambda)g)(x)= \int_0^\infty \mathcal{F}^{-1}_{\xi^\prime}[k_4(\lambda,\xi^\prime)|\lambda|^{1/2}e^{-L_2(x_N+y_N)}\widehat{g}(\xi^\prime,y_N)](x^\prime)dy_N,\\    
    (M_5(\lambda)g)(x)= \int_0^\infty \mathcal{F}^{-1}_{\xi^\prime}[k_2(\lambda,\xi^\prime)A^2\mathcal{M}(L_1,A,x_N+y_N)\widehat{g}(\xi^\prime,y_N)](x^\prime)dy_N, \\
    (M_6(\lambda)g)(x)= \int_0^\infty \mathcal{F}^{-1}_{\xi^\prime}[k_2(\lambda,\xi^\prime)A|\lambda|^\frac{1}{2}\mathcal{M}(L_1,A,x_N+y_N)\widehat{g}(\xi^\prime,y_N)](x^\prime)dy_N, \\  
 %   (L_7(\lambda)g)(x)= \int_0^\infty \mathcal{F}^{-1}_{\xi^\prime}[\widetilde k_1(\lambda,\xi^\prime)\mathcal{M}(L_2,L_1,x_N+y_N)\widehat{g}(\xi^\prime,y_N)](x^\prime)dy_N,\\ 
%    (L_5(\lambda)g)(x)= \int_0^\infty \mathcal{F}^{-1}_{\xi^\prime}[k_4(\lambda,\xi^\prime)\mathcal{M}(L_2,L_1,x_N+y_N)\widehat{g}(\xi^\prime,y_N)](x^\prime)dy_N \\
    (M_7(\lambda)g)(x)= \int_0^\infty \mathcal{F}^{-1}_{\xi^\prime}[k_5(\lambda,\xi^\prime)A\mathcal{M}(L_2,L_1,x_N+y_N)\widehat{g}(\xi^\prime,y_N)](x^\prime)dy_N,\\
    (M_8(\lambda)g)(x)= \int_0^\infty \mathcal{F}^{-1}_{\xi^\prime}[k_6(\lambda,\xi^\prime)|\lambda|^{1/2}\mathcal{M}(L_2,L_1,x_N+y_N)\widehat{g}(\xi^\prime,y_N)](x^\prime)dy_N,\\
%    (L_{10}(\lambda)g)(x)= \int_0^\infty \mathcal{F}^{-1}_{\xi^\prime}[k_5(\lambda,\xi^\prime)A^2\mathcal{M}(L_2,L_1,x_N+y_N)\widehat{g}(\xi^\prime,y_N)](x^\prime)dy_N
\end{array} 
\]
for $\lambda \in \Sigma_\epsilon$.
Then for $\ell=0,1$, for $j=1,\ldots, 8$ and for $m, n=1,\ldots, N$ the sets
\begin{align*}
& \left\{(\tau\partial_\tau)^\ell(\lambda M_j(\lambda))\mid \lambda\in\Sigma_{\epsilon, c_0}\right\}, &
&\left\{(\tau\partial_\tau)^\ell(\gamma M_j(\lambda))\mid \lambda\in\Sigma_{\epsilon, c_0}\right\}, 
\\
&  \left\{(\tau\partial_\tau)^\ell(|\lambda|^\frac{1}{2} \pd_m M_j(\lambda))\mid \lambda\in\Sigma_{\epsilon, c_0}\right\}, &
&\left\{(\tau\partial_\tau)^\ell(\pd_m\pd_n M_j(\lambda))\mid \lambda\in\Sigma_{\epsilon, c_0}\right\}
\end{align*}
are $\mathcal{R}$-bounded on $\CL(L_q(\R^N_+))$, where $\pd_n=\pd/\pd x_n$.
\end{lem}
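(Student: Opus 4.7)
The plan is to reduce Lemma \ref{l.R-bound.int.2d} to Lemma \ref{l.R-bound.int.1d} by absorbing each prefactor ($\lambda$, $\gamma$, $|\lambda|^{1/2}\partial_m$, $\partial_m\partial_n$) into the symbol and then recognising the resulting operator as one of $K_1,\dots,K_8$. Since $|\gamma|\le|\lambda|$, the case of $\gamma M_j(\lambda)$ reduces to that of $\lambda M_j(\lambda)$ up to a bounded scalar, so only three prefactor types need serious treatment.

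The algebraic key is the identity $z_j(\lambda)=L_j^2-A^2$ together with Lemma \ref{lem:zl}\eqref{z}, which yields $|\lambda|/|z_j(\lambda)|\le C$ on $\Sigma_{\epsilon,c_0}$. Writing
\[
\lambda=\frac{\lambda}{z_j(\lambda)}\bigl(L_j^2-A^2\bigr),
\]
the symbol $\lambda\,k\,e^{-L_j(x_N+y_N)}$ splits, up to a bounded scalar, into $L_j^2\,k\,e^{-L_j(x_N+y_N)}-A^2\,k\,e^{-L_j(x_N+y_N)}$. Under the kernel hypotheses of Lemma \ref{l.R-bound.int.2d}, the products $L_j^2 k$ and $A\,k$ land respectively in $\BM_{0,1}$ and $\BM_{0,2}$ by Lemmas \ref{m} and \ref{m2}, which are exactly the kernel classes needed to match $K_2$ and $K_3$ of Lemma \ref{l.R-bound.int.1d}. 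For the $\CM$-based operators $M_5,\dots,M_8$, the same decomposition applies, combined with
\[
\partial_{x_N}\CM(\alpha,\beta,x_N+y_N)=\frac{-\alpha e^{-\alpha(x_N+y_N)}+\beta e^{-\beta(x_N+y_N)}}{\alpha-\beta}
\]
whenever a normal derivative lands on the kernel.

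The prefactors $|\lambda|^{1/2}\partial_m$ and $\partial_m\partial_n$ are handled along the same lines. A tangential derivative $\partial_m$ ($m<N$) contributes $i\xi_m$; since $i\xi_m/A$ is homogeneous of degree zero and lies in $\BM_{0,2}$, it is absorbed into the kernel while the leftover factor $A$ is built into the $K_1$ or $K_2$ form. A normal derivative $\partial_N$ produces $-L_j$ on a pure exponential or a difference of exponentials on an $\CM$-kernel, and the $L_j$ is absorbed into the kernel using Lemma \ref{m}. In every case one checks, via Leibniz's rule and the product rules in Lemma \ref{m2}, that the composite symbol $\text{(prefactor)}\cdot k$ belongs to $\BM_{0,1}$ or $\BM_{0,2}$, so that Lemma \ref{l.R-bound.int.1d} applies directly. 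The operator $(\tau\partial_\tau)^\ell$ is incorporated by the same manipulations: each auxiliary scalar ($\lambda/z_j(\lambda)$, $\lambda/|\lambda|^{1/2}$, $\lambda/(\lambda+a)$, etc.) is $\xi'$-independent and, by Lemma \ref{lem:zl}, has $\tau\partial_\tau$-derivative uniformly bounded on $\Sigma_{\epsilon,c_0}$, so the relevant symbol-class inclusions are preserved.

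The main obstacle is bookkeeping rather than any single hard estimate. The statement involves eight kernels $M_1,\dots,M_8$ paired with four prefactor types, yielding on the order of thirty cases; verifying the symbol-class membership of each product requires a careful application of Leibniz's rule using the full set of symbol-class results in Section 4. Fortunately, the cases split naturally into three families according to whether the integrand involves $L_1$ only ($M_1,M_2,M_5,M_6$), $L_2$ only ($M_3,M_4$), or both ($M_7,M_8$); within each family the algebra is uniform, so the verification collapses to a short table of elementary computations. The one subtlety to watch for is that several kernels sit in the tilde-classes $\widetilde\BM_{s,j}$, where the extra $+1$ in $(|\lambda|^{1/2}+1+A)$ is available; this allows to trade a factor $L_2$ or $B_a$ for a harmless bounded quantity precisely when $|\lambda|\le c_0$, which is essential for making the $L_2$-based operators fit the $K_2$--$K_3$ template.
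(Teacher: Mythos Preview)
Your overall strategy---absorb each prefactor into the symbol and match the result to one of $K_1,\dots,K_8$ from Lemma \ref{l.R-bound.int.1d}---is exactly the paper's, and your treatment of the normal derivative via $\partial_N\CM(\alpha,\beta,\cdot)=-e^{-\alpha(\cdot)}-\beta\,\CM(\alpha,\beta,\cdot)$ (and its iterate, which uses $L_1^2=z_1+A^2$) is correct and coincides with the paper's computation \eqref{pd n}.

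The problem is your ``algebraic key''. The identity $\lambda=\dfrac{\lambda}{z_j(\lambda)}(L_j^2-A^2)$ is vacuous, since $L_j^2-A^2=z_j(\lambda)$ by definition; it just rewrites $\lambda$ as $\lambda$. More importantly, the claim that the two resulting pieces land in the right classes is false for $M_1$: with $k_1\in\BM_{-1,1}$ one gets $L_1^2 k_1\in\BM_{1,1}$, not $\BM_{0,1}$, so the piece $\dfrac{\lambda}{z_1}L_1^2 k_1\,e^{-L_1(x_N+y_N)}$ does not match any template in Lemma \ref{l.R-bound.int.1d}. The paper avoids this altogether by absorbing the prefactor \emph{directly}: for instance $\lambda M_1$ matches $K_3$ because $\dfrac{\lambda}{|\lambda|^{1/2}}\,k_1\in\BM_{0,1}$ (from $|\lambda|^{1/2}(|\lambda|^{1/2}+A)^{-1}\le 1$), and $\lambda M_8$ matches $K_7$ because $\lambda k_6\in\BM_{0,1}$ (from $|\lambda|\le c_0\le c_0(|\lambda|^{1/2}+1+A)$, which is where the tilde-class and the restriction $|\lambda|\le c_0$ enter). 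Once you replace the $z_j$-decomposition by this direct check, the rest of your bookkeeping plan goes through and is the same as the paper's.
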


\begin{proof}
The proof is similar to \cite[Lemma 5.6]{ShiS}, thus we mainly consider $M_8(\lambda)$.
For $j, k=1, \dots, N-1$, 
\begin{align*}
&(\lambda, \gamma)[M_8(\lambda)](x)\\
&\quad = \int ^\infty_0 \CF^{-1}_{\xi'}[(\lambda, \gamma) k_6(\lambda, \xi') |\lambda|^{1/2}\CM(L_2, L_1, x_N+y_N) \widehat g(\xi', y_N)](x')\,dy_N,\\
&\lambda^{1/2} \pd_j[M_8(\lambda)](x)\\
&\quad = \int ^\infty_0 \CF^{-1}_{\xi'}[(i\xi_j) \lambda^{1/2}|\lambda|^{1/2} A^{-1} k_6(\lambda, \xi') A \CM(L_2, L_1, x_N+y_N) \widehat g(\xi', y_N)](x')\,dy_N,\\
&\pd_j \pd_k[M_8(\lambda)](x)\\
&\quad = \int ^\infty_0 \CF^{-1}_{\xi'}[i\xi_j i\xi_kA^{-1}|\lambda|^{1/2} k_6(\lambda, \xi') A\CM(L_2, L_1, x_N+y_N) \widehat g(\xi', y_N)](x')\,dy_N.
\end{align*}
Lemma \ref{m} furnished that $\lambda k_6(\lambda, \xi')$, $\gamma k_6(\lambda, \xi') \in \BM_{0, 1}$ and $(i\xi_j) \lambda^{1/2} |\lambda|^{1/2} A^{-1} k_6(\lambda, \xi')$, $i\xi_j i\xi_k A^{-1} |\lambda|^{1/2} k_6(\lambda, \xi') \in \BM_{0, 2}$ provided by $|\lambda| \le c_0$.
Note that
\begin{align*}
\partial_N\mathcal{M}(L_2, L_1, x_N)&=-e^{-L_2x_N}-L_1\mathcal{M}(L_2, L_1, x_N),
\end{align*}
then
\begin{align*}
&\lambda^{1/2}\pd_N [M_8(\lambda)](x)\\
&\quad = -\int ^\infty_0 \CF^{-1}_{\xi'}[\lambda^{1/2} k_6(\lambda, \xi') |\lambda|^{1/2}e^{-L_2(x_N+y_N)}\widehat g(\xi', y_N)](x')\,dy_N\\
& \quad -\int ^\infty_0 \CF^{-1}_{\xi'}[\lambda^{1/2} L_1 k_6(\lambda, \xi') |\lambda|^{1/2}\CM(L_2, L_1, x_N+y_N)\widehat g(\xi', y_N)](x')\,dy_N,\\
&\pd_j \pd_N [M_8(\lambda)](x)\\
&\quad = -\int ^\infty_0 \CF^{-1}_{\xi'}[i\xi_j A^{-1} |\lambda|^{1/2} k_6(\lambda, \xi') Ae^{-L_2(x_N+y_N)}\widehat g(\xi', y_N)](x')\,dy_N\\
& \quad -\int ^\infty_0 \CF^{-1}_{\xi'}[i\xi_j A^{-1} |\lambda|^{1/2} L_1 k_6(\lambda, \xi') A\CM(L_2, L_1, x_N+y_N)\widehat g(\xi', y_N)](x')\,dy_N.
\end{align*}
Lemma \ref{m} implies that $\lambda^{1/2} k_6(\lambda, \xi')$, $\lambda^{1/2} L_1 k_6(\lambda, \xi') \in \BM_{0, 1}$ and
$i\xi_j A^{-1} |\lambda|^{1/2} k_6(\lambda, \xi')$, 
$i\xi_j A^{-1} |\lambda|^{1/2} L_1 k_6(\lambda, \xi') \in \BM_{0, 2}$ provided by $|\lambda| \le c_0$.
Furthermore, since
\begin{align*}
\partial_N^2 \mathcal{M}(L_2, L_1, x_N)&=(L_1+L_2)e^{-L_2x_N}+L_1^2\mathcal{M}(L_2, L_1, x_N)\\
&=(L_1+L_2)e^{-L_2x_N}+(z_1(\lambda)+A^2)\mathcal{M}(L_2, L_1, x_N),
\end{align*}
we can write
\begin{equation}\label{pd n}
\begin{aligned}
&\pd_N^2[M_8(\lambda)](x)\\
&\quad = \int ^\infty_0 \CF^{-1}_{\xi'}[(L_1+L_2) k_6(\lambda, \xi') |\lambda|^{1/2} e^{-L_2(x_N+y_N)}\widehat g(\xi', y_N)](x')\,dy_N\\
& \quad +\int ^\infty_0 \CF^{-1}_{\xi'}[z_1(\lambda) k_6(\lambda, \xi') |\lambda|^{1/2}\CM(L_2, L_1, x_N+y_N)\widehat g(\xi', y_N)](x')\,dy_N\\
& \quad +\int ^\infty_0 \CF^{-1}_{\xi'}[k_6(\lambda, \xi') |\lambda|^{1/2} A^2\CM(L_2, L_1, x_N+y_N)\widehat g(\xi', y_N)](x')\,dy_N,
\end{aligned}
\end{equation}
then we observe that
$\pd^2_N M_8(\lambda)$ satisfies the required properties by Lemma \ref{l.R-bound.int.1d}.

The other operators also satisfy the required properties by the same method as above, which completes the proof of Lemma \ref{l.R-bound.int.2d}.
\end{proof}
Thanks to the above lemma, it is sufficient to verify the following lemma to obtain the $\CR$-boundedness for the solution operator of the velocity.
\begin{lem}\label{lem:sol form}
Let $\epsilon \in (\epsilon_0, \pi/2)$ with $\tan \epsilon_0\ge |\beta|/\sqrt{2}$, and let $c_0$ be a small constant determined in Lemma \ref{lem:zl}.
For $\lambda \in \Sigma_{\epsilon, c_0}$, $M_j(\lambda)$ is the same operator in Lemma \ref{l.R-bound.int.2d} with 
$k_1 \in  \BM_{-1, 1}$, $k_2 \in \BM_{-2, 2}$, $k_3 \in  \widetilde\BM_{-2, 2}$, $k_4 \in \widetilde\BM_{-2, 1}$, $k_5 \in \widetilde \BM_{-1, 2}$, and $k_6 \in \widetilde \BM_{-1, 1}$.
Then the solution $\bu$ of \eqref{r} is given by a linear combination of $M_j$ for $j=1, \dots, 8$.
\end{lem}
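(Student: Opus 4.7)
The plan is to derive an explicit Fourier-representation for $\bu$ solving \eqref{r} and then regroup the resulting summands so that each of them matches one of the templates $M_1,\ldots,M_8$ by exhibiting its scalar symbol in the appropriate class $\BM_{s,j}$ or $\widetilde\BM_{s,j}$.

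First, I would apply the partial Fourier transform $\CF_{x'}$ to \eqref{r} to obtain a system of linear ODEs in $x_N$ with parameters $\xi'$ and $\lambda$. The characteristic polynomial is $(\omega^2-A^2)(\omega^2-B_a^2)P_2(\xi,\lambda)=0$ after eliminating the pressure via $\dv \bu=0$, which together with the factorization carried out in Section 4 produces the roots $-A,-B_a,-L_1,-L_2$ (retaining only those with positive real part to enforce decay as $x_N\to +\infty$). Writing $\hat\bu$ and $\hat\bQ$ as linear combinations of $e^{-Ax_N},e^{-B_ax_N},e^{-L_1x_N},e^{-L_2x_N}$ and imposing the divergence constraint at $x_N=0$ together with the boundary conditions $\hat\bu|_{x_N=0}=\hat\bh$, $\pd_N\hat\bQ|_{x_N=0}=\hat\bH$ yields a linear algebraic system for the coefficients, whose solvability is governed by the Lopatinski determinant—this is exactly where the quantities $\CC_a(\lambda,\xi')$ and $\CA_a(\lambda,\xi')$ from Section 4 enter. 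Solving this system (as in \cite{BM}) expresses $\hat\bu$ in terms of the boundary symbols $\CS_\lambda\bh$, $\CS_\lambda\bH$, $\lambda^{1/2}\bH$ multiplied by rational expressions in $A,B_a,L_1,L_2$ with $\lambda\CC_a$, $\CA_a$, $B_a^2-A^2$ in the denominators.

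Second, I would rewrite each summand using the identities
\[
e^{-B_ax_N}=e^{-L_1x_N}+(L_1-B_a)\CM(B_a,L_1,x_N),\quad e^{-L_2x_N}-e^{-L_1x_N}=(L_1-L_2)\CM(L_2,L_1,x_N),
\]
together with analogous rewritings for products involving $(B_a-A)$, $(L_j-A)$, so that every exponential factor takes one of the forms appearing in the $M_j$-templates, i.e.\ $e^{-L_i(x_N+y_N)}$, $A\CM(L_2,L_1,\cdot)$, $A^2\CM(L_1,A,\cdot)$, $A|\lambda|^{1/2}\CM(L_1,A,\cdot)$, or $|\lambda|^{1/2}\CM(L_2,L_1,\cdot)$. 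For each such piece the remaining scalar symbol is a product of (i)~a boundary-data factor coming from $\CS_\lambda\bh$ or $\CS_\lambda\bH$ or $\lambda^{1/2}\bH$; (ii)~one of the bounded rational combinations in $A,B_a,L_1,L_2$ analyzed in Lemmas \ref{m} and \ref{m2}; and (iii)~one inverse factor among $(\lambda\CC_a)^{-1}$ and $(\lambda+a)^{-1}$. Applying Corollary \ref{cor:ca} (which gives $\lambda^{-1}\CC_a^{-1}\in\BM_{0,2}$) and Lemma \ref{lem:aa} (giving $(\lambda+a)\CA_a^{-1}\in\widetilde\BM_{-2,1}$) together with Lemmas \ref{lem:e}, \ref{lem:eh}, \ref{lem:bl} then identifies the scalar symbol of each summand as an element of the class $\BM_{-1,1}$, $\BM_{-2,2}$, $\widetilde\BM_{-2,2}$, $\widetilde\BM_{-2,1}$, $\widetilde\BM_{-1,2}$, or $\widetilde\BM_{-1,1}$ required by Lemma \ref{l.R-bound.int.2d}.

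The main obstacle is the bookkeeping for the pieces coming from $E^h_k$, $E^{H_{NN}}_k$, $E^{H_{jN}}_k$, and $E^{H_{j\ell}}_k$, because there the denominators $\lambda\CC_a$ carry no $\lambda$-smallness and all gain must come from $\CC_a^{-1}$ via the lower bound $|\CC_a(\lambda,\xi')|\ge C|\lambda|^{-1}$ of Lemma \ref{lem:bound ca}. The delicate point is to see that the $\lambda$-power extracted from each boundary-data symbol ($A^2$, $|\lambda|^{1/2}A$, or $|\lambda|$ from $\CS_\lambda\bh,\CS_\lambda\bH$; $|\lambda|^{1/2}$ or $A$ from the extra components of $\CT_\lambda\bH$ and $\lambda^{1/2}\bH$) combines with the decomposition of Lemma \ref{lem:eh} so that, after cancelling the $\lambda^{-1}$ from $\lambda\CC_a$ against the $\lambda^{1/2}$ or $|\lambda|$ of the boundary data, the resulting symbol has exactly the right order ($-1$ or $-2$) to lie in the multiplier class required by the templates $M_1,\ldots,M_8$. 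Once this matching is verified term by term, the full solution $\bu$ is exhibited as a finite linear combination of operators of type $M_j$, completing the proof.
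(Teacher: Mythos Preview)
Your overall strategy matches the paper's: take the explicit half-space solution formula (from \cite{BM}), apply the Volevich trick, and then push each resulting symbol into one of the model classes $\BM_{s,j}$, $\widetilde\BM_{s,j}$ using the algebra of Lemmas \ref{m}, \ref{m2}, \ref{lem:e}, \ref{lem:eh}, \ref{lem:bl}, Corollary \ref{cor:ca}, and Lemma \ref{lem:aa}. That part is fine.

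There is, however, a genuine gap. You write that ``all gain must come from $\CC_a^{-1}$ via the lower bound $|\CC_a(\lambda,\xi')|\ge C|\lambda|^{-1}$ of Lemma \ref{lem:bound ca}'' and that the remaining matching is only bookkeeping. This is not true for every term. When you unfold $\CE$ via Lemma \ref{lem:e} the piece $m_1\in\widetilde\BM_{1,2}$ forces you to use the identity $1=(z_2(\lambda)+A^2)/L_2^2$, and the resulting factor $z_2(\lambda)/L_2$ has size $\sim(|\lambda|+1)^{1/2}$, \emph{not} $|\lambda|^{1/2}$. With only $(\lambda\CC_a)^{-1}\in\BM_{0,2}$ you are then short by one half-power of $|\lambda|$ and the symbol does not land in $\BM_{-2,2}$. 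The paper handles this by splitting into two frequency regimes: for $A^2\ge\omega_0$ one absorbs the excess using $A^{-2}$, while for $A^2\le|\lambda|/R$ one invokes the \emph{sharper} low-frequency bound \eqref{lem:ca l} of Lemma \ref{lem:ca}, namely $|\lambda|^{-3/2}|\CC_a^{-1}|\le C$, which supplies the missing $|\lambda|^{1/2}$. This case distinction (and the second part of Lemma \ref{lem:bound ca}/Lemma \ref{lem:ca}) is essential and is absent from your plan; without it the terms arising from the $m_1$-part of $\CE$ in $\CU_k^2$ (and its analogues in $\CU_k^6$) cannot be placed in the required multiplier class.
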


\begin{proof}
According to \cite[proof of Theorem 3.4.5]{BM}, we know that the solution formula of $u_k$ for $k=1, \dots, N-1$,
\begin{align*} 
u_k&=\sum_{m=1}^{14}\mathcal{U}_k^m\\
&= -\int_0^\infty \mathcal{F}_{\xi^\prime}^{-1}\left[\partial_{y_N}\left(\widehat{h}_k(\xi^\prime,y_N) e^{-L_1(x_N+y_N)}\right)\right]dy_N\\
&-\int_0^\infty \mathcal{F}_{\xi^\prime}^{-1}\left[\partial_{y_N}\left(\frac{i\xi_k\CE(L_1-A)}{\lambda\mathcal{C}_aA}i\xi^\prime\cdot \widehat{h}(\xi^\prime,y_N) \mathcal{M}(L_1,A,x_N+y_N)\right)\right]dy_N\\
& - \int_0^\infty \mathcal{F}_{\xi^\prime}^{-1}\left[\partial_{y_N}\left(\frac{i\xi_k}{\lambda\mathcal{C}_aA}A^2(L_1-A)\widehat{H}_{NN}(\xi^\prime,y_N)\mathcal{M}(L_1,A,x_N+y_N)\right)\right]dy_N\\
& -\sum_{j=1}^{N-1}\int_0^\infty\mathcal{F}_{\xi^\prime}^{-1}\left[\partial_{y_N}\left(\frac{i\xi_k}{\lambda\mathcal{C}_aA}\frac{(L_1-A)(B_a^2+A^2)}{B_a}i\xi_j\widehat{H}_{jN}(\xi^\prime,y_N)\mathcal{M}(L_1,A,x_N+y_N)\right)\right]dy_N\\
& -\sum_{j,l=1}^{N-1}\int_0^\infty \mathcal{F}_{\xi^\prime}^{-1}\left[\partial_{y_N}\left(\frac{i\xi_k}{\lambda\mathcal{C}_aA}(L_1-A)i\xi_ji\xi_l\widehat{H}_{j\ell}(\xi^\prime,y_N)\mathcal{M}(L_1,A,x_N+y_N)\right)\right]dy_N\\
& -\int_0^\infty \mathcal{F}_{\xi^\prime}^{-1}\left[\partial_{y_N}\left(\frac{(B_a^2-L_1^2)(B_a^2-L_2^2)}{\mathcal{A}_a}E_k^hi\xi^\prime\cdot \widehat{h}(\xi^\prime,y_N)\mathcal{M}(L_2,L_1,x_N+y_N)\right)\right]dy_N\\
& - \int_0^\infty \mathcal{F}_{\xi^\prime}^{-1}\left[\partial_{y_N}\left(\frac{(B_a^2-L_1^2)(B_a^2-L_2^2)}{\mathcal{A}_a}E_k^{H_{NN}}\widehat{H}_{NN}(\xi^\prime,y_N)\mathcal{M}(L_2,L_1,x_N+y_N)\right)\right]dy_N\\
& - \sum_{j=1}^{N-1}\int_0^\infty \mathcal{F}_{\xi^\prime}^{-1}\left[\partial_{y_N}\left(\frac{(B_a^2-L_1^2)(B_a^2-L_2^2)}{\mathcal{A}_a}E_k^{H_{jN}}\widehat{H}_{jN}(\xi^\prime,y_N)\mathcal{M}(L_2,L_1,x_N+y_N)\right)\right]dy_N\\
& - \sum_{j,l=1}^{N-1}\int_0^\infty \mathcal{F}_{\xi^\prime}^{-1}\left[\partial_{y_N}\left(\frac{(B_a^2-L_1^2)(B_a^2-L_2^2)}{\mathcal{A}_a}E_k^{H_{j\ell}}\widehat{H}_{j\ell}(\xi^\prime,y_N)\mathcal{M}(L_2,L_1,x_N+y_N)\right)\right]dy_N\\
& +\int_0^\infty \mathcal{F}^{-1}_{\xi^\prime}\left[\partial_{y_N}\left(\frac{L_1(B_aL_1-A^2)(B_a^2-L_2^2)}{\mathcal{A}_a}\widehat{h}_k(\xi^\prime,y_N)\mathcal{M}(L_2,L_1,x_N+y_N)\right)\right]dy_N\\
& - \int_0^\infty \mathcal{F}_{\xi^\prime}^{-1}\left[\partial_{y_N}\left(\frac{i\xi_k\CE L_1(B_aL_1-A^2)(B_a^2-L_2^2)}{\lambda \mathcal{C}_aA\mathcal{A}_a}i\xi^\prime\cdot \widehat{h}(\xi^\prime,y_N) \mathcal{M}(L_2,L_1,x_N+y_N)\right)\right]dy_N\\
& - \int_0^\infty \mathcal{F}_{\xi^\prime}^{-1}\left[\partial_{y_N}\left(\frac{i\xi_k}{\lambda\mathcal{C}_aA}\frac{L_1(B_aL_1-A^2)(B_a^2-L_2^2)A^2}{\mathcal{A}_a}\widehat{H}_{NN}(\xi^\prime,y_N)\mathcal{M}(L_2,L_1,x_N+y_N)\right)\right]dy_N\\
& -\sum_{j=1}^{N-1}\int_0^\infty\mathcal{F}_{\xi^\prime}^{-1}\left[\partial_{y_N}\left(\frac{i\xi_k}{\lambda\mathcal{C}_aA}\frac{L_1(B_aL_1-A^2)(B_a^2-L_2^2)}{\mathcal{A}_a}\frac{(B_a^2+A^2)}{B_a}i\xi_j\widehat{H}_{jN}(\xi^\prime,y_N)\mathcal{M}(L_2,L_1,x_N+y_N)\right)\right]dy_N\\
& -\sum_{j,l=1}^{N-1}\int_0^\infty \mathcal{F}_{\xi^\prime}^{-1}\left[\partial_{y_N}\left(\frac{i\xi_k}{\lambda\mathcal{C}_aA}\frac{L_1(B_aL_1-A^2)(B_a^2-L_2^2)}{\mathcal{A}_a}i\xi_ji\xi_l\widehat{H}_{jl}(\xi^\prime,y_N)\mathcal{M}(L_2,L_1,x_N+y_N)\right)\right]dy_N. 
\end{align*}
Let us consider the first term:
\[
\mathcal{U}^1_k=-\int_0^\infty \mathcal{F}^{-1}_{\xi^\prime}\left[e^{-L_1(x_N+y_N)}\partial_{y_N}\widehat{h}_k(\xi^\prime,y_N)-L_1 e^{-L_1(x_N+y_N)}\widehat{h}_k\right]dy_N. 
\]
The identity
\begin{equation}\label{i}
1=\frac{z_1(\lambda)+A^2}{L_1^2}
\end{equation} 
gives us 
\begin{align*}
\mathcal{U}_k^1
&=-\int_0^\infty \mathcal{F}^{-1}_{\xi^\prime}
\left[\frac{z_1(\lambda)}{\lambda^{1/2}L_1^2} e^{-L_1(x_N+y_N)} \lambda^{1/2} \partial_{y_N}\widehat{h}_k(\xi^\prime,y_N)\right]dy_N\\
& + \sum^{N-1}_{j=1}\int_0^\infty \mathcal{F}^{-1}_{\xi^\prime}
\left[\frac{1}{L_1^2} \frac{i\xi_j}{A} A e^{-L_1(x_N+y_N)} i\xi_j \partial_{y_N}\widehat{h}_k(\xi^\prime,y_N)\right]dy_N\\
& +\int_0^\infty \mathcal{F}_{\xi^\prime}^{-1}\left[\frac{z_1(\lambda)}{L_1 \lambda} e^{-L_1(x_N+y_N)}\lambda \widehat{h}_k\right]dy_N
- \sum^{N-1}_{j=1}\int_0^\infty \mathcal{F}_{\xi^\prime}^{-1}\left[\frac{1}{L_1} e^{-L_1(x_N+y_N)} (i\xi_j)^2\widehat{h}_k\right]dy_N. 
\end{align*}
Lemma \ref{m} implies that
\[
\frac{z_1(\lambda)}{\lambda^{1/2}L_1^2}, ~\frac{z_1(\lambda)}{L_1 \lambda}, ~\frac{1}{L_1} \in \BM_{-1, 1},
\quad
\frac{1}{L_1^2}\frac{i\xi_j}{A} \in \BM_{-2, 2}.
\]

% and then Theorem \ref{t.max-reg.}, we get
% $$ \|\mathcal{U}_k^1\|_{L^q}\lesssim |\lambda|^\frac{1}{2}\|\partial_{x_N}h_k\|_{L^q}+\|\nabla^\prime \partial_{x_N}h_k\|_{L^q}+|\lambda|\|h_k\|_{L^q}+\|\Delta h_k\|_{L^q}. $$

We will repeat a similar argument for all the terms. 
In particular, we consider the second term:
\begin{equation}\label{U2}
\begin{aligned}
\mathcal{U}^2_k
&=-\int_0^\infty\mathcal{F}^{-1}_{\xi^\prime}\left[\frac{i\xi_k \CE (L_1-A)}{\lambda\mathcal{C}_aA}i\xi^\prime\cdot \partial_{y_N}\widehat{h}(\xi^\prime,y_N)\CM(L_1,A,x_N+y_N)\right]dy_N\\
& - \int_0^\infty\mathcal{F}^{-1}_{\xi^\prime}\left[\frac{i\xi_k\CE(L_1-A)}{\lambda\CC_aA}i\xi^\prime\cdot \widehat{h}(\xi^\prime,y_N)\left(-e^{-L_1(x_N+y_N)}-A\mathcal{M}(L_1,A,x_N+y_N)\right)\right]dy_N,
\end{aligned}
\end{equation}
where we used the identity
\[
\partial_{x_N}\mathcal{M}(L_1,A,x_N)=-e^{-L_1x_N}-A\mathcal{M}(L_1,A,x_N). 
\]
The first term on the right-hand side of \eqref{U2} can be rewritten by 
Lemma \ref{lem:e} and
\begin{equation}\label{i3}
	L_1-A=\frac{z_1(\lambda)}{L_1+A}
\end{equation}
as follows: 
\begin{align*}
&-\int_0^\infty\mathcal{F}^{-1}_{\xi^\prime}\left[\frac{i\xi_k \CE (L_1-A)}{\lambda\mathcal{C}_aA}i\xi^\prime\cdot \partial_{y_N}\widehat{h}(\xi^\prime,y_N)\CM(L_1,A,x_N+y_N)\right]dy_N\\
&=
-\int_0^\infty\mathcal{F}^{-1}_{\xi^\prime}\left[\frac{1}{\lambda \CC_a} \frac{i \xi_k}{A} (Am_0 + m_1) \frac{\lambda+1}{\lambda} \frac{z_1(\lambda)}{L_1+A} \mathcal{M}(L_1,A,x_N+y_N) \partial_{y_N} i\xi' \cdot \widehat{h}(\xi^\prime,y_N)\right]dy_N\\
&=: I+II,
\end{align*}
where 
\begin{align*}
	I&=-\int_0^\infty\mathcal{F}^{-1}_{\xi^\prime}\left[\frac{1}{\lambda \CC_a} \frac{i \xi_k}{A} Am_0 \frac{\lambda+1}{\lambda} \frac{z_1(\lambda)}{L_1+A} \mathcal{M}(L_1,A,x_N+y_N) \partial_{y_N} i\xi' \cdot \widehat{h}(\xi^\prime,y_N)\right]dy_N,\\
	II&=-\int_0^\infty\mathcal{F}^{-1}_{\xi^\prime}\left[\frac{1}{\lambda \CC_a} \frac{i \xi_k}{A} m_1 \frac{\lambda+1}{\lambda} \frac{z_1(\lambda)}{L_1+A} \mathcal{M}(L_1,A,x_N+y_N) \partial_{y_N} i\xi' \cdot \widehat{h}(\xi^\prime,y_N)\right]dy_N
\end{align*}
with $m_0 \in \BM_{0, 2}$ and $m_1 \in \widetilde \BM_{1, 2}$.
Using the identity \eqref{i}
for $I$, it follows that
\begin{align*}
	I &=-\int_0^\infty\mathcal{F}^{-1}_{\xi^\prime}\bigg[\frac{1}{\lambda \CC_a} \frac{i \xi_k}{A} \frac{z_1(\lambda)}{|\lambda|^{1/2} L_1} m_0 \frac{(\lambda+1)z_1(\lambda)}{\lambda} \frac{1}{L_1(L_1+A)} \\
	&\enskip \times A |\lambda|^\frac{1}{2}\mathcal{M}(L_1,A,x_N+y_N) \partial_{y_N} i\xi' \cdot \widehat{h}(\xi^\prime,y_N)\bigg]dy_N\\
&\enskip -\int_0^\infty\mathcal{F}^{-1}_{\xi^\prime}\left[\frac{1}{\lambda \CC_a} \frac{i \xi_k}{A} \frac{A}{L_1} m_0 \frac{(\lambda+1)z_1(\lambda)}{\lambda} \frac{1}{L_1(L_1+A)} A^2
\mathcal{M}(L_1,A,x_N+y_N) \partial_{y_N} i\xi' \cdot \widehat{h}(\xi^\prime,y_N)\right]dy_N.
\end{align*}
In view of $m_1/L_2 \in \BM_{0, 2}$, we use the identity
\begin{equation}\label{i2}
	1=\frac{z_2(\lambda)+A^2}{L_2^2}
\end{equation}
for $II$, then it follows that
\begin{align*}
	II &=-\int_0^\infty\mathcal{F}^{-1}_{\xi^\prime}\left[\frac{1}{\lambda \CC_a} \frac{i \xi_k}{A} \frac{m_1}{L_2} \frac{(\lambda+1)z_1(\lambda)}{\lambda} \frac{z_2(\lambda)}{L_2}\frac{1}{L_1+A} \mathcal{M}(L_1,A,x_N+y_N) \partial_{y_N} i\xi' \cdot \widehat{h}(\xi^\prime,y_N)\right]dy_N\\
&\enskip -\int_0^\infty\mathcal{F}^{-1}_{\xi^\prime}\left[\frac{1}{\lambda \CC_a} \frac{i \xi_k}{A} \frac{m_1}{L_2} \frac{(\lambda+1)z_1(\lambda)}{\lambda} \frac{1}{L_2(L_1+A)} A^2 \mathcal{M}(L_1,A,x_N+y_N) \partial_{y_N} i\xi' \cdot \widehat{h}(\xi^\prime,y_N)\right]dy_N.
\end{align*}
The multipliers of $I$ and the second term of $II$
\begin{align*}
&\frac{1}{\lambda \CC_a} \frac{i \xi_k}{A} \frac{z_1(\lambda)}{|\lambda|^{1/2} L_1} m_0 \frac{(\lambda+1)z_1(\lambda)}{\lambda} \frac{1}{L_1(L_1+A)},
\\
&\frac{1}{\lambda \CC_a} \frac{i \xi_k}{A} \frac{A}{L_1} m_0 \frac{(\lambda+1)z_1(\lambda)}{\lambda} \frac{1}{L_1(L_1+A)},
\\
&\frac{1}{\lambda \CC_a} \frac{i \xi_k}{A} \frac{m_1}{L_2} \frac{(\lambda+1)z_1(\lambda)}{\lambda} \frac{1}{L_2(L_1+A)} 
\end{align*}
belong to $\BM_{-2, 2}$
by Lemma \ref{m2} \eqref{m2-3}, Corollary \ref{cor:ca}, and Lemma \ref{lem:zl} \eqref{z}.
Now, we consider the first term of $II$.
It seems difficult to obtain $z_2(\lambda)/(|\lambda|^{1/2}L_2) \in \BM_{0, 2}$ as well as $I$ because $|z_2(\lambda)| \sim |\lambda|+1$. 
Thus, we need the estimate of $\lambda^{3/2} \CC_a$ for the low frequency of $|\xi'|$ in Lemma \ref{lem:ca} \eqref{bound ca 2}.
Since $|\lambda| \neq 0$, there exists small number $\omega_0>0$ such that $0 < \omega_0 < |\lambda|/R$. 
Then we divide the region of $|\xi'|$ into two cases: $\omega_0 \le A^2$ and $0 < A^2 \le |\lambda|/R$.
In the case that $\omega_0 \le A^2$, we rewrite
\begin{align*}
&\int_0^\infty\mathcal{F}^{-1}_{\xi^\prime}\left[\frac{1}{\lambda \CC_a} \frac{i \xi_k}{A} \frac{m_1}{L_2} \frac{(\lambda+1)z_1(\lambda)}{\lambda} \frac{z_2(\lambda)}{L_2}\frac{1}{L_1+A}\mathcal{M}(L_1,A,x_N+y_N) \partial_{y_N} i\xi' \cdot \widehat{h}(\xi^\prime,y_N)\right]dy_N\\
&=\int_0^\infty\mathcal{F}^{-1}_{\xi^\prime}\left[\frac{1}{\lambda \CC_a} \frac{i \xi_k}{A} \frac{m_1}{L_2} \frac{(\lambda+1)z_1(\lambda)}{\lambda}\frac{z_2(\lambda)}{L_2}\frac{1}{L_1+A} \frac{1}{A^2} A^2 \mathcal{M}(L_1,A,x_N+y_N) \partial_{y_N} i\xi' \cdot \widehat{h}(\xi^\prime,y_N)\right]dy_N.
\end{align*}
Since $(L_1+A)^{-1} \in \BM_{0, 2}$ if $1/A^2 \le 1/\omega_0$, we observe that 
\[
\frac{1}{\lambda \CC_a} \frac{i \xi_k}{A} \frac{m_1}{L_2} \frac{(\lambda+1)z_1(\lambda)}{\lambda} \frac{z_2(\lambda)}{L_2}\frac{1}{L_1+A} \frac{1}{A^2} \in \BM_{-2, 2}.
\]
In the case that $A^2 \le |\lambda|/R$, using \eqref{i}
\begin{align*}
&\int_0^\infty\mathcal{F}^{-1}_{\xi^\prime}\left[\frac{1}{\lambda \CC_a} \frac{i \xi_k}{A} \frac{m_1}{L_2} \frac{(\lambda+1)z_1(\lambda)}{\lambda} \frac{z_2(\lambda)}{L_2}\frac{1}{L_1+A} \mathcal{M}(L_1,A,x_N+y_N) \partial_{y_N} i\xi' \cdot \widehat{h}(\xi^\prime,y_N)\right]dy_N\\
&=\sum^{N-1}_{j=1} \int_0^\infty\mathcal{F}^{-1}_{\xi^\prime}\bigg[\frac{1}{\lambda^{3/2} \CC_a} \frac{i \xi_k}{A} \frac{m_1}{L_2} \frac{(\lambda+1)z_1(\lambda)}{\lambda} \frac{z_2(\lambda)}{L_2}\frac{\lambda^{1/2}}{L_1+A} \frac{1}{L_1^2} \frac{z_1(\lambda)}{|\lambda|^{1/2}\lambda^{1/2}} \frac{i\xi_j}{A} \\
&\quad \times A |\lambda|^{1/2}\mathcal{M}(L_1,A,x_N+y_N) \lambda^{1/2} \partial_{y_N} \widehat{h}_j(\xi^\prime,y_N)\bigg]dy_N\\
&+\enskip \int_0^\infty\mathcal{F}^{-1}_{\xi^\prime}\left[\frac{1}{\lambda^{3/2} \CC_a} \frac{i \xi_k}{A} \frac{m_1}{L_2} \frac{(\lambda+1)z_1(\lambda)}{\lambda} \frac{z_2(\lambda)}{L_2}\frac{\lambda^{1/2}}{L_1+A} \frac{1}{L_1^2} A^2 \mathcal{M}(L_1,A,x_N+y_N) \partial_{y_N} i\xi' \cdot \widehat{h}(\xi^\prime,y_N)\right]dy_N,
\end{align*}
then Lemma \ref{lem:ca} implies that 
\begin{align*}
&\frac{1}{\lambda^{3/2} \CC_a} \frac{i \xi_k}{A} \frac{m_1}{L_2} \frac{(\lambda+1)z_1(\lambda)}{\lambda} \frac{z_2(\lambda)}{L_2}\frac{\lambda^{1/2}}{L_1+A} \frac{1}{L_1^2} \frac{z_1(\lambda)}{|\lambda|^{1/2}\lambda^{1/2}} \frac{i\xi_j}{A},
\\
&\frac{1}{\lambda^{3/2} \CC_a} \frac{i \xi_k}{A} \frac{m_1}{L_2} \frac{(\lambda+1)z_1(\lambda)}{\lambda} \frac{z_2(\lambda)}{L_2}\frac{\lambda^{1/2}}{L_1+A} \frac{1}{L_1^2} 
\end{align*}
belong to $\BM_{-2, 2}$ for $\lambda \in \Sigma_{\epsilon, c_0}$, which completes the proof for the first term on the right-hand side of \eqref{U2}.
%In the case that $1/R \le A^2 \le (c_0+a)/r$,
%\begin{align*}
%&\int_0^\infty\mathcal{F}^{-1}_{\xi^\prime}\left[\frac{1}{\lambda \CC_a} \frac{i \xi_k}{A} \frac{m_1}{L_2} \frac{(\lambda+1)z_1(\lambda)}{\lambda} \frac{z_2(\lambda)}{L_2(L_1+A)} \mathcal{M}(L_1,A,x_N+y_N) \partial_{y_N} i\xi' \cdot \widehat{h}(\xi^\prime,y_N)\right]dy_N\\
%&=\sum^{N-1}_{j=1} \int_0^\infty\mathcal{F}^{-1}_{\xi^\prime}\Big[\frac{1}{\lambda^2 \CC_a} \frac{i \xi_k}{A} \frac{m_1}{L_2} \frac{(\lambda+1)z_1(\lambda)}{\lambda} \frac{z_2(\lambda)}{L_2(L_1+A)} \frac{\lambda}{|\lambda|^{1/2}\lambda^{1/2}} \frac{i\xi_j}{A} \\
%&\quad \times A |\lambda|^{1/2}\mathcal{M}(L_1,A,x_N+y_N) \lambda^{1/2} \partial_{y_N} \widehat{h}_j(\xi^\prime,y_N)\Big]dy_N,
%\end{align*}
%then Lemma \ref{lem:ca} implies that 
%\[
%\frac{1}{\lambda^2 \CC_a} \frac{i \xi_k}{A} \frac{m_1}{L_2} \frac{(\lambda+1)z_1(\lambda)}{\lambda} \frac{z_2(\lambda)}{L_2(L_1+A)} \frac{\lambda}{|\lambda|^{1/2}\lambda^{1/2}} \frac{i\xi_j}{A} \in \BM_{-2, 2}.
%\]

Similarly, we consider the second term on the right-hand side of \eqref{U2}.
\begin{align*}
&\int_0^\infty\mathcal{F}^{-1}_{\xi^\prime}\left[\frac{i\xi_k\CE(L_1-A)}{\lambda\CC_aA}i\xi^\prime\cdot \widehat{h}(\xi^\prime,y_N) e^{-L_1(x_N+y_N)}\right]dy_N\\
&= \int_0^\infty\mathcal{F}^{-1}_{\xi^\prime}\bigg[\frac{1}{\lambda \CC_a} \frac{i \xi_k}{A} \frac{z_1(\lambda)}{\lambda^{1/2}L_1} m_0 \frac{(\lambda+1)z_1(\lambda)}{\lambda} \frac{1}{L_1(L_1+A)}
A e^{-L_1(x_N+y_N)} \lambda^{1/2} i\xi' \cdot \widehat{h}(\xi^\prime,y_N)\bigg]dy_N\\
&\enskip - \sum_{j=1}^{N-1} \int_0^\infty\mathcal{F}^{-1}_{\xi^\prime}\bigg[\frac{1}{\lambda \CC_a} \frac{i \xi_k}{A} \frac{A}{L_1}\frac{i\xi_j}{A} m_0 \frac{(\lambda+1)z_1(\lambda)}{\lambda}\frac{1}{L_1(L_1+A)}
A e^{-L_1(x_N+y_N)} i\xi_j i\xi' \cdot \widehat{h}(\xi^\prime,y_N)\bigg]dy_N\\
&\enskip + \int_0^\infty\mathcal{F}^{-1}_{\xi^\prime}\left[\frac{1}{\lambda \CC_a} \frac{i \xi_k}{A} \frac{m_1}{L_2} \frac{(\lambda+1)z_1(\lambda)}{\lambda} \frac{z_2(\lambda)}{L_2(L_1+A)} e^{-L_1(x_N+y_N)} i\xi' \cdot \widehat{h}(\xi^\prime,y_N)\right]dy_N\\
&\enskip -\sum_{j=1}^{N-1}\int_0^\infty\mathcal{F}^{-1}_{\xi^\prime}\left[\frac{1}{\lambda \CC_a} \frac{i \xi_k}{A} \frac{m_1}{L_2} \frac{(\lambda+1)z_1(\lambda)}{\lambda} \frac{1}{L_2(L_1+A)} \frac{i\xi_j}{A} A e^{-L_1(x_N+y_N)} i\xi_j i\xi' \cdot \widehat{h}(\xi^\prime,y_N)\right]dy_N,
\end{align*}
then
we see that the multipliers of the first, second, and fourth terms on the right hand side:
\begin{align*}
&\frac{1}{\lambda \CC_a} \frac{i \xi_k}{A} \frac{z_1(\lambda)}{\lambda^{1/2}L_1} m_0 \frac{(\lambda+1)z_1(\lambda)}{\lambda} \frac{1}{L_1(L_1+A)},
\\
&\frac{1}{\lambda \CC_a} \frac{i \xi_k}{A} \frac{A}{L_1}\frac{i\xi_j}{A} m_0 \frac{(\lambda+1)z_1(\lambda)}{\lambda}\frac{1}{L_1(L_1+A)},
\\
&\frac{1}{\lambda \CC_a} \frac{i \xi_k}{A} \frac{m_1}{L_2} \frac{(\lambda+1)z_1(\lambda)}{\lambda} \frac{1}{L_2(L_1+A)} \frac{i\xi_j}{A}  
\end{align*}
belong to $\BM_{-2, 2}$
by Lemma \ref{m2} \eqref{m2-3}, Corollary \ref{cor:ca}, and Lemma \ref{lem:zl} \eqref{z}.
The multiplier of the third term on the right-hand side also belongs to $\BM_{-2, 2}$ by the same way as the first term of \eqref{U2}.

The third term on the right-hand side of \eqref{U2}:
\[
\int_0^\infty\mathcal{F}^{-1}_{\xi^\prime}\left[\frac{i\xi_k\CE(L_1-A)}{\lambda\CC_aA}i\xi^\prime\cdot \widehat{h}(\xi^\prime,y_N) A\CM(L_1,A,x_N+y_N)\right]dy_N
\]
can be considered by the same way as above.

Similarly, we can treat $\CU^3_k$, $\CU^4_k$, and $\CU^5_k$.
Next, we consider $\CU^6_k$.
Note that
\[
\partial_{x_N}\mathcal{M}(L_2, L_1, x_N)=-e^{-L_2x_N}-L_1\mathcal{M}(L_2, L_1, x_N),
\]
then
we have
\begin{align*} 
\mathcal{U}_k^6&= -\int_0^\infty\mathcal{F}_{\xi'}^{-1}
\left[\frac{(B_a^2-L_1^2)(B_a^2-L_2^2)}{\CA_a} E^h_k \CM(L_2, L_1, x_N+y_N) i\xi' \cdot \pd_{y_N} \widehat{h}(\xi^\prime,y_N) \right]dy_N\\
&-\int_0^\infty\mathcal{F}_{\xi'}^{-1}
\bigg[\frac{(B_a^2-L_1^2)(B_a^2-L_2^2)}{\CA_a} E^h_k \left(-e^{-L_2(x_N+y_N)}- L_1 \CM(L_2, L_1, x_N)\right) i\xi' \cdot \widehat{h}(\xi^\prime,y_N) \bigg]dy_N.
\end{align*}
Lemma \ref{lem:e} and Lemma \ref{lem:eh} imply that
\[
E^h_k = i\xi_k \frac{1}{\lambda \CC_a}(Am_0+m_1)\frac{\lambda+1}{\lambda} +i\xi_k \frac{m_2}{\lambda+a},
\]
where $m_0 \in \BM_{0, 2}$, $m_1 \in \widetilde \BM_{1, 2}$, $m_2 \in \widetilde \BM_{1, 1}$.
Thus, the first term on the right-hand side of $\CU^6_k$ is written by
\begin{align*} 
&-\int_0^\infty\mathcal{F}_{\xi'}^{-1}
\left[\frac{(B_a^2-L_1^2)(B_a^2-L_2^2)}{\CA_a} E^h_k \CM(L_2, L_1, x_N+y_N) i\xi' \cdot \pd_{y_N} \widehat{h}(\xi^\prime,y_N) \right]dy_N\\
&=-\int_0^\infty\mathcal{F}_{\xi'}^{-1}
\bigg[\frac{(B_a^2-L_1^2)(B_a^2-L_2^2)}{\CA_a} \left(i\xi_k \frac{1}{\lambda \CC_a}(Am_0+m_1)\frac{\lambda+1}{\lambda} +i\xi_k \frac{m_2}{\lambda+a}\right)\\
&\qquad \times \CM(L_2, L_1, x_N+y_N) i\xi' \cdot \pd_{y_N} \widehat{h}(\xi^\prime,y_N) \bigg]dy_N\\
&=-\int_0^\infty\mathcal{F}_{\xi'}^{-1}
\left[\left(\frac{B_a^2-L_1^2}{\CA_a} i\xi_k\right) \frac{1}{\lambda \CC_a} \frac{(\lambda+1)(B_a^2-L_2^2)}{\lambda} m_0 A \CM(L_2, L_1, x_N+y_N) i\xi' \cdot \pd_{y_N} \widehat{h}(\xi^\prime,y_N) \right]dy_N\\
& -\int_0^\infty\mathcal{F}_{\xi'}^{-1}
\left[\left(\frac{B_a^2-L_1^2}{\CA_a}m_1\right) \frac{i\xi_k}{A} \frac{1}{\lambda \CC_a} \frac{(\lambda+1)(B_a^2-L_2^2)}{\lambda} A\CM(L_2, L_1, x_N+y_N) i\xi' \cdot \pd_{y_N} \widehat{h}(\xi^\prime,y_N) \right]dy_N\\
& -\int_0^\infty\mathcal{F}_{\xi'}^{-1}
\left[\left(\frac{B_a^2-L_1^2}{\CA_a} m_2 \right) \frac{i\xi_k}{A} \frac{B_a^2-L_2^2}{\lambda+a} A\CM(L_2, L_1, x_N+y_N) i\xi' \cdot \pd_{y_N} \widehat{h}(\xi^\prime,y_N) \right]dy_N.
\end{align*}
Lemma \ref{m}, Corollary \ref{cor:ca}, Lemma \ref{lem:aa}, and Lemma \ref{lem:zl} \eqref{z} furnish that
the multipliers
\begin{align*}
&\left(\frac{B_a^2-L_1^2}{\CA_a} i\xi_k\right) \frac{1}{\lambda \CC_a} \frac{(\lambda+1)(B_a^2-L_2^2)}{\lambda} m_0,
\\
&\left(\frac{B_a^2-L_1^2}{\CA_a}m_1\right) \frac{i\xi_k}{A} \frac{1}{\lambda \CC_a} \frac{(\lambda+1)(B_a^2-L_2^2)}{\lambda} ,\\
&
\left(\frac{B_a^2-L_1^2}{\CA_a} m_2 \right) \frac{i\xi_k}{A} \frac{B_a^2-L_2^2}{\lambda+a} 
\end{align*}
belong to $\widetilde \BM_{-1, 2}$.
The identity 
\eqref{i2}
gives us
\begin{equation}\label{U6-2}
\begin{aligned}
&\int_0^\infty\mathcal{F}_{\xi'}^{-1}
\bigg[\frac{(B_a^2-L_1^2)(B_a^2-L_2^2)}{\CA_a} E^h_k e^{-L_2(x_N+y_N)} i\xi' \cdot \widehat{h}(\xi^\prime,y_N) \bigg]dy_N\\
&= \int_0^\infty\mathcal{F}_{\xi'}^{-1}
\bigg[\frac{B_a^2-L_1^2}{\CA_a} \frac{1}{\lambda \CC_a} \frac{(\lambda+1)(B_a^2-L_2^2)}{\lambda} m_0
A e^{-L_2(x_N+y_N)} i\xi_k i\xi' \cdot \widehat{h}(\xi^\prime,y_N) \bigg]dy_N\\
&\enskip + \int_0^\infty\mathcal{F}_{\xi'}^{-1}
\bigg[\frac{B_a^2-L_1^2}{\CA_a}  \frac{1}{\lambda \CC_a} \frac{(\lambda+1)(B_a^2-L_2^2)}{\lambda} i\xi_k \frac{m_1}{L_2} \frac{z_2(\lambda)}{L_2} 
e^{-L_2(x_N+y_N)} i\xi' \cdot \widehat{h}(\xi^\prime,y_N) \bigg]dy_N\\
&\enskip + \int_0^\infty\mathcal{F}_{\xi'}^{-1}
\bigg[\frac{B_a^2-L_1^2}{\CA_a}  \frac{1}{\lambda \CC_a} \frac{(\lambda+1)(B_a^2-L_2^2)}{\lambda} \frac{m_1}{L_2} \frac{A}{L_2} 
A e^{-L_2(x_N+y_N)} i\xi_k i\xi' \cdot \widehat{h}(\xi^\prime,y_N) \bigg]dy_N\\
%&\enskip -\sum^{N-1}_{j=1} \int_0^\infty\mathcal{F}_{\xi'}^{-1}
%\bigg[\frac{B_a^2-L_1^2}{\CA_a} (B_a^2-L_2^2) \frac{\lambda+1}{\lambda} \frac{i\xi_k}{A}A \frac{1}{\lambda \CC_a} \frac{m_1}{L_1} \frac{A}{L_1} \frac{i\xi_j}{A}
%e^{-L_2(x_N+y_N)} i\xi_j i\xi' \cdot \widehat{h}(\xi^\prime,y_N) \bigg]dy_N\\
&\enskip + \int_0^\infty\mathcal{F}_{\xi'}^{-1}
\bigg[\frac{B_a^2-L_1^2}{\CA_a} \frac{B_a^2-L_2^2}{\lambda^{1/2}(\lambda+a)} \frac{i\xi_k}{A} \frac{m_2}{L_2} \frac{z_2(\lambda)}{L_2} 
A e^{-L_2(x_N+y_N)} \lambda^{1/2} i\xi' \cdot \widehat{h}(\xi^\prime,y_N) \bigg]dy_N\\
&\enskip + \int_0^\infty\mathcal{F}_{\xi'}^{-1}
\bigg[\frac{B_a^2-L_1^2}{\CA_a} \frac{B_a^2-L_2^2}{\lambda+a} \frac{m_2}{L_2} \frac{A}{L_2} 
A e^{-L_2(x_N+y_N)} i\xi_k i\xi' \cdot \widehat{h}(\xi^\prime,y_N) \bigg]dy_N.
\end{aligned}
\end{equation}
Dividing three cases for the second term on the right-hand side of \eqref{U6-2} in the same way as \eqref{U2}, 
the multipliers of \eqref{U6-2} belong to $\widetilde \BM_{-2, 2}$.
Similarly, the identity 
\eqref{i}
gives us
\begin{align*}
&\int_0^\infty\mathcal{F}_{\xi'}^{-1}
\bigg[\frac{(B_a^2-L_1^2)(B_a^2-L_2^2)}{\CA_a} E^h_k L_1 \CM(L_2, L_1, x_N+y_N) i\xi' \cdot \widehat{h}(\xi^\prime,y_N) \bigg]dy_N\\
&= \int_0^\infty\mathcal{F}_{\xi'}^{-1}
\bigg[\frac{(B_a^2-L_1^2)(B_a^2-L_2^2)}{\CA_a} L_1 \CM(L_2, L_1, x_N+y_N) i\xi' \cdot \widehat{h}(\xi^\prime,y_N) \bigg]dy_N\\
&= \int_0^\infty\mathcal{F}_{\xi'}^{-1}
\bigg[\left(\frac{B_a^2-L_1^2}{\CA_a}L_1\right) \frac{1}{\lambda \CC_a} \frac{(\lambda+1)(B_a^2-L_2^2)}{\lambda} m_0
A \CM(L_2, L_1, x_N+y_N) i\xi_k i\xi' \cdot \widehat{h}(\xi^\prime,y_N) \bigg]dy_N\\
&\enskip + \int_0^\infty\mathcal{F}_{\xi'}^{-1}
\bigg[\left(\frac{B_a^2-L_1^2}{\CA_a}m_1\right) \frac{1}{\lambda \CC_a} \frac{(\lambda+1)(B_a^2-L_2^2)}{\lambda} \frac{i\xi_k}{A} \frac{z_1(\lambda)}{L_1 \lambda^{1/2}} 
A \CM(L_2, L_1, x_N+y_N) \lambda^{1/2} i\xi' \cdot \widehat{h}(\xi^\prime,y_N) \bigg]dy_N\\
&\enskip + \int_0^\infty\mathcal{F}_{\xi'}^{-1}
\bigg[\left(\frac{B_a^2-L_1^2}{\CA_a}m_1\right) \frac{1}{\lambda \CC_a} \frac{(\lambda+1)(B_a^2-L_2^2)}{\lambda} \frac{A}{L_1} 
A \CM(L_2, L_1, x_N+y_N) i\xi_k i\xi' \cdot \widehat{h}(\xi^\prime,y_N) \bigg]dy_N\\
%&\enskip -\sum^{N-1}_{j=1} \int_0^\infty\mathcal{F}_{\xi'}^{-1}
%\bigg[\frac{B_a^2-L_1^2}{\CA_a} (B_a^2-L_2^2) \frac{\lambda+1}{\lambda} \frac{i\xi_k}{A}A \frac{1}{\lambda \CC_a} \frac{m_1}{L_1} \frac{A}{L_1} \frac{i\xi_j}{A}
%e^{-L_2(x_N+y_N)} i\xi_j i\xi' \cdot \widehat{h}(\xi^\prime,y_N) \bigg]dy_N\\
&\enskip + \int_0^\infty\mathcal{F}_{\xi'}^{-1}
\bigg[\left(\frac{B_a^2-L_1^2}{\CA_a}m_2\right) \frac{B_a^2-L_2^2}{\lambda+a} \frac{i\xi_k}{A} \frac{z_1(\lambda)}{L_1 \lambda^{1/2}} 
A \CM(L_2, L_1, x_N+y_N) \lambda^{1/2} i\xi' \cdot \widehat{h}(\xi^\prime,y_N) \bigg]dy_N\\
&\enskip + \int_0^\infty\mathcal{F}_{\xi'}^{-1}
\bigg[\left(\frac{B_a^2-L_1^2}{\CA_a}m_2\right) \frac{B_a^2-L_2^2}{\lambda+a} \frac{A}{L_1} 
A \CM(L_2, L_1, x_N+y_N) i\xi_k i\xi' \cdot \widehat{h}(\xi^\prime,y_N) \bigg]dy_N,
\end{align*}
then we observe that the multipliers belong to $\widetilde \BM_{-1, 2}$.

%From the others, we use that $(B_a^2-L_1^2)\CA_a^{-1} z_1(\lambda) E_k^h \in \BM_{0, 2}$.
%We also notice that
%\[
%\frac{B_a -L_2}{\lambda^{1/2}}=\frac{B_a^2 -L_2^2}{\lambda^{1/2}(B_a+L_2)}
%=\frac{\lambda+a-z_2(\lambda)}{\lambda^{1/2}}\frac{1}{B_a+L_2} \in \widetilde\BM_{0, 1}
%\]
%because $|(\lambda+a-z_2(\lambda))\lambda^{-1/2}| \le C_\beta$ and $(|\lambda|^{1/2}+1+A)^{-1} \le 1$.
%Then we have
%\[
%\frac{(B_a^2-L_1^2)(B_a^2-L_2^2)}{\lambda^{1/2} A\CA_a}\frac{z_1(\lambda)}{L_1^2} E^h_k
%\in \BM_{-2, 2},
%\enskip
%\frac{L_1(B_a^2-L_1^2)(B_a^2-L_2^2)}{\lambda^{1/2} A\CA_a}\frac{z_1(\lambda)}{L_1^2} E^h_k
%\in \BM_{-1, 2}.
%\]
Thanks to Lemma \ref{lem:eh} and the identities \eqref{i}, \eqref{i2}, we can treat $\CU^7_k$ and the terms which are written as $m_4$ and $m_5$ in $\CU^8_k$ and $\CU^9_k$ by the same manner as $\CU^6_k$,
therefore we consider the term corresponding to $m_6 \in \widetilde \BM_{1, 1}$ in $\CU^8_k$, namely,
\begin{align*}
&\int_0^\infty \mathcal{F}_{\xi^\prime}^{-1}\left[\partial_{y_N}\left(\frac{(B_a^2-L_1^2)(B_a^2-L_2^2)}{\mathcal{A}_a} m_6 \widehat{H}_{kN}(\xi', y_N)\mathcal{M}(L_2,L_1,x_N+y_N)\right)\right]dy_N\\
&=\int_0^\infty \mathcal{F}_{\xi'}^{-1}\left[\frac{(B_a^2-L_1^2)(B_a^2-L_2^2)}{\mathcal{A}_a} m_6 \mathcal{M}(L_2,L_1,x_N+y_N) \pd_{y_N}\widehat{H}_{kN}(\xi^\prime,y_N) \right]dy_N\\
&-\sum^{N-1}_{j=1}\int_0^\infty \mathcal{F}_{\xi'}^{-1}\left[\frac{(B_a^2-L_1^2)(B_a^2-L_2^2)}{\mathcal{A}_a} m_6 \left(e^{-L_2(x_N+y_N)}+ L_1 \mathcal{M}(L_2,L_1,x_N+y_N) \right) \widehat{H}_{kN}(\xi',y_N) \right]dy_N.
\end{align*}
For the first term,
\begin{align*}
&\int_0^\infty \mathcal{F}_{\xi'}^{-1}\left[\frac{(B_a^2-L_1^2)(B_a^2-L_2^2)}{\mathcal{A}_a} m_6 \mathcal{M}(L_2,L_1,x_N+y_N) \pd_{y_N}\widehat{H}_{kN}(\xi^\prime,y_N) \right]dy_N\\
&=\int_0^\infty \mathcal{F}_{\xi'}^{-1}\left[\left(\frac{B_a^2-L_1^2}{\mathcal{A}_a} m_6 \right) \frac{B_a^2-L_2^2}{|\lambda|^{1/2} \lambda^{1/2}} |\lambda|^{1/2}\mathcal{M}(L_2,L_1,x_N+y_N) \lambda^{1/2}\pd_{y_N}\widehat{H}_{kN}(\xi^\prime,y_N) \right]dy_N,
\end{align*}
then 
\[
\left(\frac{B_a^2-L_1^2}{\mathcal{A}_a} m_6\right) \frac{B_a^2-L_2^2}{|\lambda|^{1/2} \lambda^{1/2}} \in \widetilde \BM_{-1, 1}.
\]
For the second term, by \eqref{i2}
\begin{align*}
&-\sum^{N-1}_{j=1}\int_0^\infty \mathcal{F}_{\xi'}^{-1}\left[\frac{(B_a^2-L_1^2)(B_a^2-L_2^2)}{\mathcal{A}_a} m_6 e^{-L_2(x_N+y_N)} \widehat{H}_{kN}(\xi',y_N) \right]dy_N\\
&=-\sum^{N-1}_{j=1}\int_0^\infty \mathcal{F}_{\xi'}^{-1}\left[\frac{B_a^2-L_1^2}{\mathcal{A}_a} \frac{m_6}{L_2} \frac{z_2(\lambda)}{L_2} \frac{B_a^2-L_2^2}{|\lambda|^{1/2}\lambda^{1/2}}|\lambda|^{1/2}e^{-L_2(x_N+y_N)} \lambda^{1/2} \widehat{H}_{kN}(\xi',y_N) \right]dy_N\\
&\enskip -\sum^{N-1}_{j=1}\int_0^\infty \mathcal{F}_{\xi'}^{-1}\left[\frac{B_a^2-L_1^2}{\mathcal{A}_a} \frac{m_6}{L_2} \frac{A}{L_2} \frac{B_a^2-L_2^2}{\lambda} Ae^{-L_2(x_N+y_N)} \lambda \widehat{H}_{kN}(\xi',y_N) \right]dy_N,
\end{align*}
then
\begin{align*}
\frac{B_a^2-L_1^2}{\mathcal{A}_a} \frac{m_6}{L_2} \frac{z_2(\lambda)}{L_2} \frac{B_a^2-L_2^2}{|\lambda|^{1/2}\lambda^{1/2}} \in \widetilde \BM_{-2, 1},
\enskip
\frac{B_a^2-L_1^2}{\mathcal{A}_a} \frac{m_6}{L_2} \frac{A}{L_2} \frac{B_a^2-L_2^2}{\lambda} \in \widetilde \BM_{-2, 2}
\end{align*}

%For the first term,
The third term can be considered in the same way as the second term.
%\[
%\frac{(\lambda+a)(B_a^2-L_1^2)(B_a^2-L_2^2)}{B_a \lambda^{1/2}A\mathcal{A}_a} \in \BM_{-1, 2}.
%\]
%Similarly, for the second and the fourth term, we have
%\[
%\frac{(B_a^2-L_1^2)(B_a^2-L_2^2)}{B_a \mathcal{A}_a}\frac{i\xi_j}{A},
%\enskip
%\frac{L_1(B_a^2-L_1^2)(B_a^2-L_2^2)B_a}{\lambda^{1/2}A^2\mathcal{A}_a}\frac{i\xi_j}{A}
%\in \BM_{-1, 2}.
%\]
%For the third term, by Lemma \ref{lem:aa} and Lemma \ref{m}
%\[
% \frac{(B_a^2-L_1^2)(B_a^2-L_2^2)B_a}{\lambda A\mathcal{A}_a} \in \BM_{-2, 2}.
%\]

Lemma \ref{lem:bl} helps $\CU^{10}_k$, $\CU^{11}_k$, $\CU^{12}_k$, $\CU^{13}_k$, and $\CU^{14}_k$ to treat by the same way as before.
Therefore, we obtain Lemma \ref{lem:sol form} for $u_k$ with $k=1, \dots, N-1$.

Next, we consider the proof for $u_N$:
\begin{align*}
\widehat{u}_N&=-A_N^0(L_1-A)\mathcal{M}(L_1,A,x_N)+A_N^2(L_2-L_1)\mathcal{M}(L_2,L_1,x_N)\\
&=-\frac{AC(L_1-A)}{\lambda}\mathcal{M}(L_1,A,x_N)+\frac{A(L_1-A)}{\lambda}C\mathcal{M}(L_2,L_1,x_N)+i\xi^\prime\cdot \widehat{h}^\prime\mathcal{M}(L_2,L_1,x_N),
\end{align*}
where
\[
C=\frac{\CE}{\CC_a A}i\xi' \cdot \widehat{h}
+\frac{1}{\CC_a A}\left(A^2 \widehat H_{NN} -\frac{B_a^2+A^2}{B_a} \sum^{N-1}_{j=1} i\xi_j \widehat{H}_{jN}+\sum^{N-1}_{j, k=1} i\xi_j i\xi_k \widehat{H}_{jk}\right).
\]
Similarly to the tangential component of $\bu$, the following solution formula was obtained by \cite{BM},
\begin{align*} 
u_N%&=\sum_{m=1}^{9}\mathcal{U}_N^m\\
&= \int_0^\infty \mathcal{F}_{\xi^\prime}^{-1}
\left[\partial_{y_N}\left((L_1-A)\frac{\CE}{\lambda \CC_a} i\xi' \cdot \widehat{h}(\xi', y_N) \CM(L_1, A, x_N+y_N)\right)\right]dy_N\\
& +\int_0^\infty \mathcal{F}_{\xi^\prime}^{-1}
\left[\partial_{y_N}\left(\frac{L_1-A}{\lambda \CC_a}A^2 \widehat{H}_{NN}(\xi', y_N) \CM(L_1, A, x_N+y_N)\right)\right]dy_N\\
& -\sum^{N-1}_{j=1} \int_0^\infty \mathcal{F}_{\xi^\prime}^{-1}
\left[\partial_{y_N}\left(\frac{L_1-A}{\lambda \CC_a} \frac{B_a^2+A^2}{B_a}i\xi_j \widehat{H}_{jN}(\xi', y_N) \CM(L_1, A, x_N+y_N)\right)\right]dy_N\\
& +\sum^{N-1}_{j, k=1} \int_0^\infty \mathcal{F}_{\xi^\prime}^{-1}
\left[\partial_{y_N}\left(\frac{L_1-A}{\lambda \CC_a} i\xi_j i\xi_k\widehat{H}_{jk}(\xi', y_N) \CM(L_1, A, x_N+y_N)\right)\right]dy_N\\
&-\int_0^\infty \mathcal{F}_{\xi^\prime}^{-1}
\left[\partial_{y_N}\left((L_1-A)\frac{\CE}{\lambda \CC_a} i\xi' \cdot \widehat{h}(\xi', y_N) \CM(L_2, L_1, x_N+y_N)\right)\right]dy_N\\
& -\int_0^\infty \mathcal{F}_{\xi^\prime}^{-1}
\left[\partial_{y_N}\left(\frac{L_1-A}{\lambda \CC_a}A^2 \widehat{H}_{NN}(\xi', y_N) \CM(L_2, L_1, x_N+y_N)\right)\right]dy_N\\
& +\sum^{N-1}_{j=1} \int_0^\infty \mathcal{F}_{\xi^\prime}^{-1}
\left[\partial_{y_N}\left(\frac{L_1-A}{\lambda \CC_a} \frac{B_a^2+A^2}{B_a}i\xi_j \widehat{H}_{jN}(\xi', y_N) \CM(L_2, L_1, x_N+y_N)\right)\right]dy_N\\
& -\sum^{N-1}_{j, k=1} \int_0^\infty \mathcal{F}_{\xi^\prime}^{-1}
\left[\partial_{y_N}\left(\frac{L_1-A}{\lambda \CC_a} i\xi_j i\xi_k\widehat{H}_{jk}(\xi', y_N) \CM(L_2, L_1, x_N+y_N)\right)\right]dy_N\\
& -\int_0^\infty \mathcal{F}_{\xi^\prime}^{-1}
\left[\partial_{y_N}\left(i\xi'\cdot \widehat{h}(\xi', y_N) \CM(L_2, L_1, x_N+y_N)\right)\right]dy_N.
\end{align*}
Noting that 
\eqref{i3}, we can complete the proof of $u_N$ of Lemma \ref{lem:sol form} by the same way as the proof for $u_k$ with $k=1, \dots, N-1$.
\end{proof}
Lemma \ref{l.R-bound.int.2d} furnishes that $\CR$-boundedness for $u_k$ ($k=1, \dots, N-1$) and $u_N$, 
which completes the proof for the velocity part of Theorem \ref{thm:Rbdd}.

\subsection{$\CR$-boundedness for the solution operator of the order parameter}\label{Rbdd Q}
Thanks to the third equation of \eqref{r} and subsection \ref{subsection:v}, it is sufficient to consider the following problem:
\begin{equation}\label{rQ}
\left\{
\begin{aligned}
&(\lambda - \Delta + a) \bQ=\beta \bD(\bu)& \quad&\text{in $\R^N_+$},\\
&\pd_N \bQ=\bH &\quad &\text{on $\R^N_0$}.
\end{aligned}
\right.
\end{equation}
Let
%\begin{align*}
\[
\CS'_\lambda\bu = (\nabla^2\bu, \lambda^{1/2}\nabla\bu) \in L_q(\R^N_+)^{N^3} \times L_q(\R^N_+)^{N^2}.
\]
Recall that
\begin{align*}
\CS_\lambda \bu = (\nabla^2 \bu, \lambda^{1/2}\nabla \bu, \lambda \bu),\quad
\CT_\lambda\bQ = (\nabla^2\bQ, \lambda^{1/2}\nabla\bQ, \lambda \bQ, \nabla \bQ, \lambda^{1/2}\bQ).
\end{align*}
Set 
\[
	E_{even}[\bF(x)] = \left\{
	\begin{aligned}
	&\bF(x) &\quad x_N > 0, \\
	&\bF(x', -x_N) &\quad x_N < 0,
	\end{aligned}
	\right. 
\]
where $x'=(x_1, \dots, x_{N-1})$.
Assuming that $\bQ_1$ satisfies
\begin{equation}\label{Q1}
(\lambda - \Delta +a)\bQ_1=\beta E_{even}[\bD(\bu)] \quad \text{in $\R^N$},
\end{equation}
there exists an operator family $\CH_1(\lambda) \in {\rm Hol} (\Sigma_{\epsilon, c_0}, \CL(L_q(\R^N_+)^{N^3} \times L_q(\R^N_+)^{N^2}, H^3_q(\R^N; \bS_0)))$ 
such that for any $\lambda=\gamma+i\tau \in \Sigma_{\epsilon, c_0}$,
$\bQ_1=\CH_1(\lambda)(\bff_1, \bff_2)$ is a solution of \eqref{Q1} with
\begin{equation}\label{R-bound Q1}
	\CR_{\CL(L_q(\R^N_+)^{N^3} \times L_q(\R^N_+)^{N^2}, B_q(\R^N_+)))}(\{(\tau \pd_\tau)^\ell \CT_\lambda\CH_1(\lambda) \mid \lambda \in \Sigma_{\epsilon, c_0}\}) \le r_1,
\end{equation}
where $\bff_1$ and $\bff_2$ correspond to $\nabla^2\bu$ and $\lambda^{1/2}\nabla \bu$, respectively.
This follows from \cite[Theorem 3.3]{ES}.
In fact, 
set $\bF = \beta E_{even}[\bD(\bu)]$. 
The solution formula of \eqref{Q1}:
\[
	\bQ_1 = \CF^{-1}[(\lambda+|\xi|^2+a)^{-1} \CF[\bF]]
\]
can be formally rewritten in two ways:
%In particular, we consider the $\CR$-boundedness corresponding to $\nabla^3 \bQ_1$ and $\lambda \bQ_1$.
\begin{equation}\label{re sol}
\begin{aligned}
	\bQ_1 &= -\sum^N_{j=1} \CF^{-1}[(\lambda+|\xi|^2+a)^{-1} |\xi|^{-2} (i\xi_j)\CF[\pd_j \bF]],\\
	\bQ_1 &= \CF^{-1}[\lambda^{-1/2}(\lambda+|\xi|^2+a)^{-1} \CF[\lambda^{1/2} \bF]],
\end{aligned}
\end{equation}
where $\CF$ and $\CF^{-1}$ are the Fourier transform and its inverse transform, respectively.
In particular, we verify that the symbols of the operators corresponding to $\nabla^3 \bQ_1$ and $\lambda \bQ_1$ satisfy the assumption of \cite[Theorem 3.3]{ES}.
By \eqref{re sol} we have
\begin{equation}\label{sol Q1}
\begin{aligned}
	\pd_k \pd_\ell \pd_m \bQ_1 &= -\sum^N_{j=1} \CF^{-1}[(i\xi_k)(i\xi_\ell)(i\xi_m)(\lambda+|\xi|^2+a)^{-1} |\xi|^{-2} (i\xi_j)\CF[\pd_j \bF]],\\
	\lambda \bQ_1 &= \CF^{-1}[\lambda^{1/2}(\lambda+|\xi|^2+a)^{-1} \CF[\lambda^{1/2} \bF]].
\end{aligned}
\end{equation}
Let $\alpha \in \N^N_0$ be a multi-index, and let $n=0, 1$.
Lemma \ref{spectrum} and Bell's formula imply that there exists a positive constant $C$ such that
for any $(\xi, \lambda) \in \R^N \times \Sigma_\epsilon$ with $\lambda = \gamma+i\tau$
\[
	|D_\xi^\alpha (\tau \pd_\tau)^n \{(\lambda+|\xi|^2+a)^{-1}\}| \le C(|\lambda|^{1/2}+|\xi|+1)^{-1-|\alpha|}.
\]
Therefore, the symbols of \eqref{sol Q1} satisfy
\begin{align*}
	|D_\xi^\alpha (\tau \pd_\tau)^n \{(i\xi_k)(i\xi_\ell)(i\xi_m)(\lambda+|\xi|^2+a)^{-1} |\xi|^{-2} (i\xi_j)\}| &\le C |\xi|^{-|\alpha|},\\
	|D_\xi^\alpha (\tau \pd_\tau)^n \{\lambda^{1/2}(\lambda+|\xi|^2+a)^{-1}\}| &\le C |\xi|^{-|\alpha|}.
\end{align*}
Since the other symbols of $\CT_\lambda \CH_1(\lambda)$ %, corresponding to $(\nabla^2, \lambda^{1/2} \nabla^2, \lambda^{1/2} \nabla, \lambda \nabla, \lambda^{1/2}, \nabla) \bQ_1$, 
are estimated by the same method as the above, we have \eqref{R-bound Q1}.
%Then we can apply \cite[Theorem 3.3]{ES}.
%Since the other symbols corresponding to $(\nabla^2, \lambda^{1/2} \nabla^2, \lambda^{1/2} \nabla, \lambda \nabla, \lambda^{1/2}, \nabla) \bQ_1$ are estimated by the same method as the above, we can apply \cite[Theorem 3.3]{ES}.
%Then we have \eqref{R-bound Q1}.
%we have used $\|E_{even}[f]\|_{L_q(\R^N)}\le 2\|f\|_{L_q(\R^N_+)}$.
Thanks to subsection \ref{subsection:v}, there exists the operator family $\CA_2(\lambda) \in {\rm Hol} (\Sigma_{\epsilon, c_0}, \CL(\widetilde X_q(\R^N_+), H^2_q(\R^N_+)))$ 
such that for any $\lambda=\gamma+i\tau \in \Sigma_{\epsilon, c_0}$,
$(\nabla^2\bu, \lambda^{1/2}\nabla \bu)=\CS'_\lambda\CA_2(\lambda)(\CS_\lambda \bh, \CS_\lambda \bH, \lambda^{1/2}\bH)$ with
\[
\CR_{\CL(\widetilde X_q(\R^N_+), L_q(\R^N_+)^{N^3} \times L_q(\R^N_+)^{N^2})}(\{(\tau \pd_\tau)^\ell \CS'_\lambda\CA_2(\lambda) \mid \lambda \in \Sigma_{\epsilon, c_0}\}) \le r_2.
\] 
Therefore, setting $\CH_2(\lambda)=\CH_1(\lambda) \CS'_\lambda \CA_2(\lambda)$, the solution $\bQ_1$ satisfies
\begin{equation}\label{op Q1}
\bQ_1=\CH_2(\lambda)(\CS_\lambda \bh, \CS_\lambda \bH, \lambda^{1/2}\bH)
\end{equation}
with
\begin{equation}\label{R Q1}
\CR_{\CL(\widetilde X_q(\R^N_+), B_q(\R^N_+))}(\{(\tau \pd_\tau)^\ell \CT_\lambda\CH_2(\lambda) \mid \lambda \in \Sigma_{\epsilon, c_0}\}) \le r_1r_2.
\end{equation}

Set $\bQ=\bQ_1+\bQ_2$ and $\bP=\bH-\pd_N\bQ_1$, then $\bQ_2$ satisfies
\begin{equation}\label{Q2}
\left\{
\begin{aligned}
&(\lambda - \Delta + a) \bQ_2=0& \quad&\text{in $\R^N_+$},\\
&\pd_N \bQ_2=\bP& \quad&\text{on $\R^N_0$}.
\end{aligned}
\right.
\end{equation}
The solution formula of \eqref{Q2} is given by
\[
\bQ_2=-\CF^{-1}_{\xi'}\left[\frac{\widehat \bP}{B_a} e^{-B_a x_N}\right],
\]
then using the Volevich trick, 
\begin{equation}\label{V Q2}
\bQ_2=\int^\infty_0\CF^{-1}_{\xi'}\left[\frac{1}{B_a} e^{-B_a (x_N+y_N)} \pd_{y_N} \widehat \bP \right]\,dy_N
-\int^\infty_0\CF^{-1}_{\xi'}\left[ e^{-B_a (x_N+y_N)} \widehat \bP \right]\,dy_N.
\end{equation}
In order to prove $\CR$-boundedness for $\bQ_2$, we verify that the multipliers satisfy the following lemma, which is proved by replacing $|\lambda|^{1/2}$ with $|\lambda|^{1/2}+1$ in the proof of \cite[Lemma 5.4]{ShiS}.

\begin{lem}\label{R-bound Q}
Let $a>0$, $\beta \in \mathbb{R}\setminus\{0\}$, $\epsilon \in (\epsilon_0, \pi/2)$ with $\tan \epsilon_0\ge |\beta|/\sqrt{2}$, let and $q\in(1,\infty)$.
Let $m_1 \in \BM_{0,1}'$, $m_2 \in \BM_{0,2}$ and
let us define the following operators in $\CL(L_q(\mathbb{R}^N_+))$:
\begin{align*}
    (M_1(\lambda)g)(x)&= \int_0^\infty \mathcal{F}^{-1}_{\xi^\prime}[m_1(\lambda,\xi^\prime) (|\lambda|^{1/2}+1) e^{-B_a(x_N+y_N)}\widehat{g}(\xi^\prime,y_N)](x^\prime)dy_N,\\
    (M_2(\lambda)g)(x)&= \int_0^\infty \mathcal{F}^{-1}_{\xi^\prime}[m_2(\lambda,\xi^\prime) A e^{-B_a(x_N+y_N)}\widehat{g}(\xi^\prime,y_N)](x^\prime)dy_N
\end{align*}
Then for $\ell=0,1$ and $k=1, 2$, the sets $\{(\tau \pd_{\tau})^\ell M_k (\lambda) \mid \lambda \in \Sigma_\epsilon\}$ is $\mathcal{R}$-bounded on $\CL(L_q(\R^N_+))$.
\end{lem}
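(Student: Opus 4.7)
The proof proposal follows the blueprint of \cite[Lemma~5.4]{ShiS}, where analogous $\CR$-boundedness assertions were established for integrals involving the kernel $e^{-L_j(x_N+y_N)}$ and a factor $|\lambda|^{1/2}$. The only structural change is that the characteristic root here is $B_a=\sqrt{\lambda+a+A^2}$ and the multiplicative factor is $|\lambda|^{1/2}+1$ instead of $|\lambda|^{1/2}$. Since for $\lambda\in\Sigma_\epsilon$ Lemma \ref{spectrum} gives $\Re B_a \ge c_\epsilon(|\lambda|+a+A^2)^{1/2} \ge c_{\epsilon,a}(|\lambda|^{1/2}+1+A)$, the extra additive constant $a$ in the radicand supplies precisely the exponential decay that matches the weight $|\lambda|^{1/2}+1+A$ appearing in the class $\widetilde\BM_{s,1}$. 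Thus the proof in \cite{ShiS} transfers verbatim after systematically replacing the weight $|\lambda|^{1/2}+A$ used there with $|\lambda|^{1/2}+1+A$.

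Concretely, the first step is a pointwise kernel estimate. Using $B_a\in\widetilde\BM_{1,1}$ (Lemma \ref{m}\eqref{m-2}) together with Bell's formula and the inequality above, one obtains for any $\alpha\in\N_0^{N-1}$ and $\ell=0,1$
\[
\left|D^\alpha_{\xi'}(\tau\pd_\tau)^\ell e^{-B_a(x_N+y_N)}\right|
\le C\, e^{-d(|\lambda|^{1/2}+1+A)(x_N+y_N)}\,(|\lambda|^{1/2}+1+A)^{-|\alpha|},
\]
with some $d>0$ independent of $\lambda,\xi'$. Then Leibniz together with the hypotheses $m_1\in\BM_{0,1}'$ and $m_2\in\BM_{0,2}$ yields the symbol bounds
\[
\left|D^\alpha_{\xi'}(\tau\pd_\tau)^\ell\{m_1(|\lambda|^{1/2}+1)e^{-B_a(x_N+y_N)}\}\right|
\le C(|\lambda|^{1/2}+1)\,e^{-d(|\lambda|^{1/2}+1+A)(x_N+y_N)}(|\lambda|^{1/2}+1+A)^{-|\alpha|},
\]
\[
\left|D^\alpha_{\xi'}(\tau\pd_\tau)^\ell\{m_2\, A\, e^{-B_a(x_N+y_N)}\}\right|
\le C\, A\,e^{-d(|\lambda|^{1/2}+1+A)(x_N+y_N)}\,A^{-|\alpha|}.
\]
Note that in the $M_2$ case one must keep the weight $A^{-|\alpha|}$ rather than trading it for $(|\lambda|^{1/2}+1+A)^{-|\alpha|}$, because $m_2\in\BM_{0,2}$ only provides derivative bounds with the $A^{-|\alpha|}$ weight; this is exactly the distinction that motivates the two different auxiliary operators in Lemma~\ref{l.R-bound.int.1d}.

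The second step is to plug these estimates into the operator-valued Mikhlin/Fourier multiplier framework used throughout the Shibata school (specifically the device employed in \cite[Lemma~5.4]{ShiS}): one integrates $e^{-d(|\lambda|^{1/2}+1+A)y_N}$ against $\pd_{y_N}$-terms or bare terms of $\hat{g}(\xi',y_N)$ and estimates the resulting family of $x'$-Fourier multipliers. The factor $|\lambda|^{1/2}+1$ (respectively $A$) in front is absorbed by the reciprocal weight in the kernel estimate, leaving bounded multipliers uniformly in $\lambda$, $x_N$, and $y_N$, from which $\CR$-boundedness on $L_q(\R^N_+)$ follows by the standard contraction principle argument (Kahane's inequality plus Weis's theorem, applied fiber-by-fiber in $y_N$ and then integrated).

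The third step covers $\ell=1$: the operation $\tau\pd_\tau$ acting on $B_a$ gives $\tau\pd_\tau B_a=\tau i/(2B_a)$, which still lies in $\widetilde\BM_{1,1}$, so the classes $\BM_{0,1}'$ and $\BM_{0,2}$ are preserved under $\tau\pd_\tau$; the same bound structure is reproduced, with only the constants changing. The main---albeit minor---obstacle is bookkeeping: ensuring that every application of $\tau\pd_\tau$ to $e^{-B_a(x_N+y_N)}$ produces a factor of $(x_N+y_N)$ that is absorbed by the exponential decay $e^{-d(|\lambda|^{1/2}+1+A)(x_N+y_N)}/2$ rather than destroying it. This absorption is uniform in $\lambda\in\Sigma_\epsilon$ because the lower bound on $\Re B_a$ is uniform, which is precisely what distinguishes $B_a$ from $L_1$ (where one only has a $|\lambda|^{1/2}+A$ weight, necessitating the split between $\BM_{0,1}$ and $\BM_{0,1}'$). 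Having verified the symbol estimates and the $\tau\pd_\tau$-invariance, $\CR$-boundedness of $\{(\tau\pd_\tau)^\ell M_k(\lambda)\mid\lambda\in\Sigma_\epsilon\}$ for $k=1,2$ and $\ell=0,1$ follows as in \cite[Lemma~5.4]{ShiS}.
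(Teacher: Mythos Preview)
Your proposal is correct and follows exactly the approach the paper indicates: the paper states that the lemma ``is proved by replacing $|\lambda|^{1/2}$ with $|\lambda|^{1/2}+1$ in the proof of \cite[Lemma 5.4]{ShiS},'' and your write-up simply unpacks that sentence, explaining how the extra $a>0$ in $B_a$ upgrades the decay weight from $|\lambda|^{1/2}+A$ to $|\lambda|^{1/2}+1+A$ and matches the classes $\BM_{0,1}'$ and $\BM_{0,2}$. There is no difference in strategy.
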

First, we consider $\lambda \bQ_2$ and the tangential derivative of $\bQ_2$. By \eqref{V Q2}, we have
\begin{align*}
\lambda^{1/2} \bQ_2&=\int^\infty_0\CF^{-1}_{\xi'}\left[\frac{\lambda^{1/2}}{B_a(|\lambda|^{1/2}+1)} (|\lambda|^{1/2}+1)e^{-B_a (x_N+y_N)} 
\pd_{y_N}\widehat{\bP}\right]\,dy_N\\
&\enskip -\int^\infty_0\CF^{-1}_{\xi'}\left[\frac{\lambda^{1/2}}{|\lambda|^{1/2}+1} (|\lambda|^{1/2}+1) e^{-B_a (x_N+y_N)} \widehat{\bP}\right]\,dy_N,\\
\lambda \bQ_2&=\int^\infty_0\CF^{-1}_{\xi'}\left[\frac{\lambda^{1/2}}{B_a(|\lambda|^{1/2}+1)} (|\lambda|^{1/2}+1)e^{-B_a (x_N+y_N)} 
\lambda^{1/2} \pd_{y_N}\widehat{\bP}\right]\,dy_N\\
&\enskip -\int^\infty_0\CF^{-1}_{\xi'}\left[\frac{1}{|\lambda|^{1/2}+1} (|\lambda|^{1/2}+1) e^{-B_a (x_N+y_N)} \lambda \widehat{\bP}\right]\,dy_N,\\
\lambda \pd_j \bQ_2
&=\int^\infty_0\CF^{-1}_{\xi'}\left[\frac{\lambda}{B_a(|\lambda|^{1/2}+1)}(|\lambda|^{1/2}+1) e^{-B_a (x_N+y_N)} 
i\xi_j \pd_{y_N}\widehat{\bP}\right]\,dy_N\\
&\enskip -\int^\infty_0\CF^{-1}_{\xi'}\left[\frac{i\xi_j}{A} A e^{-B_a (x_N+y_N)} \lambda \widehat{\bP}\right]\,dy_N,\\
%i\xi_j \bQ_2
%&=\int^\infty_0\CF^{-1}_{\xi'}\left[e^{-B_a (x_N+y_N)} 
%i\xi_j \pd_{y_N}\widehat{\bH}\right]\,dy_N\\
%&\enskip -\int^\infty_0\CF^{-1}_{\xi'}\left[e^{-B_a (x_N+y_N)} i\xi_j \widehat{\bH}\right]\,dy_N,\\
\lambda^{1/2}\pd_j \bQ_2
&=\int^\infty_0\CF^{-1}_{\xi'}\left[\frac{\lambda^{1/2}}{B_a(|\lambda|^{1/2}+1)} (|\lambda|^{1/2}+1) e^{-B_a (x_N+y_N)} 
i\xi_j \pd_{y_N}\widehat{\bP}\right]\,dy_N\\
&\enskip- \int^\infty_0\CF^{-1}_{\xi'}\left[ \frac{1}{|\lambda|^{1/2}+1} (|\lambda|^{1/2}+1)e^{-B_a (x_N+y_N)}  \lambda^{1/2} i\xi_j \widehat{\bP}\right]\,dy_N,\\
\lambda^{1/2} \pd_j \pd_k \bQ_2
&=\int^\infty_0\CF^{-1}_{\xi'}\left[\frac{\lambda^{1/2}}{B_a}\frac{i\xi_j}{A} A e^{-B_a (x_N+y_N)} 
i\xi_k \pd_{y_N}\widehat{\bP}\right]\,dy_N\\
&\enskip -\int^\infty_0\CF^{-1}_{\xi'}\left[ \frac{\lambda^{1/2}}{|\lambda|^{1/2}+1} (|\lambda|^{1/2}+1) e^{-B_a (x_N+y_N)}i\xi_ji\xi_k \widehat{\bP}\right]\,dy_N,\\
\pd_j \bQ_2
&=\int^\infty_0\CF^{-1}_{\xi'}\left[\frac{1}{B_a (|\lambda|^{1/2}+1)} (|\lambda|^{1/2}+1) e^{-B_a (x_N+y_N)} 
i\xi_j \pd_{y_N}\widehat{\bP}\right]\,dy_N\\
&\enskip - \int^\infty_0\CF^{-1}_{\xi'}\left[\frac{1}{|\lambda|^{1/2}+1} (|\lambda|^{1/2}+1) e^{-B_a (x_N+y_N)} i\xi_j \widehat{\bP}\right]\,dy_N,\\ 
\pd_j \pd_k \bQ_2
&=\int^\infty_0\CF^{-1}_{\xi'}\left[\frac{i\xi_j}{B_a A} A e^{-B_a (x_N+y_N)} 
i\xi_k \pd_{y_N}\widehat{\bP}\right]\,dy_N\\
&\enskip- \int^\infty_0\CF^{-1}_{\xi'}\left[\frac{1}{|\lambda|^{1/2}+1} (|\lambda|^{1/2}+1) e^{-B_a (x_N+y_N)} i\xi_j i\xi_k \widehat{\bP}\right]\,dy_N,\\
\pd_j \pd_k \pd_\ell \bQ_2
&=\int^\infty_0\CF^{-1}_{\xi'}\left[\frac{i\xi_j i\xi_k}{B_a A} A e^{-B_a (x_N+y_N)} 
i\xi_\ell \pd_{y_N}\widehat{\bP}\right]\,dy_N\\
&\enskip- \int^\infty_0\CF^{-1}_{\xi'}\left[ \frac{i\xi_j}{A} A e^{-B_a (x_N+y_N)} i\xi_k i\xi_\ell \widehat{\bP}\right]\,dy_N
\end{align*}
for $j, k, \ell=1, \dots, N-1$.
Lemma \ref{m} implies that
\begin{align*}
&\frac{\lambda^m}{B_a(|\lambda|^{1/2}+1)},~\frac{1}{|\lambda|^{1/2}+1},~ \frac{\lambda^{1/2}}{|\lambda|^{1/2}+1} \in \BM_{0,1}',\\
&\frac{i\xi_j}{A},~\frac{\lambda^{1/2}}{B_a}\frac{i\xi_j}{A},~\frac{i\xi_j}{B_a A},~\frac{i\xi_j i\xi_k}{B_a A} \in \BM_{0, 2},
\end{align*}
where $m=0, 1/2, 1$.
Next, we consider the normal derivative of $\bQ_2$.
Let $\pd_N=\pd/\pd x_N$.
Since
\[
\pd_N \bQ_2
=- \int^\infty_0\CF^{-1}_{\xi'}\left[e^{-B_a (x_N+y_N)} 
\pd_{y_N} \widehat{\bP}\right]\,dy_N
+\int^\infty_0\CF^{-1}_{\xi'}\left[B_a e^{-B_a (x_N+y_N)} \widehat{\bP}\right]\,dy_N,
\]
$\lambda \pd_N \bQ_2$ is written by
\begin{align*}
\lambda \pd_N \bQ_2
&=- \int^\infty_0\CF^{-1}_{\xi'}\left[\frac{\lambda^{1/2}}{|\lambda|^{1/2}+1}(|\lambda|^{1/2}+1) e^{-B_a (x_N+y_N)} \lambda^{1/2} \pd_{y_N}\widehat{\bP}\right]\,dy_N\\
&\enskip +\int^\infty_0\CF^{-1}_{\xi'}\left[\frac{\lambda+a}{B_a(|\lambda|^{1/2}+1)} (|\lambda|^{1/2}+1) e^{-B_a (x_N+y_N)} \lambda \widehat{\bP}\right]\,dy_N\\
&\enskip +\int^\infty_0\CF^{-1}_{\xi'}\left[\frac{A}{B_a} A e^{-B_a (x_N+y_N)} \lambda \widehat{\bP}\right]\,dy_N,
\end{align*}
where we have used $1=(\lambda+a+A^2)/B_a^2$.
%i\xi_j \bQ_2
%&=\int^\infty_0\CF^{-1}_{\xi'}\left[e^{-B_a (x_N+y_N)} 
%i\xi_j \pd_{y_N}\widehat{\bH}\right]\,dy_N\\
%&\enskip -\int^\infty_0\CF^{-1}_{\xi'}\left[e^{-B_a (x_N+y_N)} i\xi_j \widehat{\bH}\right]\,dy_N,\\
Lemma \ref{m} implies that
\[
\frac{\lambda^{1/2}}{|\lambda|^{1/2}+1},~
\frac{\lambda+a}{B_a(|\lambda|^{1/2}+1)} \in \BM_{0,1}', \quad
\frac{A}{B_a} \in \BM_{0, 2},
\]
Similarly,
\begin{align*}
\pd_N^3 \bQ_2
&=- \int^\infty_0\CF^{-1}_{\xi'}\left[B_a^2 e^{-B_a(x_N+y_N)} 
\pd_{y_N} \widehat{\bP}\right]\,dy_N
+\int^\infty_0\CF^{-1}_{\xi'}\left[B_a^3 e^{-B_a (x_N+y_N)} \widehat{\bP}\right]\,dy_N\\
&=-\int^\infty_0\CF^{-1}_{\xi'}\left[\frac{\lambda+a}{|\lambda|^{1/2}+1} (|\lambda|^{1/2}+1)e^{-B_a (x_N+y_N)} 
\pd_{y_N} \widehat{\bP}\right]\,dy_N\\
&\enskip + \sum_{j=1}^{N-1} \int^\infty_0\CF^{-1}_{\xi'}\left[\frac{i\xi_j}{A} Ae^{-B_a (x_N+y_N)} 
i\xi_j \pd_{y_N} \widehat{\bP}\right]\,dy_N\\
&\enskip +\int^\infty_0\CF^{-1}_{\xi'}\left[\frac{(\lambda+a)^2}{B_a(|\lambda|^{1/2}+1)} (|\lambda|^{1/2}+1) e^{-B_a (x_N+y_N)}\widehat{\bP}\right]\,dy_N\\
&\enskip -2\int^\infty_0\CF^{-1}_{\xi'}\left[\frac{\lambda+a}{B_a(|\lambda|^{1/2}+1)} (|\lambda|^{1/2}+1) e^{-B_a (x_N+y_N)} (i\xi_j)^2 \widehat{\bP}\right]\,dy_N\\
&\enskip -\int^\infty_0\CF^{-1}_{\xi'}\left[\frac{A}{B_a} A e^{-B_a (x_N+y_N)} (i\xi_j)^2 \widehat{\bP}\right]\,dy_N.
\end{align*}
Lemma \ref{m} implies that
\begin{align*}
\frac{\lambda+a}{|\lambda|^{1/2}+1},~\frac{(\lambda+a)^2}{B_a(|\lambda|^{1/2}+1)},~\frac{\lambda+a}{B_a(|\lambda|^{1/2}+1)} \in \BM_{0,1}', \quad
\frac{i\xi_j}{A},~\frac{A}{B_a} \in \BM_{0, 2}.
\end{align*}
%\[
%(a_1(\lambda, \xi'), b_1(\lambda, \xi'))\bQ_2
%=\int^\infty_0\CF^{-1}_{\xi'}\left[m(\lambda, \xi') e^{-B_a (x_N+y_N)} 
%(a_2(\lambda, \xi'), b_2(\lambda, \xi')) \widehat{\bH} \right]\,dy_N
%\]
%by
%\[
%a_1(\lambda, \xi') \bQ_2
%=\int^\infty_0\CF^{-1}_{\xi'}\left[m(\lambda, \xi') e^{-B_a (x_N+y_N)} 
%a_2(\lambda, \xi') \widehat{\bH} \right]\,dy_N
%\]
%and
%\[
%b_1(\lambda, \xi') \bQ_2
%=\int^\infty_0\CF^{-1}_{\xi'}\left[m(\lambda, \xi') e^{-B_a (x_N+y_N)} 
%b_2(\lambda, \xi') \widehat{\bH} \right]\,dy_N.
%\]
Since the other terms $(1, \lambda^{1/2})\pd_N \bQ_2$, $\lambda^{1/2}(i\xi_j \pd_N, \pd_N^2) \bQ_2$, $(i\xi_j \pd_N, \pd_N^2) \bQ_2$,
and $(i\xi_j i\xi_k \pd_N, i\xi_j \pd_N^2)\bQ_2$ can be written by the operators $M_1(\lambda)$ and $M_2(\lambda)$ which defined in Lemma \ref{R-bound Q},
we observe that there exists an operator family 
\[
\CH_3(\lambda) \in {\rm Hol} (\Sigma_{\epsilon, c_0}, \CL(Z_q(\R^N_+), H^3_q(\R^N_+; \bS_0)))
\] 
such that for any $\lambda=\gamma+i\tau \in \Sigma_{\epsilon, c_0}$,
\[
\bQ_2=\CH_3(\lambda)((\CS_\lambda, 1, \nabla) \bP)
\] with
\begin{equation}\label{R Q2}
\CR_{\CL(Z_q(\R^N_+), B_q(\R^N_+))}(\{(\tau \pd_\tau)^\ell \CT_\lambda\CH_3(\lambda) \mid \lambda \in \Sigma_{\epsilon, c_0}\}) \le r_3,
\end{equation}
where
$Z_q(\R^N_+) = L_q(\R^N_+; \R^{N^4}) \times L_q(\R^N_+; \R^{N^3}) \times L_q(\R^N_+; \bS_0) \times L_q(\R^N_+; \bS_0) \times L_q(\R^N_+; \R^{N^3})$.
Since $\bP=\bH-\pd_N \bQ_1$, we define
\[
	\CH_4(\lambda) = \CH_3'(\lambda) - \CH_3(\lambda) \circ ((\CS_\lambda, 1, \nabla)\pd_N \CH_2'(\lambda)),
\]
where we have set $\CH_3'(\lambda)(\CS_\lambda \bh, \CT_\lambda \bH, \bH) = \CH_3(\lambda)((\CS_\lambda, 1, \nabla) \bH)$ and $\CH_2'(\lambda)(\CS_\lambda \bh, \CT_\lambda \bH, \bH) = \CH_2(\lambda)(\CS_\lambda \bh, \CS_\lambda \bH, \lambda^{1/2}\bH)$.
Then \eqref{R Q1} and \eqref{R Q2} imply that 
\[
	\CH_4(\lambda) \in {\rm Hol} (\Sigma_{\epsilon, c_0}, \CL(\widetilde Y_q(\R^N_+), H^3_q(\R^N_+; \bS_0)))
\] 
is an operator family such that
\[
	\bQ_2=\CH_4(\lambda)(\CS_\lambda \bh, \CT_\lambda \bH, \bH)
\] 
is a solution of \eqref{Q2} for any $\lambda=\gamma+i\tau \in \Sigma_{\epsilon, c_0}$ and $(\CS_\lambda \bh, \CT_\lambda \bH, \bH) \in \widetilde Y_q(\R^N_+)$ with
\[
	\CR_{\CL(\widetilde Y_q(\R^N_+), B_q(\R^N_+))}(\{(\tau \pd_\tau)^\ell \CT_\lambda\CH_4(\lambda) \mid \lambda \in \Sigma_{\epsilon, c_0}\}) \le r_4,
\]
where $r_4$ is determined by $r_1$, $r_2$, and $r_3$.
Setting $\CB_2(\lambda)=\CH_2'(\lambda)+\CH_4(\lambda)$,
$\CB_2(\lambda)$ is desired operator in Theorem \ref{thm:Rbdd}, namely,
\[
\bQ=\CB_2(\lambda)(\CS_\lambda \bh, \CT_\lambda \bH, \bH)
\]
is a solution of \eqref{rQ} with
\begin{equation*}
\CR_{\CL(\widetilde Y_q(\R^N_+), B_q(\R^N_+))}(\{(\tau \pd_\tau)^\ell \CT_\lambda \CB_2(\lambda) \mid \lambda \in \Sigma_{\epsilon, c_0}\}) \le r,
\end{equation*}
where $r$ is determined by $r_1$, $r_2$, and $r_4$.
This completes the proof of Theorem \ref{thm:Rbdd}.

\subsection{The proof of Theorem \ref{thm:Rbdd H}}\label{subsec:proof of main}
%Theorem \ref{thm:Rbdd} implies the resolvent estimates by the definition of $\CR$-boundedness, namely we have the following corollary. 
In this subsection, we prove the main theorem in this paper.
Recall that \eqref{r0}:
\begin{equation*}
\left\{
\begin{aligned}
&\lambda\bu -\Delta \bu + \nabla \fp + \beta \DV (\Delta \bQ -a \bQ)=\bff,
\enskip \dv \bu=0& \quad&\text{in $\R^N_+$},\\
&\lambda \bQ - \beta \bD(\bu) - \Delta \bQ + a \bQ =\bG& \quad&\text{in $\R^N_+$},\\
&\bu= \bh, \enskip \pd_N \bQ=\bH& \quad&\text{on $\R^N_0$}
\end{aligned}
\right.
\end{equation*}
with $h_N=0$.
According to Theorem \ref{thm:Rbdd}, it is sufficient to consider the following problem:
\begin{equation}\label{r2}
\left\{
\begin{aligned}
&\lambda\bu -\Delta \bu + \nabla \fp + \beta \DV (\Delta \bQ -a \bQ)=\bff,
\enskip \dv \bu=0& \quad&\text{in $\R^N_+$},\\
&\lambda \bQ - \beta \bD(\bu) - \Delta \bQ + a \bQ =\bG& \quad&\text{in $\R^N_+$},\\
&\bu= 0, \enskip \pd_N \bQ=0& \quad&\text{on $\R^N_0$}
\end{aligned}
\right.
\end{equation}
Let 
$E_v[\bff(x)]=(E_v[f_1(x)], \dots, E_v[f_N(x)])$ and 
the $(j, k)$ component of $E_M[\bG(x)]$
be $E_M[G_{jk}(x)]$
with
\begin{align*}
E_v[f_k(x)]&=\left\{
\begin{aligned}
        &E_{even}[f_k(x)]& \quad  & k=1,\ldots, N-1 \\
        &E_{odd}[f_N(x)]& \quad  & k=N
    \end{aligned}
\right. 
\\
E_M[G_{jk}(x)]&=\left\{
\begin{aligned}
        &E_{even}[G_{jk}(x)]& \quad  & j,k=1\ldots, N-1 \\
        &E_{odd}[G_{Nk}(x)]& \quad  & j=N, \enskip k=1,\ldots, N-1 \\
        &E_{odd}[G_{jN}(x)]& \quad  & j=1,\ldots, N-1, \enskip k=N \\
        &E_{even}[G_{NN}(x)]& \quad & j=k=N,
    \end{aligned}
\right. 
\end{align*}
where 
\[
E_{even}[f(x)] = \left\{
\begin{aligned}
&f(x) &\quad x_N > 0, \\
&f(x', -x_N) &\quad x_N < 0, 
\end{aligned}
\right.
\quad
E_{odd}[f(x)] = \left\{
\begin{aligned} 
f(x) &\quad x_N > 0, \\
-f(x', -x_N) &\quad x_N < 0, 
\end{aligned}
\right.
\]
with $x' = (x_1, \ldots, x_{N-1})$.
Theorem \ref{thm:Rbdd RN} implies that 
%there exist the $\CR$-bounded operator families $\CA_1(\lambda)$, $\CB_1(\lambda)$, and $\CC_1(\lambda)$ such that 
$\bu_1 = \CA_1(\lambda)(E_v[\bff], \nabla E_M[\bG])$, 
$\bQ_1 = \CB_1(\lambda)(E_v[\bff], \nabla E_M[\bG], E_M[\bG])$, and 
$\fp_1 = \CC_1(\lambda)(E_v[\bff], \nabla E_M[\bG])$
satisfy
\begin{equation}\label{r3}
\left\{
\begin{aligned}
&\lambda\bu_1 -\Delta \bu_1 + \nabla \fp_1 + \beta \DV (\Delta \bQ_1 -a \bQ_1)=\bff,
\enskip \dv \bu_1=0& \quad&\text{in $\R^N_+$},\\
&\lambda \bQ_1 - \beta \bD(\bu_1) - \Delta \bQ_1 + a \bQ_1 =\bG& \quad&\text{in $\R^N_+$},
\end{aligned}
\right.
\end{equation}
where by the definition of extension we have $u_{1N}|_{x_N=0}=0$.
Setting $\bu=\bu_1+\bu_2$, $\bQ=\bQ_1+\bQ_2$, and $\fp=\fp_1+\fp_2$ such that
\begin{equation}\label{r4}
\left\{
\begin{aligned}
&\lambda\bu_2 -\Delta \bu_2 + \nabla \fp_2 + \beta \DV (\Delta \bQ_2 -a \bQ_2)=0,
\enskip \dv \bu_2=0& \quad&\text{in $\R^N_+$},\\
&\lambda \bQ_2 - \beta \bD(\bu_2) - \Delta \bQ_2 + a \bQ_2 =0& \quad&\text{in $\R^N_+$},\\
&\bu_2 = -\bu_1, \enskip \pd_N \bQ_2=-\pd_N \bQ_1 &\quad & \text{in $\R^N_0$},
\end{aligned}
\right.
\end{equation}
$(\bu, \bQ, \fp)$ satisfies \eqref{r2}.
Note that $\pd_N \bQ_1 \in \bS_0$ for $\bQ_1 \in \bS_0$.
Since $(\CS_\lambda \bu_1, \CS_\lambda \pd_N \bQ_1, \lambda^{1/2}\pd_N \bQ_1) \in \widetilde X_q(\R^N_+)$ and
$(\CS_\lambda \bu_1, \CT_\lambda \pd_N \bQ_1, \pd_N \bQ_1) \in \widetilde Y_q(\R^N_+)$
 by Theorem \ref{thm:Rbdd RN} and Remark \ref{rem:Rbdd RN}, 
the solution $(\bu_2, \bQ_2)$ of
\eqref{r4} can be found by Theorem \ref{thm:Rbdd}.

The uniqueness of solutions of \eqref{r0} follows from a duality argument (cf. \cite[Corollary 3.4.8.]{BM}).
This completes the proof of Theorem \ref{thm:Rbdd H}.

\subsection{The proof of Corollary \ref{cor:resolvent}}
Theorem \ref{thm:Rbdd H} and Remark \ref{rem:def of rbdd} imply that the solution $(\bu, \bQ)$ for \eqref{r0} satisfies \eqref{est:uq}.
Therefore, we prove the existence of $\fp$ and its estimate \eqref{est:p}.
According to subsection \ref{subsec:proof of main}, 
it is sufficient to prove the existence of $\fp$ and its norm for \eqref{r}.
Note the fact that the weak Dirichlet Neumann problem:
%To verify that, we introduce the weak Dirichlet Neumann problem:
\begin{equation}\label{DN}
(\nabla p, \nabla \varphi)=(\bff, \nabla \varphi) \quad \forall \varphi \in \widehat H^1_{q'}(\R^N_+) 
\end{equation}
is uniquely solvable in the half space.
Namely, \eqref{DN} has a unique solution $p \in \widehat H^1_q(\R^N_+)$
for any $\bff \in L_q(\R^N_+)^N$
satisfying 
\[
	\|\nabla p\|_{L_q(\R^N_+)} \le C \|\bff\|_{L_q(\R^N_+)}.
\]
Let $(\bu, \bQ) \in H^2_q(\R^N_+)^N \times H^3_q(\R^N_+; \bS_0)$ be a solution of \eqref{r}
satisfying the assertions of Theorem \ref{thm:Rbdd}.
The condition $h_N=0$ implies $u_N=0$ on $\R^N_0$.
Since $u_N=0$ on $\R^N_0$ and $\dv \bu=0$ in $\R^N_+$,
$\fp$ satisfies the variational equation:
\[
	(\nabla \fp, \nabla \vp)=(\Delta \bu- \beta \DV(\Delta \bQ-a\bQ), \nabla \vp)
\]
for any $\vp \in \widehat H^1_{q'}(\R^N_+)$.
The unique solvability of \eqref{DN} implies that the unique existence of $\fp \in \widehat H^1_q(\R^N_+)$ satisfying
\[
	\|\nabla \fp\|_{L_q(\R^N_+)} \le C\|\Delta \bu+ \beta \DV(\Delta \bQ-a\bQ)\|_{L_q(\R^N_+)}.
\]
Since the estimates for $(\bu, \bQ)$ are obtained by Theorem \ref{thm:Rbdd RN}, \eqref{rem:rbdd}, and Theorem \ref{thm:Rbdd}, together with Remark \ref{rem:def of rbdd},
we observe that the existence of $\fp$ satisfying \eqref{est:p},
which completes the proof of Corollary \ref{cor:resolvent}.
%\begin{align*}
%	\|\nabla \fp\|_{L_q(\R^N_+)} &\le C\|\Delta \bu+ \beta \DV(\Delta \bQ-a\bQ)\|_{L_q(\R^N_+)}\\
%	&= C\|\Delta \bu+ \beta \DV(\lambda \bQ-\beta \bD(\bu))\|_{L_q(\R^N_+)},
%\end{align*}
%where we have used the second equation of \eqref{r}.
%Let us consider the estimate of $\DV \lambda \bQ$.
%Set $\bQ=\bQ_1+\bQ_2$, where $\bQ_1=\CH_2(\lambda)(\CS_\lambda\bh, \CS_\lambda\bH, \lambda^{1/2}\bH)$
%is a solution of \eqref{Q1} satisfying \eqref{R Q1}. 
%In particular, it holds that
%\begin{equation}\label{est:q1}
%	\|\DV \lambda \bQ_1\|_{L_q(\R^N_+)} \le C\|(\CS_\lambda \bh, \CS_\lambda \bH, \lambda^{1/2}\bH)\|_{L_q(\R^N_+)}
%\end{equation}
%for $\lambda \in \Sigma_{\epsilon, c_0}$.
%According to the solution formula of $\lambda \pd_j \bQ_2$ $(j=1, \dots, N)$ in subsection \ref{Rbdd Q},
%there exists an operator family $ \CH_5(\lambda)\in 
%{\rm Hol} (\Sigma_{\epsilon, c_0}, 
%\CL(\widetilde X_q(\R^N_+), H^1_q(\R^N_+)))$ such that $\bQ_2 = \CH_5(\lambda)(\CS_\lambda\bh, \CS_\lambda\bH, \lambda^{1/2}\bH)$
%with
%\[
%	\CR_{\CL(\widetilde X_q(\R^N_+), L_q(\R^N_+))}
%	(\{(\tau \pd_\tau)^\ell \DV \lambda \CH_5 (\lambda) \mid 
%	\lambda \in \Sigma_{\epsilon, c_0}\}) 
%	\leq r,
%\]
%which implies that 
%\begin{equation}\label{est:q2}
%	\|\DV \lambda \bQ_2\|_{L_q(\R^N_+)} \le C\|(\CS_\lambda \bh, \CS_\lambda \bH, \lambda^{1/2}\bH)\|_{L_q(\R^N_+)}.
%\end{equation}
%Since the estimate of $\Delta \bu$ is obtained by Theorem \ref{thm:Rbdd}, \eqref{est:q1} and \eqref{est:q2} furnish that
%\[
%	\|\nabla \fp\|_{L_q(\R^N_+)} \le C\|(\CS_\lambda \bh, \CS_\lambda \bH, \lambda^{1/2}\bH)\|_{L_q(\R^N_+)}.
%\]

\section{The resolvent estimates in the homogeneous Sobolev spaces}

In this section, we prove Corollary \ref{cor:homo}.
Let us consider the resolvent problem with homogeneous boundary conditions:
\begin{equation}\label{r5}
\left\{
\begin{aligned}
&\lambda\bu -\Delta \bu + \nabla \fp + \beta \DV (\Delta \bQ -a \bQ)=\bff,
\enskip \dv \bu=0& \quad&\text{in $\R^N_+$},\\
&\lambda \bQ - \beta \bD(\bu) - \Delta \bQ + a \bQ =\bG& \quad&\text{in $\R^N_+$},\\
&\bu= 0, \enskip \pd_N \bQ=0& \quad&\text{on $\R^N_0$},
\end{aligned}
\right.
\end{equation}
where $a > 0$ and $\beta \neq 0$.
In view of Remark \ref{rem:Rbdd RN}, 
by replacing $\CB_1(\lambda)(E_v[\bff], \nabla E_M[\bG], E_M[\bG])$ with $\CB_1'(\lambda)(E_v[\bff], \nabla E_M[\bG])$
in subsection \ref{subsec:proof of main}, 
we can obtain the estimates for $(\nabla^2 \bQ, \lambda^{1/2}\nabla \bQ, \lambda \bQ)$ in $\dot H^1_q(\R^N_+; \R^{N^4}) \times \dot H^1_q(\R^N_+; \R^{N^3}) \times \dot H^1_q(\R^N_+; \bS_0)$.
In more detail, we can obtain the following result concerning the $\CR$-boundedness for the solution operators of \eqref{r5} in the homogeneous Sobolev spaces.

\begin{thm}\label{thm:homo}
Let $1 < q < \infty$. Let $\epsilon \in (\epsilon_0, \pi/2)$ with $\tan \epsilon_0 \ge |\beta|/\sqrt 2$, and let $c_0$ is a small constant depending on $\epsilon$, $\beta$, and $a$
chosen in Lemma \ref{lem:zl}. 
Let 
\[
	X_q(\R^N_+)=L_q(\R^N_+)^N \times L_q(\R^N_+; \R^{N^3})
\]
and let 
\[
	\bF=(\bff, \nabla \bG) \in X_q(\R^N_+).
\]
There exist %a positive constant $\lambda_0=\lambda_0(\sigma) \geq 1$
%and 
operator families 
\begin{align*}
&\CA (\lambda) \in 
{\rm Hol} (\Sigma_{\epsilon, c_0}, 
\CL(X_q(\R^N_+), H^2_q(\R^N_+)^N))\\
&\CB (\lambda) \in 
{\rm Hol} (\Sigma_{\epsilon, c_0}, 
\CL(X_q(\R^N_+), H^3_q(\R^N_+; \bS_0)))
%&\CC (\lambda) \in 
%{\rm Hol} (\Sigma_{\epsilon, c_0}, 
%\CL(X_q(\R^N_+), \widehat H^1_{q, 0}(\R^N_+)))
\end{align*}
such that 
for any $\lambda = \gamma + i\tau \in \Sigma_{\epsilon, c_0}$,
$\bu = \CA (\lambda) \bF$ and 
$\bQ = \CB (\lambda) \bF$
are unique solutions of \eqref{r5}
and 
\begin{align*}
&\CR_{\CL(X_q(\R^N_+), A_q(\R^N_+))}
(\{(\tau \pd_\tau)^\ell \CS_\lambda \CA (\lambda) \mid 
\lambda \in \Sigma_{\epsilon, c_0}\}) 
\leq r,\\
&\CR_{\CL(X_q(\R^N_+), B_q(\R^N_+))}
(\{(\tau \pd_\tau)^\ell \CS_\lambda \CB (\lambda) \mid 
\lambda \in \Sigma_{\epsilon, c_0}\}) 
\leq r
\end{align*}
for $\ell = 0, 1,$
where
$\CS_\lambda = (\nabla^2, \lambda^{1/2}\nabla, \lambda)$,
$A_q(\R^N_+) = L_q(\R^N_+)^{N^3 + N^2+N}$,
$B_q(\R^N_+) = \dot H^1_q(\R^N_+; \R^{N^4}) \times \dot H^1_q(\R^N_+; \R^{N^3}) \times \dot H^1_q(\R^N_+; \bS_0)$,
and $r$ is a constant independent of $\lambda$.
%$\CT_\lambda \bQ = (\nabla^3 \bQ, \lambda^{1/2}\nabla^2 \bQ, \lambda \bQ)$,
%$A_q(\R^N_+) = L_q(\R^N_+)^{N^3 + N^2+N}$,
%$B_q(\R^N_+) = H^1_q(\R^N_+; \R^{N^4}) \times H^1_q(\R^N_+; \R^{N^3}) \times \dot H^1_q(\R^N_+; \R^{N^2})$,
\end{thm}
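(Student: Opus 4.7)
The plan is to follow the reduction strategy from subsection \ref{subsec:proof of main}, but replacing the whole-space operator $\CB_1(\lambda)$ with the homogeneous-Sobolev variant $\CB_1'(\lambda)$ from Remark \ref{rem:Rbdd RN}. Specifically, given $\bF=(\bff, \nabla \bG)$, I would first extend $\bff$ and $\bG$ by the even/odd reflections $E_v$ and $E_M$ defined in subsection \ref{subsec:proof of main}, so that the extended data $(E_v[\bff],\nabla E_M[\bG])$ lies in $L_q(\R^N)^N \times L_q(\R^N;\R^{N^3})$. Then I would define
\[
\bu_1 = \CA_1(\lambda)(E_v[\bff], \nabla E_M[\bG]), \quad
\bQ_1 = \CB_1'(\lambda)(E_v[\bff], \nabla E_M[\bG]), \quad
\fp_1 = \CC_1(\lambda)(E_v[\bff], \nabla E_M[\bG]),
\]
so that $(\bu_1,\bQ_1,\fp_1)$ solves the whole-space problem \eqref{r RN}, and by Remark \ref{rem:Rbdd RN} the relevant $\CR$-bound \eqref{rem:rbdd} holds in the homogeneous spaces, in particular $\CS_\lambda \bQ_1$ is $\CR$-bounded into $\dot H^1_q(\R^N_+;\R^{N^4}) \times \dot H^1_q(\R^N_+;\R^{N^3}) \times \dot H^1_q(\R^N_+;\bS_0)$.

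Next, as in the proof of Theorem \ref{thm:Rbdd H}, the symmetry of the extensions ensures that $u_{1N}|_{x_N=0}=0$ and $\pd_N\bQ_1|_{x_N=0}$ is a well-defined symmetric traceless tensor. I would then correct the boundary values by solving \eqref{r4} for $(\bu_2,\bQ_2,\fp_2)$ with boundary data $\bh = -\bu_1|_{\R^N_0}$ and $\bH = -\pd_N \bQ_1|_{\R^N_0}$, using Theorem \ref{thm:Rbdd}. The required inputs $(\CS_\lambda \bh, \CS_\lambda \bH, \lambda^{1/2}\bH)$ and $(\CS_\lambda \bh, \CT_\lambda \bH, \bH)$ are controlled by composing trace maps with the whole-space solution operator; these traces are bounded by the $\CR$-bound on $\CS_\lambda \CA_1(\lambda)$ and $\CS_\lambda \CB_1'(\lambda)$, producing $L_q(\R^N_+)$ estimates by pairing the trace theorem with the homogeneous norm bounds. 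Setting
\[
\CA(\lambda) = \CA_1(\lambda) \circ E_{v,M} + \CA_2(\lambda)\circ \Psi_1, \quad
\CB(\lambda) = \CB_1'(\lambda) \circ E_{v,M} + \CB_2(\lambda)\circ \Psi_2,
\]
where $E_{v,M}$ denotes the extension map and $\Psi_1,\Psi_2$ are the trace-based boundary data maps, gives the desired operator families; the $\CR$-boundedness is inherited from the composition of $\CR$-bounded families with bounded linear maps (Kahane's contraction-type argument).

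The main obstacle will be verifying that the composition $\Psi_2 = (\CS_\lambda \bh, \CT_\lambda \bH, \bH)$ applied to $\bu_1, \pd_N\bQ_1$ yields an $\CR$-bounded map from $X_q(\R^N_+)$ into $\widetilde Y_q(\R^N_+)$ with bounds uniform on $\Sigma_{\epsilon,c_0}$. The delicate component is the term $\bH = -\pd_N \bQ_1|_{\R^N_0}$ itself (appearing with no $\lambda$-weight): since $\CB_1'(\lambda)$ is only controlled in \emph{homogeneous} $\dot H^1_q$, one needs to use the trace estimate for $\pd_N \bQ_1$ obtained from $\nabla^2 \bQ_1 \in L_q$, which is contained in the $\CS_\lambda$-component of \eqref{rem:rbdd}. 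After this, the uniqueness of $(\bu,\bQ,\fp)$ and the weak Dirichlet--Neumann argument for $\fp$ in $\widehat H^1_q(\R^N_+)$ proceed exactly as in the proof of Corollary \ref{cor:resolvent} in subsection \ref{subsec:proof of main}, yielding finally the estimate of Corollary \ref{cor:homo} via Remark \ref{rem:def of rbdd}.
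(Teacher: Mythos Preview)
Your proposal is correct and follows exactly the route the paper indicates: repeat the argument of subsection \ref{subsec:proof of main} with $\CB_1(\lambda)(E_v[\bff],\nabla E_M[\bG],E_M[\bG])$ replaced by $\CB_1'(\lambda)(E_v[\bff],\nabla E_M[\bG])$ from Remark \ref{rem:Rbdd RN}, then apply Theorem \ref{thm:Rbdd} for the boundary correction. One clarification on your ``main obstacle'': no trace theorem is needed, because in this framework the data $\bh=-\bu_1$ and $\bH=-\pd_N\bQ_1$ enter as bulk functions on $\R^N_+$ (the operators $\CA_2,\CB_2$ act on $(\CS_\lambda\bh,\CT_\lambda\bH,\bH)\in L_q(\R^N_+)$ via the Volevich trick), and the required control of $\pd_N\bQ_1$, $\nabla\pd_N\bQ_1$, $\lambda^{1/2}\pd_N\bQ_1$ in $L_q(\R^N_+)$ is read off directly from the $\dot H^1_q$-bounds on the components $\bQ_1,\nabla\bQ_1,\lambda^{1/2}\bQ_1$ contained in $(\CT_\lambda,1)\CB_1'(\lambda)$ in \eqref{rem:rbdd}.
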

%By Theorem \ref{thm:Rbdd H}, Remark \ref{rem:Rbdd H}, we have the following corollary.
%Since the second equation of \eqref{r0} gives us the estimate of $\nabla \bQ$, then the following corollary holds.
Theorem \ref{thm:homo}, together with Remark \ref{rem:def of rbdd}, furnishes Corollary \ref{cor:homo}.
%
%\begin{cor}
%Let $1 < q < \infty$ and
%$\epsilon \in (\epsilon_0, \pi/2)$ with $\tan \epsilon_0 \ge |\beta|/\sqrt 2$.  
%Then for any $\lambda \in \Sigma_\epsilon$, $\bff \in L_q(\R^N_+)^N$, and $\bG \in \dot H^1_q(\R^N_+; \bS_0)$,
%there is a unique solution $(\bu, \bQ, \fp)$ for \eqref{r5}, unique up to additive constant on $\fp$, 
%with $\bu \in H^2_q(\R^N_+)^N$, $\bQ \in H^3_q(\R^N_+; \bS_0)$, 
%$\fp \in \widehat H^1_{q}(\R^N_+)$, and
%\begin{align*}
%&\|(|\lambda|, |\lambda|^{1/2} \nabla, \nabla^2)(\bu, \bQ)\|_{L_q(\R^N_+) \times \dot H^1_q(\R^N_+)}
%+\|\nabla \fp\|_{L_q(\R^N_+)}
%\le C \|(\bff, \nabla \bG)\|_{L_q(\R^N_+)}),
%\end{align*}
%where $C$ is a constant depending on $a$, $\beta$, $N$, $\epsilon$, and $q$.
%\end{cor}

\end{document}